\DeclareSymbolFontAlphabet{\mathbb}{AMSb}%
\DeclareSymbolFontAlphabet{\mathbbl}{bbold}
\theoremstyle{thm} \newtheorem{thm}{Theorem}
\theoremstyle{thm} \newtheorem*{thm*}{Theorem}
\newtheorem{prop}{Proposition} [section]
\newtheorem{lem}[prop]{Lemma}
\newtheorem{corol}[prop]{Corollary}
\newtheorem{conj}[prop]{Conjecture}
\theoremstyle{definition} 
\theoremstyle{remark} 
\theoremstyle{remark} \newtheorem{rem}[prop]{Remark}
\theoremstyle{definition} \newtheorem{defi}[prop]{Definition}
\theoremstyle{definition} \newtheorem*{nota}{Notation}
\newcommand{\quotientd}[2]{{\left.\raisebox{.2em}{$#1$}\middle\slash\raisebox{-.2em}{$#2$}\right.}}
\newcommand{\norm}[1] {\| #1 \| }
\newcommand{\xleftrightarrow}[2][]{\ext@arrow 3359\leftrightarrowfill@{#1}{#2}}
\newcommand{\xdashrightarrow}[2][]{\ext@arrow 0359\rightarrowfill@@{#1}{#2}}
\newcommand{\xdashleftarrow}[2][]{\ext@arrow 3095\leftarrowfill@@{#1}{#2}}
\newcommand{\xdashleftrightarrow}[2][]{\ext@arrow 3359\leftrightarrowfill@@{#1}{#2}}
\def\rightarrowfill@@{\arrowfill@@\relax\relbar\rightarrow}
\def\leftarrowfill@@{\arrowfill@@\leftarrow\relbar\relax}
\def\leftrightarrowfill@@{\arrowfill@@\leftarrow\relbar\rightarrow}
\def\arrowfill@@#1#2#3#4{%
  $\m@th\thickmuskip0mu\medmuskip\thickmuskip\thinmuskip\thickmuskip
   \relax#4#1
   \xleaders\hbox{$#4#2$}\hfill
   #3$%
}
\def\blfootnote{\xdef\@thefnmark{}\@footnotetext}
\newcommand{\thickhline}{%
    \noalign {\ifnum 0=`}\fi \hrule height 1pt
    \futurelet \reserved@a \@xhline
}
\newcolumntype{"}{@{\hskip\tabcolsep\vrule width 1pt\hskip\tabcolsep}}
\title{Generalized algebraic Morse inequalities and jet differentials}
\author{Beno\^it Cadorel}
\address{}
\email{benoit.cadorel@univ-lorraine.fr}
\address{Institut \'Elie Cartan de Lorraine \\ UMR 7502 \\ Universit\'e de Lorraine, Site de Nancy \\ B.P. 70239, F-54506 Vandoeuvre-l\`es-Nancy Cedex}
\begin{document}

\begin{abstract}We give a fully algebraic proof of an important theorem of Demailly, stating the existence of many Green-Griffiths jet differentials on a complex projective manifold of general type. To this end, we introduce a new algebraic version of the Morse inequalities, which we use in our proof as an algebraic counterpart to Demailly's and Bonavero's holomorphic Morse inequalities. \end{abstract}

\maketitle

\addcontentsline{toc}{section}{Introduction}

Jet differentials are one of the fundamental tools that were developed in the last decades to study complex hyperbolicity, and in particular the Green-Griffiths-Lang conjecture:

\begin{conj}[Green-Griffiths \cite{GG80}, Lang \cite{lang87}] \label{conjGGL} Let $X$ be a complex projective manifold of general type. Then, there exists a proper algebraic subset $\mathrm{Exc}(X) \subsetneq X$, such that $X$ is \emph{Brody hyperbolic modulo $\mathrm{Exc}(X)$} i.e. for any non constant holomorphic map $f : \mathbb C \longrightarrow X$, we have $f(\mathbb C) \subseteq \mathrm{Exc}(X)$.
\end{conj}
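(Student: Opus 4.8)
The plan is to follow the now-standard jet-differential strategy. First, recall the two ingredients. The \emph{fundamental vanishing theorem} (Green--Griffiths, Siu--Yeung, Demailly) asserts that if $P$ is a nonzero global section of the bundle $E^{GG}_{k,m}\Omega_X \otimes A^{-1}$ of Green--Griffiths jet differentials of order $k$ and weighted degree $m$, twisted down by an ample divisor $A$, then every entire curve $f\colon \mathbb C \longrightarrow X$ satisfies the differential equation $P(j_kf)\equiv 0$; equivalently $j_kf(\mathbb C)$ lies in the zero locus of $P$ inside the appropriate jet space $X_k$. The second ingredient is Demailly's existence theorem --- which the present paper re-establishes algebraically via the generalized algebraic Morse inequalities --- guaranteeing that when $X$ is of general type there exist such nonzero sections $P$ for $k$ large, and in fact a space of sections of dimension growing with $m$.

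Second, I would combine these. Taking a basis $P_1,\dots,P_N$ of a suitable subspace of $H^0\big(X, E^{GG}_{k,m}\Omega_X \otimes A^{-1}\big)$, every entire curve $f$ lies in the common zero locus $Z = \bigcap_i \{P_i = 0\} \subseteq X_k$, hence $f(\mathbb C) \subseteq \pi_0(Z)$, where $\pi_0 \colon X_k \to X$ is the projection. The conjecture would follow if $\pi_0(Z) \subsetneq X$; and, iterating the same construction on each irreducible component of $\pi_0(Z)$ and on lower-order jet strata, one would assemble a single proper algebraic subset and declare it to be $\mathrm{Exc}(X)$. So the skeleton is: (i) produce many jet differentials; (ii) intersect their zero loci and project; (iii) prove the projection to $X$ is proper; (iv) stratify and recurse to get one algebraic $\mathrm{Exc}(X)$ that works for all entire curves simultaneously.

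The hard part --- in fact the part that is genuinely open --- is step (iii): showing that the base locus of the space of jet differentials does not project onto all of $X$. The sections produced by a Morse-type (Riemann--Roch) count carry essentially no information about \emph{where} they vanish, so a priori $\pi_0(Z)$ could be all of $X$. To control it one needs jet differentials ``in independent directions'': e.g. sections of $E^{GG}_{k,m}\Omega_X \otimes A^{-1}$ for a whole family of ample twists $A$, or sections whose differentials span the horizontal directions of $X_k$, forcing the common vanishing locus to be small. This is exactly the obstruction that has been overcome only in special cases --- surfaces (McQuillan, Demailly--El Goul), generic hypersurfaces of high degree in $\mathbb P^{n+1}$ (Diverio--Merker--Rousseau, after Siu), and a few further restricted classes --- typically by exploiting extra structure (Wronskians, meromorphic connections, explicit deformations) unavailable for a general manifold of general type.

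In summary: steps (i) and (ii) are theorems, and the present paper reinforces them by giving (i)/(ii) a purely algebraic foundation; but steps (iii)--(iv), i.e. the properness of $\mathrm{Exc}(X)$, constitute the open core of Conjecture~\ref{conjGGL}, and a complete proof along these lines would require a new input producing enough \emph{generically independent} jet differentials on an arbitrary projective manifold of general type. The algebraic Morse inequalities developed here are a plausible tool toward such an input, but upgrading nonvanishing of the resulting sections to the required independence is, at present, the main obstacle, and I do not expect to resolve it here.
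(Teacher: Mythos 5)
You have correctly identified that the statement is not a theorem of the paper but an open conjecture: the paper states it as Conjecture~\ref{conjGGL} and offers no proof, contributing only to step (i) of your outline by giving an algebraic proof of Demailly's existence theorem for Green--Griffiths jet differentials (Theorem~\ref{thmprinc}). Your description of the two known ingredients --- the fundamental vanishing theorem and the existence of many jet differentials on manifolds of general type --- matches the paper's introduction, and your diagnosis that the genuinely open point is controlling the base locus of the jet differentials (so that its projection to $X$ is a proper subset) is accurate; this is precisely why the conjecture remains open outside the special cases you list. There is nothing to correct: declining to claim a proof here is the right call, and no gap in your reasoning is at issue since you do not assert one exists.
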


Recall that we say that a manifold $X$ is of general type if its canonical line bundle $K_X$ is big, i.e. if we have the maximal growth $h^0(X, K_X^{\otimes m}) \geq C \; m^{\dim X}$ for some $C > 0$. Non constant holomorphic maps $\mathbb C \longrightarrow X$ are usually called \emph{entire curves} on $X$.
\medskip

The relevance of jet differential techniques for the study of complex hyperbolicity problems has first been evidenced by Green and Griffiths \cite{GG80}. For any integers $k, m \geq 1$, they have constructed a vector bundle $E_{k,m}^{GG} \Omega_X \longrightarrow X$, the so-called \emph{bundle of Green-Griffiths jet differentials}, whose local sections represent \emph{holomorphic ordinary differential equations of order $k$ and degree $m$} on $X$ (acting on germs of holomorphic curves). The main interest of these jet differentials in view of Conjecture \ref{conjGGL} comes from the following fundamental theorem. 

\begin{thm*} [\cite{SY96, dem97}] \label{thmannfund} Let $A$ be an ample line bundle on $X$. Let $k, m \in \mathbb N^\ast$, and let $P \in H^0(X, E_{k, m}^{GG} \Omega_X \otimes \mathcal O(- A))$. Then, any non constant holomorphic map $f : \mathbb C \longrightarrow X$ is a solution to the differential equation $P$, i.e.  $P(f; f', ..., f^{(k)}) \equiv 0$. \end{thm*}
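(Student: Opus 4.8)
The plan is to show directly that for every non-constant holomorphic $f:\mathbb{C}\to X$ the section $g:=P(f;f',\dots,f^{(k)})$ vanishes identically; here $g$ is the holomorphic section of $f^\ast\mathcal{O}(-A)$ over $\mathbb{C}$ obtained by evaluating the order-$k$, degree-$m$ differential operator $P$ on the $k$-jet of $f$. Since every holomorphic line bundle on $\mathbb{C}$ is trivial, one may fix a nowhere-vanishing holomorphic section $\tau$ of $f^\ast A$ over $\mathbb{C}$, and it is equivalent to show that the entire function $G:=g\cdot\tau\in\mathcal{O}(\mathbb{C})$ is $0$. Before that I would put the metric data in place: fix a smooth Hermitian metric $\omega$ on $X$, and — using the ampleness of $A$ — a smooth metric $h$ on $A$ whose Chern curvature satisfies $\Theta_h(A)\ge\varepsilon\,\omega$ for some $\varepsilon>0$. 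Together these induce a Hermitian metric on $E^{GG}_{k,m}\Omega_X\otimes\mathcal{O}(-A)$ for which $\|P\|$ is a continuous, hence bounded, function on the compact manifold $X$: $\|P\|\le C_0$. Unwinding the definition of the Green--Griffiths bundle, this gives the crucial pointwise comparison $\|g(t)\|_{f^\ast h^{-1}}\le C_0\,\|j_kf(t)\|_\omega^{m}$, where $\|j_kf(t)\|_\omega$ is the weighted norm of the $k$-jet of $f$ at $t$ (weight $j$ on $f^{(j)}$) measured with $\omega$.

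The core of the argument is an Ahlfors--Schwarz estimate on the discs $\Delta_R=D(0,R)\subset\mathbb{C}$, followed by the limit $R\to\infty$. Assume, for contradiction, that $g\not\equiv0$. On each $\Delta_R$ one equips $\mathbb{C}$ with a conformal pseudo-metric $\gamma$ built from $g$ and the jet data of $f$ — with density, up to bounded factors and corrections involving the intermediate derivatives $f^{(j)}$, a fixed root of $\|g(t)\|^2_{f^\ast h^{-1}}$ — and one estimates its Gaussian curvature from above. Ampleness enters through the Lelong--Poincaré identity
\[ i\,\partial\bar\partial\log\|g\|^2_{f^\ast h^{-1}}\;=\;2\pi\,[\operatorname{div} g]\;+\;f^\ast\Theta_h(A)\;\ge\;\varepsilon\,f^\ast\omega, \]
which yields a strictly negative upper bound for the curvature of $\gamma$ in terms of $\gamma$ itself. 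The Ahlfors--Schwarz lemma then dominates $\gamma$ on $\Delta_R$ by a uniform multiple of the Poincaré metric of $\Delta_R$, whose density tends to $0$ on every fixed compact subset as $R\to\infty$; hence the density of $\gamma$ is identically $0$, and since that density is a strictly positive multiple of $\|g\|^{2/m}$ away from a discrete subset of $\mathbb{C}$, we get $g\equiv0$, a contradiction. The same mechanism can be phrased within Nevanlinna theory: the lemma on logarithmic derivatives bounds the characteristic $T_G(r)$ in terms of that of $f$ relative to $A$, up to an $O(\log^+T_f(r)+\log r)$ error term, and the ampleness of $A$ makes this force $G$ to be constant, while the strict positivity $\Theta_h(A)\ge\varepsilon\,\omega$ rules out a nonzero constant.

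The step I expect to be the genuine obstacle is the construction of the auxiliary pseudo-metric $\gamma$ whose curvature is self-bounded above by a negative multiple of itself. The naive choice $\gamma=\|g\|^{2/m}_{f^\ast h^{-1}}\,|dt|^2$ does not work: its curvature is bounded above only by $-\tfrac{\varepsilon}{m}\,f^\ast\omega$, which registers the size of $f'$ alone, whereas $\gamma$ also feels the higher-order derivatives through the inequality $\|g\|\le C_0\|j_kf\|_\omega^{m}$, and those derivatives can grow far faster than $f'$, so no comparison $f^\ast\omega\ge c\,\gamma$ is available. The remedy — essentially Demailly's jet metric — is to incorporate the weighted jet norms $\|f^{(j)}(t)\|^{2/j}_\omega$ for $1\le j\le k$ into $\gamma$ and to recover the curvature that is missing for those terms by differentiating the tautological relations $f^{(j)}=(f^{(j-1)})'$; arranging the bookkeeping so that the resulting metric simultaneously dominates a root of $\|g\|^2$ off the critical locus of $f$ and has self-bounded curvature is the only truly delicate point, after which Ahlfors--Schwarz together with the triviality of $f^\ast A$ over $\mathbb{C}$ conclude the proof. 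One also has to accommodate the mild degeneracy of $\gamma$ along $\operatorname{div} g$ and the critical locus of $f$, which is standard — either work with pseudo-metrics throughout, or regularize by adding $\delta\,|dt|^2$ and let $\delta\to0$ at the end.
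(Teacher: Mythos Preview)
The paper does not prove this theorem: it is stated in the introduction as a known result, cited from \cite{SY96, dem97}, and used only as motivation for studying jet differentials. There is therefore no proof in the paper to compare against.

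That said, your outline is essentially the standard argument due to Demailly (with antecedents in Siu--Yeung), and you have correctly located the one genuinely nontrivial step: the naive pseudo-metric $\|g\|^{2/m}_{f^\ast h^{-1}}\,|dt|^2$ has curvature controlled only by $f^\ast\omega$, which measures $|f'|$, whereas the metric itself involves all derivatives up to order $k$; Demailly's jet metric repairs this by incorporating the weighted terms $\|f^{(j)}\|^{2/j}_\omega$ and exploiting the tautological relation $f^{(j)}=(f^{(j-1)})'$ to propagate curvature negativity upward in order. Your description of this mechanism is accurate, though of course what you have written is a roadmap rather than a proof --- the actual bookkeeping (choosing the right convex combination of the jet-norm terms, checking the curvature inequality, and handling the degeneracy locus) takes several pages in \cite{dem97}. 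The alternative Nevanlinna-theoretic route you mention (logarithmic derivative lemma) is also valid and is closer in spirit to the original Green--Griffiths and Siu--Yeung arguments.
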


The theorem above is the first step of a general strategy to find restrictions on the geometry of entire curves, in the hope of eventually proving the Green-Griffiths-Lang conjecture. In many contexts, this strategy has permitted to obtain strong hyperbolicity results e.g. for hypersurfaces of $\mathbb P^{n+1}$ (see e.g. \cite{DMR10, ber15, bro17, siu15, deng16, dem18, BK19}, complements of hypersurfaces of $\mathbb P^n$ (see \cite{Dar15, BD19}), surfaces of general type (see \cite{bog77, McQ98, RR12})... The field of application of these jet differential techniques has also been recently extended to the orbifold setting by Campana, Darondeau, and Rousseau in \cite{CDR18}.
\medskip

The strategy sketched above will work if we are able to prove the existence of many global jet differential equations on a given manifold of general type. The most general result in this direction is due to Demailly \cite{dem11}, and can be stated as follows.

\begin{thm} \label{thmprinc} Let $X$ be a complex projective manifold of general type. Then, for a fixed $k \in \mathbb N$ large enough, the Green-Griffiths sheaf of algebras $E_{k, \bullet}^{GG} \Omega_X$ is \emph{big}, i.e. there is maximal growth $h^0(X, E_{k, m}^{GG} \Omega_X) \geq C m^{n + nk - 1}$ with $C > 0$, if $m \gg 1$ is divisible enough.

In particular, if $A$ is an ample line bundle on $X$, and if $m \gg k \gg 1$, we have
$$
H^0(X, E_{k, m}^{GG} \Omega_X \otimes \mathcal O(- A)) \neq 0.
$$
\end{thm}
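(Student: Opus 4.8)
The strategy is to reduce the existence of global Green--Griffiths jet differentials to a positivity statement about an auxiliary line bundle on a suitable projectivized jet bundle, and then to extract that positivity from an algebraic Morse-type inequality (the new ingredient announced in the abstract). Recall that, after Green--Griffiths, there is a tower of projectivized jet bundles $X_k \to X_{k-1} \to \dots \to X_0 = X$ together with a tautological line bundle $\mathcal O_{X_k}(1)$, whose direct images down to $X$ recover the Green--Griffiths sheaves: for $a = (a_1, \dots, a_k)$ a suitable positive weight, $\pi_{k,\ast} \mathcal O_{X_k}(a) $ is a subsheaf of $E_{k,\bullet}^{GG}\Omega_X$ (up to twisting and with $|a|$ playing the role of the degree $m$), and $h^0(X_k, \mathcal O_{X_k}(m \cdot a))$ grows like $m^{n + nk - 1}$ precisely when $E_{k,\bullet}^{GG}\Omega_X$ is big in the sense of the statement. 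So the plan is: \emph{first} I would recall this dictionary and reduce Theorem~\ref{thmprinc} to showing that, for $k$ large, the tautological class $\mathcal O_{X_k}(a)$ admits sections in large degree, equivalently that a certain $\mathbb Q$-line bundle $L_k$ on $X_k$ (built from $\mathcal O_{X_k}(1)$ twisted down by a small multiple of a fixed ample bundle pulled back from $X$) is big.

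\emph{Second}, I would set up the cohomological input. Demailly's own argument uses holomorphic Morse inequalities to bound $h^0 - h^1$ of $L_k$ from below by a curvature integral; the point of this paper is to replace that transcendental step with the new \emph{generalized algebraic Morse inequalities}, which should give a lower bound for $h^0(X_k, L_k^{\otimes m})$ in terms of intersection numbers (top self-intersections of the pieces of a decomposition $L_k = F - G$ with $F, G$ nef, or more precisely an inequality of the shape $h^0 \geq \big(F^N - N\, F^{N-1}\cdot G\big)\frac{m^N}{N!} - o(m^N)$, with $N = \dim X_k = n + nk$). So the task becomes purely numerical: exhibit such a decomposition of $L_k$ on $X_k$ and show the resulting intersection number is strictly positive once $k \gg 1$.

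\emph{Third} — and this is where the real work sits — I would carry out the intersection-number computation on the jet tower. One uses the recursive structure $X_j = \mathbb P(V_{j-1})$ over $X_{j-1}$ for an explicit rank-$n$ bundle $V_{j-1}$, so that intersection numbers on $X_k$ are computed by pushing forward down the tower via the Grothendieck relation $\sum_i c_i(V_{j-1}) u_j^{n-1-i} \cdot(\text{lower}) $, where $u_j = c_1(\mathcal O_{X_j}(1))$. With the weight $a$ chosen as in Demailly (strongly decreasing, e.g. $a_j$ comparable to $k^{?}$ or the specific choice $a = (?, \dots)$ that he uses to kill the curvature cross-terms), the leading part of the relevant intersection number reduces, as $k \to \infty$, to a positive multiple of $c_1(K_X)$ raised to the power $n$ plus error terms controlled by the other Chern classes of $\Omega_X$; bigness of $K_X$ then forces positivity. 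The main obstacle I anticipate is precisely this asymptotic analysis: controlling the combinatorial explosion of terms coming from the $k$-fold iterated projective bundle, identifying the correct normalization of the weight $a = a(k)$ so that the dominant term is a fixed positive multiple of $(K_X^n) > 0$ while all Chern-class cross-terms are genuinely lower order in $k$, and keeping the ``$-N F^{N-1}\cdot G$'' correction from the algebraic Morse inequality subdominant. Once the combinatorics is organized (e.g. via a generating-function / residue bookkeeping for the pushforwards), the strict positivity for $k$ large — and hence $h^0(X_k, L_k^{\otimes m}) > 0$, whence the final displayed nonvanishing after absorbing $\mathcal O(-A)$ into the small ample twist — should follow.
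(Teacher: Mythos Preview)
Your proposal has a genuine gap, and the paper addresses it explicitly in its introduction. The approach you outline --- write $\mathcal O_{X_k}(a)$ as $F - G$ with $F, G$ nef on the jet tower and apply the Demailly--Siu--Angelini inequality $h^0 \geq (F^N - N\,F^{N-1}\cdot G)\,m^N/N!$ --- is precisely Diverio's method, and the paper notes that it does \emph{not} yield the full Theorem~\ref{thmprinc}: for hypersurfaces in $\mathbb P^{n+1}$ it produces jet differentials only when the degree is very large, not under the optimal hypothesis ``$K_X$ big''. The obstruction is structural: to make $A = \mathcal O_k(a) + \pi_k^\ast H$ nef one must take $H$ so positive that the correction $N\,F^{N-1}\cdot G$ swamps the main term. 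Your ``main obstacle I anticipate'' is in fact fatal with this form of the Morse inequalities; no choice of weight $a$ rescues it in general. (Incidentally, you are also conflating two different jet constructions: the iterated $\mathbb P(V_{j-1})$ tower you describe is the Demailly--Semple tower, whereas the Green--Griffiths space $X_k^{GG}$ is a single weighted projective bundle over $X$, not an iteration.)

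The paper's route is quite different. First, rather than working on $X_k^{GG}$, it uses the filtration on $E_{k,m}^{GG}\Omega_X$ (equivalently, a Rees deformation of $X_k^{GG}$ to the weighted projectivized bundle $\mathbb P(\Omega_X^{(1)}\oplus\cdots\oplus\Omega_X^{(k)})$) to reduce to bounding $\chi^{[1]}$ of $\bigoplus_{l_1+2l_2+\cdots+kl_k=m} S^{l_1}\Omega_X\otimes\cdots\otimes S^{l_k}\Omega_X$. Second, a splitting principle reduces further to the case $\Omega_X = L_1\oplus\cdots\oplus L_n$. Third --- and this is the new ingredient --- the paper replaces the nef-decomposition Morse inequalities by a \emph{stratification-based} version (Theorem~\ref{thmintromorse}): one fixes a trivialized stratification $\underline\Sigma$ adapted to a line bundle and bounds $\chi^{[i]}$ by truncated intersection numbers $\deg c_1(L,\underline\Sigma)^n_{[\leq i]}$. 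These are flexible enough that, after a probabilistic averaging argument over the simplex $\Delta^{kn-1}$ (an algebraic analogue of Demailly's Monte-Carlo computation), the leading coefficient as $k\to\infty$ is governed by $\deg c_1(K_X,\underline\Sigma)^n_{[\leq 1]}$, which can be made positive when $K_X$ is big via Fujita approximation and Kawamata/Bloch--Gieseker covers. None of the iterated-projective-bundle pushforward combinatorics you sketch is used.
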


The proof of Theorem \ref{thmprinc} given by Demailly in \cite{dem11} is fundamentally analytic in nature: it is based on the \emph{holomorphic Morse inequalities} he introduced in \cite{dem85}, and that were later extended to the singular setting by Bonavero \cite{bonavero98}. 
\medskip

Our main goal in these notes is to give a fully algebraic and, we hope, essentially self-contained proof of Theorem \ref{thmprinc}. The general strategy will be to exhibit algebraic counterparts to the concepts introduced in \cite{dem11}. In particular, we introduce a new algebraic version of the holomorphic Morse inequalities.

\subsection{Algebraic Morse inequalities} The current main candidate for an algebraic version of the holomorphic Morse inequalities are the \emph{algebraic Morse inequalities} of Demailly \cite{dem96} and Angelini \cite{ang96}. In their simplest form, due to Siu \cite{siu93}, they can be stated as follows. Let $X$ be a complex projective manifold of dimension $n$, and let $L$ be a line bundle on $X$. Assume that $L = \mathcal O(A - B)$, where $A, B$ are nef divisors on $X$. Then, for $m \gg 1$, we have
$$
h^0(X, L^{\otimes m}) - h^1(X, L^{\otimes m}) \geq \frac{m^n}{n!} \left(A^n - n B \cdot A^{n-1} \right) + O(m^{n-1}).
$$
In particular, if $A^n > n B \cdot A^{n-1}$, then $h^0(X, L^{\otimes m})$ has maximal growth, and $L$ is big.
\medskip

A first natural idea to try to prove Theorem \ref{thmprinc} with algebraic methods, is to apply these holomorphic Morse inequalities to the tautological line bundle of the \emph{Green-Griffiths jet spaces}. If $X$ is a complex manifold, these jet spaces are projective fiber bundles $X_k^{GG} \overset{\pi_k}{\longrightarrow} X$ ($k \in \mathbb N^\ast$), each one endowed with an (orbifold) tautological line bundle $\mathcal O_k^{GG}(1)$, such that $(\pi_k)_\ast \mathcal O_k^{GG}(m) = E_{k, m}^{GG} \Omega_X$. Showing that $E_{k, \bullet}^{GG} \Omega_X$ is big for some $k$, amounts to showing that $\mathcal O_k^{GG}(m)$ is big; in this situation, we can then try to apply the previous algebraic Morse inequalities. To do this, we need to write $\mathcal O_k^{GG}(m) = \mathcal O(A - B)$, where $A, B$ are nef divisors on $X_k^{GG}$, and then to compute $A^N - N A^{N-1} \cdot B$, where $N = \dim X_k^{GG}$. A natural way to proceed is to remark that $\mathcal O_k^{GG}(m)$ is relatively ample, and to choose $B = \pi_k^\ast H$, where $H$ is an ample divisor on $X$, sufficiently positive so that $\mathcal O (A) = \mathcal O_k^{GG}(m) \otimes \mathcal \pi_k^\ast \mathcal O(H)$ is itself nef.
\medskip

This general strategy has been followed by Diverio to the case where $X$ is a hypersurface of $\mathbb P^{n+1}$ of degree $d$ in \cite{div08, div09} (more precisely, Diverio deals with the \emph{Demailly-Semple jet tower} rather than with the spaces $X_k^{GG}$).  In this situation, he shows the existence of many jet differentials of order $k \geq n$, as soon as $d \geq d(n)$, for some constant $d(n) \in \mathbb N$. Unfortunately, this algebraic method does not seem to give the bigness of $E_{k, \bullet}^{GG} \Omega_X$ when $d \geq n + 3$ and $k \gg 1$, which would be the expected bound on $d$ according to Theorem \ref{thmprinc}. There seems here to be a discrepancy between the results the algebraic and analytic methods can provide. \medskip 

It seems to us this discrepancy comes from a too restrictive setting for the statement of the algebraic Morse inequalities. Rather than dealing only with the case where $L = A - B$, with $A, B$ nef, it would be much more flexible to be able to deal with any difference of \emph{effective} divisors $A, B$. 

In the following discussion, we propose to follow this idea, and to give accordingly a new algebraic version of the Morse inequalities. We are quickly led to stating Morse inequalities in terms of \emph{stratifications} on our varieties, as follows. Let $L \longrightarrow X$ be a line bundle over a complex manifold, and let $e$ be a trivialization of $L$ over some affine open subset $U \subseteq X$, where $X \setminus U$ is the support of a Cartier divisor $D$ such that $L= \mathcal O(D)$. We can now extract a particular combinatorial data of this situation: 1) write $D = D^+ - D^-$, where $D^+$, $D^-$ are effective ; 2) note the multiplicities of $e$ along the irreducible components of $D^+$ and $D^-$ ; 3) restrict $L$ to $D$, and find a trivialization of this restriction on a Zariski dense open subset of $D$ ; 4) repeat this operation with $X$ replaced by $D$. Inductively, this defines a stratification $\Sigma$ on $X$, with the data of a trivialization $\mathbf e$ over $\Sigma$ (see Section \ref{sectstrat} for the proper definitions). Then the data of $\underline{\Sigma} = (\Sigma, \mathbf e)$ and of the multiplicities computed along the way permits to define \emph{truncated Chern intersection numbers} $\deg c_1(X, \underline{\Sigma})^{n}_{[\leq i]}$, for all $0 \leq i \leq n$. 

We can also extend these definitions to the case of $\mathbb Q$-line bundles, to formulate generalized Morse inequalities as follows.

\begin{thm} \label{thmintromorse} Let $X$ be a normal projective variety of dimension $n$. Let $L$ be a $\mathbb Q$-line bundle on $X$, and let $\underline{\Sigma}$ be a trivialized stratification adapted to $L$. Let $M$ be another line bundle on $X$. Then, for each integer $0 \leq i \leq n$, and any $m$ divisible enough, we have
\begin{enumerate}[(i)]
\item (Strong Morse inequalities) 
$$
\sum_{0 \leq j \leq i}  (-1)^{j + i} h^j (X, M \otimes L^{\otimes m}) \leq (-1)^i \left( \deg c_1(L, \underline{\Sigma})^n_{[\leq i]} \right) \frac{m^n}{n!} + O(m^{n-1})
$$

\item (Weak Morse inequalities) 
$$
h^i (X, M \otimes L^{\otimes m}) \leq (-1)^i \left( \deg c_1(L, \underline{\Sigma})^n_{[i]} \right) \frac{m^n}{n!} + O(m^{n-1})
$$

\item (Asymptotic Riemann-Roch formula)
$$
\chi(X, M \otimes L^{\otimes m}) = \left( \deg c_1(L, \underline{\Sigma})^n_{[\leq n]} \right) \frac{m^n}{n!} + O(m^{n-1})
$$
\end{enumerate}
\end{thm}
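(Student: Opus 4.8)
The plan is to prove the three statements by induction on the dimension $n$, using the stratification $\underline{\Sigma}$ to reduce each inequality to the one on the top divisor $D$ plus an elementary cohomological estimate coming from the trivialization $\mathbf e$. The base case $n = 0$ is trivial. For the inductive step, write $L = \mathcal O(D)$ with $D = D^+ - D^-$ effective, where the multiplicities of the trivialization $e$ encode $D^+$ and $D^-$, and fix a large $m$ (divisible enough that $L^{\otimes m}$ is genuinely a line bundle and that $mD^{\pm}$ are integral). The section $e^{\otimes m}$ of $M \otimes L^{\otimes m}$, multiplied by a section of $\mathcal O(mD^-)$ cutting out $D^-$, produces a section of $M \otimes L^{\otimes m} \otimes \mathcal O(mD^-)$ vanishing on $mD^+$; this gives a Koszul-type short exact sequence
\begin{equation*}
0 \longrightarrow M \otimes \mathcal O(-mD^-) \longrightarrow M \otimes L^{\otimes m} \longrightarrow \left( M \otimes L^{\otimes m} \right)\big|_{Z} \longrightarrow 0,
\end{equation*}
where $Z$ is the subscheme $mD^+$ (or, more carefully, one filters through the irreducible components of $D^+$ and $D^-$ one multiplicity layer at a time, so that at each stage the cokernel is supported on a single component $D_\alpha$ with a line bundle quotient). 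The first term has $h^0$ of size $O(m^{n-1})$ — indeed, after twisting down by an effective divisor, its sections inject into those of $M$ itself on smaller and smaller loci — so it contributes only to the error term, while $Z$ is a divisor and the stratification $\underline{\Sigma}$ restricts to a trivialized stratification adapted to $L|_Z$ on each component, to which the inductive hypothesis applies.

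The key computation is matching the Chern intersection numbers across this filtration. By the very recursive definition of $\deg c_1(L, \underline{\Sigma})^n_{[\leq i]}$, the truncated number on $X$ should decompose as a signed sum over the components of $D^+$ and $D^-$ of the corresponding truncated numbers $\deg c_1(L|_{D_\alpha}, \underline{\Sigma}|_{D_\alpha})^{n-1}_{[\leq i - \epsilon_\alpha]}$, weighted by multiplicity, where $\epsilon_\alpha = 0$ for components of $D^+$ and $\epsilon_\alpha = 1$ for components of $D^-$ (reflecting the sign change and index shift forced by the long exact sequence in cohomology, since restricting to a component of $D^-$ means passing through a kernel rather than a cokernel). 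I would verify this by unwinding Section \ref{sectstrat}'s definitions directly. Granting it, the inductive inequalities (i) and (ii) follow by taking the alternating-sum cohomology estimate through the filtration: each short exact sequence contributes $\sum_{j \le i}(-1)^{j+i} h^j$ on the quotient via the inductive strong Morse inequality, the kernel contributes $O(m^{n-1})$, and the signs and shifts line up with the decomposition of the Chern numbers. Statement (iii) is cleanest: $\chi$ is fully additive in short exact sequences, so $\chi(X, M \otimes L^{\otimes m})$ is the signed sum of the $\chi$ on each layer plus $\chi(X, M \otimes \mathcal O(-mD^-)) = O(m^{n-1})$, and one concludes by the inductive asymptotic Riemann–Roch together with the same decomposition of $\deg c_1(L,\underline{\Sigma})^n_{[\le n]}$; alternatively (iii) can be deduced from (i) at $i = n$ combined with (i) applied to a dual-type statement, but the direct argument is more transparent.

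The main obstacle I anticipate is \textbf{not} the cohomological bookkeeping but the purely combinatorial identity for the truncated Chern numbers under restriction — making precise how the index truncation $[\le i]$ interacts with the recursion, in particular getting the index shift on $D^-$-components and the inclusion–exclusion signs exactly right when components of $D^+$ and $D^-$ meet, and when the stratification is not smooth (so that $D_\alpha$ itself is only normal, or one must pass to a further stratum). A secondary technical point is ensuring that "divisible enough" is uniform down the stratification — i.e. a single arithmetic progression of $m$ works at all strata simultaneously — which should follow from finiteness of the stratification, and that the normality hypothesis on $X$ is propagated (or harmlessly replaced by working with the relevant reduced/irreducible components) at each step. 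Once the restriction formula for $\deg c_1(L, \underline{\Sigma})^n_{[\le i]}$ is established, the three inequalities fall out uniformly from the long exact cohomology sequence and a dimension count on the twisted-down kernel.
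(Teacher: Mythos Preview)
Your overall strategy --- induction on $\dim X$, reducing to the stratum via short exact sequences, and matching the recursion for the truncated Chern numbers --- is the same as the paper's. The combinatorial identity you flag as the ``main obstacle'' is exactly Lemma~\ref{lemformulainduct}, and it is indeed straightforward once the definitions are unwound.

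However, there is a genuine gap in your cohomological step. You write down the sequence
\[
0 \longrightarrow M \otimes \mathcal O(-mD^-) \longrightarrow M \otimes L^{\otimes m} \longrightarrow \bigl(M \otimes L^{\otimes m}\bigr)\big|_{mD^+} \longrightarrow 0
\]
and claim the kernel ``contributes only to the error term'' because its $h^0$ is $O(m^{n-1})$. But to bound $\chi^{[i]} = \sum_{j\le i}(-1)^{j+i}h^j$ you must control \emph{all} $h^j$ for $j\le i$, and the higher cohomology of $M\otimes\mathcal O(-mD^-)$ is typically of order $m^n$ (e.g.\ if $D^-$ is ample, $h^n$ grows maximally by Serre duality). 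So the kernel does not drop out, and your inequality collapses.

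The paper avoids this by never estimating such a kernel directly. Instead of one sequence at level $m$, it uses \emph{two} sequences comparing $m$ and $m+1$: writing $\mathrm{div}(e) = A - B$ with $A,B$ effective, one has $L^{\otimes(m+1)}\otimes\mathcal O(-A) \cong L^{\otimes m}\otimes\mathcal O(-B)$, so the kernel terms in
\[
0 \to L^{m+1}(-A) \to L^{m+1} \to L^{m+1}|_A \to 0, \qquad 0 \to L^m(-B) \to L^m \to L^m|_B \to 0
\]
coincide. Extracting the two truncated-Euler-characteristic inequalities (one at level $i$, one at level $i-1$, with the sign shift you correctly anticipated for $D^-$) and adding them cancels the common kernel and bounds the \emph{difference} $\chi^{[i]}(L^{m+1}) - \chi^{[i]}(L^m)$ by quantities supported on the divisor. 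One then sums telescopically over $m$; each step contributes $O(m^{n-1})$ by the inductive hypothesis on the strata (after filtering $\mathcal O_A$, $\mathcal O_B$ as in Lemma~\ref{lemfilt}), and the sum $\sum_{l\le m} l^{n-1}/(n-1)! = m^n/n! + O(m^{n-1})$ produces the leading term. The passage to a smooth model with SNC boundary and the cyclic cover handling the fractional trivialization $\frac{1}{d}\mathbf e$ are further technical layers you would also need to supply.
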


The core of our proof is very similar in spirit to the one of Angelini \cite{ang96}: it is an induction on $\dim X$, using quite standard dimensional considerations on long exact sequences of coherent sheaves. However, some difficulties will appear due to the need to work with the singular varieties appearing in our stratifications, as well as ramified coverings used to deal with the $\mathbb Q$-line bundles.

\subsection{Weighted projectivized bundles and jet spaces} To prove Theorem \ref{thmprinc}, a natural idea would be to proceed as in \cite{dem11}, and to apply the Morse inequalities to the line bundles $\mathcal O_k^{GG}(1) \longrightarrow X_k^{GG}$. In these notes, we will follow a rather different strategy, which is based on two simplifying ideas (which seem also relevant in the analytic context). 

First of all, following a remark we made in \cite{cad17}, we can reduce the study of the jet spaces $X_k^{GG}$ to the one of a \emph{weighted projectivized bundle}, by using a construction implicitly used in \cite{dem11}. Its geometric interpretation is the existence of a deformation of $X_k^{GG}$ into the weighted projectivized bundle $P_k = \mathbb{P}_X(\Omega_X^{(1)} \oplus ... \oplus \Omega_X^{(k)}),$ (see Section \ref{sectweightproj} for a proper definition), and of an (orbifold) line bundle over this family of deformations, restricting to the tautological line bundles $\mathcal O_k^{GG}(1) \longrightarrow X_k^{GG}$ and $\mathcal O_k(1) \longrightarrow P_k$. In this situation, the semi-continuity properties of $h^0 - h^1$ (see \cite{dem95}) yields
\begin{equation} \label{eqineqfund}
h^0 ( X_k^{GG}, \mathcal O_k^{GG}(m)) - h^1 ( X_k^{GG}, \mathcal O_k^{GG}(m)) \geq  h^0(P_k^{GG}, \mathcal O_k(m)) - h^1(P_k^{GG}, \mathcal O_k(m)).
\end{equation}

This inequality can also be shown using a very simple argument on filtered algebras which was communicated to me by L. Darondeau. To show that the left hand side is large, we just have to bound from below the right hand side, which is in fact the $h^0 - h^1$ of a natural graded algebra on $E_{k,m}^{GG} \Omega_X$: this right hand side is equal to  
\begin{equation} \label{eqchi1sym}
(h^0 - h^1) \left( \sum_{l_1 + 2 l_2 + ... + k l_k = m} S^{l_1} \Omega_X \otimes ... \otimes S^{l_r} \Omega_X \right), 
\end{equation}
and we get to our first reduction step, which states that it is enough to bound from below the quantity \eqref{eqchi1sym}, which is completely described \emph{only in terms of the cotangent bundle $\Omega_X$}.
\medskip
 
As another important reduction step, we can use a version of the \emph{splitting principle} implying that to get an asymptotic lower bound for \eqref{eqchi1sym}, it is actually enough to deal with the case where $\Omega_X = L_1 \oplus ... \oplus L_n$ is a direct sum of line bundles. We can then expend \eqref{eqchi1sym} in terms of direct sums of various products of powers of the $L_i$, and apply the Morse inequalities to each one of the direct factors so obtained. This computation eventually makes appear an integral over the standard $(kn-1)$-dimensional simplex $\Delta^{kn-1}$; we can actually state the following general result (see Section \ref{sectstatement} and Theorem \ref{thmineqintegral} for more precise and general statements). 

\begin{thm} \label{thmintroint} Let $X$ be a complex projective manifold of dimension $n$, and let $L_1, ..., L_r$ be line bundles on $X$. Let $a_1, ..., a_r \in \mathbb N_{\geq 1}$. Fix a stratification $\Sigma$ on $X$, and let $\mathbf{e}_1, ..., \mathbf{e}_r$ be trivializations of the $L_i$ on $\Sigma$. Then, for all $i$ $(0 \leq i \leq n)$, there exists a piecewise polynomial function $\upsilon_{[\leq i]} :\Delta^{r-1} \longrightarrow \mathbb R$, constructed explicitly in terms of $(\Sigma, \mathbf{e}_1, ..., \mathbf{e}_r)$, such that for all $m \in \mathbb N$:
\begin{align} \nonumber
\chi^{[i]} (X, \bigoplus_{a_1 l_1 + ... + a_r l_r = m} & L_1^{\otimes l_1} \otimes ...  \otimes L_r^{\otimes l_r} )  \\ \label{eqintrointegral}
& \leq  \frac{\mathrm{gcd}(a_1, ..., a_r)}{a_1 ... a_r} \binom{n + r - 1}{r - 1} \left[ \int_{\Delta^{r-1}} \upsilon_{[\leq i]} dP \right] \frac{m^{n+ r - 1}}{(n+ r - 1)!} + o(m^{n+r - 1}),
\end{align}
where $dP$ is the invariant probability measure on the simplex $\Delta^{r-1}$.
\end{thm}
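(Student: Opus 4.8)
The plan is to reduce Theorem~\ref{thmintroint} to an application of the generalized Morse inequalities of Theorem~\ref{thmintromorse} applied fiberwise on each graded piece, and then to recognize the resulting Riemann sum as a simplex integral. First I would fix the line bundles $L_1, \dots, L_r$ and the trivializations $\mathbf e_1, \dots, \mathbf e_r$ over the common stratification $\Sigma$. For each tuple $(l_1, \dots, l_r) \in \mathbb N^r$ with $a_1 l_1 + \dots + a_r l_r = m$, the line bundle $L_1^{\otimes l_1} \otimes \dots \otimes L_r^{\otimes l_r}$ naturally inherits a trivialized stratification $\underline\Sigma(l_1, \dots, l_r)$ adapted to it, simply by taking the product of the $\mathbf e_i^{\otimes l_i}$; the associated truncated Chern numbers are then, by construction of $c_1(\cdot, \underline\Sigma)$ in Section~\ref{sectstrat}, a fixed piecewise-polynomial function of the normalized weights $(l_1/m, \dots, l_r/m) \in \Delta^{r-1}$ — call this function $\upsilon_{[\le i]}$ (after the appropriate truncation). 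Theorem~\ref{thmintromorse}(i) applied to each factor with $M = \mathcal O_X$ gives
\begin{equation} \label{eqplanfactor}
\chi^{[i]}\bigl(X, L_1^{\otimes l_1} \otimes \dots \otimes L_r^{\otimes l_r}\bigr) \le \frac{1}{n!}\, \upsilon_{[\le i]}\!\left(\tfrac{l_1}{m}, \dots, \tfrac{l_r}{m}\right) m^n + O(m^{n-1}),
\end{equation}
where I write $\chi^{[i]}$ for the alternating partial sum $\sum_{0 \le j \le i} (-1)^{i+j} h^j$, and the $O(m^{n-1})$ is uniform over the (finitely many types of) strata appearing, hence over all admissible tuples.

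Next I would sum \eqref{eqplanfactor} over all $(l_1, \dots, l_r)$ with $a_1 l_1 + \dots + a_r l_r = m$. The number of such tuples is $\binom{n+r-1}{r-1}$-free of $n$ — more precisely it grows like $\frac{m^{r-1}}{(r-1)!\, a_1 \cdots a_r} \cdot \gcd(a_1, \dots, a_r)$ up to lower order, which accounts for the combinatorial prefactor in \eqref{eqintrointegral}. The key point is that $\sum_{a_1 l_1 + \dots + a_r l_r = m} \upsilon_{[\le i]}(l_1/m, \dots, l_r/m)$ is a Riemann sum for $\upsilon_{[\le i]}$ over the lattice points of the dilated simplex $\{a \cdot l = m\}$, and as $m \to \infty$ (through sufficiently divisible $m$, so that all the $\mathbb Q$-line bundle issues in Theorem~\ref{thmintromorse} are handled) this converges to $\bigl[\int_{\Delta^{r-1}} \upsilon_{[\le i]}\, dP\bigr]$ times the number of lattice points, with error controlled because $\upsilon_{[\le i]}$ is bounded and piecewise polynomial (so Riemann-integrable with a quantitative modulus of continuity away from the finitely many walls). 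Multiplying the lattice-point count $\sim \frac{\gcd(a_1, \dots, a_r)}{a_1 \cdots a_r} \binom{n+r-1}{r-1} \frac{m^{r-1}}{(r-1)!} \cdot \frac{(n+r-1)!}{n!\,(r-1)!}^{-1}$... — rather, combining the $\frac{m^n}{n!}$ from each factor with the $\sim \frac{m^{r-1}}{(r-1)!}$ factors in the count gives exactly $\frac{m^{n+r-1}}{(n+r-1)!}$ up to the stated rational constant, and the summed error terms contribute only $o(m^{n+r-1})$.

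The main obstacle I expect is \emph{uniformity}: Theorem~\ref{thmintromorse} gives, for each \emph{fixed} trivialized stratification, an estimate with an $O(m^{n-1})$ whose implied constant a priori depends on that stratification — but here the stratification $\underline\Sigma(l_1, \dots, l_r)$ varies with the tuple, and there are $\sim m^{r-1}$ of them, so a naive bound would only give $O(m^{n-1}) \cdot O(m^{r-1}) = O(m^{n+r-2})$ for the error, which is fine, but one must be careful that the leading constant in \eqref{eqplanfactor} genuinely depends on $(l_1/m, \dots, l_r/m)$ only through the \emph{rational function} $\upsilon_{[\le i]}$ and not through some unbounded arithmetic of the individual $l_i$. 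This is ensured by the fact that the truncated Chern intersection numbers $\deg c_1(L, \underline\Sigma)^n_{[\le i]}$ are multilinear/homogeneous in the class of $L$ and the trivialization data, so scaling $(l_1, \dots, l_r) \mapsto m \cdot (l_1/m, \dots, l_r/m)$ pulls out exactly a factor $m^n$; the remaining subtlety — that the error $O(m^{n-1})$ in the Morse inequalities can be taken uniform over a bounded family of stratifications parametrized by a compact simplex — follows because only finitely many \emph{combinatorial types} of stratification occur (the supports are fixed, only the integer multiplicities scale), and for each type the lower-order terms are themselves polynomial in the multiplicities of bounded degree. A secondary technical point is the treatment of the divisibility condition on $m$ needed to clear denominators when some of the intermediate restrictions produce genuine $\mathbb Q$-line bundles; this is absorbed by passing to sufficiently divisible $m$ throughout and invoking Theorem~\ref{thmintromorse} in its $\mathbb Q$-line bundle form.
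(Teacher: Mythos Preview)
Your overall strategy---bound each summand by a Morse-type inequality and recognize a Riemann sum over the simplex---is exactly the one the paper follows. You also correctly identify the crux: uniformity of the error in \eqref{eqplanfactor}. However, your proposed resolution does not close the gap, and this is precisely where the paper's argument diverges from yours.

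The problem is that \eqref{eqplanfactor} simply does not follow from Theorem~\ref{thmintromorse}. That theorem is an asymptotic statement for a \emph{fixed} $\mathbb Q$-line bundle $L$ as the exponent $m\to\infty$; the implied constant in the $O(m^{n-1})$ depends on $L$ through all of its cohomological data, not merely through the truncated Chern number. If you try to absorb the tuple $(l_1,\dots,l_r)$ into the base by writing $L_1^{\otimes l_1}\otimes\dots\otimes L_r^{\otimes l_r}=(L_1^{\otimes l_1/m}\otimes\dots\otimes L_r^{\otimes l_r/m})^{\otimes m}$, the $\mathbb Q$-line bundle now has denominator $m$, and the proof of Theorem~\ref{thmintromorse} (which passes through a Bloch--Gieseker cover of degree depending on that denominator) gives no control on the error. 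Your sketch ``the lower-order terms are polynomial in the multiplicities of bounded degree'' is not what the proof of Theorem~\ref{thmintromorse} actually yields, and proving such a statement directly would amount to reproving the theorem in a genuinely multi-parameter form.

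The paper handles this by proving a \emph{new} Morse-type inequality for several line bundles at once (Proposition~\ref{propvariantmorse}), obtained by decrementing one exponent $l_j$ at a time and running the same long-exact-sequence argument. This gives a uniform $O(m^{n-1})$, but the price is that the leading coefficient is not the pointwise value $\upsilon_{[\le i]}(\underline l/m)$: it is a \emph{maximum} $c(\mathcal L,\underline\Sigma)_{[\le i]}$ over all possible assignments of line bundles to the edges of the stratification tree (because the proof must choose, at each inductive step, which coordinate to decrement). To recover the pointwise value, the paper then introduces a cone-localization step (Proposition~\ref{propcone}): one covers the simplex by narrow rational cones, applies the multi-line-bundle inequality to the monomials $\mathcal L^{\otimes \underline u_j}$ generating each cone, and uses continuity of $\varphi_{[\le i]}$ to show the leading coefficient is $\upsilon_{[\le i]}(t)+O(\epsilon)$ on a cone of width $\epsilon$. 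Summing over cones and letting $\epsilon\to 0$ gives the Riemann integral. This two-step mechanism (uniform-but-crude bound, then localization) is the missing ingredient in your plan.
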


Suppose now that we are given a trivialized stratification $\underline{\Sigma} = (\Sigma, \mathbf{e})$, adapted to $K_X = L_1 \otimes ... \otimes L_n$. We can then construct trivializations $\mathbf{e}_i$ of the $L_i$ on $\Sigma$ (after possibly refining $\Sigma$), whose tensor product gives back $\mathbf{e}$. The next part of the work is to estimate the integral in \eqref{eqintrointegral} as we apply this inequality to \eqref{eqchi1sym}, letting $k \longrightarrow + \infty$. This estimation is very close to the computations done in \cite{dem11}: we can actually present them in a probabilistic manner, further elaborating on Demailly's Monte-Carlo method. As in the analytic situation, we observe an "averaging" phenomenon: as $k \longrightarrow + \infty$, the integral of \eqref{eqintrointegral} gets closer to its mean value over the simplex $\Delta^{r-1}$; a simple computation shows that this mean value is proportional to $\deg c_1(K_X, \underline{\Sigma})^n_{[\leq i]}$.

Letting $i = 1$, this implies that Theorem \ref{thmprinc} will be proved if we can find a stratification for which $\deg c_1(K_X, \underline{\Sigma})^n_{[\leq 1]} > 0$, whenever $K_X$ is big. As often when passing analytic arguments to an algebraic context, this stratification will actually exist only on some ramified covering $X' \overset{p}{\longrightarrow} X$, produced using Kawamata and Bloch-Gieseker lemmas. Also, $\Sigma$ will actually be adapted to an ample subsheaf of $\mathcal O(p^\ast K_X)$ rather that to $K_X$ itself.

Putting everything together, we get the following result, which implies immediately Theorem \ref{thmprinc}.

\begin{thm} \label{thmfulldetail} Let $X$ be a projective manifold of general type, of dimension $n$. Then, for all small $\epsilon > 0$, there exists 
\begin{enumerate} \itemsep=0em
		\item a generically finite, proper morphism $p: X' \longrightarrow X$;
		\item a decomposition $p^\ast K_X = A + E$ into ample and effective divisors;
		\item a trivialized stratification $\underline{\Sigma}$ on $X'$, adapted to $A$, such that $$\deg c_1(A, \underline{\Sigma})^n_{[\leq 1]} > (\deg p) (\mathrm{vol}(K_X) - \epsilon) > 0.$$
\end{enumerate}

	Moreover, when $m \gg k \gg 1$, and $m$ is divisible enough, we have
$$
h^0(X', p^\ast E_{k,m}^{GG} \Omega_X) \geq \frac{(\log k)^n}{n! (k!)^n} \left( \deg c_1(A, \underline{\Sigma})^n_{[\leq 1]} - O(\frac{1}{ \log k}) \right) \frac{m^{n + nk - 1}}{(n+kr - 1)!}  + o(m^{n + kr - 1}).
$$
This implies that for $k \gg 1$, $p^\ast E_{k, \bullet}^{GG} \Omega_X$, and hence $E_{k, \bullet}^{GG} \Omega_X$, is big.
\end{thm}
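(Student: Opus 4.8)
The plan is to assemble Theorem~\ref{thmfulldetail} from the three main ingredients developed in the earlier sections: the generalized Morse inequalities of Theorem~\ref{thmintromorse}, the integral estimate of Theorem~\ref{thmintroint}, and the construction of a suitable trivialized stratification on a ramified cover. First I would produce the cover $p : X' \to X$ and the decomposition $p^\ast K_X = A + E$. Since $K_X$ is big, Kodaira's lemma gives such a decomposition on $X$ itself only with $A$ a $\mathbb{Q}$-divisor that is merely big; to make $A$ genuinely ample one passes to a ramified cover via the Bloch--Gieseker and Kawamata covering lemmas, which also lets one arrange $\operatorname{vol}(A) \geq (\deg p)(\operatorname{vol}(K_X) - \epsilon)$ (volumes multiply by $\deg p$ under pullback, and the error $\epsilon$ absorbs the loss in Kodaira's lemma). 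On $X'$ one then builds a trivialized stratification $\underline{\Sigma}$ adapted to $A$ by the inductive recipe sketched in the introduction (peel off the divisor $D$ with $\mathcal{O}(D) = A$, record multiplicities, restrict, repeat). Because $A$ is ample, the leading truncated Chern number $\deg c_1(A, \underline{\Sigma})^n_{[\leq 1]}$ can be made to dominate $\deg c_1(A, \underline{\Sigma})^n_{[\leq 0]} = A^n = \operatorname{vol}(A)$ up to the chosen $\epsilon$ — this is where one invokes whatever positivity statement Section~\ref{sectstrat} establishes comparing the truncated numbers of an ample class; granting it, item (3) follows.

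Next I would run the chain of reductions described in the introduction to pass from $h^0(X', p^\ast E_{k,m}^{GG}\Omega_X)$ down to the truncated Euler characteristic of the graded piece \eqref{eqchi1sym}. Concretely: the deformation/filtration argument giving \eqref{eqineqfund} reduces us to bounding $(h^0 - h^1)$ of $\bigoplus_{l_1 + 2l_2 + \dots + k l_k = m} S^{l_1}\Omega_X \otimes \dots \otimes S^{l_k}\Omega_X$; the splitting principle reduces this to the split case $\Omega_X = L_1 \oplus \dots \oplus L_n$; expanding the symmetric powers and collecting monomials $L_1^{\otimes l_1}\otimes\dots\otimes L_r^{\otimes l_r}$ (here $r = kn$, with weights $a_i$ coming from the grading $1,2,\dots,k$ each repeated $n$ times) puts us exactly in the setting of Theorem~\ref{thmintroint} with $i = 1$. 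Applying that theorem, using the trivializations $\mathbf{e}_i$ of the $L_i$ on $\Sigma$ obtained by factoring $\mathbf{e}$, gives an upper bound for $\chi^{[1]}$, hence — since $h^0 - h^1 \geq \chi^{[1]}$ up to the usual sign — a \emph{lower} bound for $h^0 - h^1 \geq h^0(X', p^\ast\mathcal{O}_k^{GG}(m)) - h^1$ in terms of $\int_{\Delta^{kn-1}} \upsilon_{[\leq 1]}\, dP$.

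The last analytic step is the asymptotic evaluation of that integral as $k \to \infty$. Following Demailly's Monte-Carlo / averaging idea in a probabilistic reformulation, one shows that, after rescaling, the piecewise-polynomial integrand $\upsilon_{[\leq 1]}$ concentrates around its mean over the simplex, and that this mean is proportional to $\deg c_1(K_X, \underline{\Sigma})^n_{[\leq 1]}$ — more precisely, since $\Sigma$ is adapted to $A \leq p^\ast K_X$, one gets $\deg c_1(A, \underline{\Sigma})^n_{[\leq 1]}$ with the stated combinatorial prefactor $\frac{(\log k)^n}{n!(k!)^n}$ and an error $O(1/\log k)$ coming from the speed of concentration. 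Combining with item (3), the bracketed quantity is positive for $k \gg 1$, which forces $h^0(X', p^\ast E_{k,m}^{GG}\Omega_X)$ to have maximal growth in $m$, i.e. $p^\ast E_{k,\bullet}^{GG}\Omega_X$ is big; bigness descends along the generically finite $p$ (a sub-line-bundle of maximal Iitaka dimension pushes forward to one on $X$), giving bigness of $E_{k,\bullet}^{GG}\Omega_X$ and hence Theorem~\ref{thmprinc}.

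I expect the main obstacle to be the integral estimate in the final step: controlling the $k \to \infty$ asymptotics of $\int_{\Delta^{kn-1}} \upsilon_{[\leq 1]}\, dP$ with an explicit error term, while keeping track of how the stratification data enters, is the technically heaviest part — it is precisely where Demailly's analytic Morse-inequality computation has to be reproduced by hand in the algebraic language, and where the various normalizations ($\gcd(a_1,\dots,a_r)/(a_1\cdots a_r)$, the binomial factors, the factorials $(k!)^n$) must be matched exactly so that the leading term survives. A secondary subtlety is ensuring the positivity comparison $\deg c_1(A,\underline{\Sigma})^n_{[\leq 1]} > (\deg p)(\operatorname{vol}(K_X) - \epsilon)$ for the \emph{particular} stratification produced, rather than just for some abstract one; this will require choosing the peeling order in the stratification construction compatibly with the ampleness of $A$, and is the place where the covering lemmas and Kodaira's lemma have to be used in a coordinated way.
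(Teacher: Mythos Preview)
Your overall architecture matches the paper's: construct a cover and a decomposition $p^\ast K_X = A + E$, build a good trivialized stratification adapted to $A$, run the filtration and splitting-principle reductions, apply the integral Morse inequality of Theorem~\ref{thmintroint}, and estimate the simplex integral by the averaging argument. Two points, however, are not merely imprecise but are genuine gaps.

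\medskip

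\textbf{The twist by the effective part $E$ is missing.} Your stratification $\underline{\Sigma}$ is adapted to $A$, but after the splitting reduction the line bundles $L_1,\dots,L_n$ satisfy $L_1\otimes\dots\otimes L_n \cong p^\ast K_X = \mathcal O(A+E)$, not $\mathcal O(A)$. You cannot ``factor $\mathbf e$'' (a trivialization of $A$) into trivializations of the $L_i$; the product of such trivializations would trivialize $\mathcal O(A+E)$, which $\mathbf e$ does not. The paper resolves this by introducing the auxiliary $\mathbb Q$-line bundle $N=\mathcal O(-E)$ and, for each $k$, the twist $N_k = \mathcal O\bigl(-\tfrac{1}{kn}(1+\tfrac12+\dots+\tfrac1k)E\bigr)$. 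One works not with $S^m\mathbb E_k$ but with $N_k^{\otimes m}\otimes S^m\mathbb E_k$ (this is what Proposition~\ref{proprelative}/\ref{proptwisteestimate} and Corollary~\ref{corolineintegral} are for), so that the relevant determinant becomes $\det E\otimes N=\mathcal O(A)$, to which $\underline\Sigma$ is adapted. Without this twist the averaging step produces $\deg c_1(p^\ast K_X,\underline\Sigma)^n_{[\leq 1]}$ for a stratification that is \emph{not} adapted to $p^\ast K_X$, and the argument breaks down.

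\medskip

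\textbf{The construction of $\underline\Sigma$ with the stated positivity is more specific than you indicate.} It is not true that $\deg c_1(A,\underline\Sigma)^n_{[\leq 0]}=A^n$ for an arbitrary stratification adapted to $A$; this number depends on $\underline\Sigma$. The paper's Lemma~\ref{lemstrat} produces a \emph{particular} $\underline\Sigma$: one passes (via Bloch--Gieseker) to a cover where $A$ pulls back to a very ample $C$, takes successive generic hyperplane sections, and observes that all markings are then $+1$, whence $\deg c_1(p^\ast A,\underline\Sigma)^n_{[\leq 1]}=\deg c_1(p^\ast A,\underline\Sigma)^n_{[\leq 0]}=(p^\ast A)^n=(\deg p)\,A^n$ exactly, with no $\epsilon$-loss at this stage. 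The $\epsilon$ enters earlier, and not through Kodaira's lemma (which gives no volume control) but through Fujita's approximate Zariski decomposition, which furnishes $m\,p_1^\ast K_X=A_1+G_1$ with $A_1$ ample and $(A_1^n)\geq m^n(\mathrm{vol}(K_X)-\epsilon)$; Kawamata's lemma is then used to make $G_1$ divisible by $m$ on a further cover.
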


\subsection{Organization of the paper} These notes will essentially be divided in four parts. In the first three of them, we prove our general results about Morse inequalities, and we detail our reduction steps for the proof of Theorem \ref{thmprinc}. We will finally prove the main theorem in the last part. 

\begin{enumerate}
\item Section 1:  The basic definitions of trivialized stratifications and truncated intersection numbers will be given in Sections \ref{sectstrat} and \ref{sectruncatedchern}. We will then prove Theorem \ref{thmintromorse} in Section \ref{sectmorseineq}.

\item Section 2: We will present the main setup of the proof of Theorem \ref{thmprinc}, as a motivation for the more general results proved in Section 3.
\begin{enumerate} \item In Section \ref{sectweightproj}, we recall the basic definitions of Green-Griffiths jet differentials, and of direct sums of vector bundles. We then give our first reduction step, which shows that it suffices to apply the Morse inequalities to the symmetric powers of a weighted direct sum of the form $\Omega_X^{(1)} \oplus ... \oplus \Omega_X^{(k)}$.
		\item In Section \ref{sectrel}, we give a relative version of the results of Section \ref{sectweightproj}, which also shows that we can study twisted direct sums of the form $\mathcal O(-E) \otimes (\Omega_X^{(1)} \oplus ... \oplus \Omega_X^{(k)})$ for some effective divisor $E$. It will be important in the proof of Theorem \ref{thmprinc} to be able to deal with the case where $K_X$ is merely big, and not necessarily ample.
\item Section \ref{sectredsum} is devoted to the version of the splitting principle mentioned above: this is our second reduction step. 
\end{enumerate}

\item Section 3: we prove Theorem \ref{thmintroint}, and we give a variant where we introduce twists by an auxiliary $\mathbb Q$-line bundle.
\item Section 4: we put the results of Section 3 and 4 together: we explain how to construct adapted stratifications in the case where $K_X$ is big, and we complete the proof of Theorem \ref{thmprinc}.
\item Annex: we gathered here some useful computations related to the integration of linear forms over simplexes of $\mathbb R^m$, and associated probability estimates.
\end{enumerate}

\subsection*{Acknowledgments.}

I would like to thank Damian Brotbek, Fr\'ed\'eric Campana, Julien Grivaux, Henri Guenancia, Gianluca Pacienza and Erwan Rousseau for our enriching discussions around this subject. I am also grateful to Jean-Pierre Demailly for several useful explanations about his original article, to which I am of course very indebted. Finally, I address special thanks to Lionel Darondeau for many enlightening and motivating discussions all along the conception of these notes.
\medskip

During the preparation of this work, the author was partially supported by the ANR Programme: D\'efi de tous les savoirs (DS10) 2015, "GRACK", Project ID : ANR-15-CE40-0003.

\section{Truncated Chern classes}

\subsection{Stratifications} \label{sectstrat}

In the following, $X$ will denote a normal complex projective variety of dimension $n$. For us, a \emph{variety} will be an irreducible and reduced complex scheme. 
\medskip

The first concept we would like to define is a basic notion of \emph{stratification} on $X$. It will be constructed inductively, using the following elementary step. 

\begin{defi} \label{defistratum} Let $Y_n$ (resp. $Y_{n-1}$) be a normal reduced scheme with irreducible connected components, of pure dimension $n$ (resp. $n-1$). A morphism $Y_{n-1} \overset{f}{\longrightarrow} Y_n$ is called a \emph{stratum} of $Y_n$ if it factors through a reduced Weil divisor $D_n$ on $Y_n$ such that:
\begin{enumerate}[(a)]
\item $U_n = X_n \setminus \mathrm{Supp}(D_n)$ is an open dense affine subset of $X$;
\item \label{itembij} $f : Y_{n-1} \longrightarrow D_n$ is a proper birational morphism. In particular, $f$ associates pairwise the irreducible components of $Y_{n-1}$ and of $D_n$.
\end{enumerate}
 \end{defi}

We can now define a stratification as follows.

\begin{defi} \label{defistratification} A \emph{stratification} of $X$ is a sequence of strata $X_0 \longrightarrow X_1 \longrightarrow ... \longrightarrow X_n = X$.
\end{defi}

We now define a notion of compatibility between a stratification $\Sigma$ and a vector bundle $E$, holding when there exists a trivialization of $E$ over $\Sigma$, in the following sense.

\begin{defi} Let $E \longrightarrow X$ be a vector bundle of rank $r$, and let $\Sigma \equiv X_0 \overset{f_0}{\longrightarrow} X_1 \overset{f_1}{\longrightarrow}  ... \overset{f_{n-1}}{\longrightarrow} X_n = X$ be a stratification of $X$.

A \emph{trivialization} $\mathbf e$ of $E$ over $\Sigma$, is the data, for each $i$ ($0 \leq i \leq n$), of a trivialization $(e^i_j)_{1 \leq j \leq r}$ of $q_i^\ast E$ on $U_i$, where $U_i = X_i \setminus f_{i-1}(X_{i-1})$, and $q_i : X_i \longrightarrow X$ is the natural map. A \emph{trivialized stratification} $\underline{\Sigma} = (\Sigma, \mathbf e)$ for $E$ is the data of a stratification $\Sigma$ of $X$ and of a trivialization $\mathbf e$ of $E$ over $\Sigma$.

We say that $\Sigma$ is \emph{adapted} to $E$ if there exists a trivialization of $E$ over $\Sigma$.
\end{defi}

To deal with $\mathbb Q$-line bundles, it will be useful to extend slightly the definition above.

\begin{defi}
	Let $L \longrightarrow X$ be a $\mathbb Q$-line bundle, i.e. a formal root $L = N^{\otimes 1/d}$, where $N$ is a line bundle on $X$, and $d \in \mathbb N_{\geq 1}$. Let $(\Sigma, \mathbf{e})$ be a trivialized stratification for $N$. We say will say that the formal data $\frac{1}{d} \mathbf{e}$ is a \emph{fractional trivialization} of $L$ on $\Sigma$; we will simply call the data $\underline{\Sigma} = (\Sigma, \frac{1}{d} \mathbf{e})$ a \emph{trivialized stratification} for $L$.
\end{defi}

Note that the above definition also make sense if $N$ is a standard line bundle. The proof of the following proposition is straightforward.

\begin{prop} For any vector bundle $E$ over $X$, there exists a trivialized stratification for $E$.
\end{prop}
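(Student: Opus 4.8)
The plan is to build the trivialized stratification by descending induction on $\dim X$, at each step removing the complement of a cleverly chosen dense affine open set and then normalizing what remains. First I would observe that since $E$ is locally free it is trivial over each member of some Zariski open cover of $X$; as $X$ is irreducible any such member is dense, and inside it one can choose a dense \emph{affine} open $U_n \subseteq X$ over which $E|_{U_n}$ is still trivial, so fix a trivialization $(e^n_j)_{1 \le j \le r}$ there. If $\dim X = 0$ the stratification $X_0 = X$ together with this trivialization already works. Otherwise, because $X$ is normal and projective and $U_n$ is affine, the closed set $X \setminus U_n$ is nonempty and of pure codimension one, hence is exactly the support of a (unique) reduced effective Weil divisor $D_n$, with $U_n = X_n \setminus \mathrm{Supp}(D_n)$.

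Next I would pass to the boundary by setting $X_{n-1} := \widetilde{D_n} \overset{\nu}{\longrightarrow} D_n$, the normalization of $D_n$, i.e.\ the disjoint union of the normalizations of the irreducible components of $D_n$. Then $X_{n-1}$ is a normal reduced scheme of pure dimension $n-1$ whose connected components are irreducible (normality forces the connected and irreducible components to coincide), and the composition $f_{n-1} : X_{n-1} \longrightarrow D_n \hookrightarrow X_n$ is proper (finite morphism followed by a closed immersion) and birational, matching the irreducible components of $X_{n-1}$ and $D_n$ pairwise. Thus $f_{n-1}$ is a stratum of $X_n$ in the sense of Definition \ref{defistratum}. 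Now I would repeat the first step applied to the vector bundle $q_{n-1}^\ast E$ on each irreducible component of $X_{n-1}$ — each being a normal projective variety of dimension $n-1$ — to produce a dense affine trivializing open on each component; taking disjoint unions yields a dense affine $U_{n-1} \subseteq X_{n-1}$, a trivialization $(e^{n-1}_j)_j$ of $q_{n-1}^\ast E$ there, and a reduced Weil divisor $D_{n-1}$ with $\mathrm{Supp}(D_{n-1}) = X_{n-1} \setminus U_{n-1}$. Setting $X_{n-2} := \widetilde{D_{n-1}}$ and iterating, after $n$ steps one reaches $X_0$, a finite disjoint union of reduced points, on which every vector bundle is trivial; this provides the last trivialization $(e^0_j)_j$. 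The sequence $X_0 \longrightarrow \cdots \longrightarrow X_n = X$ together with $\mathbf e = (e^i_j)_{i,j}$ is then a trivialized stratification for $E$.

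The one point that is not purely formal, and the only place something real is used, is the classical fact that on a normal projective variety the complement of an affine open subset is nonempty of pure codimension one: this is what guarantees at every stage that the locus we peel off is genuinely a reduced Weil divisor, so that Definition \ref{defistratum} applies and the recursion is well posed. Everything else — existence of affine trivializing opens inside an irreducible variety, finiteness and birationality of normalization, triviality of a bundle over finitely many reduced points, and the bookkeeping that the open set $U_i = X_i \setminus f_{i-1}(X_{i-1})$ appearing in the definition of a trivialization is precisely the affine open chosen at step $i$ — is routine and I would only indicate it briefly.
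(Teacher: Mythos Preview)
Your argument is correct and is precisely the ``straightforward'' construction the paper has in mind; the paper does not actually supply a proof beyond that word. The only substantive input, as you note, is the classical fact that the complement of a dense affine open in a projective variety has pure codimension one, and you invoke it at the right place.
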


\begin{rem} \label{remtree} Let $\Sigma \equiv X_0 \overset{f_0}{\longrightarrow} X_1 \overset{f_1}{\longrightarrow}  ... \overset{f_{n-1}}{\longrightarrow} X_n = X$ be a stratification. It has an naturally associated \emph{tree}, with vertices indexed by irreducible components of the $X_i$. If $v$ is such a vertex, indexed by an irreducible component $V \subseteq X_i$, then the children of $v$ are indexed by the irreducible components $W$ of $X_{i-1}$ such that $f_{i-1} (W) \subseteq V$. The \emph{leaves} are in bijection with the points of $X_0$.
More precisely, this tree $\mathcal T$ satisfies the following properties:

\begin{enumerate} \itemsep=0em
\item to each node $\nu$ of $\mathcal T$ is associated an irreducible variety $V_\nu$;
\item to each arrow $\nu \longrightarrow \mu$ in $\mathcal T$ is associated a morphism $V_\nu \longrightarrow V_\mu$;  
\item if $C_\mu$ is the set of children of $\mu$, the natural map $c_\mu : \sqcup_{\nu \in C_\mu} V_{\nu} \longrightarrow V_{\mu}$ is a stratum of $V_\mu$ accordingly to Definition \ref{defistratum}; 
\item the root of $\mathcal T$ is indexed by $X$, and the leaves are indexed by points.
\end{enumerate}
Conversely, we see right away that the data of a tree satisfying the four conditions above is equivalent to the data of a stratification of $X$. This alternative description will be quite convenient for us to describe intersection computations.
\end{rem}

The following definition is useful to construct stratifications adapted to several line bundles at once.

\begin{defi} \label{defirefinement} We say that a stratum $f_1' : X_1' \longrightarrow X$ is a \emph{refinement} of another stratum $f_1 : X_1 \longrightarrow X$, if $X_1$ is isomorphic to a direct sum of components of $X_1'$, and if $f_1$ factors through $f_1'$, via the natural map $X_1 \hookrightarrow X_1'$. 

Let $\Sigma$ and $\Sigma'$ be stratifications of $X$, with their associated trees $\mathcal T$ and $\mathcal T'$. We say that $\Sigma'$ is a \emph{refinement} of $\Sigma$, if there is a embedding of trees $\varphi : \mathcal T \hookrightarrow \mathcal T'$, sending root on root, such that the following hold: with the notations of Remark \ref{remtree}, we require that if $\mu \in \mathcal T$, then $V_{\varphi(\mu)} \cong V_\mu$, and the stratum $c_{\varphi(\mu)}$ is a refinement of $c_{\mu}$.

With the same notations, we say that $\underline{\Sigma'} = (\Sigma', \mathbf{e}')$ refines $\underline{\Sigma} = (\Sigma, \mathbf{e})$ if $\Sigma'$ refines $\Sigma$, and if for any $\mu \in \mathcal T$, the trivializations $e$ and $e'$ on $V_{\mu}$ and $V_{\varphi(\mu)}$ correspond under the identification $V_\mu \cong V_{\varphi(\mu)}$.
\end{defi}

Informally, $\Sigma'$ is a refinement of $\Sigma$ if it is obtained from it by adding more boundary components to the successive strata. It is then easy to prove the following.

\begin{prop} \label{proprefine} Let $E, F \longrightarrow X$ be vector bundles, and let $\Sigma$ be a stratification adapted to $E$. There exists a stratification $\Sigma'$, refining $\Sigma$, and adapted to both $E$ and $F$.
\end{prop}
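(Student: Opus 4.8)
The plan is to argue by induction on $n = \dim X$, working throughout --- as the definitions of stratum and stratification already permit --- with possibly disconnected normal projective schemes all of whose irreducible components have the same dimension; the existence proposition quoted above is valid in this generality, by applying it to each component. The base case $n = 0$ is immediate: every bundle over a finite set of reduced points is trivial, and the only stratification is the empty one.

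For the inductive step, write $\Sigma \equiv X_0 \overset{f_0}{\to} \cdots \overset{f_{n-1}}{\to} X_n = X$ with $\mathbf e$ a trivialization of $E$ over it, and let $f_{n-1}$ factor through the reduced Weil divisor $D_n$, so that $E$ is trivial on the dense affine open $U_n = X \setminus \mathrm{Supp}(D_n) = X \setminus f_{n-1}(X_{n-1})$. As $U_n$ is affine, $F|_{U_n}$ is the sheaf attached to a finitely generated projective module over $R := \Gamma(U_n, \mathcal O)$, which --- the generic fibre being free --- becomes free after inverting some nonzero $f \in R$ (take $f = 1$ if $F|_{U_n}$ is itself free). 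By Krull's principal ideal theorem the zero locus $V(f) \subseteq U_n$ is empty or of pure codimension $1$; since $X$ is irreducible, taking Zariski closure preserves pure codimension $1$, so $\mathrm{Supp}(D_n) \cup \overline{V(f)}$ is the support of a reduced Weil divisor $D_n'$, and one checks that $U_n' := X \setminus \mathrm{Supp}(D_n')$ coincides with the principal open $(U_n)_f$: it is affine and dense, and both $E$ and $F$ are trivial on it. Writing $D_n' = D_n + W_1 + \cdots + W_p$ with $W_1, \dots, W_p$ the components of $D_n'$ not occurring in $D_n$, set
\[
  X_{n-1}' \;:=\; X_{n-1} \sqcup \widetilde W_1 \sqcup \cdots \sqcup \widetilde W_p ,
\]
each $\widetilde W_j \to W_j$ being the normalization. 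The induced morphism $f_{n-1}' : X_{n-1}' \to X$ is proper, birational onto $D_n'$, and matches the components of $X_{n-1}'$ with those of $D_n'$ pairwise, hence is a stratum of $X$ (note that $X_{n-1}'$ is normal: $X_{n-1}$ is, being part of $\Sigma$, and each $\widetilde W_j$ is, being a normalization); and the inclusion $X_{n-1} \hookrightarrow X_{n-1}'$ exhibits $f_{n-1}'$ as a refinement of $f_{n-1}$ in the sense of Definition \ref{defirefinement}.

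It remains to continue $X_{n-1}'$ to a stratification that refines $\Sigma$ and is adapted to $E$ and $F$, which we do componentwise, using $\dim X_{n-1}' = n - 1$. Over (each component of) $X_{n-1}$, the truncation $X_0 \to \cdots \to X_{n-1}$ of $\Sigma$ is a stratification adapted to $q_{n-1}^\ast E$ (restrict $\mathbf e$), so by the inductive hypothesis it has a refinement $\Sigma''$ adapted to both $q_{n-1}^\ast E$ and $q_{n-1}^\ast F$. Over each $\widetilde W_j$, the generalized existence proposition gives a stratification adapted to the pullback of $E$, which the inductive hypothesis then refines to one adapted to the pullbacks of $E$ and $F$. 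Attaching these stratifications below $X_{n-1}'$ produces a stratification $\Sigma'$ of $X$; the tree embedding sending root to root and the subtree of $\Sigma$ below $X_{n-1}$ into that of $\Sigma''$ shows that $\Sigma'$ refines $\Sigma$. Finally $\Sigma'$ is adapted to $E$ (use the trivialization of $E$ on $U_n'$ at the top vertex, and the trivializations carried by $\Sigma''$ and by the stratifications of the $\widetilde W_j$ below it) and, in exactly the same way, to $F$; this closes the induction.

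The argument is essentially bookkeeping organized by the tree formalism of Remark \ref{remtree}; the only substantive point, and the one I expect to be the main (if modest) obstacle to write cleanly, is the reduction from $U_n$ to the smaller affine $(U_n)_f$ without losing the property that the complement is the support of a divisor --- this is Krull's Hauptidealsatz together with the stability of pure codimension $1$ under Zariski closure in an irreducible variety. Note also that, since Proposition \ref{proprefine} asks only for a stratification refinement adapted to $E$ and $F$ --- not for a \emph{trivialized} refinement --- no compatibility of the trivialization $\mathbf e$ under the refinement needs to be tracked.
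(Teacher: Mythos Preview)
Your proof is correct and supplies the details that the paper omits entirely: the paper states only that the proposition is ``easy to prove'' and gives no argument. Your induction on $\dim X$, shrinking $U_n$ to a principal open $(U_n)_f$ on which $F$ also trivializes (with Krull's Hauptidealsatz ensuring the complement remains divisorial), and then recursing on the enlarged stratum $X_{n-1}'$, is exactly the kind of bookkeeping the tree formalism of Remark~\ref{remtree} is designed to organize.

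One small expositional point: you announce that you work throughout with possibly disconnected $X$ (which is indeed necessary for the induction, since $X_{n-1}$ need not be connected), but then invoke ``since $X$ is irreducible'' in the middle of the inductive step. This is harmless --- you clearly intend to treat each connected component separately, and each is irreducible by the standing hypothesis --- but it would read more cleanly if you said so explicitly at the top of the inductive step before invoking integrality of $R$.
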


\subsection{Truncated first Chern classes} \label{sectruncatedchern}

We now define a cycle group using the data of a stratification on $X$. We choose to use rational coefficients: this will be well suited to prove general Morse inequalities holding also for $\mathbb Q$-line bundles.

\begin{defi} \label{deficyclegroup} Let $\Sigma \equiv X_0 \overset{f_0}{\longrightarrow} X_1  ... \overset{f_n}{\longrightarrow} X_n = X $ be a stratification of $X$. For each $k$ ($0 \leq k \leq n$), the \emph{$k$-th cycle group} of $\Sigma$ is the free abelian group
$$
	Z_k^\Sigma(X)_{\mathbb Q} = \bigoplus_{V \subseteq X_k} \mathbb Q \cdot [V],
$$
	where $V$ runs among the connected components of $X_k$. The \emph{total cycle group} of $\Sigma$ is the direct sum $Z_\bullet^\Sigma (X)_{\mathbb Q} = \bigoplus_{0 \leq k \leq n} Z_k^\Sigma(X)_{\mathbb Q}$.
\end{defi}

Note that for any stratification $\Sigma$, there are natural maps $Z_k^\Sigma(X)_{\mathbb Q} \overset{\rho_k}{\longrightarrow} Z_k(X)_{\mathbb Q}$, induced by the morphisms $f_i$ appearing in the stratification $\Sigma$. Here $Z_k(X)_{\mathbb Q}$ denotes the $k$-th cycle group of $X$ (see e.g. \cite{fulton98}).

\medskip

For any $\mathbb Q$-line bundle $L \longrightarrow X$, and any trivialized stratification $\underline{\Sigma} = (\Sigma, \frac{1}{d} \mathbf e)$ adapted to $L$, we will now construct a \emph{truncated first Chern class} as a particular endomorphism of $Z_\bullet^\Sigma(X)_{\mathbb Q}$. Let us first describe the elementary step of this construction.
\medskip

\begin{bfseries}Elementary step.\end{bfseries} Let $Y$ be a complex projective variety of dimension $n$, and let $Y_1 \overset{f_1}{\longrightarrow} Y$ be a stratum. Let $L$ be a $\mathbb Q$-line bundle over $Y$. Assume that a power $L^{\otimes d}$ is a standard line bundle admitting a trivialization $e$ on the open affine subset $U_1 = Y \setminus f_1(Y_1)$. Then $e$ can be seen as a meromorphic section of $L^{\otimes d}$ on $Y$: as such, it has a well defined multiplicity $m_i \in \mathbb Z$ at the generic point of any component $D_i$ of the boundary divisor underlied by $Y \setminus U_1$. Remark that by definition of the first Chern class, we have:
\begin{equation} \label{eqdefimult}
	c_1(L) \cap [Y] = \sum_i \frac{m_i}{d} [D_i] \hspace{0.5cm}  \text{in} \; A_{n-1}(Y)_{\mathbb Q}
\end{equation}
where $A_{n-1}(Y)_{\mathbb Q}$ denotes the $(n-1)$-th Chow group of $Y$ with rational coefficients (see e.g. \cite{fulton98}).

For each $i$, we let $V_i \subseteq Y_1$ be the unique connected component such that $f_1(V_i) = D_i$ (see Definition \ref{defistratum}, \eqref{itembij}). Let $l \in \left\{0, 1 \right\}$. We define a $(n-1)$-cycle in $Y_1$, as follows
\begin{equation} \label{eqchernelem}
	c_1(L, f_1)_{[l]} \cap [X] = \sum_{(-1)^l m_i > 0} \frac{m_i}{d} [V_i] \hspace{0.5cm} \text{in} \; Z_{n-1}(Y_1)_{\mathbb Q}.
\end{equation}
The previous sum is designed so that $V_i$ runs among the connected components of $Y_1$ such that $m_i$ has the same sign as $(-1)^l$. 
\medskip

\begin{defi} \label{defistratfirstchern} Let $l \in \left\{0, 1 \right\}$. The \emph{truncated first Chern class of level $l$} of $(L, \underline{\Sigma})$ is the endomorphism $c_1(L, \underline{\Sigma})_{[l]}$ of $Z^{\Sigma}_{\bullet}(X)_{\mathbb Q}$, whose action on the pure cycles $[V] \in Z_\bullet^{\Sigma}(X)_{\mathbb Q}$ is defined as follows.
Write $\Sigma \equiv X_0 \overset{f_0}{\longrightarrow} ... \overset{f_{n-1}}{\longrightarrow} X_n = X$. Let $V$ be a connected component of $X_k$, and let $g_k : f_k^{-1}(V) \longrightarrow V$ be the morphism induced by $f_k$. We see immediately from Definition \ref{defistratum} that $g_k$ is a stratum of $V$. For $l \in \left\{0, 1\right\}$, we then define, following \eqref{eqchernelem}:
\begin{equation} \label{eqstratfirstchernclass}
	c_1(L, \underline{\Sigma})_{[l]} \cap [V] = c_1(L, g_k)_{[l]} \cap [V] \;\; \in \;\;  Z_{k-1}(f_k^{-1}(V))_{\mathbb Q}.
\end{equation}
	By Definition \ref{deficyclegroup}, $Z_{k-1}(f_k^{-1}(V))_{\mathbb Q}$ is a direct summand of $Z_{k-1}^{\Sigma}(X)_{\mathbb Q}$. This permits to see the above cycle as an element of $Z^{\Sigma}_{k-1}(X)_{\mathbb Q}$.

If $l \notin \left\{ 0, 1 \right\}$, we extend the definition by $c_1(L, \underline{\Sigma})_{[l]} = 0$.

\end{defi}

\begin{defi} \label{defitrunchigherpowers}
	Let $k$ and $l$ be integers. We let $c_1(L, \underline{\Sigma})^k_{[l]}$ be the endomorphism of $Z_\bullet^\Sigma(X)_{\mathbb Q}$ defined inductively as follows.
\begin{itemize}
\item If $k = 1$, we let $c_1(L, \underline{\Sigma})^k_{[l]} = c_1(L, \underline{\Sigma})_{[l]}$.

\item If $k > 1$, we let
\begin{equation} \label{eqinductform}
c_1(L, \underline{\Sigma})^k_{[l]} = c_1(L, \underline{\Sigma})^{k-1}_{[l]} \cdot c_1(L, \underline{\Sigma})_{[0]} + c_1(L, \underline{\Sigma})^{k-1}_{[l-1]} \cdot c_1(L, \underline{\Sigma})_{[1]}
\end{equation}
\end{itemize}
\end{defi}

Note that the definition above provides $c_1(L, \underline{\Sigma})^k_{[l]} = 0$ if $l \notin \llbracket 0, k \rrbracket$.
\medskip

The Morse inequalities of Theorem \ref{thmmorse} will be stated in terms of the truncated Chern classes, as in the following definition.

\begin{defi} \label{deficherntrunc} Let $l$ and $k$ be integers $(0 \leq l \leq k)$. The \emph{$l$-truncated $k$-th power of the first Chern class} of $(L, \underline{\Sigma})$ is the following endomorphism of $Z_\bullet^\Sigma(X)_{\mathbb Q}$:
$$
c_1(L, \underline{\Sigma})^k_{[\leq l]} = \sum_{j = 0}^l c_1(L, \underline{\Sigma})^k_{[j]}.
$$
\end{defi}

The terminology of \emph{truncated} power can be justified easily as shown by the following proposition, which is easy to show by induction on $k$.

\begin{prop} Let $\rho : Z_\bullet^\Sigma(X)_{\mathbb Q} \longrightarrow Z_\bullet(X)_{\mathbb Q}$ be the natural map. For any $k$ $(0 \leq k \leq n)$, the $(n-k)$-cycle
$$
	\rho \left( \; c_1(L, \underline{\Sigma})^k_{[\leq k]} \cap [X] \; \right) \; \; \in Z_{n-k}(X)_{\mathbb Q}
$$ 

	is a representative of the cycle class $c_1(L)^k \cap[X] \in A_{n-k}(X)_{\mathbb Q}$.

\end{prop}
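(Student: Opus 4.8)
The plan is an induction on $k$, run not only for $X$ but simultaneously for every variety attached to a node $\mu$ of the tree $\mathcal{T}$ of Remark~\ref{remtree} — each such $V_\mu$ carrying the sub-stratification given by the subtree below $\mu$, the restriction of the trivialization, and the pullback of $L$ along $q_\mu\colon V_\mu\to X$; the Proposition is the case where $\mu$ is the root. The case $k=0$ is trivial (reading $c_1(L,\underline{\Sigma})^0_{[\leq 0]}=\mathrm{id}$), and the case $k=1$ is a reformulation of the construction: for a variety $Y$ with stratum $f_1\colon Y_1\to Y$, Definition~\ref{defistratfirstchern} and the elementary step \eqref{eqchernelem} give $\bigl(c_1(L,\underline{\Sigma})_{[0]}+c_1(L,\underline{\Sigma})_{[1]}\bigr)\cap[Y]=\sum_i\frac{m_i}{d}[V_i]$, where $V_i$ is the component of $Y_1$ mapping birationally onto the boundary component $D_i$; applying $\rho$ replaces each $[V_i]$ by $[D_i]$, and \eqref{eqdefimult} says $\sum_i\frac{m_i}{d}[D_i]$ represents $c_1(L)\cap[Y]$.

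For the inductive step, I would first record a purely combinatorial simplification of Definitions~\ref{defitrunchigherpowers} and~\ref{deficherntrunc}: since $c_1(L,\underline{\Sigma})^{k-1}_{[j]}=0$ for $j\notin\llbracket 0,k-1\rrbracket$, summing the recursion \eqref{eqinductform} over $0\le l\le k$ telescopes to
\[
c_1(L,\underline{\Sigma})^k_{[\leq k]}\;=\;c_1(L,\underline{\Sigma})^{k-1}_{[\leq k-1]}\cdot\bigl(c_1(L,\underline{\Sigma})_{[0]}+c_1(L,\underline{\Sigma})_{[1]}\bigr)\;=\;c_1(L,\underline{\Sigma})^{k-1}_{[\leq k-1]}\cdot c_1(L,\underline{\Sigma})_{[\leq 1]}.
\]
Applying in such a product the rightmost factor first, the $k=1$ case gives $c_1(L,\underline{\Sigma})^k_{[\leq k]}\cap[X]=c_1(L,\underline{\Sigma})^{k-1}_{[\leq k-1]}\cap\bigl(\sum_i\frac{m_i}{d}[V_i]\bigr)$, where the $V_i$ are now the components of $X_{n-1}$. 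The next observation is that the operator $c_1(L,\underline{\Sigma})^{k-1}_{[\leq k-1]}$, restricted to the direct summand of $Z^\Sigma_\bullet(X)_{\mathbb{Q}}$ indexed by the subtree below a given $V_i$, agrees with the operator $c_1(L|_{V_i},\underline{\Sigma}|_{V_i})^{k-1}_{[\leq k-1]}$ attached to the stratum $V_i$; this is immediate from Definition~\ref{defistratfirstchern} since that construction is local to the subtree. Hence the induction hypothesis applies to each $(V_i,\underline{\Sigma}|_{V_i})$ with exponent $k-1$.

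It then remains to push forward correctly. Let $g_i\colon V_i\hookrightarrow X_{n-1}\xrightarrow{f_{n-1}}X$, which is birational onto $D_i$, satisfies $L|_{V_i}=g_i^{\ast}L$, and is compatible with the natural maps in that $\rho=(g_i)_{\ast}\circ\rho_{V_i}$ on the summand of $V_i$ (proper pushforward of cycles inducing the pushforward on Chow groups). The induction hypothesis says $\rho_{V_i}\bigl(c_1(L|_{V_i},\underline{\Sigma}|_{V_i})^{k-1}_{[\leq k-1]}\cap[V_i]\bigr)$ represents $c_1(g_i^{\ast}L)^{k-1}\cap[V_i]$; applying $(g_i)_{\ast}$ and iterating the projection formula for first Chern classes (see \cite{fulton98}), the cycle $\rho\bigl(c_1(L,\underline{\Sigma})^{k-1}_{[\leq k-1]}\cap[V_i]\bigr)$ represents $c_1(L)^{k-1}\cap[D_i]$, using $(g_i)_{\ast}[V_i]=[D_i]$. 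Summing over $i$ with weights $\frac{m_i}{d}$, the cycle $\rho\bigl(c_1(L,\underline{\Sigma})^k_{[\leq k]}\cap[X]\bigr)$ represents $c_1(L)^{k-1}\cap\bigl(\sum_i\frac{m_i}{d}[D_i]\bigr)$; since $\sum_i\frac{m_i}{d}[D_i]$ represents $c_1(L)\cap[X]$ by \eqref{eqdefimult} and since $c_1(L)^{k-1}\cap(-)$ descends to rational equivalence classes, this equals $c_1(L)^k\cap[X]$, closing the induction.

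The delicate part is not any single computation but the bookkeeping linking the tree combinatorics to Chow-theoretic functoriality: the truncated operators live on the groups $Z^\Sigma_\bullet(-)_{\mathbb{Q}}$ of genuine cycles — where one cannot intersect with $c_1$ of a line bundle — whereas the conclusion is a statement about classes, so each assertion that a cycle represents a given class must be transported through proper pushforward, which is legitimate because such pushforward factors through rational equivalence and is compatible with the morphisms of the stratification. A minor additional point is the passage to $\mathbb{Q}$-line bundles $L=N^{\otimes 1/d}$: the identity \eqref{eqdefimult}, the projection formula, and the compatibility of $c_1\cap(-)$ with rational equivalence all hold for the honest line bundle $N$ with $\mathbb{Z}$-coefficients and descend after dividing by $d$, so this introduces no new difficulty.
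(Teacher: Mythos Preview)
Your proof is correct and follows exactly the approach the paper indicates: it states only that the proposition ``is easy to show by induction on $k$'' without giving details, and your argument supplies precisely those details. The key identity $c_1(L,\underline{\Sigma})^k_{[\leq k]}=c_1(L,\underline{\Sigma})^{k-1}_{[\leq k-1]}\cdot c_1(L,\underline{\Sigma})_{[\leq 1]}$ obtained by telescoping \eqref{eqinductform}, together with the projection formula along the birational maps $g_i\colon V_i\to D_i$, is exactly what makes the induction go through.
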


As usual, we have a degree map that compute the total multiplicity of a $0$-cycle.

\begin{defi} Let $\rho : Z_\bullet^\Sigma (X)_{\mathbb Q} \longrightarrow Z_\bullet (X)_{\mathbb Q}$ be the natural map. The \emph{degree map}
$$
	\mathrm{deg} : Z_0^\Sigma(X)_{\mathbb Q} \longrightarrow \mathbb Q
$$
	is the composition $\mathrm{deg}_X \circ \rho$, where $\mathrm{deg}_X : Z_0(X)_{\mathbb Q} \longrightarrow \mathbb Q$ is the usual degree map, given the sum of all multiplicities on the components of a $0$-cycle.

\end{defi}

The next lemma provides a simple formula that will be prove useful in the upcoming discussion. Consider a trivialized stratification $\underline{\Sigma} = (\Sigma, \frac{1}{d} \mathbf e)$ for a $\mathbb Q$-line bundle $L$ on $X$, and let $X_{n-1}$ the strata of dimension $n-1$ appearing in $\Sigma$, with natural map $f : X_{n-1} \longrightarrow X$. Let $e$ be the trivialization of $L^{\otimes d}$ on $X \setminus f(X_{n-1})$ provided by $\mathbf e$. Let $(D_k)$ be the family of components of $X_{n-1}$, and $m_k$ be the multiplicities of $e$ along the images of these components by $f$. For each $k$, $\underline{\Sigma}$ induces by restriction a natural trivialized stratification on $D_k$, that we denote by $\underline{\Sigma}_k$. The following proposition tells us that the truncated intersection numbers can be computed inductively using the data of the $m_k$ and of the $\underline{\Sigma_k}$.

\begin{lem} \label{lemformulainduct} With the previous notations, denote by $q_k : D_k \longrightarrow X$ the natural maps. Then, we have, for any $l \in \llbracket 0 , n \rrbracket$:
$$
	\deg c_1(L, \underline{\Sigma})^n_{[\leq l]} = \sum_{\overset{k}{m_k > 0}} \frac{m_k}{d} \deg c_1(q_k^\ast L, \underline{\Sigma_k})^{n-1}_{[\leq l]} + \sum_{\overset{k}{m_k < 0}} \frac{m_k}{d} \deg c_1(q_k^\ast L, \underline{\Sigma_k})^{n-1}_{[\leq l - 1]}.
$$ 
\end{lem}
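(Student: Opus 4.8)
The plan is to reduce the identity to a single application of the recursion \eqref{eqinductform} followed by a harmless rearrangement of finite sums. The one fact I would set up first is a \emph{restriction compatibility} for the truncated Chern operators: if $V$ is a connected component of a stratum $X_k$ of $\Sigma$, with structure morphism $q_V:V\to X$, and if $\underline{\Sigma}|_V$ denotes the trivialized stratification induced on $V$ by $\underline{\Sigma}$ (the subtree of $\Sigma$ rooted at $V$ with the restricted trivializations, which is exactly $\underline{\Sigma_k}$ when $V=D_k$), then for all integers $p,l$ one has $c_1(L,\underline{\Sigma})^p_{[l]}\cap[V]=c_1(q_V^\ast L,\underline{\Sigma}|_V)^p_{[l]}\cap[V]$ inside $Z_{k-p}(\cdot)_{\mathbb Q}$, viewed as a common direct summand of $Z^\Sigma_\bullet(X)_{\mathbb Q}$ and of the cycle group of $V$. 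For $p=1$ this is built into the elementary step \eqref{eqchernelem}: the meromorphic section of $L^{\otimes d}$ on the affine open of $V$ used to read off the multiplicities $m_i$ is, by definition of $\mathbf e$ and of $\underline{\Sigma}|_V$, literally the same section on both sides, so the two cycles agree; the general case then follows by induction on $p$ from \eqref{eqinductform}, the two stratifications being identical below $V$. Summing over $j\le l$ gives the same identity for $c_1(\cdot)^p_{[\le l]}$.

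Next I would apply \eqref{eqinductform} once, with $k=n$, evaluated on $[X]$, so that for each $0\le j\le n$
\[
c_1(L,\underline{\Sigma})^n_{[j]}\cap[X]=c_1(L,\underline{\Sigma})^{n-1}_{[j]}\cap\big(c_1(L,\underline{\Sigma})_{[0]}\cap[X]\big)+c_1(L,\underline{\Sigma})^{n-1}_{[j-1]}\cap\big(c_1(L,\underline{\Sigma})_{[1]}\cap[X]\big).
\]
Since the top stratum of $\Sigma$ is $f:X_{n-1}\to X$, with boundary components $(D_k)$ and multiplicities $(m_k)$, the elementary step \eqref{eqchernelem}--\eqref{eqstratfirstchernclass} identifies $c_1(L,\underline{\Sigma})_{[0]}\cap[X]=\sum_{m_k>0}\tfrac{m_k}{d}[D_k]$ and $c_1(L,\underline{\Sigma})_{[1]}\cap[X]=\sum_{m_k<0}\tfrac{m_k}{d}[D_k]$ in $Z^\Sigma_{n-1}(X)_{\mathbb Q}$; substituting and using linearity rewrites $c_1(L,\underline{\Sigma})^n_{[j]}\cap[X]$ as $\sum_{m_k>0}\tfrac{m_k}{d}\,c_1(L,\underline{\Sigma})^{n-1}_{[j]}\cap[D_k]+\sum_{m_k<0}\tfrac{m_k}{d}\,c_1(L,\underline{\Sigma})^{n-1}_{[j-1]}\cap[D_k]$.

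Then I would sum over $0\le j\le l$ and interchange the two finite sums, using $\sum_{j=0}^l c_1(\cdot)^{n-1}_{[j]}=c_1(\cdot)^{n-1}_{[\le l]}$ and $\sum_{j=0}^l c_1(\cdot)^{n-1}_{[j-1]}=\sum_{j=0}^{l-1}c_1(\cdot)^{n-1}_{[j]}=c_1(\cdot)^{n-1}_{[\le l-1]}$ (the $j=0$ term dropping since $c_1(\cdot)^{n-1}_{[-1]}=0$), which yields
\[
c_1(L,\underline{\Sigma})^n_{[\le l]}\cap[X]=\sum_{m_k>0}\tfrac{m_k}{d}\,c_1(L,\underline{\Sigma})^{n-1}_{[\le l]}\cap[D_k]+\sum_{m_k<0}\tfrac{m_k}{d}\,c_1(L,\underline{\Sigma})^{n-1}_{[\le l-1]}\cap[D_k].
\]
Finally I would apply $\deg=\deg_X\circ\rho$; since $\rho$ is assembled from the (proper) stratum morphisms, the degree map is additive over the direct summands of $Z^\Sigma_\bullet(X)_{\mathbb Q}$ and compatible with the identifications above, so Step 1 (for $q_k=q_{D_k}$, $\underline{\Sigma_k}=\underline{\Sigma}|_{D_k}$) turns $\deg\big(c_1(L,\underline{\Sigma})^{n-1}_{[\le l]}\cap[D_k]\big)$ into $\deg c_1(q_k^\ast L,\underline{\Sigma_k})^{n-1}_{[\le l]}$, and likewise with $l-1$. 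This is exactly the asserted formula.

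The routine bookkeeping (the sum interchange and the re-indexing $j\mapsto j-1$) is entirely mechanical; the only point I expect to need real care is the restriction-compatibility claim — that the truncated Chern endomorphism of a sub-stratification is the restriction of the one of the whole stratification. This is ``true by construction'', but it requires running the induction on $p$ cleanly and, in parallel, checking that the direct-summand embeddings of the cycle groups are compatible with $\rho$ and with the degree map, which is where properness of the stratum morphisms is used.
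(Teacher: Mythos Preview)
Your proof is correct and follows exactly the approach the paper indicates: the paper simply states that the lemma is ``straightforward using the formula \eqref{eqinductform} and Definition \ref{deficherntrunc}'' without giving further details, and your argument spells out precisely this computation---one application of the recursion \eqref{eqinductform} at level $n$, followed by summing over $j\le l$ and reading off the degree. Your careful handling of the restriction-compatibility point is a welcome addition, since the paper leaves it implicit.
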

The proof of Lemma \ref{lemformulainduct} is straightforward using the formula \eqref{eqinductform} and Definition \ref{deficherntrunc}.
\medskip

Keeping the same notations as above, let $\mathcal T$ be the tree associated to $\Sigma$ (see Remark \ref{remtree}). The trivialization $\frac{1}{d}\mathbf{e}$ provides us with a marking of all edges of $\mathcal T$ by a rational number, as follows. If $D_k$ is an irreducible component of $X_{n-1}$, labeling a vertex $\mathbf{v}_k \in \mathcal T$, we mark the edge $\mathbf{v}_k \longrightarrow \mathbf{r}$ by $\mu_k = \frac{m_k}{d}$ (where $\mathbf{r}$ is the root of $\mathcal T$). Then, we mark inductively the edges of all the trees based at the $\mathbf{v}_k$, using the trivialized stratifications $\underline{\Sigma}_k$ on each $D_k$.

\begin{defi} Let $\sigma$ be a complete path in $\mathcal T$, i.e. a path leading from the root $\mathbf{r}$ to one of the leaves of $\mathcal T$. We say that $\sigma$ is a path of \emph{index} $l$, if there are \emph{exactly} $l$ negative markings on the edges of $\sigma$.
\end{defi}

Then, by Lemma \ref{lemformulainduct}, and the construction of the markings on $\mathcal T$, the following proposition is straightforward.

\begin{prop} \label{propgraph} Let $\underline{\Sigma} = (\Sigma, \frac{1}{d} \mathbf{e})$ be as above, and let $\mathcal T$ be the tree associated to $\Sigma$, with the markings provided by the trivialization $\frac{1}{d} \mathbf{e}$. For any complete path in $\mathcal T$, let $C_\sigma$ denote the product of the markings along the edges of $\sigma$. Then, for all $l$, we have
$$
\deg c_1(L, \underline{\Sigma})^n_{[l]} = \sum_{\mathrm{index}(\sigma) = l} C_\sigma,
$$
where, in the sum above, $\sigma$ runs among all complete paths of $\mathcal T$ of index $l$.
\end{prop}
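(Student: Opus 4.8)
The plan is to prove the identity by induction on $n=\dim X$, at each step peeling off the top edges of the tree $\mathcal T$ (those issuing from the root) and matching this decomposition with the recursion governing the truncated Chern classes, which is exactly what Lemma \ref{lemformulainduct} records. The base case $n=0$ is immediate: $\mathcal T$ is a single vertex, its unique complete path $\sigma$ is empty, so $\mathrm{index}(\sigma)=0$ and $C_\sigma$ is the empty product $1$; on the other side $c_1(L,\underline\Sigma)^0_{[0]}$ is the identity (the convention making \eqref{eqinductform} consistent at $k=1$) and $c_1(L,\underline\Sigma)^0_{[l]}=0$ for $l\neq 0$, so $\deg c_1(L,\underline\Sigma)^0_{[0]}\cap[X]=1$ and both sides agree for every $l$. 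One could equally well take $n=1$ as the base case, where the claim is a direct restatement of \eqref{eqchernelem}.

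For the inductive step I would keep the notation of Lemma \ref{lemformulainduct}: $f:X_{n-1}\to X$ is the codimension-one stratum, $(D_k)$ its irreducible components, $m_k\in\mathbb Z$ the multiplicity of $e$ along the image of $D_k$, $q_k:D_k\to X$ the natural map, and $\underline\Sigma_k$ the trivialized stratification induced by $\underline\Sigma$ on $D_k$. First I would record the level-$l$ form of Lemma \ref{lemformulainduct}: since $c_1(L,\underline\Sigma)^n_{[l]}=c_1(L,\underline\Sigma)^n_{[\leq l]}-c_1(L,\underline\Sigma)^n_{[\leq l-1]}$, subtracting the formula of that lemma at levels $l$ and $l-1$ gives
$$
\deg c_1(L,\underline\Sigma)^n_{[l]}=\sum_{\substack{k\\ m_k>0}}\frac{m_k}{d}\,\deg c_1(q_k^\ast L,\underline\Sigma_k)^{n-1}_{[l]}+\sum_{\substack{k\\ m_k<0}}\frac{m_k}{d}\,\deg c_1(q_k^\ast L,\underline\Sigma_k)^{n-1}_{[l-1]}.
$$
Next I would decompose the complete paths of $\mathcal T$. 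By Remark \ref{remtree} the children of the root $\mathbf r$ are the vertices $\mathbf v_k$ labelled by the $D_k$; by construction the edge $\mathbf v_k\to\mathbf r$ carries the marking $m_k/d$, and the subtree of $\mathcal T$ rooted at $\mathbf v_k$, together with its markings, is precisely the marked tree attached to $\underline\Sigma_k$ on $D_k$. Hence every complete path $\sigma$ of $\mathcal T$ is the concatenation of the edge $\mathbf r\to\mathbf v_k$, for a unique $k$, with a complete path $\sigma'$ of $\mathcal T_k$, and then $C_\sigma=\tfrac{m_k}{d}\,C_{\sigma'}$ while $\mathrm{index}(\sigma)=\mathrm{index}(\sigma')$ if $m_k>0$ and $\mathrm{index}(\sigma)=\mathrm{index}(\sigma')+1$ if $m_k<0$. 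Summing $C_\sigma$ over all complete paths of index $l$, grouped according to the first vertex $\mathbf v_k$, and then applying the induction hypothesis on each $D_k$ (which has dimension $n-1$) to rewrite $\sum_{\mathrm{index}(\sigma')=j}C_{\sigma'}=\deg c_1(q_k^\ast L,\underline\Sigma_k)^{n-1}_{[j]}$, I recover exactly the displayed level-$l$ formula, which closes the induction.

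The only points that require genuine care — and the nearest thing to an obstacle — are bookkeeping: one must check that the restricted data $\underline\Sigma_k$ is legitimately a trivialized stratification for $q_k^\ast L$, written with the same denominator $d$, so that the induction hypothesis applies to it without modification; and one must account for boundary components with $m_k=0$, which contribute $C_\sigma=0$ on the path side and are simply absent from the sums on the Chern-class side, hence can be discarded. An alternative to invoking Lemma \ref{lemformulainduct} would be to unwind the iterated recursion \eqref{eqinductform} directly into a sum over root-to-leaf paths, but routing the argument through the lemma keeps the computation cleanest. With these points understood, the proof is a direct unwinding of Definition \ref{deficherntrunc}, the recursion \eqref{eqinductform}, and Lemma \ref{lemformulainduct}.
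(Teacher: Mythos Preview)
Your proof is correct and follows essentially the same approach as the paper: the paper simply states that the proposition is ``straightforward'' from Lemma \ref{lemformulainduct} and the construction of the markings on $\mathcal T$, and you have supplied precisely the induction-on-$n$ details that this remark encodes. Your level-$l$ version of Lemma \ref{lemformulainduct} (obtained by subtraction), the root-edge decomposition of complete paths, and the handling of the $m_k=0$ case are all correct and match what the paper leaves implicit.
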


Let us finish this section with a proposition showing that we can use arbitrarily refinements of a stratification $\Sigma$ to compute the truncated Chern classes.

\begin{prop} \label{proprefine2} Let $\underline{\Sigma} = (\Sigma, \frac{1}{d} \mathbf{e})$ be a trivialized stratification, and let $\underline{\Sigma'} = (\Sigma, \frac{1}{d} \mathbf{e'})$ be a refinement of $\underline{\Sigma}$. Then, for any $l$, we have
$$
\deg c_1(L, \underline{\Sigma})^n_{[l]} = \deg c_1(L, \underline{\Sigma}')^n_{[l]}.
$$  
\end{prop}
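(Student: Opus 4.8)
The plan is to compute both sides with the graph formula of Proposition \ref{propgraph} and to show that the extra edges produced by the refinement all carry the marking $0$, hence contribute nothing to the relevant sums of path–products. Let $\mathcal T$ and $\mathcal T'$ be the trees attached to $\Sigma$ and $\Sigma'$, equipped with the markings induced by $\frac1d\mathbf e$ and $\frac1d\mathbf e'$ respectively, and let $\varphi:\mathcal T\hookrightarrow\mathcal T'$ be the embedding of trees coming from the refinement (Definition \ref{defirefinement}), so that $V_{\varphi(\mu)}\cong V_\mu$ and $c_{\varphi(\mu)}$ is a refinement of $c_\mu$ for every node $\mu\in\mathcal T$. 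I would first record the elementary observation that $\varphi$ sends leaves to leaves (if $V_\mu$ is a point then so is $V_{\varphi(\mu)}$, which then admits no non-trivial stratum), so that $\sigma\mapsto\varphi(\sigma)$ is a bijection between complete paths of $\mathcal T$ and complete paths of $\mathcal T'$ whose vertices all lie in $\varphi(\mathcal T)$.

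The heart of the argument is the behaviour of the markings. Fix $\mu\in\mathcal T$ and write $V=V_\mu\cong V_{\varphi(\mu)}$; by Definition \ref{defistratum}\eqref{itembij} the children of $\mu$ (resp.\ of $\varphi(\mu)$) correspond bijectively to the irreducible components of the boundary divisor $D$ (resp.\ $D'$) of the stratum $c_\mu$ (resp.\ $c_{\varphi(\mu)}$), where $D'\supseteq D$ and, by the last clause of Definition \ref{defirefinement}, the trivialization $e$ of $L^{\otimes d}|_V$ used at $\mu$ is carried to the one used at $\varphi(\mu)$ under $V_\mu\cong V_{\varphi(\mu)}$. For an ``old'' child $\varphi(\nu)$ with $\nu\in C_\mu$, the corresponding component of $D'$ is identified with the one attached to $\nu$ in $D$, and since the underlying meromorphic section is the same, the multiplicity of $e$ along it, hence the marking of the edge $\varphi(\nu)\to\varphi(\mu)$, equals the marking of $\nu\to\mu$. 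For a ``new'' child (a vertex of $C_{\varphi(\mu)}\setminus\varphi(C_\mu)$), the corresponding component $D'_j$ of $D'$ is an irreducible divisor that is \emph{not} a component of $D$; its generic point therefore lies in $V\setminus\mathrm{Supp}(D)$, where $e$ is an honest, nowhere vanishing trivialization of $L^{\otimes d}|_V$. Consequently $e$ has multiplicity $0$ along $D'_j$, and the edge joining this new child to $\varphi(\mu)$ is marked by $0$. Now let $\sigma'$ be any complete path of $\mathcal T'$: either all its vertices lie in $\varphi(\mathcal T)$, in which case $\sigma'=\varphi(\sigma)$ for a unique complete path $\sigma$ of $\mathcal T$, with the same markings along corresponding edges, so $C_{\sigma'}=C_\sigma$ and $\mathrm{index}(\sigma')=\mathrm{index}(\sigma)$; or $\sigma'$ uses at least one edge from a node $\varphi(\mu)$ to a new child, which is marked $0$, so that $C_{\sigma'}=0$. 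Discarding the latter paths, Proposition \ref{propgraph} yields, for every $l$,
\begin{align*}
\deg c_1(L,\underline{\Sigma'})^n_{[l]}
&=\sum_{\mathrm{index}(\sigma')=l}C_{\sigma'}
 =\sum_{\mathrm{index}(\varphi(\sigma))=l}C_{\varphi(\sigma)}\\
&=\sum_{\mathrm{index}(\sigma)=l}C_\sigma
 =\deg c_1(L,\underline{\Sigma})^n_{[l]},
\end{align*}
which is the claim.

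The only genuinely delicate point is the assertion that a newly added boundary component has multiplicity $0$: this combines the compatibility of trivializations built into the notion of a refinement (Definition \ref{defirefinement}) with the basic fact that the complement of the old boundary divisor is precisely the locus where the chosen trivialization is invertible; everything else is bookkeeping with the tree of Remark \ref{remtree}. Alternatively, the same statement can be obtained by induction on $n=\dim X$ using Lemma \ref{lemformulainduct}: the terms indexed by the new $(n-1)$-dimensional strata drop out because their multiplicities vanish, the remaining terms match those for $\underline{\Sigma}$ term by term, and one applies the inductive hypothesis to the restricted trivialized stratifications $\underline{\Sigma}_k$ and $\underline{\Sigma'}_k$, which are again refinements of one another; passing from $[\le l]$ to $[l]$ by subtraction then concludes.
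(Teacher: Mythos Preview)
Your proof is correct and follows essentially the same approach as the paper: both use the tree embedding $\varphi:\mathcal T\hookrightarrow\mathcal T'$ from Definition \ref{defirefinement}, argue that any edge of $\mathcal T'$ leaving $\varphi(\mathcal T)$ (necessarily from some $\varphi(\mu)$ to a new child) carries the marking $0$ because the trivialization is invertible near the generic point of the new component, and conclude via Proposition \ref{propgraph}. Your write-up is slightly more explicit (e.g.\ the observation that $\varphi$ preserves leaves, the equality of indices), and the alternative inductive argument via Lemma \ref{lemformulainduct} is a valid variant not present in the paper.
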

\begin{proof}
With the notations of Definition \ref{defirefinement}, we have an embedding of trees $\varphi : \mathcal T \longrightarrow \mathcal T'$ preserving the markings. Let $\mathbf{v} \overset{\mathfrak{s}}{\longrightarrow} \mathbf{w}$ be an edge of $\mathcal T'$ which does not belong to $\varphi(\mathcal T)$, and satisfying $\mathbf{w} \in \mathcal T$. We show that $\mathfrak{s}$ is given the multiplicity $0$ for the marking associated to $\frac{1}{d} \mathbf{e'}$. Indeed, if $V$ (resp. $W$) is the irreducible variety labeling $\mathbf{v}$ (resp. $\mathbf{w}$), then $V$ does not appear among the irreducible components of the stratum of $W$ given by $\Sigma$. Thus, if $e$ is the trivialization of $L^{\otimes d}$ given by $\mathbf{e}$ near the generic point of $W$, then $e$ must be invertible near the generic point of $\mathrm{Im}(V \longrightarrow W)$. This shows that $\mathfrak{s}$ is marked with $0$.  Hence, if $\sigma$ is a complete path not included in $\varphi(\mathcal T)$, it has one edge $\mathfrak{s}$ as above, and we have $C_\sigma = 0$. Hence, this path does not contribute to the sum defining $\deg c_1(L, \underline{\Sigma}')^n_{[l]}$ as in Proposition \ref{propgraph}. This ends the proof.
\end{proof}

\subsection{Morse inequalities} \label{sectmorseineq}

As before, let $X$ be a normal complex projective variety of dimension $n$.

\begin{defi} Let $\mathcal F$ be a coherent sheaf on $X$, and let $l$ be an integer ($0 \leq l \leq n$). The \emph{$l$-th truncated} Euler characteristic of $\mathcal F$ is the integer
$$
\chi^{[l]} (X, \mathcal F) = \sum_{j = 0}^l (-1)^{j+l} h^j(X, \mathcal F).
$$
\end{defi}

Note that the top $h^j$ appearing in this definition comes with a positive sign. We are now in position to state and prove the following algebraic Morse inequalities.

\begin{thm}[Morse inequalities] \label{thmmorse} Let $L$ be a $\mathbb Q$-line bundle on $X$, and let $\underline{\Sigma} = (\Sigma, \frac{1}{d} \mathbf e)$ be a trivialized stratification for $L$. Let $M$ be another line bundle on $X$. Then, for each integer $i$, and for any $m$ divisible by $d$, we have
	\medskip

\begin{enumerate}[(i)]
\item (Strong Morse inequalities) 
$$
\chi^{[i]} (X, M \otimes L^{\otimes m}) \leq (-1)^i \left( \deg c_1(L, \underline{\Sigma})^n_{[\leq i]} \right) \frac{m^n}{n!} + O(m^{n-1})
$$

\item (Weak Morse inequalities) 
$$
h^{i} (X, M \otimes L^{\otimes m}) \leq (-1)^i \left( \deg c_1(L, \underline{\Sigma})^n_{[i]} \right) \frac{m^n}{n!} + O(m^{n-1})
$$

\item (Asymptotic Riemann-Roch formula)
$$
\chi(X, M \otimes L^{\otimes m}) = \left( \deg c_1(L, \underline{\Sigma})^n_{[\leq n]} \right) \frac{m^n}{n!} + O(m^{n-1})
$$
\end{enumerate}
\end{thm}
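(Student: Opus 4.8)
The plan is to prove the three statements simultaneously by induction on $n=\dim X$, the strong inequalities (i) being the main point and (ii), (iii) formal consequences. First a reduction: since the assertions only concern $m$ divisible by $d$, write $m=dm'$ and $L^{\otimes m}=N^{\otimes m'}$ with $N=L^{\otimes d}$ an honest line bundle carrying the trivialized stratification $(\Sigma,\mathbf e)$; as each elementary step of Definition~\ref{defistratfirstchern} rescales multiplicities by $1/d$ we have $\deg c_1(L,\underline\Sigma)^n_{[\le i]}=d^{-n}\deg c_1(N,(\Sigma,\mathbf e))^n_{[\le i]}$, and $(m')^n=m^n/d^n$, so the statement for $(L,m)$ reduces to that for $(N,m')$; thus we may assume $d=1$. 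The base case $n=0$ is trivial ($X$ a point: $h^0(M\otimes L^{m})=1$, $h^{>0}=0$, $\deg c_1(L,\underline\Sigma)^0_{[\le i]}=1$ for $i\ge0$).

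Assume now $n\ge1$ and the theorem in dimension $<n$. Let $f:X_{n-1}\to X$ be the top stratum, factoring through the reduced divisor $D_n=\bigcup_kE_k$; the trivialization gives a rational section $e$ of $L$, regular and invertible on $U_n=X\setminus\mathrm{Supp}(D_n)$, with $\mathrm{div}(e)=\sum_km_kE_k$. Set $D^+=\sum_{m_k>0}m_kE_k$ and $D^-=\sum_{m_k<0}(-m_k)E_k$. Let $V_k\subseteq X_{n-1}$ be the component dominating $E_k$, with $q_k:V_k\to X$; then $q_k^*L$ is a line bundle on the normal projective variety $V_k$ of dimension $n-1$, carrying the induced trivialized stratification $\underline\Sigma_k$ of Lemma~\ref{lemformulainduct}. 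The cohomological input is two inequalities, valid for any short exact sequence $0\to\mathcal F'\to\mathcal F\to\mathcal F''\to0$ of coherent sheaves and obtained by a routine chase of the long exact cohomology sequence:
$$
\chi^{[i]}(\mathcal F)\le\chi^{[i]}(\mathcal F')+\chi^{[i]}(\mathcal F''),
\qquad
\chi^{[i]}(\mathcal F')\le\chi^{[i]}(\mathcal F)+\chi^{[i-1]}(\mathcal F'').
$$
The second, in which the truncation level on the quotient drops by one, is the crucial point: it is what makes the \emph{negative} components of $\mathrm{div}(e)$ controllable, a naive peeling producing only an inequality in the wrong direction.

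Let $\mathcal G_j\subseteq M\otimes L^{\otimes j}$ be the subsheaf of sections vanishing along $D^+$; via $e$ this is identified with the subsheaf of $M\otimes L^{\otimes(j-1)}$ of sections vanishing along $D^-$. Applying the first inequality to $0\to\mathcal G_j\to M\otimes L^{\otimes j}\to(M\otimes L^{\otimes j})|_{D^+}\to0$ and the second to $0\to\mathcal G_j\to M\otimes L^{\otimes(j-1)}\to(M\otimes L^{\otimes(j-1)})|_{D^-}\to0$, then adding, gives for each $j\ge1$
$$
\chi^{[i]}(M\otimes L^{\otimes j})\le\chi^{[i]}(M\otimes L^{\otimes(j-1)})+\chi^{[i]}\big((M\otimes L^{\otimes j})|_{D^+}\big)+\chi^{[i-1]}\big((M\otimes L^{\otimes(j-1)})|_{D^-}\big),
$$
and telescoping from $j=m$ to $j=1$ bounds $\chi^{[i]}(M\otimes L^{\otimes m})$ by $\chi^{[i]}(M)=O(1)$ plus the two sums $\sum_{j=1}^m$ of the last two terms. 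For each $j$ a filtration of the non-reduced divisor $D^+$ exhibits $(M\otimes L^{\otimes j})|_{D^+}$ as built from $m_k$ graded pieces on each $E_k$ $(m_k>0)$, each a fixed twist of $(M\otimes L^{\otimes j})|_{E_k}$; passing to the normalization $V_k$ alters each $\chi^{[i]}$ by $O(j^{n-2})$, and the inductive form of (i) on $V_k$ bounds the resulting term by $(-1)^i\deg c_1(q_k^*L,\underline\Sigma_k)^{n-1}_{[\le i]}\,\tfrac{j^{n-1}}{(n-1)!}+O(j^{n-2})$; the same with $i$ replaced by $i-1$ handles $D^-$. Using $\sum_{j=1}^mj^{n-1}=\tfrac{m^n}{n}+O(m^{n-1})$ and $\tfrac1{(n-1)!\,n}=\tfrac1{n!}$, and noting that reassembling the positive- and negative-multiplicity contributions is exactly the recursion of Lemma~\ref{lemformulainduct}, which turns $\sum_{m_k>0}m_k\deg c_1(q_k^*L,\underline\Sigma_k)^{n-1}_{[\le i]}+\sum_{m_k<0}m_k\deg c_1(q_k^*L,\underline\Sigma_k)^{n-1}_{[\le i-1]}$ into $\deg c_1(L,\underline\Sigma)^n_{[\le i]}$, yields (i). Then (ii) follows from $h^i=\chi^{[i]}+\chi^{[i-1]}$ and $\deg c_1(L,\underline\Sigma)^n_{[\le i]}-\deg c_1(L,\underline\Sigma)^n_{[\le i-1]}=\deg c_1(L,\underline\Sigma)^n_{[i]}$ (Definition~\ref{deficherntrunc}), and (iii) from classical asymptotic Riemann--Roch together with $\deg c_1(L,\underline\Sigma)^n_{[\le n]}=(L^n)$ (equivalently, re-run the telescope with the additive Euler characteristic in place of $\chi^{[i]}$).

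I expect the delicate points to be: first, arranging the cohomological bookkeeping so that the two distinct truncation levels appear and keeping straight which sheaf plays the role of sub/middle/quotient at each step — the level-dropping second inequality being indispensable for the negative part; and second, that for $X$ merely normal the $D^\pm$ need not be Cartier and the $E_k$ need not be normal, so the filtrations of $\mathcal O_{D^\pm}$ and their conormal twists behave well only in codimension zero, forcing one to pass systematically to the normalizations $V_k$ and to check that every discrepancy so introduced is absorbed into the $O(m^{n-1})$ error term.
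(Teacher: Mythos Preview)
Your inductive strategy --- telescoping via the two truncated-$\chi$ inequalities for a short exact sequence, then peeling off the multiplicities by filtering $\mathcal O_{D^\pm}$ and invoking the inductive bound on each component, and finally reassembling via Lemma~\ref{lemformulainduct} --- is exactly the skeleton of the paper's proof. Two points of comparison are worth recording.

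First, your reduction to $d=1$ (replace $L$ by $N=L^{\otimes d}$ and $m$ by $m/d$, using the $d^{-n}$-homogeneity of the truncated Chern numbers visible from Proposition~\ref{propgraph}) is a genuine simplification. The paper does \emph{not} do this: it carries the fractional trivialization through the induction, which forces it in Step~3 to build a cyclic $d$-cover of $X'$ on which $p^\ast e$ acquires a $d$-th root, and then separately (Section~\ref{sectblochgieseker}) to invoke Bloch--Gieseker coverings for the honestly $\mathbb Q$-line-bundle case. Your observation that both complications disappear after reindexing is correct and buys a considerably cleaner argument.

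Second, you work directly on the normal variety $X$, whereas the paper first passes (Step~2) to a smooth model on which the pullback of the boundary has simple normal crossing support; for this reason the paper must strengthen the inductive statement to Proposition~\ref{propredmorse}, allowing an auxiliary generically finite morphism $p:X'\to X$ and carrying the factor $\deg p$ through. Your final paragraph correctly identifies the cost of staying on $X$: since $D^\pm$ are only Weil, the graded pieces of the filtration of $\mathcal O_X/\mathcal O_X(-D^+)$ are coherent sheaves on the $E_k$ that are merely generically rank one, not line bundles, and comparing their $\chi^{[i]}$ (twisted by $L^{\otimes j}$) to the corresponding quantity on the normal strata $V_k$ up to $O(j^{n-2})$ is \emph{not} covered by Lemma~\ref{lemmodification} as stated --- that lemma only compares pullbacks of line bundles. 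This is a real gap, not just bookkeeping: making it precise essentially forces you either to resolve $X$ (landing back in the paper's framework, where the strengthened inductive hypothesis is then needed because the components upstairs do not literally coincide with the $V_k$), or to prove a variant of Lemma~\ref{lemmodification} allowing an arbitrary fixed coherent twist on the source. Either route closes the argument, but neither is automatic from what you have written.
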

	
	\begin{rem} We will first prove this result under the assumption that $L$ is a \emph{standard line bundle} (but we nevertheless assume that $\underline{\Sigma} = (\Sigma, \frac{1}{d} \mathbf{e})$, where $\mathbf{e}$ is a trivialization of $L^{\otimes d}$ on $\Sigma$, for some $d \geq 1$). We will prove the general case in Section \ref{sectblochgieseker}, after some considerations about Bloch-Gieseker coverings.
	\end{rem}

\begin{proof}[Proof (Case where $L$ is a standard line bundle)]

As usual, the weak Morse inequalities follow from the strong ones, remarking that $h^{i} = \chi^{[i]} + \chi^{[i-1]}$. Also, the asymptotic Riemann-Roch formula can be obtained from the strong Morse inequalities, using $\chi = (-1)^n \chi^{[n]} = (-1)^{n+1} \chi^{[n+1]}$. Thus, it suffices to prove the first point. 
\medskip

We will actually prove the seemingly more general proposition.

\begin{prop} \label{propredmorse} Under the hypotheses of Theorem \ref{thmmorse}, let $p: X' \longrightarrow X$ be a proper generically finite morphism of degree $D$, and let $M'$ be any line bundle on $X'$. Then, for each $i$, and any $m$ divisible by $d$, we have
\begin{equation} \label{eqmorseupperboundbir}
\chi^{[i]} (X', M' \otimes p^\ast L^{\otimes m}) \leq D (-1)^i \left( \deg c_1(L, \underline{\Sigma})^n_{[\leq i]} \right) \frac{m^n}{n!} + O(m^{n-1}).
\end{equation}
\end{prop}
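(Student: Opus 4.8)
The plan is to prove Proposition~\ref{propredmorse} by induction on $n = \dim X$. The base case $n = 0$ is immediate, since then $X$ is a finite set of points and all cohomology vanishes in positive degrees. For the inductive step, the key tool is the restriction sequence associated to the stratum of dimension $n-1$. Write $\Sigma \equiv X_0 \to \dots \to X_n = X$, let $f : X_{n-1} \to X$ be the top stratum map, factoring through the reduced Weil divisor $D = D^+ - D^-$ underlying $X \setminus U_n$, where $D^\pm$ are effective and $e$ (the trivialization of $L^{\otimes d}$ on $U_n$) has multiplicities $m_k$ along the components $D_k$. Twisting $L^{\otimes m}$ by $\mathcal O(-D^+)$ or by $\mathcal O(D^-)$ and comparing to $\mathcal O(mD/d \cdot \text{stuff})$ via the meromorphic section $e$, one builds a short exact sequence relating the cohomology of $M' \otimes p^\ast L^{\otimes m}$ on $X'$ to the cohomology of the same sheaf twisted along the preimage $p^{-1}(D_k)$ of the boundary components. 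The subtle point is that $X'$ and the $D_k$ may be singular and $p$ may not be finite, so one must be careful to work with Weil divisors, torsion-free sheaves, and the degree $D = \deg p$ entering as a multiplicative constant — which is exactly why the statement is phrased with the auxiliary $p$ and $M'$, making the induction hypothesis strong enough to be applied to the strata $\underline{\Sigma_k}$ on the $D_k$ (pulled back along $p$ restricted to the preimages).

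**Key steps.** First, I would fix a large integer $N$ so that $N \cdot \frac{1}{d}\mathbf e$ has integral multiplicities, and reduce to comparing $L^{\otimes m}$ with its twists by the effective divisors $\frac{m}{d} D^\pm$ (valid since $d \mid m$). Second, using the section $e$, produce the two exact sequences
\begin{equation*}
0 \to \mathcal O(-\tfrac{m}{d}D^+) \to \mathcal O(\tfrac{m}{d}D^-) \to \mathcal Q^- \to 0, \qquad
0 \to \mathcal O(-\tfrac{m}{d}D^+) \to \mathcal O(\tfrac{m}{d}D^-) \to \mathcal Q^+ \to 0
\end{equation*}
(schematically) where $\mathcal Q^\pm$ are sheaves supported on $D^\pm$; tensor by $M' \otimes p^\ast L^{\otimes m}$ (pulled to $X'$) and take the long exact sequence in cohomology. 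Third, apply the standard "sub-additivity" of truncated Euler characteristics along a short exact sequence: $\chi^{[i]}(\mathcal G) \leq \chi^{[i]}(\mathcal G') + \chi^{[i-1]}(\mathcal G'')$ type inequalities — being careful with the sign conventions in the definition of $\chi^{[i]}$, in which the top term is positive — to bound $\chi^{[i]}$ on $X'$ by a sum of $\chi^{[i]}$ and $\chi^{[i-1]}$ terms on the boundary pieces $p^{-1}(D_k)$, weighted by the sign of $m_k$. Fourth, invoke the inductive hypothesis on each $p^{-1}(D_k) \to D_k$ (of dimension $n-1$), with the trivialized stratification $\underline{\Sigma_k}$ and the appropriate line bundle $M'|_{\cdot}$, obtaining bounds of order $O(m^{n-1})$ with leading coefficients $\deg c_1(q_k^\ast L, \underline{\Sigma_k})^{n-1}_{[\leq i]}$ or $[\leq i-1]$. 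Fifth, assemble these via Lemma~\ref{lemformulainduct}: the positive-$m_k$ terms contribute the $[\leq i]$-part and the negative-$m_k$ terms the $[\leq i-1]$-part, which is precisely the right-hand side of that lemma multiplied by $\frac{m^{n-1}}{(n-1)!}$, and multiplying by $m/d$ (from the divisor twists) and by $n$ (combinatorial bookkeeping from $\frac{m^{n-1}}{(n-1)!} \cdot \frac{m}{n} = \frac{m^n}{n!}$) gives the claimed $\frac{m^n}{n!}$ bound with coefficient $D \cdot \deg c_1(L,\underline\Sigma)^n_{[\leq i]}$.

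**Main obstacle.** The hard part will be step three and four together: carefully tracking the cohomology along the exact sequences when $X'$ and the $D_k$ are merely reduced (possibly non-normal, non-Cauchy-Macaulay) and when $p$ is only generically finite. One must ensure that the error terms really are $O(m^{n-1})$ uniformly, that pulling the stratification $\underline{\Sigma_k}$ back along $p$ restricted to $p^{-1}(D_k)$ still yields a valid trivialized stratification of the right dimension, and that the degree $\deg p$ propagates correctly through the induction (this is the reason Proposition~\ref{propredmorse} carries $p$ and $M'$ rather than just specializing to $X' = X$). The other delicate point is the precise sign bookkeeping: the convention that the top cohomology in $\chi^{[i]}$ has a $+$ sign is what makes the subadditivity inequalities point in the correct direction, and one must verify at each use of a short exact sequence that no sign is flipped. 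Once these are handled, the algebra of step five is routine and matches Lemma~\ref{lemformulainduct} exactly.
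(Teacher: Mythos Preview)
Your overall framework is right: induction on $n$, subadditivity of $\chi^{[i]}$ across short exact sequences, and assembly via Lemma~\ref{lemformulainduct}. But the specific mechanism you propose---twisting by the \emph{thick} divisors $\tfrac{m}{d}D^{\pm}$ in one shot---has a genuine gap in the error analysis, and this is precisely the point where the paper's argument diverges from yours.

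The problem is in your steps four and five. After filtering $\mathcal O_{\frac{m}{d}p^{\ast}D^{+}}$, the graded pieces are of the form $\mathcal O_{D'_{k}} \otimes (N^{\ast}_{D'_{k}})^{\otimes l}$ with $l$ ranging from $0$ up to $\tfrac{m}{d}m_{k}$, i.e.\ up to order $m$. You then want to apply the inductive hypothesis to each piece, with auxiliary line bundle $M'\!\mid_{D'_{k}}\,\otimes\,(N^{\ast}_{D'_{k}})^{\otimes l}$. But the $O(m^{n-2})$ error term in the induction hypothesis depends on this auxiliary bundle, and hence on $l$; as $l$ grows to order $m$, so does the implied constant. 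Summing over the $O(m)$ graded pieces therefore does \emph{not} give a total error of $O(m^{n-1})$: you get something of order $m^{n}$, which swamps the leading term. Your ``combinatorial bookkeeping'' in step five presupposes a uniform $O(m^{n-2})$ that you do not have.

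The paper avoids this by \emph{telescoping}: rather than jumping from $0$ to $m$, it bounds the single increment $\chi^{[i]}(L^{\otimes(m+1)}) - \chi^{[i]}(L^{\otimes m})$ using the divisor $A - B$ of a section of $L$ itself (not $L^{\otimes d}$). Now the filtration has \emph{fixed} length independent of $m$, so the twists $(N^{\ast})^{\otimes l}$ have $l$ bounded by a constant, the $O(m^{n-2})$ errors are genuinely uniform, and summing the increments over $m$ gives the desired $O(m^{n-1})$. To make this work one needs an honest section of $L$ (not just $L^{\otimes d}$), which is why the paper first passes to a \emph{cyclic cover} $X'' \to X'$ on which $p^{\ast}e$ acquires a $d$-th root $e''$. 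And for the short exact sequences and filtrations to behave (Cartier divisors, smooth components), the paper first replaces $X'$ by a \emph{resolution} with $p^{-1}(D_{n})$ simple normal crossings; you flag this singularity issue as an obstacle but do not say how to resolve it, whereas in the paper it is handled explicitly (via Lemma~\ref{lemmodification}, which controls the $\chi^{[i]}$ under generically finite morphisms). The careful tracking of exceptional versus dominant components of the pullback, with the projection formulas \eqref{eqsumproj1}--\eqref{eqsumproj2}, is what makes the degree $D$ propagate correctly.
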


Let us now prove Proposition \ref{propredmorse} by induction on $\dim X$.
\medskip

	\emph{Step 1. Initialization of the induction.} If $\dim X = 0$, then $X'$ is just a union of $D$ scheme points. In this case, for each $i \geq 0$ and each $m$ divisible enough, we have $\chi^{[i]} (X', M' \otimes p^\ast L^{\otimes m}) = (-1)^i h^0 (X', M' \otimes p^\ast L^{\otimes m}) = (-1)^i D$, and $\deg c_1(L, \underline{\Sigma})^0_{[\leq 0]} \cap [X] = \deg [X] = 1$, so the result holds in this case (with $O(m^{-1}) = 0$).

	Suppose now that the result has been proved for $\dim X \leq n-1$. Write $\Sigma \equiv X_0 \overset{f_0}{\longrightarrow} ... X_{n-1} \overset{f_{n-1}}{\longrightarrow} X_n = X$, and let $e$ be the trivialization of $L^{\otimes d}$ over the open affine subset $U_n = X_n \setminus f_{n-1}(X_{n-1})$ given by $\mathbf e$.  
\medskip

	Let $D_n = \sum_{1 \leq j \leq r} m_j F_j \in Z_{n-1}(X)$ be the \emph{Weil} divisor of zeros and poles of the section $e$, seen as a meromorphic section of $L^{\otimes d}$. For each $j$ ($1 \leq j \leq r$), let $V_j$ be the unique connected component of $X_{n-1}$ such that $f(V_j) = F_j$, and let $q_j : V_j \longrightarrow X$ be the natural map.
\medskip

	\emph{Step 2. We pass to a simpler birational model of $X'$.} By Lemma \ref{lemmodification}, it suffices to prove Proposition \ref{propredmorse} with $X'$ replaced by any modification $q : \widetilde{X'} \longrightarrow X'$, $M'$ replaced by $q^\ast M'$, and $p$ replaced by $p \circ q$. Thus, we can suppose without loss of generality that $X'$ is a \emph{smooth} complex projective manifold, and that the inverse image $p^{-1} (D_n)$ is a divisor with simple normal crossings.
By construction, the line bundle $p^\ast L$ is trivialized by the section $p^\ast e$ on the open dense subset $p^{-1}(U_n)$. 

Thus, we have the following \emph{Cartier divisors} identity on $X'$:
$$
	D(p^\ast e) = \sum_{1 \leq j \leq s} m_j' F_j' + \sum_{1 \leq k \leq s} a_k E_k' 
$$
	where each $F_j'$ is a Cartier divisor on $X'$ dominating $F_j$ for some $1 \leq j \leq r$, and each $E_k'$ is a $p$-exceptional divisor, weighted with a coefficient $a_k \in \mathbb Z$. Note that if $m_j$ is the multiplicity of $e$ along $F_j$, then the projection formula yields, for all $j$: 
	\begin{align} \label{eqsumproj1}
		\sum_{k} m_k' \, \deg (F_k' / F_j) & = D\, m_j,	\end{align}
where the sum runs among all $k$ such that $F_k'$ dominates $F_j$.

	\emph{Step 3. We use $e$ to get a cyclic cover of $X'$}. The trivialization $e$ can be seen as a meromorphic section of $p^\ast L^{\otimes d}$ on the open dense subset $p^{-1}(U_n)$. It permits to form a cyclic cover $X_c \longrightarrow X'$ (by definition, $X_c$ is the locus $ \{ [1 : x^d] = [1 : p^\ast e ] \} \subseteq \mathbb P_{X'} (\mathcal O_X \oplus p^\ast L)$). Let $X_c^n$ be the normalization of $X_c$. We can again resolve the singularities of $X_c^{n}$ be performing blowing-ups on centers projecting on the intersections of the different $F_j'$, to obtain a smooth manifold $X''$. Let $r : X'' \longrightarrow X'$ and $q : X'' \longrightarrow X$ be the natural maps. For each $j$, we can write $r^\ast F_j' = \sum_{1 \leq l \leq n_j} d_{l} F_{j,l}'' + \sum_l b_l E_k''$, where the $E_k''$ are $r$-exceptional divisors. The line bundle $q^\ast L$ has a canonical meromorphic section $e''$ (given by the pullback of the section $x$ of $\mathcal O(1) \longrightarrow \mathbb P_{X'} (\mathcal O_X \oplus p^\ast L)$. We have then $(e'')^{\otimes d} = q^\ast e$ as a meromorphic section  of $q^\ast L^{\otimes d}$.

	We can write the divisor associated to this section $e''$ as follows:
	$$
	D(e'') = \sum_{1 \leq j \leq r} \sum_{1 \leq l \leq n_j} m''_{j, l} F_{j,l}'' + \sum_{1 \leq k \leq r} c_k E_{k}'',
	$$
	where the $E_{k}''$ are $q$-exceptional. Note that for any $l$, all $m''_{j, l}$ have the same sign as $m_l$ (this sign is determined by whether or not $p^\ast e$ is regular near $F_j'$). Also, the projection formula gives $d \, r_\ast D(e'') = r_\ast D(r^\ast p^\ast e) = d \, D(p^\ast e)$. This yields 
	\begin{equation} \label{eqsumproj2}
		m_j' = \sum_{1 \leq l \leq n_j} m''_{j, l} \, \mathrm{deg}(F_{j,l}''/F_j')
	\end{equation} In the following, we let $M'' = r^\ast M'$.

\medskip

	\emph{Step 4. We bound from above the difference of two consecutive $\chi^{[i]}(M'' \otimes q^\ast L^{\otimes m})$}. Letting $A = \sum_{m_{j,l}'' > 0} m_{j, l}'' F_{j,l}'' + \sum_{c_k > 0} c_k E''_k$, and $B = \sum_{m_{j, l} < 0} (-m_{j, l}'') F_{j,l}'' + \sum_{c_k < 0} c_k E_k''$, we obtain the two exact sequences

\begin{tikzcd}[column sep=1em]
 0 \ar[r]  & M'' \otimes q^\ast L^{\otimes(m+1)} \otimes \mathcal O_{X''}( - A) \ar[r] \ar[d, equal] & M'' \otimes q^\ast L^{\otimes(m+1)} \ar[r] &  M'' \otimes q^\ast L^{\otimes(m+1)} \otimes \mathcal O_{A} \ar[r] &  0 \\
 0 \ar[r] & M'' \otimes q^\ast L^{\otimes m} \otimes \mathcal O_{X''}( - B) \ar[r] & M'' \otimes q^\ast L^{\otimes m} \ar[r] &  M'' \otimes q^\ast L^{\otimes m} \otimes \mathcal O_{B} \ar[r] &  0 
\end{tikzcd}

Taking an initial fragment of the long exact sequence associated to the first line, we obtain
\begin{align*}
\cdots \longrightarrow H^i (M'' \otimes & \, q^\ast L^{\otimes(m+1)}( - A))  \longrightarrow  \\
&  H^i (M'' \otimes q^\ast L^{\otimes(m+1)}) \longrightarrow H^i (M'' \otimes q^\ast L^{\otimes(m+1)} \otimes \mathcal O_{A}) \longrightarrow Z \longrightarrow 0.
\end{align*}
where the cohomology groups are taken over $X''$. Since $\dim Z \geq 0$, taking the Euler characteristic yields
\begin{align*}
0 \leq \dim Z  =  - & \chi^{[i]} (X'',  M'' \otimes q^\ast L^{\otimes(m+1)}) \\
& + \chi^{[i]} (X'', M'' \otimes q^\ast L^{\otimes(m+1)} \otimes \mathcal O_{X''}(-A)) + \chi^{[i]}(X'', M'' \otimes q^\ast L^{\otimes(m+1)} \otimes \mathcal O_A )
\end{align*}
Similarly, the second line yields:
\begin{align*}
0 \leq  \chi^{[i]}& (X'',  M'' \otimes q^\ast L^{\otimes m}) \\
& - \chi^{[i]} (X'', M'' \otimes q^\ast L^{\otimes m} \otimes \mathcal O_{X''}(-B)) + \chi^{[i - 1]}(X'', M'' \otimes q^\ast L^{\otimes m} \otimes \mathcal O_B )
\end{align*}
Summing these two equations, we obtain
\begin{align} \label{eqtelescopic}
\chi^{[i]} (X'',  M'' \otimes q^\ast L^{\otimes(m+1)})  - & \chi^{[i]} (X'',  M'' \otimes q^\ast L^{\otimes m}) \leq \\ \nonumber
& \chi^{[i]}(X'', M'' \otimes q^\ast L^{\otimes(m+1)} \otimes \mathcal O_A ) + \chi^{[i - 1]}(X'', M'' \otimes q^\ast L^{\otimes m} \otimes \mathcal O_B )
\end{align}

\emph{Step 5. We write an upper bound on the right hand side of \eqref{eqtelescopic}}. Since we have the identity of \emph{Cartier} divisors $A = \sum_{m_{j,k}'' > 0} m_{j, k}'' F''_{j,k} + \sum_{c_k > 0} c_k E''_k$, and each connected component of $X''$ is integral (since $X''$ is normal), the sheaf $\mathcal O_A$ admits a filtration $0 \subseteq \mathcal F_0 \subseteq ... \subseteq \mathcal{F}_{N} = \mathcal O_A$, where the graded terms are given by short exact sequences
$$
0 \longrightarrow \mathcal F_j \longrightarrow \mathcal F_{j+1} \longrightarrow \mathcal O_{C} \otimes \left( N_C^\ast \right)^{l} \longrightarrow 0.
$$
where $C$ is one of the reduced components of $A$, $N_C$ is the normal bundle to $C$, and where $0 \leq l \leq \mathrm{mult}_A(D)$ (see Lemma \ref{lemfilt}). Taking the long exact sequences in cohomology and the truncated Euler characteristic, we obtain:
$$
\chi^{[i]} (\mathcal F_{j+1} \otimes q^\ast L^{(m+1)} \otimes M'') \leq \chi^{[i]} (\mathcal F_{j} \otimes q^\ast L^{(m+1)} \otimes M'') + \chi^{[i]}(C, \left( N_C^\ast \right)^{l} \otimes (q^\ast L^{(m+1)} \otimes M'')|_C).
$$
After summation, this gives
\begin{align} \nonumber 
	\chi^{[i]}(X'', M'' \otimes q^\ast L^{\otimes(m+1)} \otimes \mathcal O_A ) & \leq \sum_{j,k} \; \sum_{1 \leq l \leq m''_{j,k}} \chi^{[i]}(F_{j,k}'', \left( N_{F_{j,k}''}^\ast \right)^{l} \otimes q^\ast L^{\otimes (m+1)} \otimes M''') \\ \label{eqstepA} 
 & + \sum_k \sum_{1 \leq l \leq c_k} \chi^{[i]}(E_k, \left( N_{E_k}^\ast \right)^{l} \otimes q^\ast L^{(m+1)} \otimes M'')
\end{align}
where the first (resp. second) sum runs through the indexes $j, k$ (resp. $k$) such that $m_{j,k}'' >0$ (resp. $c_k > 0$). Now, a standard argument shows that for any $1 \leq l \leq m_{j,k}''$, we have, as $m \longrightarrow + \infty$, 
$$
\chi^{[i]}(F_k'', \left( N_{F_{j,k}''} ^\ast \right)^{l} \otimes q^\ast L^{(m+1)} \otimes M'') = \chi^{[i]}(F_k'', q^\ast L^{\otimes (m+1)} \otimes M'') + O(m^{n-2})
$$

Besides, since each $E_k''$ is $q$-exceptional, we have, for any $1 \leq l \leq c_k$, as $m \longrightarrow + \infty$:
$$
\chi^{[i]}(E_k'', \left( N_{E_k''}^\ast \right)^{l} \otimes q^\ast L^{(m+1)} \otimes M'') = O(m^{n-2}).
$$

Inserting this in \eqref{eqstepA}, we obtain
$$
\chi^{[i]}(X'', M'' \otimes q^\ast L^{\otimes(m+1)} \otimes \mathcal O_A ) \leq \sum_{j,k} m''_{j,k}\,   \chi^{[i]}(F_k'', q^\ast L^{(m+1)} \otimes M'') + O(m^{n-2}).
$$
where the sum runs among all $j,k$ such that $m_{j,k}'' > 0$. Similarly,
$$
\chi^{[i-1]}(X'', M'' \otimes q^\ast L^{\otimes m} \otimes \mathcal O_B ) \leq \sum_{j,k} (-m_{j,k}'')\,   \chi^{[i- 1]}(F_k'', q^\ast L^{ m} \otimes M'') + O(m^{n-2}).
$$
where the sum runs among all $k$ such that $m_{j,k}'' < 0$. 

\emph{Step 6. We write the final upper bound.} Putting the last two equations in \eqref{eqtelescopic}, and summing over $m$, we finally obtain 
\begin{align*}
 \chi^{[i]} ( & X'',   M'' \otimes   q^\ast L^{\otimes(m+1)})  \leq \\ 
	& \sum_{l \leq m} \left(\sum_{\overset{j,k}{m_{j,k}'' > 0}} m_{j,k}''\,   \chi^{[i]}(F_{j,k}'', q^\ast L^{\otimes (l+1)} \otimes M'') +  \sum_{\overset{j,k}{m_{j,k}'' < 0}} (-m_{j,k}'')\,   \chi^{[i- 1]}(F_{j,k}'', q^\ast L^{ \otimes l} \otimes M'') + O(l^{n-2}) \right),
\end{align*}
where the constants appearing in the $O(l^{n-2})$-terms do not depend on $m$. Since the restrictions $q : F_{j,k}'' \longrightarrow F_l$ are finite dominant morphisms, we can now apply the induction hypothesis to each $F_{j,k}''$. For any $j$, let $\underline{\Sigma}_l$ be the trivialized stratification induced on $V_l$ by $\underline{\Sigma}$. Using the induction hypothesis, we get

\begin{align*}
 \chi^{[i]} ( & X'',   M'' \otimes   q^\ast L^{\otimes(m+1)})  \leq \\ 
	& \sum_{l \leq m} \left(\sum_{\overset{j,k}{m_{j,k}'' > 0}} m_{j,k}''\,  \mathrm{deg}(F_{j,k}''/F_l) \; c_1(q_l^\ast L, \underline{\Sigma_l})^n_{[\leq i]} +  \sum_{\overset{j,k}{m_{j,k}'' < 0}} (-m_{j,k}'')\,  \mathrm{deg}(F_{j,k}''/F_l) \; c_1(q_l^\ast L, \underline{\Sigma_l})^n_{[\leq i - 1]} \right)  \\
	& \hspace{5em}+ O(l^{n-2}),
\end{align*}
where for all $j,k$, we have written $l$ to denote the index of the component $V_l$ of $D_n$ dominated by $F_{j,k}''$.

Now, \eqref{eqsumproj1} and \eqref{eqsumproj2} give $\sum_{j,k} {m_{j,k}''} \mathrm{deg}(F_{j,k}''/F_l) = D m_l$, where the sum runs among all $j,k$ such that $F_{j,k}''$ dominates $F_l$. Thus, since
$$	
	\chi^{[i]}  (X',   M' \otimes p^\ast L^{\otimes(m+1)})  = \frac{1}{d} \chi^{[i]} (X'', M'' \otimes q^\ast L^{\otimes {m+1}}) + O(m^{n-1}),
$$
by Lemma \ref{lemmodification}, we obtain
	\begin{align*}
		\chi^{[i]} &  (X',   M' \otimes p^\ast L^{\otimes(m+1)})  \\
	& \leq  D \, (-1)^i \left( \sum_{\overset{l}{m_l > 0}} \frac{m_l}{d}\, \deg c_1(q_l^\ast L, \underline{\Sigma_k})_{[\leq i]} -  \sum_{\overset{k}{m_k < 0}} (-\frac{m_k}{d})\,   \deg c_1{}(q_k^\ast L, \underline{\Sigma_k})_{[\leq i -1]} \right) \sum_{l \leq m} \frac{l^{n-1}}{(n-1)!} \\
&  \hspace{50pt}  + \sum_{l \leq m} O(l^{n-2}).\\
\end{align*}

Since $\sum_{1 \leq l \leq m} \frac{l^{n-1}}{(n-1)!} = \frac{m^n}{n!} + O(m^{n-1})$ and $\sum_{l \leq m} l^{n-2} = O(m^{n-1})$, the conclusion then comes immediately from Lemma \ref{lemformulainduct}.
\end{proof}

Let us mention briefly two classical lemmas that were used in the proof.

\begin{lem}  [see \cite{dem11}] \label{lemmodification} Let $X$ be a reduced complex scheme of pure dimension $n$, and let $X' \overset{p}{\longrightarrow} X$ be a dominant proper generically finite morphism of degree $d$. Then, for any line bundles $M$ and $L$ on $X$, and any $i \geq 0$, we have
$$
\chi^{[i]}(X', p^\ast M \otimes p^\ast L^{\otimes m}) = d \; \chi^{[i]}(X, M \otimes L^{\otimes m}) + O(m^{n-1}).
$$
\end{lem}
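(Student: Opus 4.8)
The plan is to push everything down to $X$ and there compare Euler characteristics of coherent sheaves. Two elementary facts will be used repeatedly. First, for a coherent sheaf $\mathcal H$ on a projective scheme $Z$ with $\dim\operatorname{Supp}\mathcal H \le n-1$ and any line bundle $N$ on $Z$, one has $h^j(Z, \mathcal H \otimes N^{\otimes m}) = O(m^{n-1})$ for every $j$; this is standard (reduce $N$ to a difference of very ample bundles, peel off one of them by a general section using sub-additivity of $h^j$ in short exact sequences, and induct on $\dim\operatorname{Supp}\mathcal H$, the very ample case following from Serre vanishing and the polynomiality of $m\mapsto\chi(Z,\mathcal H\otimes N^{\otimes m})$). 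Second, for a short exact sequence $0 \to A \to B \to C \to 0$ of coherent sheaves on a projective scheme, the long exact cohomology sequence gives $\chi^{[i]}(B) = \chi^{[i]}(A) + \chi^{[i]}(C) - \dim Z$, where $Z = \operatorname{im}\bigl(H^i(C) \to H^{i+1}(A)\bigr)$, so that $0 \le \dim Z \le \min\{h^i(C),\, h^{i+1}(A)\}$. We may assume $X$ and $X'$ projective; set $\mathcal G = p_\ast\mathcal O_{X'}$, a coherent sheaf on $X$ which is locally free of rank $d$ over the dense open subset where $p$ is finite (such an open exists since $p$ is proper and generically finite).

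The first step is to replace $X'$ by $X$ via the Leray spectral sequence of $p$. With $\mathcal F = M \otimes L^{\otimes m}$ locally free, the projection formula gives $R^b p_\ast(p^\ast\mathcal F) \cong \mathcal F \otimes R^b p_\ast\mathcal O_{X'}$, and since $p$ is finite over a dense open, $R^b p_\ast\mathcal O_{X'}$ is supported in dimension $\le n-1$ for all $b \ge 1$. Hence in $E_2^{a,b} = H^a\bigl(X, \mathcal F \otimes R^b p_\ast\mathcal O_{X'}\bigr) \Rightarrow H^{a+b}(X', p^\ast M \otimes p^\ast L^{\otimes m})$ every entry with $b \ge 1$ has dimension $O(m^{n-1})$. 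The graded pieces of the induced filtration on $H^j(X', p^\ast M \otimes p^\ast L^{\otimes m})$ are the $E_\infty^{j-b,b}$, $0 \le b \le j$, which are $O(m^{n-1})$ for $b \ge 1$; and $E_\infty^{j,0}$ differs from $E_2^{j,0} = H^j(X, \mathcal F \otimes \mathcal G)$ only by the images of the incoming differentials $d_r\colon E_r^{j-r,r-1} \to E_r^{j,0}$ with $r \ge 2$, which are again $O(m^{n-1})$ because $r-1 \ge 1$. So $h^j(X', p^\ast M \otimes p^\ast L^{\otimes m}) = h^j(X, \mathcal G \otimes L^{\otimes m} \otimes M) + O(m^{n-1})$ for each $j$, and therefore $\chi^{[i]}(X', p^\ast M \otimes p^\ast L^{\otimes m}) = \chi^{[i]}(X, \mathcal G \otimes L^{\otimes m} \otimes M) + O(m^{n-1})$.

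It would then remain to prove $\chi^{[i]}(X, \mathcal G \otimes L^{\otimes m} \otimes M) = d\,\chi^{[i]}(X, L^{\otimes m} \otimes M) + O(m^{n-1})$. Fixing a very ample $H$ and clearing denominators, the $d$ rational sections of $\mathcal G$ trivializing it over a dense open become sections of $\mathcal G \otimes \mathcal O_X(kH)$ for $k \gg 1$, hence define a morphism $\beta\colon \mathcal O_X(-kH)^{\oplus d} \to \mathcal G$ that is an isomorphism over a dense open; factoring $\beta$ as $\mathcal O_X(-kH)^{\oplus d} \twoheadrightarrow \operatorname{im}\beta \hookrightarrow \mathcal G$ gives two short exact sequences in which both $\ker\beta$ and $\operatorname{coker}\beta$ are supported in dimension $\le n-1$. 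Tensoring these by the locally free sheaf $L^{\otimes m} \otimes M$ and applying the defect formula of the first paragraph — each time with the low-dimensional sheaf playing the role of whichever outer term makes the bound $\min\{h^i, h^{i+1}\}$ equal $O(m^{n-1})$, and using that $\chi^{[i]}$ of that low-dimensional term is itself $O(m^{n-1})$ — one obtains $\chi^{[i]}(X, \mathcal G \otimes L^{\otimes m} \otimes M) = d\,\chi^{[i]}(X, \mathcal O_X(-kH) \otimes L^{\otimes m} \otimes M) + O(m^{n-1})$. A final application to $0 \to \mathcal O_X(-kH) \to \mathcal O_X \to \mathcal O_{kH} \to 0$ trades $\mathcal O_X(-kH)$ for $\mathcal O_X$ at the cost of another $O(m^{n-1})$, which finishes the proof.

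The one genuinely delicate point — and the reason this is not the one-line $K$-theoretic computation that would settle the analogue for the ordinary $\chi$ — is that $\chi^{[i]}$ is merely sub-additive in short exact sequences. Everything therefore hinges on keeping the connecting maps under control and checking, at each use of an exact sequence, that the defect $\dim Z$ is bounded by the cohomology of a sheaf supported in dimension $\le n-1$; this is exactly why the argument is routed through the concrete sheaf $\mathcal G = p_\ast\mathcal O_{X'}$ and the explicit comparison map $\beta$ rather than through a relation in the Grothendieck group. The remaining ingredients (Leray, the projection formula, Serre vanishing, the polynomial growth estimate) are entirely standard.
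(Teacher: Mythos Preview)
Your proof is correct and follows essentially the same route as the paper: Leray's spectral sequence reduces the question to comparing $h^j(X, p_\ast\mathcal O_{X'}\otimes M\otimes L^{\otimes m})$ with $d\,h^j(X, M\otimes L^{\otimes m})$, and then one uses that $p_\ast\mathcal O_{X'}$ agrees with $\mathcal O_X^{\oplus d}$ over a dense open. The paper merely asserts the second step; you supply the details via the explicit map $\beta\colon \mathcal O_X(-kH)^{\oplus d}\to\mathcal G$ and a careful tracking of the defect $\dim Z$ in each short exact sequence, which is exactly what is needed since $\chi^{[i]}$ is only sub-additive.
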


The previous lemma can be proved by a standard application of Leray's spectral sequence. It suffices to remark the following two facts: first that $E_2^{r, s} = H^s(X, R^r p_\ast \mathcal O_{X'} \otimes M \otimes L^{\otimes m})$ has $O(m^{n-1})$ dimension if $r > 0$, since then $R^r p_\ast \mathcal O_{X'}$ is a torsion sheaf on $X$. Second, we have $p_\ast \mathcal O_{X'}|_{U} \cong \mathcal O_{U}^{\oplus d}$ for a Zariski open dense subset $U \subseteq X$, so $h^i(X, p_\ast \mathcal O_{X'} \otimes  M \otimes L^{\otimes m}) = d \, h^i(X, M \otimes L^{\otimes m}) + O(m^{n-1})$.

\begin{lem} \label{lemfilt} Let $X$ be an integral complex scheme on $X$, and let $D_1, ..., D_r$ be irreducible Cartier divisors on $X$. For all $1 \leq i \leq r$, let $m_i \in \mathbb N$, and define $D = \sum_{i} m_i D_i$. Then there exists a filtration $\mathcal F_1 \subseteq ... \subseteq \mathcal F_{N} = \mathcal O_{D}$ (where $N = \sum_i m_i$), with successive quotients given as follows:
$$
0 \longrightarrow \mathcal F_i \longrightarrow \mathcal F_{i+1}  \longrightarrow N_{D_i}^{ \otimes k_i} \longrightarrow 0,
$$
for some $1 \leq k_i \leq m_i$, where $N_{D_i}$ is the normal $\mathcal O_{D_i}$-line bundle of $D_i$.
\end{lem}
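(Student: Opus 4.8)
The plan is to induct on $N=\sum_i m_i$, reducing everything to the conormal filtration of a single thickened prime divisor. First I would dispose of the case $r=1$: if $C$ is a prime Cartier divisor and $k\geq 1$, then $\mathcal O_{kC}=\mathcal O_X/\mathcal I_C^k$ carries the $\mathcal I_C$-adic filtration
$$
0\subseteq \mathcal I_C^{k-1}/\mathcal I_C^k\subseteq\cdots\subseteq \mathcal I_C/\mathcal I_C^k\subseteq \mathcal O_{kC},
$$
and since $\mathcal I_C\cong\mathcal O_X(-C)$ is a line bundle (so $\mathcal I_C^j=\mathcal I_C^{\otimes j}$), tensoring $0\to\mathcal I_C\to\mathcal O_X\to\mathcal O_C\to 0$ by $\mathcal I_C^{\otimes j}$ identifies the graded piece $\mathcal I_C^j/\mathcal I_C^{j+1}$ with $\mathcal O_X(-jC)|_C=N_C^{\otimes(-j)}$, for $0\leq j\leq k-1$. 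This settles $r=1$ directly, the exponents having absolute value at most $m_1-1$.

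For the inductive step, assume $N\geq 2$, pick an $i$ with $m_i\geq 1$, and set $D'=D-D_i$, an effective Cartier divisor whose multiplicities sum to $N-1$. Since $\mathcal I_D=\mathcal I_{D'}\cdot\mathcal I_{D_i}$ is a product of invertible ideals, there is a short exact sequence
$$
0\longrightarrow \mathcal I_{D'}/\mathcal I_D\longrightarrow \mathcal O_D\longrightarrow \mathcal O_{D'}\longrightarrow 0,
$$
and flatness of the line bundle $\mathcal I_{D'}$ gives $\mathcal I_{D'}/\mathcal I_D=\mathcal I_{D'}/(\mathcal I_{D'}\mathcal I_{D_i})=\mathcal I_{D'}\otimes_{\mathcal O_X}\mathcal O_{D_i}=\mathcal O_X(-D')|_{D_i}$. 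I would then apply the induction hypothesis to $\mathcal O_{D'}$, pull the resulting filtration back along $\mathcal O_D\twoheadrightarrow\mathcal O_{D'}$, and prepend $\mathcal F_1:=\mathcal O_X(-D')|_{D_i}$; this yields a filtration of $\mathcal O_D$ of length $N$ with exactly one new graded piece, namely $\mathcal O_X(-D')|_{D_i}$. Writing $D'=(m_i-1)D_i+\sum_{j\neq i}m_jD_j$ exhibits this piece as $N_{D_i}^{\otimes(1-m_i)}$ twisted by the restriction to $D_i$ of $\mathcal O_X(-\sum_{j\neq i}m_jD_j)$.

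I expect the one point requiring care — and the place where the statement is to be read up to a harmless correction — to be precisely this last identification. When $D$ has several mutually intersecting components, a graded piece equals a power $N_{D_i}^{\otimes(-k_i)}$ with $|k_i|\leq m_i-1$ only \emph{up to} a twist by the restriction to $D_i$ of a line bundle pulled back from $X$; the lemma should be understood with that twist absorbed into the statement. This is immaterial for the applications: in every use (see Step 5 of the proof of Theorem \ref{thmmorse}) a graded piece is fed into an asymptotic truncated Euler characteristic $\chi^{[i]}(C,\mathcal L|_C\otimes q^\ast L^{\otimes m}\otimes M)$ on a fixed subvariety $C$, and replacing $\mathcal L$ by any fixed line bundle changes such a quantity only by $O(m^{\dim C-1})$, so none of the leading-order estimates are affected.
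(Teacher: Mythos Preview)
Your argument is essentially the paper's, repackaged as an induction. The paper writes down the whole lattice of submodules
\[
\mathcal M_{a_1,\dots,a_r}=\mathcal O_X\!\left(-\textstyle\sum_j a_j D_j\right)\big/\mathcal O_X(-D)\subseteq\mathcal O_D,\qquad 0\le a_j\le m_j,
\]
and picks a monotone path from $(m_1,\dots,m_r)$ to $(0,\dots,0)$; your inductive step ``peel off one $D_i$'' is exactly one move along such a path, and your bottom piece $\mathcal I_{D'}/\mathcal I_D=\mathcal O_X(-D')|_{D_i}$ is the paper's $\mathcal M_{m-e_i}$.

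Where you go further than the paper is in flagging the twist. The paper asserts that the local isomorphism $\mathcal M_a/\mathcal M_{a+e_i}\cong (f_i^{a_i})/(f_i^{a_i+1})$ glues to identify the graded piece with a pure power of $N_{D_i}$; in fact the global quotient is $\mathcal O_X(-\sum_j a_j D_j)|_{D_i}$, i.e.\ a power of $N_{D_i}^\ast$ \emph{twisted} by $\mathcal O_X(-\sum_{j\ne i}a_jD_j)|_{D_i}$, exactly as you write. So your ``harmless correction'' is not a defect of your approach relative to the paper's: the same twist is implicitly present in the paper's construction. Your observation that this is immaterial for Step~5 of Theorem~\ref{thmmorse} (a fixed line bundle on each $D_i$ shifts $\chi^{[i]}$ only by $O(m^{n-2})$) is correct and is precisely why the imprecision causes no harm downstream. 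There is also a dual/sign slip in the statement ($N_{D_i}$ versus $N_{D_i}^\ast$, and $1\le k_i\le m_i$ versus $0\le k_i\le m_i-1$) which both you and the paper's own usage in Step~5 silently correct.
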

\begin{proof}
Assume first that $X = \mathrm{Spec} A$, where $A$ is an integral ring, and that each $D_i$ is given by some $f_i \in D_i$. Then, we define the $\mathcal F_i$ so that they are associated to $A$-submodules of $\quotientd{A}{(f_1^{m_1} ... f_r^{m_r})}$ of the form
$$
M_{a_1, ..., a_r} = \quotientd{(f_1^{a_1} ... f_r^{a_r})}{(f_1^{m_1} ... f_r^{m_r})}
$$
with $0 \leq a_i \leq m_i$ for all $i$. Since $A$ is integral, we see that $M_{a_1, ..., a_r} \hookrightarrow M_{b_1, ..., b_r}$ if $a_i \geq b_i$ for all $i$. Also $\quotientd{M_{a_1, ..., a_i + 1, ..., a_r}}{M_{a_1, ..., a_i, ..., a_r }} \cong \quotientd{(f_i^{a_i})}{(f_i^{a_i + 1})}$ and this last $A$-module defines the sheaf $N_{D_i}^{\otimes a_i}$, supported on $D_i$. The general case follows easily by covering $X$ with affine open subsets: the local definitions glue together.
\end{proof}

The algebraic Morse inequalities of Demailly and Angelini appear now as a particular case of Theorem \ref{thmmorse}.

\begin{thm}[Demailly \cite{dem96}, Angelini \cite{ang96}] Let $X$ be projective variety of dimension $n$, and let $L$ be a line bundle on $X$. Assume $L = \mathcal O(F - G)$, where $F, G$ are Cartier nef divisors. Then, for any $i \in \llbracket 0, n \rrbracket$, we have
$$
\chi^{[i]} (X, L^{\otimes m}) \leq \left[ \sum_{ 0 \leq j \leq i} (-1)^{i + j} \binom{n}{j} F^{n-j} \cdot G^j \right] \frac{m^n}{n!} \, + \, O(m^{n-1}).
$$
\end{thm}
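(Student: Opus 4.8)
The plan is to deduce this from Theorem~\ref{thmmorse}(i), applied with $M=\mathcal O_X$, by exhibiting an explicit trivialized stratification $\underline\Sigma$ adapted to $L$ satisfying
$$
\deg c_1(L,\underline\Sigma)^n_{[\leq i]}=\sum_{0\leq j\leq i}(-1)^j\binom nj F^{n-j}\cdot G^j .
$$
First, replacing $X$ by its normalization — which by Lemma~\ref{lemmodification} changes the $\chi^{[i]}$'s only by $O(m^{n-1})$ and leaves the intersection numbers unchanged — we may assume $X$ normal. Next, since both sides behave continuously under $F\mapsto F+\epsilon A$, $G\mapsto G+\epsilon A$ for $A$ ample and $\epsilon\downarrow0$, and $\mathcal O\big((F+\epsilon A)-(G+\epsilon A)\big)=L$, it suffices — as in the classical proofs of Siu, Demailly and Angelini — to treat the case where $F$ and $G$ are ample (possibly $\mathbb Q$-divisors). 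Fix then $d\geq1$ such that $dF$ and $dG$ are very ample integral divisors, so that $L^{\otimes d}=\mathcal O(dF-dG)$.

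I would construct $\underline\Sigma=(\Sigma,\tfrac1d\mathbf e)$ by the inductive recipe of Section~\ref{sectstrat}, at each stratum $Y$ (a closed subvariety of $X$, starting with $Y=X$) using the decomposition $L^{\otimes d}|_Y=\mathcal O_X(dF)|_Y\otimes\big(\mathcal O_X(dG)|_Y\big)^{-1}$: choose a general divisor $A_Y$ in the linear system of $\mathcal O_X(dF)$ restricted to $Y$, and likewise a general $B_Y$ for $\mathcal O_X(dG)$; let $e_Y=s_{A_Y}/s_{B_Y}$, a meromorphic section of $L^{\otimes d}|_Y$ trivializing it over the affine open set $Y\setminus(A_Y\cup B_Y)$ (affine because $A_Y+B_Y$ supports an ample divisor); then recurse on $A_Y$ and on $B_Y$, stopping in dimension $0$. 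Since $\mathcal O_X(dF)|_Y$ and $\mathcal O_X(dG)|_Y$ are very ample, Bertini's theorem ensures that $A_Y\sqcup B_Y\to A_Y\cup B_Y$ is a stratum of $Y$ in the sense of Definition~\ref{defistratum} (for $\dim Y\geq2$ the general members are irreducible and normal; in dimension $1$ they are reduced $0$-cycles). In the tree $\mathcal T$ attached to $\Sigma$ (Remark~\ref{remtree}), every node of dimension $\geq2$ then has exactly two children, with markings $+\tfrac1d$ (the $A_Y$-branch) and $-\tfrac1d$ (the $B_Y$-branch) coming from $\tfrac1d\mathbf e$; and a node of dimension $1$, a curve $Z$, has $\deg(dF|_Z)$ children marked $+\tfrac1d$ and $\deg(dG|_Z)$ children marked $-\tfrac1d$.

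Then I would compute the truncated Chern numbers via Proposition~\ref{propgraph}. Each complete path $\sigma$ of $\mathcal T$ has $n$ edges, so if it has index $l$ (exactly $l$ negative markings) then $C_\sigma=(-1)^l/d^n$. Prescribing which of the first $n-1$ steps of $\sigma$ are "$G$-steps" pins down a unique chain of strata ending at a curve $Z$ of class $(dF)^a\cdot(dG)^b\cap[X]$ with $a+b=n-1$; the last edge then ranges over $\deg(dF|_Z)=(dF)^{a+1}(dG)^b$ leaves if it is an $F$-step, or $\deg(dG|_Z)=(dF)^a(dG)^{b+1}$ leaves if it is a $G$-step. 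Collecting the paths by their total number $l$ of $G$-steps gives
$$
\deg c_1(L,\underline\Sigma)^n_{[l]}=\Big(\binom{n-1}{l}+\binom{n-1}{l-1}\Big)\,d^nF^{n-l}\cdot G^l\cdot\frac{(-1)^l}{d^n}=(-1)^l\binom nl F^{n-l}\cdot G^l ,
$$
whence the displayed identity after summing over $l\leq i$. Theorem~\ref{thmmorse}(i) now yields the claimed bound for every $m$ divisible by $d$; to obtain it for all $m$, apply Theorem~\ref{thmmorse} instead to the genuine line bundle $L^{\otimes d}$ equipped with the trivialized stratification $(\Sigma,\mathbf e)$ — whose truncated Chern numbers are $d^n$ times the ones above — twisted by $M=L^{\otimes s}$ for each $s\in\{0,\dots,d-1\}$, and write $m=dm'+s$. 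Finally, letting $\epsilon\downarrow0$ removes the ampleness assumption.

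I expect the main obstacle to be the combinatorial evaluation of $\deg c_1(L,\underline\Sigma)^n_{[l]}$ through Proposition~\ref{propgraph} — in particular keeping careful track of the $\tfrac1d$-markings and recognizing the number of leaves below a chain of $n-1$ general sections as a top self- and cross-intersection number of $dF$ and $dG$ — together with the classical limiting step passing from ample to nef $F$, $G$, where the lower-order error term must be handled with some care.
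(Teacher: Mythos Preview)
Your proposal is correct and follows essentially the same route as the paper: reduce to the (very) ample case, build the stratification by successively intersecting with general members of $|dF|$ and $|dG|$, and check that the truncated Chern numbers come out to $(-1)^l\binom{n}{l}F^{n-l}G^l$ --- the paper performs this last computation via the inductive formula \eqref{eqinductform} rather than through Proposition~\ref{propgraph}, but the content is identical. Your extra care about the divisibility of $m$ (twisting by $M=L^{\otimes s}$) and about the limiting step $\epsilon\downarrow0$ is well placed; the paper's reduction ``replace $F,G$ by $s(rF+A),\,s(rG+A)$'' is the same maneuver and shares the same subtlety you flag, namely that passing from ample back to nef by a limit a priori degrades $O(m^{n-1})$ to $o(m^n)$.
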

\begin{proof}
As usual, we do not lose generality in assuming that $X$ smooth, by replacing it by some other birational modification. Also, replacing $F$ (resp. $G$) by $s(rF + A)$ (resp. $s(rG + A)$) with $r, s$ large, $A$ an ample divisor, we can assume that both $F$ and $G$ are very ample.

We are now going to exhibit a stratification $\Sigma$ on $X$ to which apply Theorem \ref{thmmorse}. Let $X_n = X$. Since $F$ and $G$ are very ample, we can replace them by smooth members of their linear equivalence class, and assume that $X_{n-1} = |F| + |G|$ is a simple normal crossing divisor. Let $e_n$ be a meromorphic section of $L$ which trivializes $L$ on $U_n = X_n \setminus X_{n-1}$, such that $D(e_n) = F - G$. Now, $\mathcal O(F)$ and $\mathcal O(G)$ are very ample when restricted to both $E$ and $F$, so we can iterate this construction to produce stratifications on both $E$ and $F$. This gives a sequence of strata $X_0 \longrightarrow ... \longrightarrow X_{n-2} \longrightarrow X_{n-1} = |E| \sqcup |F|$, such that, for each $i$, $L|_{X_i}$ admits a meromorphic section $e_i$ with $D(e_i) = F_i - G_i$, where $\mathcal O(F_i) = \mathcal O(F)|_{X_i}$, and $\mathcal O(G_i) = \mathcal O(G)|_{X_i}$. Putting this data together with $X_n = X$ and the trivialization $e_n$, we get the requested trivialized stratification $\underline{\Sigma}$ of $L$ over $X$.

	Now, by construction of $\underline{\Sigma}$, if $\alpha \in Z^{\Sigma}_k X_{\mathbb Q}$, the cycle class of $c_1(L, \underline{\Sigma})_{[0]} \cap \alpha$ (resp. $c_1(L, \underline{\Sigma})_{[1]} \cap \alpha$) is equal to $c_1(F) \cap \alpha$ (resp. $-c_1(G) \cap \alpha$) in $A_k(X)$. Consequently, iterating Definition \ref{defitrunchigherpowers} yields, for any $j \in \llbracket 0, n \rrbracket$: 
\begin{align*}
c_1(L, \underline{\Sigma})^n_{[j]} \cap [X] & = \sum_{\overset{T \in \llbracket 1, n \rrbracket}{|T| = j}}  c_1(L, \underline{\Sigma})_{[\mathbbm{1}_{1 \in T}]} \cap ... \cap  c_1(L, \underline{\Sigma})_{[\mathbbm{1}_{n \in T}]} \cap [X] \\
& = \sum_{\overset{T \in \llbracket 1, n \rrbracket}{|T| = j}} ( c_1(F))^{n - |T|} \cap (-c_1(G))^{|T|} \cap [X] \\
& = (-1)^{j} \binom{n}{j} F^{n- j} \cdot G^j
\end{align*}
	in $A_0(X)$ (here $\mathbbm{1}_{j \in T}$ is equal to $1$ if $j \in T$, and to $0$ otherwise). By Theorem \ref{thmmorse}, this gives the result.
\end{proof}

\subsection{Bloch-Gieseker coverings} \label{sectblochgieseker}

In this section, we explain how to lift a stratification to a Bloch-Gieseker covering, and how to compute its truncated intersection numbers.

\medskip

Consider an $n$-dimensional variety $X$ equipped with a line bundle $L$, with a trivialized stratification $\underline{\Sigma} =(\Sigma, \mathbf e)$. Let $A$ be a very ample line bundle on $X$, associated with an embedding $X \hookrightarrow \mathbb P^N$. Fix $d \in \mathbb N^\ast$, and let $\widehat{X} \overset{p}{\longrightarrow} X$ be the morphism obtained by taking the cartesian product
$$
\begin{tikzcd}
	\widehat{X} \arrow[r] \arrow[d] & \mathbb P^N \arrow[d, "q"] \\
	X \arrow[r] & \mathbb P^N
\end{tikzcd}
$$
where $q$ is the endomomorphism $[z_0 : ... : z_N] \longmapsto [z_0^d : ... : z_N^d]$ of $\mathbb P^N$. Recall that this morphism is the base step of the Bloch-Gieseker construction (cf. \cite{BG71}, see also \cite{KM98} or \cite[Theorem 4.1]{lazpos1}).
\medskip

For each irreducible variety $V$ appearing as a component of a stratum of $\Sigma$, the morphism $\widehat{X} \longrightarrow X$ induces a fibre product
\begin{equation} \label{diagV}
\begin{tikzcd}
	\widehat{V} \arrow[r] \arrow[d] & \widehat{X} \arrow[d, "p"] \\
	V \arrow[r, "f"] & X
\end{tikzcd}
\end{equation}
where $V \longrightarrow X$ is naturally induced by $\Sigma$. Also, the maps between two strata $V \longrightarrow W$ induce natural maps $\widehat{W} \longrightarrow \widehat{V}$. Putting all the maps $\widehat{W} \longrightarrow \widehat{V}$ together, we get a stratification $\widehat{\Sigma}$ on $\widehat{X}$. We will call it the \emph{pull-back stratification} of $\Sigma$ by $p$. We can also pull back the stratifications of $L$ provided by $\mathbf{e}$ on the strata of $\widehat{\Sigma}$, to get a trivialization $\widehat{\mathbf{e}}$ of $p^\ast L$. 

Then, we have a projection formula, as follows.

\begin{lem} \label{lemBG}
	Assume that $X \hookrightarrow \mathbb P^N$ is in general position. With the previous notations, let $\underline{\widehat{\Sigma}} = (\widehat{\Sigma}, \widehat{\mathbf e})$. We have then, for any $j \in \llbracket 0, n \rrbracket$:
$$
	\deg c_1(p^\ast L, \underline{\widehat{\Sigma}})^n_{[\leq j]} = \deg(p) \cdot \deg c_1(L, \underline{\Sigma})^n_{[\leq j]}.
$$
\end{lem}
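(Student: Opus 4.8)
The strategy is an induction on $n = \dim X$, mirroring the inductive formula of Lemma \ref{lemformulainduct} (equivalently, the path-sum description of Proposition \ref{propgraph}). First I would set up the base case: when $\dim X = 0$, both sides count (with the usual degree $1$ per point, since $c_1(L,\underline\Sigma)^0_{[\le 0]}\cap[X]=[X]$) the number of points in the relevant zero-dimensional scheme, and $\widehat X \to X$ is a finite morphism of degree exactly $\deg(p)$, so the formula holds trivially. For the inductive step, I would write $\Sigma \equiv X_0 \to \cdots \to X_n = X$, let $e$ be the trivialization of $L$ on the affine open $U_n = X_n \setminus f_{n-1}(X_{n-1})$ supplied by $\mathbf e$, and let $D_n = \sum_k m_k F_k$ be its Weil divisor of zeros and poles, with $V_k$ the component of $X_{n-1}$ dominating $F_k$ and $\underline\Sigma_k$ the induced trivialized stratification on $V_k$.

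The heart of the argument is a divisor-pullback computation on $\widehat X$. Here is the key point that makes the Bloch--Gieseker covering special: because $q\colon \mathbb P^N \to \mathbb P^N$, $[z_0:\cdots:z_N]\mapsto[z_0^d:\cdots:z_N^d]$, is finite and, for $X \hookrightarrow \mathbb P^N$ in general position, the preimage $\widehat X$ is irreducible, normal, and the components $F_k \subset X$ pull back to \emph{irreducible} components $\widehat F_k = (p^\ast F_k)_{\mathrm{red}}$ of $\widehat X$ — indeed $p^\ast \mathcal O_X(F_k)$ has a section $p^\ast e$ whose divisor is $p^\ast D_n = \sum_k m_k\, p^\ast F_k$, and general position guarantees there are no new exceptional components and that $p^\ast F_k$ is reduced over a dense open, so the multiplicity of $p^\ast e$ along $\widehat F_k$ is again $m_k$, with the \emph{same sign}. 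Meanwhile the induced morphism $\widehat V_k \to \widehat X$ of the pullback stratification $\widehat\Sigma$ is exactly the strict transform of $V_k \to X$ under the fibre-product diagram \eqref{diagV}, and $\widehat V_k \to \widehat F_k$ is birational. Therefore the elementary step \eqref{eqchernelem} computing $c_1(p^\ast L, \widehat{\underline\Sigma})_{[l]}$ on $\widehat X$ produces precisely $\sum_{(-1)^l m_k > 0}\tfrac{m_k}{d}[\widehat V_k]$, with the very same multiplicities $m_k$ (and $d$) as on $X$.

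Now I would plug this into Lemma \ref{lemformulainduct} on $\widehat X$:
$$
\deg c_1(p^\ast L, \widehat{\underline\Sigma})^n_{[\le j]} = \sum_{m_k>0}\frac{m_k}{d}\deg c_1(p_k^\ast L, \widehat{\underline\Sigma}_k)^{n-1}_{[\le j]} + \sum_{m_k<0}\frac{m_k}{d}\deg c_1(p_k^\ast L, \widehat{\underline\Sigma}_k)^{n-1}_{[\le j-1]},
$$
where $p_k\colon \widehat V_k \to V_k$ is the base change of $p$. The crucial bookkeeping fact is $\sum \deg(\widehat V_k' / V_k) = \deg(p)$ when $\widehat V_k$ is connected — and more generally that $\widehat{\underline\Sigma}_k$ is exactly the Bloch--Gieseker pullback of $\underline\Sigma_k$ along $p_k$, up to the finitely many components, so that by the induction hypothesis $\deg c_1(p_k^\ast L, \widehat{\underline\Sigma}_k)^{n-1}_{[\le j]} = \deg(p)\cdot \deg c_1(q_k^\ast L, \underline\Sigma_k)^{n-1}_{[\le j]}$ (one must be slightly careful that $p$ restricted to $V_k$ need not be the literal Bloch--Gieseker cover of $V_k$, but since $\deg(p_k) = \deg(p)$ for $V_k$ in general position and the stratification/trivialization data pulls back compatibly, the inductive statement still applies — possibly after enlarging the induction to allow an arbitrary generically finite $p$ compatible with the stratification, exactly as Proposition \ref{propredmorse} did for the Morse inequalities). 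Substituting and comparing with Lemma \ref{lemformulainduct} applied to $\underline\Sigma$ on $X$ gives
$$
\deg c_1(p^\ast L, \widehat{\underline\Sigma})^n_{[\le j]} = \deg(p)\left(\sum_{m_k>0}\frac{m_k}{d}\deg c_1(q_k^\ast L,\underline\Sigma_k)^{n-1}_{[\le j]} + \sum_{m_k<0}\frac{m_k}{d}\deg c_1(q_k^\ast L,\underline\Sigma_k)^{n-1}_{[\le j-1]}\right) = \deg(p)\cdot\deg c_1(L,\underline\Sigma)^n_{[\le j]},
$$
which is the claim.

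The main obstacle, and where care is genuinely needed, is the general-position input: one must verify that for $X \hookrightarrow \mathbb P^N$ in sufficiently general position the preimages $\widehat X$ and each $\widehat V_k$ (and all the deeper strata) are irreducible and normal, that no spurious exceptional divisors are created, and that the multiplicities $m_k$ are literally preserved — this is precisely the content of the Bloch--Gieseker theorem (\cite{BG71}, \cite[Theorem 4.1]{lazpos1}), applied simultaneously to the finitely many varieties occurring in the tree $\mathcal T$ of $\Sigma$. A secondary subtlety, as flagged above, is that the induction must be phrased for the class of generically finite morphisms compatible with a stratification (not just for the base Bloch--Gieseker step), so that the restrictions $p_k$ feed back into the hypothesis; once the statement is set up in that generality the recursion closes cleanly.
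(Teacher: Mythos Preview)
Your proposal is correct and uses essentially the same geometric inputs as the paper: irreducibility of $\widehat V$ for $\dim V\ge 1$, preservation of multiplicities (since $p$ is \'etale near the generic point of each $f(V)$), and $\deg(\widehat V/V)=\deg(p)$. The difference is purely organizational. You proceed by induction on $\dim X$ via Lemma~\ref{lemformulainduct}, peeling off one level of the stratification at a time; the paper instead applies these facts to \emph{all} strata at once to describe the marked tree $\widehat{\mathcal T}$ globally---it is just $\mathcal T$ with each leaf replaced by $\deg(p)$ copies and every edge keeping its marking---and then invokes the closed-form path sum of Proposition~\ref{propgraph}.

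The payoff of the paper's global approach is precisely that it sidesteps the issue you flagged: there is no need to formulate an inductive hypothesis for the restricted covers $p_k:\widehat V_k\to V_k$. Your suggested fix of enlarging the induction to ``arbitrary generically finite $p$ compatible with the stratification'' would not work as stated, since for a general such $p$ the multiplicities can change and components can split; the correct fix is the other one you mention, namely to observe that $\widehat V_k = V_k\times_{\mathbb P^N}\mathbb P^N$ via the composite $V_k\to X\hookrightarrow\mathbb P^N$, so $p_k$ is again a base change of the $d$-th power map and the general-position hypothesis (applied simultaneously to all $f(V)$) still guarantees the needed irreducibility and multiplicity preservation. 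With that phrasing your induction closes, and the two arguments become formally equivalent unrollings of one another.
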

\begin{proof}
	Let $\mathcal T$ be the marked tree associated to $\Sigma$. To prove this result, we are going to describe the marked tree $\widehat{\mathcal T}$ associated to $\underline{\widehat{\Sigma}}$ in terms of $\mathcal T$. Resume the notations of the diagram \eqref{diagV}. Remark that the construction above provides a natural morphism of trees $\phi : \widehat{\mathcal T} \longrightarrow \mathcal T$, sending the vertices labeled by the irreducible components of $\widehat{V}$ onto the vertex labeled by $V$.
	\medskip

	If $X \hookrightarrow \mathbb P^N$ is in general position, then each $f(V)$ intersects each of the hyperplanes $\{ z_i = 0 \} \subseteq \mathbb P^N$ transversally (in particular, this intersection is empty in the case $\dim f(V) = 0$).

	Recall, by Definitions \ref{defistratum} and \ref{defistratification}, that each $f : V \longrightarrow f(V)$ is birational onto its image. This implies that if $\dim V \geq 1$, then $\dim f(V) \geq 1$. In this case, the map $\widehat{V} \longrightarrow V$ is then a finite ramified cover of degree $\deg(p)$, and $\widehat{V}$ is irreducible. If $\dim V = 0$, then $f(V)$ is included in the locus where $p$ is étale, and then $\widehat{V}$ is a finite union of $\deg (p)$ reduced scheme points. This shows that if $\dim V \geq 1$ (resp. if $\dim V = 0$), the fibre $\phi^{-1}(V)$ contains exactly one (resp. $\deg(p)$) vertex of $\widehat{\mathcal T}$. Similarly, if $\mathfrak{s}$ is an edge of $\mathcal T$ between two varieties of dimensions $i$ and $i + 1$, then $\phi^{-1}(\mathfrak{s})$ contains $1$ (resp. $\deg(p)$) edges of $\widehat{\mathcal T}$ if $i > 0$ (resp. $i = 0$).

	\medskip

	For any $V$, the map $p$ realizes an isomorphism around the generic point of $f(V)$ in the local analytic topology (or in the \'etale topology). This has the following consequence: let $e$ be the trivialization of $L$ on some open subset $U \subseteq V$ provided by $\mathbf{e}$, and let $m$ be the multiplicity of $e$ along a strata projecting on a component $W$ of $V \setminus U$. Then, the multiplicity of the pullback $p^\ast e$ along each component of $\widehat{W}$ is also equal to $m$. This shows that if $\widehat{\mathfrak{s}}$ is an edge of $\widehat{\mathcal T}$, then $\widehat{\mathfrak{s}}$ and $\varphi(\widehat{\mathfrak{s}})$ have the same multiplicities.
	\medskip

	We have then shown that $\widehat{\mathcal T}$ can be described as the tree $\mathcal T$ where each leaf have been replaced by $\deg(p)$ copies, each edge keeping the same multiplicity. The inverse image by $\varphi$ of complete path in $\mathcal T$, consists in $\deg(p)$ paths in $\widehat{\mathcal T}$ with the same multiplicities. We can now conclude by Proposition \ref{propgraph}.
\end{proof}

We are now ready to end the proof of Theorem \ref{thmmorse} in the general case where $L$ is \emph{any} $\mathbb Q$-line bundle.

\begin{proof}[Proof of Theorem \ref{thmmorse} in the general case]
	
	We assume now that assume that $L = N^{\otimes 1/d}$ is a formal root of a standard line bundle, and we show how to prove the strong Morse inequalities. We can perform a generic Bloch-Gieseker covering $\widehat{X} \overset{p}{\longrightarrow} X$, in order to ensure that $p^\ast N$ has a $d$-th root, that we will denote by $L'$. Now, if $\underline{\widehat{\Sigma}} = (\widehat{\Sigma}, \frac{1}{d} \widehat{\mathbf{e}})$ is the pullback trivialized stratification of $\underline{\Sigma}$, we can use the version of Theorem \ref{thmmorse}, valid in the case where $L$ is a line bundle, to get
	\begin{equation} \label{eqchiBG}
		\chi^{[i]}(\widehat{X}, (N')^{\otimes m} \otimes p^\ast M) \leq (-1)^i c_1(L', \underline{\widehat{\Sigma}})^n_{[\leq i]} \frac{m^n}{n!} + O(m^{n-1}).
	\end{equation}
for $m$ divisible by $d$.

	Now, for such $m$, we have $\chi^{[i]}(\widehat{X}, (N')^{\otimes m} \otimes p^\ast M) = \chi^{[i]} (\widehat{X}, p^\ast L^{\otimes m} \otimes p^\ast M)$ and by Lemma \ref{lemmodification}, this is equal to $(\deg p) \, \chi^{[i]}(X, L^{\otimes m} \otimes M)$ up to a $O(m^{n-1})$ term. Also, by Lemma \ref{lemBG}, we have $c_1(L', \widehat{\underline{\Sigma}})^n_{[\leq i]} = \deg (p) \, c_1(L, \underline{\Sigma} )^n_{[\leq i]}$. Substituting in both sides of \eqref{eqchiBG} and dividing by $\deg (p)$, we get the requested formula.
\end{proof}

 \section{Green-Griffiths jet bundles. Preparation of the proof}

The main goal of this section is to introduce several reduction steps to simplify our proof of Theorem \ref{thmprinc}. It will also serve as a motivation for Theorem \ref{thmineqintegral}, whose scope goes beyond the study of Green-Griffiths jet differentials. 

\subsection{Weighted direct sums and Green-Griffiths vector bundles} \label{sectweightproj}

Our proof of Theorem \ref{thmprinc} will come from an application of a variant of the Morse inequalities of Theorem \ref{thmmorse} to the symmetric powers of some particular \emph{weighted direct sums of vector bundles}. We give now a few definitions related to these objects, and to the Green-Griffiths vector bundles. Most of these definitions can be found in \cite{dem12a}.
\medskip

\begin{defi} Let $X$ be a projective variety. We let $E_1, ..., E_r$ be vector bundles on $X$, and $a_1, ..., a_r$ be positive integers. We will often refer to this data as to the one of a \emph{weighted direct sum} $\mathbb E = E_1^{(a_1)} \oplus ... \oplus E_r^{(a_r)}$.

For any such weighted direct sum, we define its \emph{$m$-th symmetric product} to be the vector bundle on $X$

\begin{equation} \label{eqdefsym}
 {S}^m\, \mathbb E =  \bigoplus_{a_1 l_1 + ... + a_k l_k = m}  S^{l_1}\, E_1^{\ast} \; \otimes ... \otimes S^{l_k}\, E_k^{\ast}
\end{equation}
\end{defi}

The above terminology was used in \cite{cad17} to construct the \emph{weighted projective bundles} $\mathbb P(\mathbb E) = \mathbf{Proj}_X (S^\bullet \mathbb E)$, which were studied in particular by Al-Amrani \cite{alamrani97}. 

\medskip

Assume now that $X$ is a complex smooth projective manifold, and let $k \in \mathbb N$. We denote by $E_{k, \bullet}^{GG} \Omega_X = \bigoplus_{m \geq 0} E_{k, m}^{GG} \Omega_X$ the graded algebra of \emph{Green-Griffiths jet differentials} of order $k$ on $X$. By construction, the local holomorphic sections of $E_{k, m}^{GG} \Omega_X$ are holomorphic differential equations of order $k$ and of (weighted) degree $m$. We can then define the \emph{Green-Griffiths jet spaces} as $X_k^{GG} = \mathbf{Proj}_X(E_{k, \bullet}^{GG} \Omega_X)$. This projective bundle comes with its natural tautological (orbifold) line bundle $\mathcal O_k^{GG}(1)$.
\medskip

Recall that the algebra $E_{k, \bullet}^{GG} \Omega_X$ is endowed with a canonical $\mathbb N^k$-filtration, that we will denote by $F_\bullet E_{k, \bullet}^{GG} \Omega_X$, and which is compatible with the structure of $\mathcal O_X$-algebra. Its associated graded algebra is isomorphic to 
\begin{equation} \label{eqgradedterm}
\mathrm{Gr}_F \left( E_{k, \bullet}^{GG} \Omega_X \right) \cong S^{\bullet} \left( \Omega_X^{(1)} \oplus ... \oplus \Omega_X^{(k)} \right),
\end{equation}
where the right hand term is defined as in \eqref{eqdefsym}.
\medskip

To prove Theorem \ref{thmprinc}, the strategy coming from \cite{dem11} is to control the growth of $\chi^{[i]}(X, E_{k, m}^{GG} \Omega_X)$ when $X$ is a given manifold of general type. The next proposition shows that, to do so, it is sufficient to control the growth of $\chi^{[i]}$ on the graded term \eqref{eqgradedterm}.

\begin{prop} \label{propcompjetvb} For any $k, m \in \mathbb N$, and any $0 \leq i \leq n$, we have
\begin{equation} \label{equpperboundjet}
\chi^{[i]} \left( X, E_{k, m}^{GG} \Omega_X \right) \leq \chi^{[i]} \left( X, S^m (\Omega_X^{(1)} \oplus ... \oplus \Omega_X^{(k)}) \right).
\end{equation}
\end{prop}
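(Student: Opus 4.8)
The plan is to use the canonical $\mathbb{N}^k$-filtration $F_\bullet E_{k,m}^{GG}\Omega_X$ of the Green-Griffiths bundle together with the general principle that passing to the associated graded of a filtered sheaf does not decrease the truncated Euler characteristic $\chi^{[i]}$. Concretely, a finite filtration $0 = \mathcal{F}_0 \subseteq \mathcal{F}_1 \subseteq \cdots \subseteq \mathcal{F}_N = \mathcal{F}$ by coherent subsheaves gives short exact sequences $0 \to \mathcal{F}_{j} \to \mathcal{F}_{j+1} \to \mathcal{G}_j \to 0$ with $\mathcal{G}_j = \mathcal{F}_{j+1}/\mathcal{F}_j$, and the associated long exact sequence in cohomology yields $\chi^{[i]}(X, \mathcal{F}_{j+1}) \leq \chi^{[i]}(X, \mathcal{F}_j) + \chi^{[i]}(X, \mathcal{G}_j)$ for every $i$; this is exactly the elementary inequality already used (in the guise of Lemma \ref{lemfilt}) in the proof of Theorem \ref{thmmorse}. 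Summing over $j$ gives $\chi^{[i]}(X, \mathcal{F}) \leq \sum_j \chi^{[i]}(X, \mathcal{G}_j) = \chi^{[i]}(X, \mathrm{Gr}\,\mathcal{F})$.

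First I would recall why the key sub-additivity $\chi^{[i]}(\mathcal{F}_{j+1}) \le \chi^{[i]}(\mathcal{F}_j) + \chi^{[i]}(\mathcal{G}_j)$ holds: write the long exact sequence
$$
\cdots \to H^i(\mathcal{F}_j) \to H^i(\mathcal{F}_{j+1}) \to H^i(\mathcal{G}_j) \to H^{i+1}(\mathcal{F}_j) \to \cdots,
$$
truncate it at $H^{i+1}(\mathcal{F}_j)$, split it into short exact pieces, and alternate-sum the dimensions; the fact that the top term $h^i$ carries a $+$ sign in the definition of $\chi^{[i]}$ is precisely what makes the inequality go in the stated direction (the boundary map into $H^{i+1}(\mathcal{F}_j)$ contributes a term of sign $(-1)^{i+1}\cdot(-1) \le 0$ after the truncation, or more cleanly: $\chi^{[i]}$ is subadditive on short exact sequences with the middle term on the left). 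Then I would apply this to $\mathcal{F} = E_{k,m}^{GG}\Omega_X$ with its canonical $\mathbb{N}^k$-filtration, which is a finite filtration by coherent (indeed locally free) subsheaves, and invoke the isomorphism \eqref{eqgradedterm}, restricted to the degree-$m$ graded piece, namely $\mathrm{Gr}_F(E_{k,m}^{GG}\Omega_X) \cong S^m(\Omega_X^{(1)} \oplus \cdots \oplus \Omega_X^{(k)})$. Since $\chi^{[i]}$ is additive over finite direct sums, $\chi^{[i]}(X, \mathrm{Gr}_F(E_{k,m}^{GG}\Omega_X)) = \chi^{[i]}(X, S^m(\Omega_X^{(1)} \oplus \cdots \oplus \Omega_X^{(k)}))$, and combining with the displayed inequality $\chi^{[i]}(X,\mathcal{F}) \le \chi^{[i]}(X,\mathrm{Gr}\,\mathcal{F})$ gives \eqref{equpperboundjet}.

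The only genuinely delicate point is to make sure that the sub-additivity of $\chi^{[i]}$ is applied in the correct direction and that the filtration on $E_{k,m}^{GG}\Omega_X$ induced by the $\mathbb{N}^k$-filtration of the algebra is finite in each fixed degree $m$ — but this is immediate since $E_{k,m}^{GG}\Omega_X$ has finite rank, so any increasing filtration by subbundles stabilizes. I would also note that the argument does not use any positivity or properness beyond $X$ being a compact complex manifold (all cohomology groups are finite-dimensional), so no further hypotheses are needed. I do not expect any serious obstacle here; the proposition is essentially a formal consequence of the existence of the canonical filtration together with the standard behaviour of (truncated) Euler characteristics under filtrations, and the whole proof should be two or three lines once the sign bookkeeping in the definition of $\chi^{[i]}$ is spelled out.
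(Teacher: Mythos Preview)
Your proposal is correct and follows essentially the same approach as the paper: both argue that $\chi^{[i]}(E) \leq \chi^{[i]}(\mathrm{Gr}_F E)$ for any filtered vector bundle by reducing via induction on the length of the filtration to the sub-additivity $\chi^{[i]}(E) \leq \chi^{[i]}(F) + \chi^{[i]}(G)$ on a short exact sequence, which comes from truncating the long exact sequence in cohomology. One small correction: the elementary inequality you cite is not Lemma~\ref{lemfilt} (which concerns filtrations of $\mathcal O_D$) but rather the argument carried out in Step~4 of the proof of Theorem~\ref{thmmorse}, exactly as the paper's own proof points out.
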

\begin{proof}
The following simple argument has been communicated to me by L. Darondeau. The proposition comes from the fact that for any vector bundle $E \longrightarrow X$, and any filtration $F_\bullet E$, we have:
$$
\chi^{[i]} (E) \leq \chi^{[i]}(\mathrm{Gr}_F(E)).
$$
	To prove this last fact, we can reason by induction on the length of the filtration: the base case is when this length is equal to $2$. Then, the result boils down to showing that for any exact sequence $0 \longrightarrow F \longrightarrow E \longrightarrow G \longrightarrow 0$, we have $\chi^{[i]}(E) \leq \chi^{[i]}(F) + \chi^{[i]}(G)$. The last formula follows directly from the long exact sequence in cohomology, as in Step 4 of the proof of Theorem \ref{thmmorse}. 
\end{proof}

\begin{rem}

When $m$ is divisible enough, we can draw a more geometric picture for the last proposition, using the Rees deformation  (see \cite{BG96, cad17}). The latter yields a graded sheaf $\mathcal E$ of $\mathcal O_{X \times \mathbb{C}}$-algebras over $X \times \mathbb C$, whose specialization to the central fiber $X \times \left\{0\right\}$ is identified to  $S^{\bullet} \, ( \Omega_X^{(1)} \oplus ... \oplus \Omega_X^{(k)})$, and whose specialization to any other fiber $X \times \left\{ t \right\}$ ($t \neq 0$) identifies with $E_{k, \bullet}^{GG} \Omega_X$. Projectivizing $\mathcal E$ over $X \times \mathbb C$, we get a morphism of varieties $\mathcal P \longrightarrow X \times \mathbb C$. This can be seen as a family of projectivized bundles $(P_t \longrightarrow X)_{t \in \mathbb C}$, with natural identifications  $P_0 \cong \mathbb{P} (\Omega_X^{(1)} \oplus ... \oplus \Omega_X^{(k)})$ and $P_t \cong X_k^{GG}$ ($t\neq0$). There is also a natural orbifold line bundle $\mathcal L \longrightarrow \mathcal P$, which restricts to the respective tautological line bundles of $X_k^{GG}$ and $\mathbb{P} (\Omega_X^{(1)} \oplus ... \oplus \Omega_X^{(k)})$. If $m$ is divisible enough, the standard line bundle $\mathcal L^{\otimes m}$ is flat over $\mathbb C$ since $\dim \mathbb C = 1$ and $\mathcal P$ is reduced. Thus, we can apply the upper semi-continuity property of $\chi^{[i]}$  (see Demailly \cite{dem95}) to obtain, for a generic $t \neq 0$,
$$
\chi^{[i]} (P_t, \mathcal L_t ^{\otimes {m}}) \leq \chi^{[i]} (P_0, \mathcal L_0^{\otimes m}).
$$
Now, we have $\chi^{[i]} (P_t, \mathcal L_t^{\otimes m}) = \chi^{[i]} (X_k^{GG}, \mathcal O_k^{GG}(m)) = \chi^{[i]} (X, E_{k, m}^{GG} \Omega_X)$, and the right hand side identifies similarly with the right hand side of \eqref{equpperboundjet}. 
\end{rem}

\subsection{Relative version. Twists by ideal sheaves} \label{sectrel} In this section, we state a relative version of Proposition \ref{propcompjetvb}, where we tensor additionally the sheaves $\Omega_X$ by ideal sheaves of the form $\mathcal O(-E)$, for some auxiliary effective divisor.\medskip

Let $p : X' \longrightarrow X$ be a morphism of projective manifolds, and let $E$ be an effective divisor on $X'$. Then, for any $k, m \in \mathbb N$, the vector bundle $p^\ast E_{k, m}^{GG}$ admits a filtration with graded term
$$
p^\ast S^m (\Omega_X^{(1)} \oplus ... \oplus \Omega_X^{(k)}).
$$ 

This admits
\begin{equation} \label{eqsubsheaf}
	\mathcal O(-m E) \otimes p^\ast S^m (\Omega_X^{(1)} \oplus ... \oplus \Omega_X^{(k)})
\end{equation} 
as a subsheaf, and the $\mathcal O_{X'}$-module $\mathcal E_m = \mathcal O(-m E) \otimes p^\ast E_{k,m}^{GG} \Omega$ has a natural induced filtration for which the graded module is isomorphic to \eqref{eqsubsheaf}. 

In this context, the proof of Proposition \ref{propcompjetvb} applied to $\mathcal E_m$ yields the following result.

\begin{prop} \label{proprelative}
Let $p : X' \longrightarrow X$ be a surjective morphism of projective manifolds of dimension $n$, and let $E$ be an effective divisor on $X'$. Fix $k \in \mathbb N$. Then, for any $m \in \mathbb N$, there exists a subsheaf $\mathcal E_m \subseteq p^\ast E_{k, m}^{GG} \Omega_X$ such that, for any $i$ $(0 \leq i \leq n)$, we have
$$
	\chi^{[i]} \left( X', \mathcal E_m \right) \leq \chi^{[i]} \left( X',	\mathcal O(-m p^\ast E) \otimes p^\ast S^m (\Omega_X^{(1)} \oplus ... \oplus \Omega_X^{(k)}) \right).
$$
\end{prop}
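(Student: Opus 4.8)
The plan is to copy the argument of Proposition~\ref{propcompjetvb} essentially verbatim, now applied to the sheaf $\mathcal E_m = \mathcal O(-m p^\ast E) \otimes p^\ast E_{k,m}^{GG}\Omega_X$ rather than to $E_{k,m}^{GG}\Omega_X$ itself. First I would record the filtration: pulling back the canonical $\mathbb N^k$-filtration $F_\bullet E_{k,\bullet}^{GG}\Omega_X$ along $p$ gives a filtration of $p^\ast E_{k,m}^{GG}\Omega_X$ with graded term $p^\ast S^m(\Omega_X^{(1)}\oplus\cdots\oplus\Omega_X^{(k)})$, using \eqref{eqgradedterm} and the fact that $p^\ast$ is exact on locally free sheaves. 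Tensoring the whole filtration with the line bundle $\mathcal O(-m p^\ast E)$ preserves exactness, so $\mathcal E_m$ acquires an induced finite filtration whose associated graded is exactly $\mathcal O(-m p^\ast E)\otimes p^\ast S^m(\Omega_X^{(1)}\oplus\cdots\oplus\Omega_X^{(k)})$, the sheaf \eqref{eqsubsheaf}.

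Next I would invoke the abstract inequality already established inside the proof of Proposition~\ref{propcompjetvb}: for any coherent sheaf $\mathcal F$ on a projective variety carrying a finite filtration $F_\bullet\mathcal F$, one has $\chi^{[i]}(\mathcal F)\le \chi^{[i]}(\mathrm{Gr}_F\mathcal F)$ for every $i$. This reduces by induction on the length of the filtration to the two-step case $0\to\mathcal F'\to\mathcal F\to\mathcal F''\to 0$, where the claim $\chi^{[i]}(\mathcal F)\le\chi^{[i]}(\mathcal F')+\chi^{[i]}(\mathcal F'')$ follows from the long exact sequence in cohomology exactly as in Step~4 of the proof of Theorem~\ref{thmmorse} (one truncates the long exact sequence, the cokernel term has nonnegative Euler characteristic, and the signs are arranged so the top $h^i$ carries a $+$). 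Nothing in this step used local freeness of the graded pieces, so it applies to our (no longer locally free, because of the twist) situation without change.

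Applying this inequality to $\mathcal E_m$ with the filtration above yields directly
$$
\chi^{[i]}(X',\mathcal E_m)\ \le\ \chi^{[i]}\!\left(X',\ \mathcal O(-m p^\ast E)\otimes p^\ast S^m(\Omega_X^{(1)}\oplus\cdots\oplus\Omega_X^{(k)})\right),
$$
which is the assertion. It remains only to note, for the statement's phrasing, that $\mathcal E_m$ is indeed a subsheaf of $p^\ast E_{k,m}^{GG}\Omega_X$: the bottom piece of the pulled-back filtration of $p^\ast E_{k,m}^{GG}\Omega_X$ is (a twist-free copy of) $p^\ast S^m(\Omega_X^{(1)}\oplus\cdots)$, and the natural inclusion $\mathcal O(-mp^\ast E)\hookrightarrow\mathcal O_{X'}$ induces an injection of $\mathcal E_m$ into $p^\ast E_{k,m}^{GG}\Omega_X$; alternatively one identifies $\mathcal E_m$ with $p^\ast E_{k,m}^{GG}\Omega_X\otimes\mathcal O(-mp^\ast E)$ and uses the evident map. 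I do not expect a genuine obstacle here; the only mild subtlety is bookkeeping — being careful that tensoring a filtration by a line bundle commutes with taking $\mathrm{Gr}$, and that the abstract $\chi^{[i]}$-subadditivity was proved for arbitrary coherent sheaves and not just vector bundles, so that the twist (which destroys local freeness of the graded pieces only in the trivial sense of being a line-bundle twist of a bundle) causes no trouble.
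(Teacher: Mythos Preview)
Your proposal is correct and follows essentially the same approach as the paper: define $\mathcal E_m = \mathcal O(-mE)\otimes p^\ast E_{k,m}^{GG}\Omega_X$, equip it with the filtration obtained by tensoring the pulled-back canonical filtration by the line bundle $\mathcal O(-mE)$, and apply the $\chi^{[i]}$-subadditivity for filtered sheaves from the proof of Proposition~\ref{propcompjetvb}. One small remark: tensoring a vector bundle by a line bundle does not destroy local freeness, so your parenthetical ``no longer locally free, because of the twist'' is inaccurate (and unnecessary --- the argument works for arbitrary coherent sheaves anyway).
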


In particular, specializing Proposition \ref{proprelative} to $i = 1$, and applying the two inequalities $h^0(p^\ast E_{k,m}^{GG} \Omega_X) \geq h^0(\mathcal E_m)$ and $h^0 \geq - \chi^{[1]}$, we obtain
\begin{prop} \label{proptwisteestimate}
Let $p : X' \longrightarrow X$ be a surjective morphism of projective manifolds of dimension $n$, and let $E$ be an effective divisor on $X'$. Then, for any $k, m \geq 1$, we have
$$
h^0(X', p^\ast E_{k,m}^{GG} \Omega) \geq - \chi^{[1]} \left( X',	\mathcal O(-m p^\ast E) \otimes p^\ast S^m (\Omega_X^{(1)} \oplus ... \oplus \Omega_X^{(k)}) \right)
$$
\end{prop}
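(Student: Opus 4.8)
The statement to be proved is Proposition \ref{proptwisteestimate}, which is presented in the excerpt as an immediate consequence of Proposition \ref{proprelative} specialized to $i = 1$. So the plan is essentially to chase through two elementary inequalities. First I would recall the content of Proposition \ref{proprelative}: for each $m$ there is a subsheaf $\mathcal E_m \subseteq p^\ast E_{k,m}^{GG}\Omega_X$ obtained from the natural filtration (coming from the jet filtration pulled back and twisted by $\mathcal O(-mE)$), whose graded pieces are subsheaves of $\mathcal O(-mp^\ast E)\otimes p^\ast S^m(\Omega_X^{(1)}\oplus\cdots\oplus\Omega_X^{(k)})$, and for which $\chi^{[i]}(X',\mathcal E_m)\leq \chi^{[i]}\big(X',\mathcal O(-mp^\ast E)\otimes p^\ast S^m(\Omega_X^{(1)}\oplus\cdots\oplus\Omega_X^{(k)})\big)$. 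Setting $i=1$ there gives the inequality with the right-hand side of Proposition \ref{proptwisteestimate} as an upper bound for $\chi^{[1]}(X',\mathcal E_m)$, i.e. $\chi^{[1]}(X',\mathcal E_m)\leq \chi^{[1]}\big(X',\mathcal O(-mp^\ast E)\otimes p^\ast S^m(\cdots)\big)$.

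Next I would supply the two bounds relating this to $h^0(X',p^\ast E_{k,m}^{GG}\Omega_X)$. The first is $h^0(X',p^\ast E_{k,m}^{GG}\Omega_X)\geq h^0(X',\mathcal E_m)$, which is immediate from the inclusion of sheaves $\mathcal E_m\hookrightarrow p^\ast E_{k,m}^{GG}\Omega_X$ and left-exactness of global sections (an injection of sheaves induces an injection on $H^0$). The second is $h^0(X',\mathcal E_m)\geq -\chi^{[1]}(X',\mathcal E_m)$: by definition $\chi^{[1]}(X',\mathcal E_m)=h^1(X',\mathcal E_m)-h^0(X',\mathcal E_m)$, so $-\chi^{[1]}(X',\mathcal E_m)=h^0(X',\mathcal E_m)-h^1(X',\mathcal E_m)\leq h^0(X',\mathcal E_m)$ since $h^1\geq 0$. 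Chaining these:
\[
h^0(X',p^\ast E_{k,m}^{GG}\Omega_X)\ \geq\ h^0(X',\mathcal E_m)\ \geq\ -\chi^{[1]}(X',\mathcal E_m)\ \geq\ -\chi^{[1]}\big(X',\mathcal O(-mp^\ast E)\otimes p^\ast S^m(\Omega_X^{(1)}\oplus\cdots\oplus\Omega_X^{(k)})\big),
\]
where the last step uses the inequality from Proposition \ref{proprelative} (multiplying $\chi^{[1]}(X',\mathcal E_m)\leq \chi^{[1]}(\cdots)$ by $-1$). This is exactly the claimed estimate.

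Since everything reduces to Proposition \ref{proprelative}, the only genuine content to verify is that proposition itself, whose proof is stated to be the proof of Proposition \ref{propcompjetvb} applied to $\mathcal E_m$. There the key point — the main obstacle, such as it is — is constructing the filtration on $\mathcal E_m = \mathcal O(-mp^\ast E)\otimes p^\ast E_{k,m}^{GG}\Omega_X$ with the right graded term. This comes from pulling back the canonical $\mathbb N^k$-filtration $F_\bullet E_{k,\bullet}^{GG}\Omega_X$ of the jet algebra (whose graded algebra is $S^\bullet(\Omega_X^{(1)}\oplus\cdots\oplus\Omega_X^{(k)})$, cf. \eqref{eqgradedterm}), tensoring by the invertible sheaf $\mathcal O(-mp^\ast E)$ — which preserves the filtration structure and commutes with taking associated graded since it is locally free of rank one — and then observing that $\mathcal O(-mp^\ast E)\otimes p^\ast S^m(\Omega_X^{(1)}\oplus\cdots\oplus\Omega_X^{(k)})$ is a subsheaf of $p^\ast S^m(\Omega_X^{(1)}\oplus\cdots\oplus\Omega_X^{(k)})$ (this is the sheaf \eqref{eqsubsheaf}), so that comparing against it rather than against the graded term only weakens the inequality in the correct direction. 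The inequality $\chi^{[i]}(\mathcal E)\leq\chi^{[i]}(\mathrm{Gr}_F\mathcal E)$ is then exactly the lemma proved in Proposition \ref{propcompjetvb} by induction on the length of the filtration, each inductive step using that a short exact sequence $0\to F\to E\to G\to 0$ gives $\chi^{[i]}(E)\leq\chi^{[i]}(F)+\chi^{[i]}(G)$ via the long exact cohomology sequence (the same telescoping argument as in Step 4 of the proof of Theorem \ref{thmmorse}). No further estimates are needed for Proposition \ref{proptwisteestimate} beyond these purely formal manipulations.
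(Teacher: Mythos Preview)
Your proof is correct and follows precisely the paper's own argument: specialize Proposition \ref{proprelative} to $i=1$, then chain the inequalities $h^0(p^\ast E_{k,m}^{GG}\Omega_X)\geq h^0(\mathcal E_m)$ (from the subsheaf inclusion) and $h^0\geq -\chi^{[1]}$. One small remark on your supplementary discussion of Proposition \ref{proprelative}: since tensoring by the invertible sheaf $\mathcal O(-mE)$ commutes with taking associated graded, the graded term of $\mathcal E_m$ \emph{is} exactly \eqref{eqsubsheaf}, so no additional ``weakening'' step is needed there --- but this does not affect the argument for Proposition \ref{proptwisteestimate} itself.
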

\medskip

The point of Proposition \ref{propcompjetvb} (or of the more general Proposition \ref{proptwisteestimate}) is to permit us to limit our study to the symmetric algebra ${S^\bullet (\Omega^{(1)} \oplus ... \oplus \Omega^{(k)})}$. In the following sections, we will perform this study for the more general case of symmetric products of the form $S^m(E^{(1)} \oplus ... \oplus E^{(k)})$, where $E$ is any vector bundle with $\mathrm{det} \, E$ big. The goal of the next section is to show that it is actually enough to study the case where $E$ is a direct sum of line bundles.

\subsection{Reduction to a sum of line bundles} \label{sectredsum}

The main goal of the present section is to prove the following \emph{splitting principle}, which will drastically simplify our application of Morse inequalities to weighted direct sums. 
\begin{prop} \label{propredlb}
Let $\mathbb E = E_1^{(a_1)} \oplus ...\oplus E_r^{(a_r)}$ be a weighted direct sum over a complex projective manifold $X$. Then, there exists a smooth modification $p : \widetilde{X} \longrightarrow X$, and a weighted direct sum $\overline{\mathbb E} = \overline{E_1}^{(a_1)} \oplus ... \oplus \overline{E_r}^{(a_r)}$ over $\widetilde{X}$, such that
\begin{enumerate} \itemsep=0em
\item each $\overline{E_i}$ is a direct sum of line bundles over $\widetilde{X}$, of the same rank as $E_i$ ;
\item for each $i$, $\det(\overline{E_i}) \cong p^\ast \, \det (E_i)$,
\item for any $i$ (with $1 \leq i \leq \dim X$), we have the asymptotic inequality
$$
\chi^{[i]} (X, S^m \, \mathbb E) \leq \chi^{[i]} (\widetilde{X}, S^m \, \overline{\mathbb E}) + O(m^{n + {\rm rk}\, \mathbb E - 1}).
$$
\end{enumerate}
\end{prop}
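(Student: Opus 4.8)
The plan is to prove Proposition \ref{propredlb} by a standard flag-bundle argument iterated over the factors $E_1, \dots, E_r$, keeping track of Euler characteristic estimates via a filtration argument of the kind used in Proposition \ref{propcompjetvb}. Fix one factor, say $E_1$ of rank $e_1$, and let $\pi : \mathrm{Fl}(E_1) \longrightarrow X$ be the full flag bundle of $E_1$. On $\mathrm{Fl}(E_1)$ the pullback $\pi^\ast E_1$ admits a filtration $0 = G_0 \subset G_1 \subset \dots \subset G_{e_1} = \pi^\ast E_1$ by subbundles with line bundle quotients $L_{1,1}, \dots, L_{1,e_1}$, and $\bigotimes_j L_{1,j} \cong \pi^\ast \det E_1$. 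Iterating over $E_2, \dots, E_r$ (each step a flag bundle over the previous total space) produces a smooth tower $p : \widetilde X \longrightarrow X$; it is a composition of projective bundles, hence $\widetilde X$ is smooth projective, and setting $\overline{E_i} = \bigoplus_j L_{i,j}$ gives (1) and (2) for free. Note $\dim \widetilde X = n + D$ where $D$ is the total fiber dimension of the flag tower; I will need to reconcile this with the stated error term $O(m^{n + \mathrm{rk}\,\mathbb{E} - 1})$ — see the last paragraph.

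**The Euler characteristic comparison.** For point (3), the key input is the projection formula together with the fact that a full flag bundle has a filtration of $\mathcal O$ by the trivial bundle plus relatively negative pieces; more concretely, I will use that for the flag bundle $\pi$, the vector bundle $\pi^\ast E_1$ admits a filtration whose graded pieces, after tensoring suitably, recover enough of $S^{l_1} E_1^\ast$. The cleanest route: first replace $X$ by $\widetilde X$ and note that on $\widetilde X$ the bundle $p^\ast S^m \mathbb{E}$ carries a natural filtration (built from the tensor products of the flag filtrations on each $p^\ast E_i^\ast$) whose associated graded is a direct sum of the bundles $S^{l_1}\overline{E_1}^{\ast} \otimes \dots \otimes S^{l_r}\overline{E_r}^{\ast}$ — i.e. exactly $S^m\overline{\mathbb E}$ — up to reordering. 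By the inequality $\chi^{[i]}(F) \leq \chi^{[i]}(\mathrm{Gr}\, F)$ established in the proof of Proposition \ref{propcompjetvb}, this yields
$$
\chi^{[i]}(\widetilde X, p^\ast S^m \mathbb{E}) \leq \chi^{[i]}(\widetilde X, S^m \overline{\mathbb E}).
$$
It then remains to relate $\chi^{[i]}(\widetilde X, p^\ast S^m\mathbb E)$ to $\chi^{[i]}(X, S^m \mathbb E)$. Since $p$ is a composition of flag bundles, $R^q p_\ast \mathcal O_{\widetilde X} = 0$ for $q > 0$ and $p_\ast \mathcal O_{\widetilde X} = \mathcal O_X$, so by the projection formula and the Leray spectral sequence $H^\bullet(\widetilde X, p^\ast S^m\mathbb E) \cong H^\bullet(X, S^m \mathbb E)$ \emph{exactly}, hence $\chi^{[i]}(\widetilde X, p^\ast S^m \mathbb E) = \chi^{[i]}(X, S^m\mathbb E)$ with no error term at all. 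Combining the two gives (3) even without the $O(m^{\cdots})$ correction — unless the filtration-with-reordering introduces lower-order discrepancies, which is where the error term enters.

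**Where the error term comes from, and the main obstacle.** The subtlety I expect to be the real work: when one builds the filtration of $S^m(p^\ast E_1^\ast \otimes \cdots)$ from the flag filtrations, the associated graded is $S^m\overline{\mathbb E}$ only \emph{after} further filtering each $S^{l}\,\overline{E_i}^\ast$ — and $S^l$ of a direct sum of line bundles $\bigoplus_j L_{i,j}$ is literally $\bigoplus_{\sum m_j = l}\bigotimes L_{i,j}^{\otimes m_j}$, which is already split, so in fact there is no discrepancy here and the inequality should hold on the nose. The genuine issue is more likely that one does \emph{not} want to pass all the way to full flag bundles simultaneously but rather to argue inductively one factor at a time, and at each stage the relevant comparison $\chi^{[i]}(X, S^m \mathbb E) \le \chi^{[i]}(\mathrm{Fl}(E_1), S^m \mathbb E')$ for the modified weighted sum $\mathbb E' = \overline{E_1}^{(a_1)} \oplus E_2^{(a_2)} \oplus \cdots$ requires comparing $S^{l_1} E_1^\ast$ with $S^{l_1}\overline{E_1}^\ast$ on the flag bundle; here $\pi^\ast S^{l_1} E_1^\ast$ has a filtration with graded $S^{l_1}\overline{E_1}^\ast$, giving $\chi^{[i]}(\pi^\ast S^{l_1}E_1^\ast \otimes \cdots) \le \chi^{[i]}(S^{l_1}\overline{E_1}^\ast \otimes \cdots)$ term by term, hence after summing over all $(l_1,\dots,l_r)$ with $\sum a_j l_j = m$. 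This is clean. So the main obstacle is purely bookkeeping: verifying that the filtration on $S^m$ of a direct-sum-of-factors is compatible with the $\mathbb N$-grading by $m$ (so that the comparison is term-by-term in $l$, not just globally), and handling the dimension count — the honest inequality one gets is with error $O(m^{\dim \widetilde X - 1})$, and one checks $\dim\widetilde X - 1$ is dominated by (or one simply absorbs it into) $n + \mathrm{rk}\,\mathbb E - 1$ only after observing that the leading term of $\chi^{[i]}(X, S^m\mathbb E)$ already grows like $m^{n + \mathrm{rk}\,\mathbb E - 1}$, so the statement is really comparing two quantities of the same order $m^{n+\mathrm{rk}\,\mathbb E-1}$ and the stated error term is the natural one for that common order; the flag-bundle dimension $\dim\widetilde X$ is larger, but cohomology on $\widetilde X$ of a pulled-back sheaf sees only the base, so no spurious higher-order growth appears. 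I would write the induction on $r$ carefully, with the base case $r=1$ being exactly the flag-bundle filtration argument above, and the inductive step splitting off $\overline{E_1}$ while freezing the remaining factors.
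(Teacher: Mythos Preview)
Your argument is essentially correct but proves a slightly different statement: a flag-bundle tower is not a \emph{modification} (it is not birational), so your $\widetilde X$ has $\dim\widetilde X>n$ and does not satisfy the hypothesis of the proposition as written. The paper instead constructs a genuine birational model via Lemma~\ref{lemlb}: for each $E_i$ one picks a generic rank-one quotient (coming from a trivialization on a dense open set), saturates the kernel, and uses Hironaka's principalization theorem to blow up so that this kernel becomes a sub\emph{bundle}; iterating over the rank yields a complete filtration of $p^\ast E_i$ by subbundles on a smooth birational model $\widetilde X$ with $\dim\widetilde X=n$. The filtration-versus-graded inequality you invoke is exactly the paper's Lemma~\ref{lemdeffilt}, and the passage from $X$ to $\widetilde X$ is handled by the Leray argument of Lemma~\ref{lemmodification}, which is where the $O(m^{n+\operatorname{rk}\mathbb E-1})$ term enters.

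Your flag-bundle route actually yields a sharper inequality (no error term, since $Rp_\ast\mathcal O_{\widetilde X}=\mathcal O_X$ holds exactly for an iterated projective bundle), and the core of the two arguments --- subadditivity of $\chi^{[i]}$ along a filtration --- is identical. The trade-off is that the paper's later Morse-inequality machinery (Theorem~\ref{thmineqintegral} and the stratification formalism) is set up on a variety of dimension $n$; applying it directly on a flag tower of larger dimension would give the wrong exponent, and one would have to push everything back down to $X$ via Borel--Weil--Bott to recover the correct growth rate. Keeping $\dim\widetilde X=n$ via a birational modification, as the paper does, avoids this detour.
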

\begin{rem}
	The item (2) implies in particular that $\det(\overline{E_i})$ is big if $\det(\overline{E_i})$ is.
\end{rem}

We begin by a simple lemma.

\begin{lem} \label{lemlb}
Let $X$ be a projective complex manifold, and let $E \longrightarrow X$ be a vector bundle of rank $m$.  There exists a smooth modification $p : \widetilde{X} \longrightarrow X$ such that $p^\ast E$ admits a total filtration by vector subbundles:
$$
F_1 \subsetneq F_2 \subsetneq ... \subsetneq F_m = p^\ast E.
$$
\end{lem}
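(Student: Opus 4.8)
The plan is to argue by induction on the rank $m$ of $E$, splitting off one line subbundle at a time after a suitable modification. If $m=1$ there is nothing to prove: take $\widetilde X = X$ and $F_1 = E$. So suppose $m \geq 2$ and that the lemma is known for all vector bundles of rank $< m$ on any projective complex manifold.

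\emph{Producing a rational sub-line-bundle.} By Serre's theorem, choose an ample line bundle $A$ on $X$ such that $E \otimes A$ is globally generated, and pick any nonzero section $s \in H^0(X, E \otimes A)$; via the identification $H^0(X, E\otimes A) \cong \mathrm{Hom}(A^{-1}, E)$, it is a nonzero sheaf morphism $A^{-1} \longrightarrow E$, and over the complement of its (proper closed) zero locus $Z$ this is the inclusion of a line subbundle with locally free quotient. Equivalently, writing $\pi : \mathbb P_{\mathrm{sub}}(E) \longrightarrow X$ for the bundle of lines in the fibres of $E$, with tautological inclusion $\mathcal O(-1) \hookrightarrow \pi^\ast E$, the section $s$ determines a section of $\pi$ over $X \setminus Z$, i.e.\ a rational section $\sigma$ of $\pi$.

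\emph{Resolving and splitting.} By Hironaka's resolution of indeterminacies, there is a smooth modification $\mu : X' \longrightarrow X$ together with a morphism $\widetilde\sigma : X' \longrightarrow \mathbb P_{\mathrm{sub}}(E)$ satisfying $\pi \circ \widetilde\sigma = \mu$. Pulling back the tautological exact sequence $0 \to \mathcal O(-1) \to \pi^\ast E \to Q \to 0$ by $\widetilde\sigma$ and using $\widetilde\sigma^\ast \pi^\ast E = \mu^\ast E$, we obtain a line subbundle $L := \widetilde\sigma^\ast \mathcal O(-1) \subseteq \mu^\ast E$ whose quotient $Q_1 := \mu^\ast E / L$ is \emph{locally free} of rank $m-1$; the local freeness of $Q_1$ is automatic from the universal property of $\mathbb P_{\mathrm{sub}}(E)$, and this is exactly why one passes through the projective bundle rather than trying to use the saturation of $A^{-1}$ in $E$ directly. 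Now apply the induction hypothesis to $Q_1$ on $X'$: there is a smooth modification $\nu : \widetilde X \longrightarrow X'$ and a total filtration $G_1 \subsetneq \dots \subsetneq G_{m-1} = \nu^\ast Q_1$ by vector subbundles. Set $p := \mu \circ \nu$ (a smooth modification of $X$, being a composition of such), and consider $0 \to \nu^\ast L \to p^\ast E \overset{q}{\to} \nu^\ast Q_1 \to 0$. Define $F_1 := \nu^\ast L$ and $F_i := q^{-1}(G_{i-1})$ for $2 \leq i \leq m$. Each $F_i$ is a vector subbundle of $p^\ast E$ (the preimage of a subbundle under a surjective bundle morphism is a subbundle), the inclusions are strict since the successive quotients $F_1$, $F_2/F_1 \cong G_1$ and $F_i/F_{i-1} \cong G_{i-1}/G_{i-2}$ all have rank $1$, and $F_m = q^{-1}(\nu^\ast Q_1) = p^\ast E$. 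This is the required total filtration.

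The only non-elementary ingredient, and the step to be careful with, is the resolution of the indeterminacy locus of $\sigma$ (Hironaka); everything else is routine manipulation of exact sequences of vector bundles. As a variant, one can avoid the induction by building a rational section of the full flag bundle $\mathrm{Fl}(E) \to X$ directly (choosing rational sub-line-bundles successively, as above) and resolving its indeterminacy in one step, then pulling back the tautological flag; the recursive formulation is nonetheless the most transparent way to see that such a rational section exists.
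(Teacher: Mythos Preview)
Your proof is correct. Both your argument and the paper's proceed by induction on the rank and invoke Hironaka at the key step, but they run in opposite directions: you split off a line \emph{sub}bundle by resolving the indeterminacy of a rational section of $\mathbb P_{\mathrm{sub}}(E)$, then apply induction to the rank-$(m-1)$ quotient; the paper instead takes a generic rank-$(m-1)$ saturated subsheaf $\mathcal F_0 \subseteq E$, notes that the torsion-free rank-one quotient has the form $\mathcal I \otimes L_0$ for an ideal sheaf $\mathcal I$ and a line bundle $L_0$, principalises $\mathcal I$ via Hironaka so that the quotient becomes an honest line bundle, and then applies induction to the kernel. Your route through the projective bundle is slightly more geometric and makes the local freeness of the cokernel automatic from the tautological sequence; the paper's approach is more sheaf-theoretic but avoids having to produce a global section or a rational map. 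Either way the heavy lifting is Hironaka, and the two arguments are equivalent in strength and length.
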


\begin{proof}
We proceed by induction on $m = \mathrm{rk}\, E$. The case where $\mathrm{rk} E = 1$ is trivial. Let us assume that $\mathrm{rk}\, E \geq 2$.

Let $U \subseteq X$ be a Zariski open subset such that $E|_U$ is trivialized by a frame $(e_1, ..., e_m)$. Let $\mathcal F_0 \subseteq E$ be the unique saturated subsheaf such that $\mathcal F_0|_{U} = \left< e_1, ..., e_{m-1} \right>$. Then, we have an exact sequence
$$
0 \longrightarrow \mathcal F_0 \longrightarrow E \longrightarrow \mathcal L_0 \longrightarrow 0,
$$
	where $\mathcal L_0$ is a torsion free rank one coherent sheaf. By e.g. \cite[Chapter V, \S 5, 6]{kob87}, $L_0 = \mathcal L_0^{\vee \vee}$ is a line bundle over $X$, and we have a natural identification $\mathcal L_0 = \mathcal I \otimes_{\mathcal O_X} L_0$ for some ideal sheaf $\mathcal I$ over $X$. Now, use Hironaka's principalization theorem to obtain a smooth modification $\pi : X' \longrightarrow X$ such that $\pi^\ast \mathcal I = \mathcal O(-E)$ as a subsheaf of $\mathcal O_{X'}$, for some effective Cartier divisor $E$. Let $L = \mathcal O(-E) \otimes_{\mathcal O_{X_0}} \pi^\ast L_0$: this sheaf is a line bundle, and the morphism $E \twoheadrightarrow \mathcal L_0 = \mathcal I {\otimes} L_0$ induces a natural surjection $\pi^\ast E \twoheadrightarrow L$. This map is a surjection of locally free sheaves, hence its kernel is a subvector bundle $F \subseteq \pi^\ast E$, and $\quotientd{\pi^\ast E}{F} = L$. Since $\mathrm{rk} \, F < \mathrm{rk} \, E$, we can now apply the induction hypothesis to $F$ on the manifold $X'$, and get a modification $\widetilde{X} \longrightarrow X$ dominating $X'$, and satisfying our requirements.
\end{proof}

We have then reduced to the case where the vector bundles are filtered by complete flags of vector subbundles. Then, further reducing to direct sums of line bundles is not hard.

\begin{lem} \label{lemdeffilt}
Let $E_1, ..., E_m$ be vector bundles over a projective manifold $X$, and let $a_1, ..., a_m \in \mathbb N^\ast$. Assume that each $E_i$ admits a filtration by vector subbundles
\begin{equation} \label{eqfiltration}
E_{i, 1} \subsetneq ... \subsetneq E_{i, r_i} = E_i,
\end{equation}
where $r_i = \mathrm{rk}\, E_i$. Let $L_{i, 1}, ..., L_{i, r_i}$ be the successive line bundle quotients. Denote by $\overline{E_i} = L_{i, 1} \oplus ... \oplus L_{i, r_i}$ the graded bundle of the filtration \eqref{eqfiltration}. Then, for any $i$ $(1 \leq i \leq n)$, and any $m \geq 1$, we have
\begin{equation} \label{equpperboundlb}
\chi^{[i]} (S^m (E_1 ^{(a_1)} \oplus ... \oplus E_k^{(a_k)})) \leq \chi^{[i]} (S^m (\overline{E_1}^{(a_1)} \oplus ... \oplus \overline{E_k}^{(a_k)})).
\end{equation}
\end{lem}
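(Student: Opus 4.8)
The statement to prove is Lemma \ref{lemdeffilt}, asserting the inequality \eqref{equpperboundlb} comparing the truncated Euler characteristics of the symmetric products of a weighted direct sum of \emph{filtered} bundles and of its associated \emph{graded} bundle. The strategy is to reduce this to the elementary inequality $\chi^{[i]}(E) \le \chi^{[i]}(\mathrm{Gr}_F(E))$ already established in the proof of Proposition \ref{propcompjetvb}: it suffices to exhibit, for each $m$, a filtration on $S^m(E_1^{(a_1)}\oplus\dots\oplus E_k^{(a_k)})$ whose associated graded sheaf is exactly $S^m(\overline{E_1}^{(a_1)}\oplus\dots\oplus\overline{E_k}^{(a_k)})$.

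First I would construct such a filtration. Recall that $S^m(E_1^{(a_1)}\oplus\dots\oplus E_k^{(a_k)}) = \bigoplus_{a_1 l_1+\dots+a_k l_k = m} S^{l_1} E_1^\ast \otimes \dots \otimes S^{l_k} E_k^\ast$, so it suffices to filter each summand $S^{l_1} E_1^\ast \otimes \dots \otimes S^{l_k} E_k^\ast$ compatibly. A dual of the filtration \eqref{eqfiltration} gives, for each $i$, a filtration of $E_i^\ast$ with graded pieces $L_{i,j}^\ast$ (for $j = 1,\dots,r_i$). The key classical fact is that for a filtered vector bundle $0 = W_0 \subsetneq W_1 \subsetneq \dots \subsetneq W_r = W$ with line bundle quotients $M_j = W_j/W_{j-1}$, the symmetric power $S^l W$ inherits a natural filtration whose associated graded is $\bigoplus_{d_1+\dots+d_r = l} M_1^{\otimes d_1}\otimes\dots\otimes M_r^{\otimes d_r} = S^l(\bar W)$, where $\bar W = \bigoplus M_j$; and similarly tensor products of filtered bundles are filtered with graded the tensor product of the gradeds. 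Applying this to each factor $S^{l_i} E_i^\ast$ and then taking tensor products over $i$ (and direct sums over the solution set $\{a_1 l_1+\dots+a_k l_k = m\}$), one obtains a finite filtration of $S^m(E_1^{(a_1)}\oplus\dots)$ whose graded is precisely $\bigoplus_{a_1 l_1+\dots = m} \bigl(\bigotimes_i S^{l_i}\overline{E_i}^\ast\bigr) = S^m(\overline{E_1}^{(a_1)}\oplus\dots\oplus\overline{E_k}^{(a_k)})$, using that taking associated graded commutes with direct sums and that $\overline{E_i}^\ast = \bigoplus_j L_{i,j}^\ast$ is (up to reordering of summands, which does not matter) the graded of the dual filtration.

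Once the filtration is in place, I would invoke the inequality from the proof of Proposition \ref{propcompjetvb}: for any coherent sheaf (here, vector bundle) $\mathcal F$ on $X$ equipped with a finite filtration $F_\bullet$, one has $\chi^{[i]}(X,\mathcal F)\le \chi^{[i]}(X,\mathrm{Gr}_F(\mathcal F))$, proved by induction on the length of the filtration, the length-two case following from the long exact cohomology sequence of $0\to F\to E\to G\to 0$ together with the elementary fact that $\chi^{[i]}$ is subadditive on short exact sequences. Applying this to $\mathcal F = S^m(E_1^{(a_1)}\oplus\dots\oplus E_k^{(a_k)})$ with the filtration just constructed yields \eqref{equpperboundlb} directly, with no error term.

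**Main obstacle.** The analysis is essentially formal, so there is no deep obstacle; the one point requiring care is the combinatorial bookkeeping of the filtration on $S^l W$ for a filtered bundle $W$ — specifically, verifying that the induced filtration (indexed, say, by the partial sums $\sum_{j > t} d_j$ in some chosen monomial order, or equivalently obtained from the universal case of $GL$-representations restricted to a Borel) really does have associated graded $\bigoplus_{\sum d_j = l} \bigotimes_j M_j^{\otimes d_j}$, and that this is compatible with tensor products of several filtered bundles (so that the multi-filtration on $\bigotimes_i S^{l_i} E_i^\ast$ behaves well). This is standard multilinear algebra — it is the same mechanism underlying the construction of the Schur-functor / flag-bundle filtrations — but one must choose the filtration carefully enough that it globalizes over $X$ as a filtration by \emph{subbundles} rather than merely subsheaves, which follows here because all the $W_j$ are subbundles and the operations $S^l$ and $\otimes$ preserve local triviality of the flags.
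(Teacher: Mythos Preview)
Your proposal is correct and follows essentially the same approach as the paper: construct on $S^m\mathbb{E}$ the natural filtration induced by the filtrations \eqref{eqfiltration}, identify its associated graded with $S^m\overline{\mathbb{E}}$, and apply the subadditivity inequality $\chi^{[i]}(\mathcal F)\le\chi^{[i]}(\mathrm{Gr}_F\mathcal F)$ from the proof of Proposition~\ref{propcompjetvb}. You spell out the filtration (via duals, symmetric powers, tensor products, and direct sums) in more detail than the paper does, but the argument is the same.
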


\begin{proof}
We can use the same argument as in the proof of Proposition \ref{propcompjetvb}.
Let $\mathbb{E} = E_1^{(a_1)} \oplus ... \oplus E_r^{(a_r)}$, and $\overline{\mathbb E} = \overline{E_1}^{(a_1)} \oplus ... \oplus \overline{E_r}^{(a_r)}$. The filtrations given in \eqref{eqfiltration} induce a natural filtration on $S^\bullet \mathbb E$, with graded algebra equal to $S^\bullet \overline{\mathbb E}$. Now, we use again the comparison result for $\chi^{[i]}$ applied to a filtered vector bundle and to its graded terms, to get
$$
\chi^{[i]} (X, S^m \mathbb E) \leq \chi^{[i]}(X, S^m \overline{\mathbb E}).
$$
This completes the proof.
\end{proof}

\begin{proof}[Proof of Proposition \ref{propredlb}]
By Lemma \ref{lemlb}, there exists a modification $p : \widetilde{X} \longrightarrow X$ such that all the direct summands of $p^\ast \mathbb E = p^\ast E_1^{(a_1)} \oplus ... \oplus p^\ast E_r^{(a_r)}$ admit filtrations with direct sums of line bundles as graded terms. The same Leray's spectral sequence argument as in Lemma \ref{lemmodification} yields
$$
\chi^{[i]} (X, S^m \, \mathbb E) \leq \chi^{[i]} (\widetilde{X}, S^m \, p^\ast \mathbb E) + O(m^{n + \mathrm{rk} \mathbb E - 1}).
$$
To conclude, it suffices to apply Lemma~\ref{lemdeffilt} to the weighted direct sum $p^\ast \mathbb E$ on $\widetilde{X}$. For each $i$ ($1 \leq i \leq r$), it yields a direct sum of vector bundles $\overline{E_i}$, with $\det \overline{E_i} = \det (p^\ast E_i)$, and such that, for any $m$,
$$
\chi^{[i]} (\widetilde X,  S^m p^\ast \mathbb E) \leq  \chi^{[i]} (\widetilde X,  S^m  \overline{\mathbb E}),
$$
where $\overline{\mathbb E} = \overline{E_1}^{(a_1)} \oplus ... \oplus \overline{E_r}^{(a_r)}$. This ends the proof.
\end{proof}

\section{Morse inequalities for symmetric products of weighted direct sums}

\subsection{Statement of the result} \label{sectstatement}
The main theorem of this section is an asymptotic estimate on the growth of quantities of the form $\chi^{[i]} (X, N^{\otimes m} \otimes S^m\, \mathbb E)$, where $\mathbb E$ is a \emph{weighted direct sum} of line bundles, and $N$ is an arbitrary $\mathbb Q$-line bundle; it will be the central step in our proof of Theorem \ref{thmprinc}.

To simplify a bit the exposition, we have chosen to state first a result holding for standard line bundles ; we will present its generalization to $\mathbb Q$-line bundles later on. Before stating this first result, we need to introduce a few notations.

\medskip

\begin{defi} \label{defisimplex}
	Let $m \in \mathbb N$. A \emph{$m$-dimensional simplex} $\Delta$ is a metric space isomorphic to the convex envelop of $m+1$ points in $\mathbb R^m$, such that each $p$ of them generate an affine $(p-1)$-space. We will sometimes write $\Delta \; \accentset{\circ}{\subset} \; \mathbb R^m$ to emphasize the fact that $\Delta$ has non-empty interior in $\mathbb R^m$ (or equivalently, that $\dim \Delta = m$) and to oppose this situation to the case of a $(m-1)$-dimensional simplex included in $\mathbb R^m$.

	For all $(a_1, ..., a_r) \in \mathbb N_{\geq 1}^r$, we define the $(r-1)$-dimensional simplex $$\Delta_{(a_1, ..., a_r)} = \{ (t_1, .., t_r) \in \mathbb R^r \; | \; \sum_i a_i t_i = 1 \} \subseteq \mathbb R^r.$$ For any $m \in \mathbb N$, we will simply denote by $\Delta^m$ the $m$-dimensional simplex $\Delta_{(1, ..., 1)}$ ($1$ repeated $m+1$ times).
\end{defi}

Consider now a complex variety $X$ of dimension $n$, and let $L_1, ..., L_r$ be line bundles over $X$. Let $\Sigma$ be a stratification adapted to all $L_1, ..., L_r$, and for each $i \in \llbracket 1, r \rrbracket$, choose a trivialization $\mathbf e_i$ of $L_i$ over $\Sigma$. Let $\mathcal T$ be the tree associated to $\Sigma$. For each edge $\mathfrak{s}$ in $\mathcal T$, denote by $m_{i}^\mathfrak{s}$ the marking of $\mathfrak{s}$ associated to the trivialization $\mathbf e_i$.  For all $i \in \llbracket 1, n \rrbracket$, we define a piecewise polynomial function on $\mathbb R^r$, as follows.

\begin{defi} \label{defiupsilon} Let $(t_1, ..., t_r) \in \mathbb R^r$. Mark each edge $\mathfrak{s}$ in $\mathcal T$ with the real number $t_1 m_{1}^{\mathfrak{s}} + ... + t_r m_{r}^\mathfrak{s}$. For each complete path $\sigma$ in $\mathcal T$, denote by $C_\sigma$ the product of all markings along the edges of $\sigma$. Then, for all $i \in \llbracket 1, n \rrbracket$, we let
$$
\upsilon_{[\leq i]} (t_1, ..., t_r) = \sum_{\mathrm{index}(\sigma) \leq i} C_{\sigma},
$$
where the sum runs among all the complete paths of index $\leq i$ in $\mathcal T$, i.e. among the paths with less than $i$ negative markings.
\end{defi}

It is easy to check that the functions $\upsilon_{[\leq i]}$ are piecewise polynomial, and homogeneous of degree $n$, i.e. $\upsilon_{[\leq i]} (\lambda \cdot u) = \lambda^n \; \upsilon_{[\leq i]} (u)$ for $\lambda \in \mathbb R_+$. We can now state the following theorem.

\begin{thm} \label{thmineqintegral} Let $\underline{a} = (a_1, ..., a_r) \in \mathbb N_{\geq 1}^r$, and consider the $(r-1)$-dimensional simplex $\Delta_{\underline{a}}$, according to Definition \ref{defisimplex}. Then, for all $i \in \llbracket 0, n \rrbracket$, we have the following asymptotic upper bound.
\begin{align*}
	\chi^{[i]} (X, S^m (&L_1^{(a_1)} \oplus ...  \oplus L_r^{(a_r)}) )  \\
& \leq  \frac{\mathrm{gcd}(a_1, ..., a_r)}{a_1 ... a_r} \binom{n + r - 1}{r - 1} \left[ \int_{\Delta_{\underline{a}}} \upsilon_{[\leq i]} dP \right] \frac{m^{n+ r - 1}}{(n+ r - 1)!} + o(m^{n+r - 1}),
\end{align*}
where $P$ is the uniform probability measure on the simplex $\Delta_{\underline{a}}$, i.e. the unique probability measure which is the restriction of a translation invariant measure on $\mathbb R^{r-1}$.
\end{thm}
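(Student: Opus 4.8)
The plan is to reduce the statement, via the additivity of $\chi^{[i]}$ over finite direct sums, to a family of applications of the Morse inequalities of Theorem~\ref{thmmorse} to the line bundles occurring in the decomposition \eqref{eqdefsym}, and then to recognize the resulting sum over a lattice simplex as a Riemann sum over $\Delta_{\underline a}$. Since cohomology is additive over direct sums, we have $\chi^{[i]}(X, S^m(L_1^{(a_1)}\oplus\dots\oplus L_r^{(a_r)})) = \sum_{\underline l}\chi^{[i]}(X,\mathbb L^{\underline l})$, where $\underline l=(l_1,\dots,l_r)$ runs over the lattice points of $\mathbb N^r$ with $a_1 l_1+\dots+a_r l_r=m$ and $\mathbb L^{\underline l}=S^{l_1}L_1^\ast\otimes\dots\otimes S^{l_r}L_r^\ast$. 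For each such $\underline l$ the tensor product of the duals of the given trivializations $\mathbf e_j$ provides a trivialization of $\mathbb L^{\underline l}$ over the \emph{fixed} stratification $\Sigma$, so Theorem~\ref{thmmorse} applies to every summand with the same underlying stratification.

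Next I would identify the truncated intersection numbers produced this way with the values of $\upsilon_{[\leq i]}$. Because the multiplicity of a tensor product of trivializing meromorphic sections along a boundary component is the sum of the individual multiplicities, the marking of an edge $\mathfrak s$ of the tree $\mathcal T$ attached to the chosen trivialization of $\mathbb L^{\underline l}$ is, up to sign, the linear form $l_1 m_1^{\mathfrak s}+\dots+l_r m_r^{\mathfrak s}$. Keeping track of the signs built into the definitions of $\chi^{[i]}$ and of the index of a path, Proposition~\ref{propgraph} turns Theorem~\ref{thmmorse}(i) into an estimate of the form $\chi^{[i]}(X,\mathbb L^{\underline l})\leq\frac1{n!}\,\upsilon_{[\leq i]}(\underline l)+O(\|\underline l\|^{n-1})$ for each $\underline l$, using that $\upsilon_{[\leq i]}$ is homogeneous of degree $n$; the finitely many chambers on which $\upsilon_{[\leq i]}$ is polynomial are precisely those on which the sign pattern of all markings — hence the splitting of the edges of $\mathcal T$ into those counted by the index and the rest — is constant.

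Summing over $\underline l$ and using homogeneity to write $\upsilon_{[\leq i]}(\underline l)=m^n\,\upsilon_{[\leq i]}(\underline l/m)$, the main term becomes $\frac{m^n}{n!}\sum_{\underline l}\upsilon_{[\leq i]}(\underline l/m)$. As $m\to\infty$ the points $\underline l/m$ with $\sum_j a_j l_j=m$ become equidistributed on $\Delta_{\underline a}$ for the probability measure $P$, and $\upsilon_{[\leq i]}$, being piecewise polynomial on the compact simplex $\Delta_{\underline a}$, is bounded and Riemann integrable (its set of discontinuity is a finite union of hyperplane slices, hence of measure zero); therefore $\sum_{\underline l}\upsilon_{[\leq i]}(\underline l/m)=\#\{\underline l\}\cdot\int_{\Delta_{\underline a}}\upsilon_{[\leq i]}\,dP+o(\#\{\underline l\})$. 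Combining this with the Ehrhart-type asymptotics $\#\{\underline l\in\mathbb N^r:\sum_j a_j l_j=m\}=\frac{\gcd(a_1,\dots,a_r)}{(r-1)!\,a_1\cdots a_r}\,m^{r-1}+O(m^{r-2})$, and with the identity $\binom{n+r-1}{r-1}/(n+r-1)!=1/((r-1)!\,n!)$, the main term takes exactly the shape asserted in the theorem.

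The step I expect to be the main obstacle is the control of the error terms. The $O(m^{n-1})$ in Theorem~\ref{thmmorse}, applied to a single summand $\mathbb L^{\underline l}$, has an implied constant that a priori depends on the direction $\underline l/m\in\Delta_{\underline a}$, while there are $\sim m^{r-1}$ summands; one thus needs a \emph{uniform} version of Theorem~\ref{thmmorse}, namely $\chi^{[i]}(X,\mathbb L^{\underline l})\leq\frac1{n!}\upsilon_{[\leq i]}(\underline l)+C(1+\|\underline l\|)^{n-1}$ with $C$ independent of $\underline l$, so that the cumulative error is $O(m^{n+r-2})=o(m^{n+r-1})$. I would obtain this by re-running the induction of Proposition~\ref{propredmorse} with the coefficient vector $\underline l$ carried as a parameter and checking that each intermediate quantity — the telescoping sums over the power, the filtration of Lemma~\ref{lemfilt}, the twists by powers of normal bundles — contributes an error polynomial in the entries of $\underline l$ of degree at most $n-1$; the key point is that the boundary divisors of $\Sigma$ are fixed while only the multiplicities grow linearly in $\underline l$. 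Alternatively, one can argue chamber by chamber on $\Delta_{\underline a}$, where the combinatorial type of the exact sequences used to prove Theorem~\ref{thmmorse} is constant; or one can pass to the weighted projectivized bundle $\mathbb P(\mathbb E)$, use $S^m\mathbb E=\pi_\ast\mathcal O_{\mathbb P(\mathbb E)}(m)$ together with the vanishing of higher direct images, and apply Theorem~\ref{thmmorse} a single time to $\mathcal O_{\mathbb P(\mathbb E)}(1)$ with a stratification lifting $\Sigma$ — at the price of an intersection computation on $\mathbb P(\mathbb E)$ that reproduces the simplex integral and is of comparable difficulty.
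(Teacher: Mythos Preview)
Your overall plan---decompose $S^m(\mathbb E)$ into the summands $\mathbb L^{\underline l}$, bound each $\chi^{[i]}(X,\mathbb L^{\underline l})$ by $\frac{1}{n!}\upsilon_{[\leq i]}(\underline l)$ plus a lower-order term, then sum and recognize a Riemann sum over $\Delta_{\underline a}$---is exactly the skeleton of the paper's proof. The normalization, the Ehrhart count, and the identification of the leading constant are all correct.

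The genuine gap is the step you yourself flag: the uniform pointwise bound $\chi^{[i]}(X,\mathbb L^{\underline l})\leq \frac{1}{n!}\upsilon_{[\leq i]}(\underline l)+C(1+\|\underline l\|)^{n-1}$ with $C$ independent of $\underline l$. Your proposed fix~(1), ``re-run the induction with $\underline l$ as a parameter'', is precisely what the paper carries out in Proposition~\ref{propvariantmorse}, and it does \emph{not} produce $\upsilon_{[\leq i]}(\underline l)$ as the leading coefficient. The reason is structural: the telescoping step goes from $\underline l$ to $\underline l-\underline{\delta_L}$ for some single $L\in\{L_1,\dots,L_r\}$, and whether the resulting boundary term contributes to $\chi^{[i]}$ or $\chi^{[i-1]}$ on the stratum is governed by the sign of the \emph{individual} multiplicity $m_L^{\mathfrak s}$, not by the sign of the linear combination $\sum_j l_j m_j^{\mathfrak s}$ that defines the index in $\upsilon_{[\leq i]}$. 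After summing along any path from $\underline 0$ to $\underline l$, and after applying the induction hypothesis on each stratum (which must itself be uniform in $\underline l$), the best leading coefficient one can extract is the maximum $c(\mathcal L,\underline\Sigma)_{[\leq i]}$ of Definition~\ref{defileadingcoeff}---a constant over all of $\Delta_{\underline a}$, not the pointwise value $\upsilon_{[\leq i]}(t)$. Your fix~(2), working chamber by chamber, does not help either: the chambers are finitely many and of fixed size, so the discrepancy between the max and the pointwise value does not vanish.

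The paper closes this gap with an extra idea (Proposition~\ref{propcone}): fix an $\epsilon>0$, cover $\Delta_{\underline a}$ by cones of angular width $O(\epsilon)$ generated by integer vectors $\underline u_1,\dots,\underline u_p$, and on each cone replace the family $\{L_1,\dots,L_r\}$ by the family $\{M_j=\mathcal L^{\underline u_j}\}$. Since every $\underline l$ in that cone is a nonnegative integer combination of the $\underline u_j$ up to a bounded remainder, Proposition~\ref{propvariantmorse} applied to the $M_j$ gives a uniform bound whose leading coefficient is the max over $\phi:\mathcal E\to\{M_1,\dots,M_p\}$; because all $\underline u_j/\lambda$ are $\epsilon$-close to a single point $t\in\Delta_{\underline a}$, that max is $\upsilon_{[\leq i]}(t)+O(\epsilon)$ by continuity (Lemma~\ref{lemcontfunction}). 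Summing over the cones, the $O(\epsilon)$ becomes the Riemann-sum error and one lets $\epsilon\to 0$ at the end. This two-scale argument---a coarse uniform bound refined on narrow cones---is the missing ingredient in your sketch; without it the induction you propose delivers only a constant upper bound for the integrand, which is too crude to recover $\int_{\Delta_{\underline a}}\upsilon_{[\leq i]}\,dP$.
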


The proof of Theorem \ref{thmineqintegral} will proceed in several steps. We will first give an extension of Theorem \ref{thmmorse} to a context where several line bundles are introduced. Then, we will apply this estimate to give an upper bound on the $\chi^{[i]} (X, L_1^{\otimes l_1} \otimes ... \otimes L_r^{\otimes l_r})$, where $(l_1, ..., l_r)$ belongs to a small angular sector of $\mathbb N^r$. Finally, we will sum over a covering of $\mathbb N^r$ by such arbitrarily small angular sectors, to get a Riemann sum leading to the integral appearing in Theorem \ref{thmineqintegral}.

\subsection{Morse inequalities for several line bundles. Direct sums of line bundles}

Let $X$ still denote a complex variety, and let $\mathcal L$ be a finite set of line bundles on $X$. Assume $\Sigma$ is a stratification on $X$, adapted to all $L \in \mathcal L$. For each $L \in \mathcal L$, we let $\mathbf e_L$ be a stratification of $L$ on $\Sigma$, and we let $\underline{\Sigma} = ( \Sigma, (\mathbf{e}_L)_{L \in \mathcal L} )$. The next definition will introduce a quantity that will serve as leading coefficient for an upper bound on the numbers $\chi^{[i]} (X, L_1^{\otimes a_1} \otimes  ... L_r^{\otimes a_r})$ as $a_1, ..., a_r \longrightarrow + \infty$, where $L_1, ..., L_r \in \mathcal L$ are arbitrary elements.
\medskip

Let $\mathcal T$ be the tree associated to $\Sigma$, and let $\mathcal E$ be the set of its edges. For all $\mathfrak{s} \in \mathcal E$, we let $m_{L}^{\mathfrak{s}}$ be the marking of $e$ provided by the trivialization $\mathbf{e}_L$. Now, for any mapping $\phi : \mathcal E \longrightarrow \mathcal L$, we let $\mathcal T_{\phi}$ be the \emph{marked} tree where each edge $\mathfrak{s}$ is labeled by the marking $m_{\phi(\mathfrak{s})}^\mathfrak{s}$.

\begin{defi} \label{defileadingcoeff} Let $l \in \llbracket 0, n \rrbracket$. For all $\phi : \mathcal E \longrightarrow \mathcal L$, and any complete path $\sigma$ in $\mathcal T$, we let $C_{\sigma, \phi}$ be the product of the labels along the edges of $\sigma$ in $\mathcal T_{\phi}$. We let
$$
c(\phi, \underline{\Sigma})_{[ \leq i]} = \sum_{\mathrm{index} (\sigma) \leq  i} C_{\sigma, \phi},
$$
where the sum runs among all complete paths in $\mathcal T_{\phi}$ with less than $i$ negative labels. 

We let
$$
(-1)^i c(\mathcal L, \underline{\Sigma})_{[\leq l]} = \underset{\phi : \mathcal E \longrightarrow \mathcal L}{\max} (-1)^i c(\phi, \underline{\Sigma})_{[\leq l]}.
$$
\end{defi}

The previous definition mimicks Proposition \ref{propgraph} to provide the leading coefficient in our version of Morse inequalities with several line bundles. Before stating this result, let us introduce the following simplifying notation.
\medskip

\begin{nota} Let $\underline{l} = (l_L)_{L \in \mathcal L}$ be a family of integers. We write $\mathcal L^{\otimes \underline{l}} = \bigotimes_{L \in \mathcal L} L^{\otimes l_L}$. For all $L \in \mathcal L$, we write $\underline{\delta_L} = (\delta_{L, L'})_{L' \in \mathcal L}$, where $\delta_{L, L'} = 1$ if $L = L'$ and $\delta_{L, L'} = 0$ otherwise.
\end{nota}

\begin{prop} \label{propvariantmorse} With the notations above, let $M$ be any line bundle on $X$.

Then for any $i \in \llbracket 0, n \rrbracket$, any $m \in \mathbb N$, and any $\underline{l} = (l_L)_{L \in \mathcal L}$ such that $\sum_L l_L = m$, we have
$$
\chi^{[i]} (X, \mathcal L^{\otimes \underline{l}} \otimes M) \leq (-1)^i c(\mathcal L, \underline{\Sigma})_{[\leq j]} \frac{m^n}{n!} + O(m^{n-1}).
$$ 
\end{prop}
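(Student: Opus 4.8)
The plan is to reduce this multi-bundle statement to the single-bundle Morse inequalities of Theorem \ref{thmmorse}, by fixing the direction $\underline{l}/m$ in which the exponents go to infinity and choosing, for that direction, the line bundle $L_\phi = \mathcal L^{\otimes\underline{l}}$ together with an \emph{optimal} assignment $\phi:\mathcal E\to\mathcal L$ of bundles to edges. More precisely, I would first observe that $\mathcal L^{\otimes \underline l}$ is itself a line bundle on $X$, and that the stratification $\Sigma$ is adapted to it: on each edge $\mathfrak s$ of $\mathcal T$ the tensor-product trivialization $\bigotimes_L \mathbf e_L^{\otimes l_L}$ has marking $\sum_L l_L\, m_L^{\mathfrak s}$. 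Applying Theorem \ref{thmmorse}(i) to the line bundle $\mathcal L^{\otimes\underline l}$ (with $m=1$ there, and absorbing the twist $M$) and using Proposition \ref{propgraph} to express $\deg c_1(\mathcal L^{\otimes\underline l},\underline\Sigma)^n_{[\leq i]}$ as a sum over complete paths, one gets
$$
\chi^{[i]}(X,\mathcal L^{\otimes\underline l}\otimes M)\ \leq\ (-1)^i\Bigl(\sum_{\mathrm{index}(\sigma)\leq i}\ \prod_{\mathfrak s\in\sigma}\Bigl(\textstyle\sum_L l_L\, m_L^{\mathfrak s}\Bigr)\Bigr)\frac1{n!}+O(1).
$$
The difficulty is that the error term $O(m^{n-1})$ in Theorem \ref{thmmorse} is uniform only for a \emph{fixed} line bundle, whereas here $\mathcal L^{\otimes\underline l}$ varies with $\underline l$; so the naive substitution is not immediately legitimate, and this is where the real work lies.

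The key idea to handle the uniformity is that there are only \emph{finitely many} edges in $\mathcal T$, hence only finitely many maps $\phi:\mathcal E\to\mathcal L$. For each fixed $\phi$, consider the single line bundle $L_\phi=\bigotimes_{\mathfrak s}(\text{dummy})$—rather, run the induction of Proposition \ref{propredmorse} directly with the markings replaced by the linear forms $\ell_\phi(\mathfrak s)=m_{\phi(\mathfrak s)}^{\mathfrak s}$, tracking how the constant in the $O(m^{n-1})$ depends on $\underline l$. Going back into the proof of Proposition \ref{propredmorse}, every appearance of the multiplicity $m_k$ becomes a coefficient that grows at most linearly in $\|\underline l\|_1=m$; an inductive bookkeeping on $\dim X$ then shows that the error term is $O(m^{n-1})$ with a constant depending only on $\Sigma$, $M$ and $\mathcal L$ (not on the particular $\underline l$), because at each stratum the number of components and the ambient geometry are fixed. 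Expanding the resulting degree-$n$ polynomial in $\underline l$ and then substituting $\underline l$ with $\sum_L l_L=m$, the leading term is exactly $c(\phi,\underline\Sigma)_{[\leq i]}\,m^n/n!$ \emph{when $\phi$ is chosen to maximize $(-1)^i c(\phi,\cdot)_{[\leq i]}$}; any other choice of $\phi$ only produces an a priori weaker bound, but since we are free to pick the best one for each $i$, we obtain the stated inequality with leading coefficient $(-1)^i c(\mathcal L,\underline\Sigma)_{[\leq i]}$.

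There is one subtlety: in Definition \ref{defileadingcoeff} the quantity $c(\mathcal L,\underline\Sigma)_{[\leq i]}$ is defined by maximizing over \emph{all} $\phi$ simultaneously, not edge-by-edge along each path, so one must check that choosing a single global $\phi$ suffices to dominate the path-sum $\sum_L l_L\,m_L^{\mathfrak s}$ over the region $\sum_L l_L=m$, $l_L\geq 0$. This holds because the expression $\prod_{\mathfrak s\in\sigma}(\sum_L l_L m_L^{\mathfrak s})$, summed over paths of index $\leq i$ and multiplied by $(-1)^i$, is a convex function of $\underline l$ on the simplex, hence attains its max at a vertex $\underline l=m\,\delta_{L_0}$ for some $L_0$; at such a vertex the path weights are literally $\prod_{\mathfrak s}m_{L_0}^{\mathfrak s}$, i.e. $c(\phi_0,\cdot)_{[\leq i]}$ for the constant map $\phi_0\equiv L_0$. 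Thus the inequality holds termwise for every $\underline l$, with the same uniform $O(m^{n-1})$ tail, which is precisely the assertion. The main obstacle, to reiterate, is the uniformity of the error term across the infinitely many line bundles $\mathcal L^{\otimes\underline l}$; once that is secured by the finiteness of $\mathcal E$ and a careful pass through the induction of Proposition \ref{propredmorse}, the rest is formal.
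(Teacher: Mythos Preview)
Your reduction to Theorem \ref{thmmorse} has a genuine gap in the paragraph where you argue that
\[
(-1)^i\sum_{\mathrm{index}(\sigma)\le i}\ \prod_{\mathfrak s\in\sigma}\Bigl(\sum_L l_L\,m_L^{\mathfrak s}\Bigr)\ \le\ m^n\,(-1)^i\,c(\mathcal L,\underline\Sigma)_{[\le i]}.
\]
You claim this follows because the left-hand side is convex on the simplex $\{\sum_L l_L=m\}$ and hence maximised at a vertex, where it equals $c(\phi_0,\underline\Sigma)_{[\le i]}$ for a constant $\phi_0$. This convexity is false. Take $n=2$, a single path with two edges, $\mathcal L=\{L_1,L_2\}$, and markings $m^1_{L_1}=1,\ m^1_{L_2}=0$ on the first edge, $m^2_{L_1}=0,\ m^2_{L_2}=1$ on the second. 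For $i=0$ the function is $l_1l_2$, which is \emph{concave} on the segment $l_1+l_2=m$ and attains its maximum $m^2/4$ at the midpoint, while both vertices give $0$. The inequality $l_1l_2\le m^2$ still holds, but only because the non-constant assignment $\phi=(L_1,L_2)$ contributes $c(\phi,\underline\Sigma)_{[\le 0]}=1$; constant $\phi$'s give $0$. So your argument does not establish the combinatorial bound, and it is precisely the freedom to let $\phi$ vary from edge to edge that makes $c(\mathcal L,\underline\Sigma)$ large enough.

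The paper avoids this pointwise comparison entirely. It proves the proposition by a direct induction on $\dim X$, imitating the proof of Theorem \ref{thmmorse} rather than invoking it: one bounds the increment $\chi^{[i]}(X,\mathcal L^{\otimes\underline l}\otimes M)-\chi^{[i]}(X,\mathcal L^{\otimes(\underline l-\underline{\delta_L})}\otimes M)$ via the short exact sequences attached to the divisor of the trivialization of $L$, applies the induction hypothesis on the codimension-one stratum (which already carries the constant $c(\mathcal L,\underline{\Sigma'})_{[\le i]}$), and then checks that $m_L\cdot c(\mathcal L,\underline{\Sigma'})_{[\le i\ \text{or}\ i-1]}$ equals $\max_{\phi:\,\phi(\mathfrak s)=L}(-1)^i c(\phi,\underline\Sigma)_{[\le i]}\le(-1)^i c(\mathcal L,\underline\Sigma)_{[\le i]}$, where $\mathfrak s$ is the edge leaving the root. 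Summing the increments along any lattice path from $0$ to $\underline l$ then gives the bound with a uniform $O(m^{n-1})$, because the constant on the divisor is already independent of $\underline l$ by induction. In other words, the maximisation over $\phi$ is performed one edge at a time as the induction descends the stratification, which is exactly what your global convexity shortcut tried (and failed) to replace.
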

\begin{proof}
	\emph{Step 1. Initialization of the induction.} The proof is very close to the one of Theorem \ref{thmmorse}. We reason by induction on $\dim X$. When $\dim X = 0$, the result is trivial, with $O(m^{-1}) = 0$.

Let us assume that the result has been proved for all dimensions up to $n - 1$. Write $\Sigma \equiv X_0 \overset{f_0}{\longrightarrow} ... \longrightarrow X_{n-1} \overset{f_{n-1}}{\longrightarrow} X_n = X$, and for each $L \in \mathcal L$, let $s_L$ be the trivialization of $L$ over the open affine subset $U_n = X_n \setminus f_{n-1}(X_{n-1})$ provided by $\mathbf{e}_L$.
\medskip

	We can now argue as in the proof of Theorem \ref{thmmorse}: it suffices to prove the seemingly more general proposition.
\begin{prop} \label{propredmorsegen} Under the hypotheses of Theorem \ref{propvariantmorse}, let $p: X' \longrightarrow X$ be a proper birational morphism, and let $M'$ be any line bundle on $X'$. Then, for each $i$, and any $m$ divisible by $d$, we have
	Then for any $i \in \llbracket 0, n \rrbracket$, any $m \in \mathbb N$, and any $\underline{l} = (l_L)_{L \in \mathcal L}$ such that $\sum_L l_L = m$, we have
$$
\chi^{[i]} (X, p^\ast \mathcal L^{\otimes \underline{l}} \otimes M') \leq (-1)^i c(\mathcal L, \underline{\Sigma})_{[\leq j]} \frac{m^n}{n!} + O(m^{n-1}).
$$ 
\end{prop}

	Thus, it suffices to deal with the case where $X$ is smooth, and $D_n = f_{n-1}(X_{n-1})$ has simple normal crossing support. For simplicity of notations, we will conclude the proof in the case where $D_n$ is even \emph{irreducible} (hence smooth). For all $L \in \mathcal L$, we let $m_L$ be the multiplicity of $s_L$ along the unique component of $D_n$. 
\medskip

	\emph{Step 2. We bound the difference between terms given by two close $r$-uples.} Let $\underline{l} = (l_L)_{L \in \mathcal L}$ be such that $\sum_{L \in \mathcal L} l_L = m$, with $m \geq 1$. Then, for some $L \in \mathcal L$, we have $\underline{l} - \underline{\delta_L} \in \mathbb N^\mathcal L$. The same arguments as in Step 4 in the proof of Theorem \ref{thmmorse} yield
\begin{align*}
\chi^{[i]} (X, \mathcal L^{ \otimes \underline{l}} \otimes M)  - \chi^{[i]} &  (X, \mathcal L^{ \otimes (\underline{l} - \underline{\delta_L})} \otimes M) 
\\  \leq  \mathbbm 1_{\{m_L > 0\}} & \; m_L \; \chi^{[i]} (D_n, \mathcal L^{\otimes \underline{l}} \otimes M) \; + \; \mathbbm 1_{\{m_L < 0\}} \; (-m_L) \; \chi^{[i - 1]} (D_n, \mathcal L^{\otimes (\underline{l} - \underline{\delta_L})} \otimes M) \\
 & + O(m^{n-2}).
\end{align*}

Let $\Sigma'$ be stratification induced on $D_{n}$ by $\Sigma$, and let $\mathbf e'_{N}$ be the trivializations of the $N \in \mathcal L$ induced by the $\mathbf e_N$ on $\Sigma'$. Writing $\underline{\Sigma'} = (\Sigma', (\mathbf e'_N)_{N \in \mathcal L})$, we can apply the induction hypothesis to obtain
\begin{align} \nonumber \chi^{[i]} (X, \mathcal L^{ \otimes \underline{l}} \otimes M)  - \chi^{[i]} &  (X, \mathcal L^{ \otimes (\underline{l} - \underline{\delta_L})} \otimes M) 
\\ \label{eqinductseveral}  \leq  (-1)^i \mathbbm 1_{\{m_L > 0\}} & \; m_L \; c(\mathcal L, \underline{\Sigma'})_{[\leq i]} \frac{m^{n-1}}{(n-1)!}  \; + \; (-1)^i \mathbbm 1_{\{m_L < 0\}} \; m_L \; c(\mathcal L, \underline{\Sigma'})_{[\leq i-1]} \frac{(m-1)^{n-1}}{(n-1)!} \\ \nonumber
 & + O(m^{n-2}).
\end{align}

\begin{lem} Let $\mathfrak{s}$ be the edge linking the unique component of $X_{n-1}$ to $X_n$ in $\mathcal T$. Then
\begin{align} \nonumber 
(-1)^i ( \mathbbm 1_{\{m_L > 0\}} \; m_L \; c(\mathcal L, \underline{\Sigma'})_{[\leq i]}  + \mathbbm 1_{\{m_L < 0\}} & \; m_L \; c(\mathcal L, \underline{\Sigma'})_{[\leq i - 1]})  \\ \label{eqinductphi}
& = \underset{\phi : \mathcal E \longrightarrow \mathcal L \; | \; \phi(\mathfrak{s}) = L }{\max} (-1)^i c(\phi, \underline{\Sigma})_{[\leq i]},
\end{align}
where the maximum in the right hand side runs among all $\phi$ such that $\phi(\mathfrak{s}) = L$. In particular, by Definition \ref{defileadingcoeff}, the left hand side is bounded from above by $(-1)^i c(\mathcal L, \underline{\Sigma})_{[\leq i]}$.
\end{lem}
 
The lemma comes right away from Definition \ref{defileadingcoeff}: it suffices to evaluate the maximum on the right hand side, by distinguishing among the two possible signs of $m_L$. Now, since $(m-1)^{n-1} = m^{n-1} + O(m^{n-2})$, inserting \eqref{eqinductphi} in \eqref{eqinductseveral} gives
$$
\chi^{[i]} (X, \mathcal L^{ \otimes \underline{l}} \otimes M)  - \chi^{[i]}  (X, \mathcal L^{ \otimes (\underline{l} - \underline{\delta_L})} \otimes M) \leq (-1)^i c(\mathcal L, \underline{\Sigma})_{[\leq i]} \frac{m^{n-1}}{(n-1)!} + O(m^{n-2}).
$$

	\emph{Step 3. We sum these differences over a path leading to a given $\underline{l}$.} Consider now a sequence $\underline{l}_1, ..., \underline{l}_m = \underline{l}$, where for all $j$, we have $\underline{l}_j = \underline{l}_{j-1} + \underline{\delta_{L_j}}$ for some $L_j \in \mathcal L$. For all $j$, we have $\sum_{L} (l_j)_{L} = j$, so we can sum the last inequality for all $\underline{l_j}$, to get
$$
\chi^{[i]}(X, \mathcal L^{\otimes \underline{l}}) \leq (-1)^i c(\mathcal L, \underline{\Sigma})_{[\leq i]} \sum_{j = 1}^m \frac{j^{n-1}}{(n-1)!} + O(m^{n-1}).
$$
This gives the result, since $\sum_{1 \leq j \leq m} \frac{j^{n-1}}{(n-1)!} = \frac{m^n}{n!} + O(m^{n-1})$.
\end{proof}

\begin{rem} \label{remroot}
	If we replace Proposition \ref{propredmorsegen} by the corresponding statement where $p$ is a generically finite morphism (see Proposition \ref{propredmorse}), and if we use the method of Step 3 in the proof of Theorem \ref{thmmorse}, it is possible to adapt the previous proof to the case where the $L \in \mathcal L$ are standard line bundles endowed with fractional trivializations $\frac{1}{d_L} \mathbf{e}_L$. The definition of $c_1(\mathcal L, \underline{\Sigma})_{[\leq j]}$ is obtained by replacing the markings $m_L$ by $\frac{m_l}{d_L}$.
\end{rem}

Let us mention yet another variant, which will give a better estimate in Proposition \ref{propvariantmorse} when $\underline{l}$ belongs to a narrow angular sector of $\mathbb N^{\mathcal L}$.
\medskip

We assume that $\mathcal L = \{ L_1, ..., L_r \}$, and we pick $\underline{u_1}, ..., \underline{u_p} \in \mathbb N^r$. For each $i$, we write $\underline{u_i} = (u_{i,1}, ..., u_{i, r})$, and we let $M_i = \mathcal L^{\otimes \underline{u_i}}$. Each $M_i$ is endowed with a natural trivialization $\mathbf{f}_i$ on $\Sigma$, given on a particular stratum by
\begin{equation} \label{eqdefifi}
f_i = (e_1)^{\otimes u_{i, 1}} \otimes ... \otimes (e_r)^{\otimes u_{i, r}},
\end{equation}
where the $e_j$ are the trivializations of the $L_j$ on the specified stratum. Let $\mathcal M = \{M_1, ..., M_p \}$, and $\underline{\Sigma}^{\mathcal M} = (\Sigma, (\mathbf f_j)_j)$.

Using Definition \ref{defileadingcoeff} with $\mathcal L$ replaced by $\mathcal M$, we can define the quantities $c(\mathcal M, \underline{\Sigma}^{\mathcal M})_{[\leq i]}$. It is clear by construction that the latter are continuous piecewise polynomial functions in the $u_{i}$. More precisely, we have the following.

\begin{lem} \label{lemcontfunction} There exists a continuous piecewise polynomial function $\varphi_{[\leq j]} : (\mathbb R^r_+)^p \longrightarrow \mathbb R$, homogeneous of degree $n$, such that for all $\underline{u_1}, ..., \underline{u_p} \in \mathbb N^r$ as above, we have
$$
c(\mathcal M, \underline{\Sigma}^{\mathcal M})_{[\leq i]} = \varphi_{[\leq i]} (\underline{u_1}, ..., \underline{u_p}).
$$
Moreover, $\varphi_{[\leq j]}$ depends only on $p$ and on the data of $\Sigma, \mathbf{e}_1, ..., \mathbf{e}_p$.
\end{lem}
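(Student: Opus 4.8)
The statement to prove is Lemma \ref{lemcontfunction}, asserting that the quantity $c(\mathcal M, \underline{\Sigma}^{\mathcal M})_{[\leq i]}$ — which by Definition \ref{defileadingcoeff} is a maximum over maps $\phi : \mathcal E \longrightarrow \mathcal M$ of sums of products of edge-markings — is computed by a single continuous piecewise polynomial function $\varphi_{[\leq i]}$ of the vectors $\underline{u_1}, \dots, \underline{u_p} \in (\mathbb R^r_+)^p$, homogeneous of degree $n$, depending only on $p$ and on $(\Sigma, \mathbf e_1, \dots, \mathbf e_r)$. The approach is to unwind both definitions and observe that every operation in sight is one of three polynomial-preserving operations: substitution of linear forms for variables, finite products, and finite maxima. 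First I would fix the data: for an edge $\mathfrak s \in \mathcal E$ and an index $1\le j\le r$, write $m_j^{\mathfrak s}$ for the marking of $\mathfrak s$ given by $\mathbf e_j$. Then for the bundle $M_i = \mathcal L^{\otimes \underline{u_i}}$ with trivialization $\mathbf f_i$ as in \eqref{eqdefifi}, the marking of $\mathfrak s$ given by $\mathbf f_i$ is $m^{\mathfrak s}_{M_i} = \sum_{j=1}^r u_{i,j}\, m_j^{\mathfrak s}$, which is a linear form in the entries of $\underline{u_i}$ with integer coefficients depending only on $\Sigma$ and the $\mathbf e_j$.

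\textbf{Key steps.} Given a choice function $\phi : \mathcal E \longrightarrow \mathcal M$ and a complete path $\sigma$ in $\mathcal T$, the label of an edge $\mathfrak s \in \sigma$ in $\mathcal T_\phi$ is $m^{\mathfrak s}_{\phi(\mathfrak s)}$, and $C_{\sigma, \phi}$ is the product of these labels over the edges of $\sigma$. Substituting the linear expressions above, $C_{\sigma,\phi}$ becomes a polynomial in the coordinates of $(\underline u_1,\dots,\underline u_p)$; since a complete path has exactly $n$ edges (the tree has depth $n$, by Definition \ref{defistratification} and Remark \ref{remtree}), this polynomial is homogeneous of degree exactly $n$ and its coefficients depend only on $\Sigma$ and the $\mathbf e_j$. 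Summing over the complete paths of index $\le i$ (of which there are finitely many, again a combinatorial datum of $\mathcal T$), we get that $c(\phi, \underline{\Sigma}^{\mathcal M})_{[\le i]}$ is a homogeneous polynomial of degree $n$ in $(\underline u_1, \dots, \underline u_p)$, with the desired dependence. Finally, by Definition \ref{defileadingcoeff}, $c(\mathcal M, \underline{\Sigma}^{\mathcal M})_{[\le i]} = (-1)^i \max_\phi (-1)^i c(\phi, \underline{\Sigma}^{\mathcal M})_{[\le i]}$, a finite maximum (or minimum, when $i$ is odd) of such polynomials over the finite set $\mathcal M^{\mathcal E}$. A finite max or min of polynomials that are each homogeneous of the same degree $n$ is a continuous piecewise polynomial function, homogeneous of degree $n$; call it $\varphi_{[\le i]}$. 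Since the index count of a path and the set of paths depend only on $\mathcal T$ (hence on $\Sigma$), and the only input beyond $\Sigma$ is the collection of markings $m_j^{\mathfrak s}$, which come from the $\mathbf e_j$, and the cardinality $p = |\mathcal M|$ which bounds the number of choice functions, the function $\varphi_{[\le i]}$ depends only on $p$ and on $(\Sigma, \mathbf e_1, \dots, \mathbf e_r)$, as claimed.

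\textbf{Main obstacle.} There is no serious obstacle; the content is entirely bookkeeping, and the one point requiring a half-sentence of care is the homogeneity claim. One must note that although \eqref{eqdefifi} allows the $\underline u_i$ to have varying $\mathbb Z$-entries, each term $C_{\sigma,\phi}$ is a product over the $n$ edges of $\sigma$ of a \emph{linear} form in the $u$'s — \emph{one} factor per edge — so the total degree is precisely $n$, uniformly in $\phi$ and $\sigma$; hence the sum over paths, and then the max over $\phi$, preserve homogeneity of degree $n$. (The subtlety that the sign $(-1)^i$ turns the $\max$ defining $c(\mathcal M, \cdot)_{[\le i]}$ into a $\min$ of polynomials when $i$ is odd is harmless: $\min$ of finitely many continuous piecewise polynomial functions is again continuous piecewise polynomial.) I would also remark that the continuity and piecewise-polynomiality are standard: the domain $(\mathbb R^r_+)^p$ is subdivided into the finitely many polyhedral cones on which a fixed $\phi$ achieves the extremum, and on each such cone $\varphi_{[\le i]}$ agrees with a single polynomial, the pieces matching continuously along the walls by construction.
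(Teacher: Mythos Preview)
Your approach is the same as the paper's --- unwind Definition \ref{defileadingcoeff}, note that each edge marking for $\mathbf f_i$ is a linear form in the $u$'s, and conclude that the resulting expression is a continuous, $n$-homogeneous, piecewise polynomial function. However, there is one genuine oversight.

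You assert that for fixed $\phi$, the quantity $c(\phi,\underline\Sigma^{\mathcal M})_{[\le i]}$ is a \emph{polynomial} in the $u$'s, on the grounds that ``the index count of a path and the set of paths depend only on $\mathcal T$''. This is not correct: the index of a complete path $\sigma$ in $\mathcal T_\phi$ is the number of \emph{negative} labels along $\sigma$, and each label $m^{\mathfrak s}_{\phi(\mathfrak s)} = \sum_j u_{\phi(\mathfrak s),j}\,m_j^{\mathfrak s}$ is a linear form in the $u$'s whose sign varies over $(\mathbb R^r_+)^p$. Hence the set of paths contributing to the sum $\sum_{\mathrm{index}(\sigma)\le i} C_{\sigma,\phi}$ changes as the $u$'s cross the hyperplanes $\{m^{\mathfrak s}_{\phi(\mathfrak s)}=0\}$, and $c(\phi,\underline\Sigma^{\mathcal M})_{[\le i]}$ is itself only \emph{piecewise} polynomial. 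The fix is easy and does not affect your conclusion: these hyperplanes cut $(\mathbb R^r_+)^p$ into finitely many polyhedral cones, on each of which the contributing set of paths is fixed and the sum is a genuine degree-$n$ homogeneous polynomial; continuity across a wall holds because the path whose index changes has $C_{\sigma,\phi}=0$ there (one of its factors vanishes). The final max/min over $\phi$ then produces another piecewise polynomial function, exactly as you argue. With this correction your proof matches the paper's.
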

\begin{proof}
By Definition \ref{defileadingcoeff}, the real number $c(\mathcal M,  \underline{\Sigma}^{\mathcal M})_{[\leq i]}$ is a sum of maxima and minima of $n$-homogeneous piecewise polynomials functions in the markings of the trees $\mathcal T_{\phi}$. Each one of these markings being a linear form in $(\underline{u_1}, ..., \underline{u_r}) \in (\mathbb R_+^r)^p$ by definition of the $\mathbf f_i$ (see \eqref{eqdefifi}), we get the result.
\end{proof}

Comparing Definitions \ref{defiupsilon} and \ref{defileadingcoeff} with Lemma \ref{lemcontfunction} gives the following simple relation between $\varphi_{[\leq i]}$ and $\upsilon_{[\leq i]}$:

\begin{lem} For any $t \in \Delta_{\underline{a}}$, we have
$$
\upsilon_{[\leq j]} (t) = \varphi_{[\leq j]} (t, ..., t).
$$
\end{lem}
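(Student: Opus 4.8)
The plan is, in essence, to unwind the two definitions and to observe that on the diagonal $\underline v_1 = \dots = \underline v_p = t$ the maximum over relabelings that defines $\varphi_{[\leq j]}$ is taken over a constant family, hence is attained by every relabeling and coincides term by term with the sum defining $\upsilon_{[\leq j]}$.

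In more detail, the proof of Lemma \ref{lemcontfunction} exhibits $\varphi_{[\leq j]}$ explicitly as a function on $(\mathbb R^r_+)^p$: writing $m^{\mathfrak s}=(m_1^{\mathfrak s},\dots,m_r^{\mathfrak s})$ for the vector of markings of an edge $\mathfrak s$ of $\mathcal T$, and $\langle\cdot,\cdot\rangle$ for the standard pairing on $\mathbb R^r$, one has
\[
(-1)^j\,\varphi_{[\leq j]}(\underline v_1,\dots,\underline v_p)\;=\;\max_{\phi:\,\mathcal E\to\{1,\dots,p\}}\ (-1)^j\sum_{\mathrm{index}_\phi(\sigma)\,\leq\, j}\ \prod_{\mathfrak s\in\sigma}\langle\underline v_{\phi(\mathfrak s)},\,m^{\mathfrak s}\rangle,
\]
where $\sigma$ runs over the complete paths of $\mathcal T$ and $\mathrm{index}_\phi(\sigma)=\#\{\mathfrak s\in\sigma:\langle\underline v_{\phi(\mathfrak s)},m^{\mathfrak s}\rangle<0\}$; by \eqref{eqdefifi} and Definition \ref{defileadingcoeff} this agrees with $c(\mathcal M,\underline{\Sigma}^{\mathcal M})_{[\leq j]}$ at $\mathbb N^r$-points. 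First I would substitute $\underline v_1=\dots=\underline v_p=t$ for $t\in\Delta_{\underline a}$. Then $\langle\underline v_{\phi(\mathfrak s)},m^{\mathfrak s}\rangle=\sum_k t_k m_k^{\mathfrak s}$ is independent of $\phi$, hence so are $\mathrm{index}_\phi(\sigma)$ and $\prod_{\mathfrak s\in\sigma}\langle\underline v_{\phi(\mathfrak s)},m^{\mathfrak s}\rangle$; the maximum is therefore attained by every $\phi$ and equals $\sum_{\sigma}\prod_{\mathfrak s\in\sigma}(\sum_k t_k m_k^{\mathfrak s})$, the sum being over the complete paths with at most $j$ edges $\mathfrak s$ such that $\sum_k t_k m_k^{\mathfrak s}<0$. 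By Definition \ref{defiupsilon}, where each edge $\mathfrak s$ is marked by $\sum_k t_k m_k^{\mathfrak s}$ and $C_\sigma$ is the product of these markings along $\sigma$, this last quantity is exactly $\upsilon_{[\leq j]}(t)$, which yields the identity.

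I do not expect a genuine obstacle here; the only point requiring attention is that the "index" of a complete path be read consistently in Definitions \ref{defiupsilon} and \ref{defileadingcoeff} — as the number of edges of the path whose marking is strictly negative, with respect to whichever weight data is in force — and the diagonal substitution above makes this meaning the same on both sides. If one wishes to invoke only the statement of Lemma \ref{lemcontfunction} rather than its proof, one can instead note that both $\upsilon_{[\leq j]}$ and $u\mapsto\varphi_{[\leq j]}(u,\dots,u)$ are continuous and homogeneous of degree $n$ on $\mathbb R^r_+$, reduce by density of $\mathbb Q_{>0}^r$ and homogeneity to the case $t=\underline u\in\mathbb N^r$, and run the same argument with $\mathcal M=\{\mathcal L^{\otimes\underline u}\}$.
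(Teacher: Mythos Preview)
Your proposal is correct and is precisely the comparison of Definitions~\ref{defiupsilon} and~\ref{defileadingcoeff} with Lemma~\ref{lemcontfunction} that the paper invokes (the paper gives no further detail). Your observation that on the diagonal the maximum over $\phi$ is taken over a constant family is exactly the content of this comparison, and your caveat about working from the explicit formula in the proof of Lemma~\ref{lemcontfunction} (or alternatively via density and continuity) is well taken.
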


We can now give the following refinement of Proposition \ref{propvariantmorse} when $\underline{l}$ belongs to a narrow cone of the form $\sum_j \mathbb R_+ \cdot \underline{u_j}$, for $\underline{u_1}, ..., \underline{u_p} \in \mathbb N^r$.

\begin{prop} \label{propcone}
	Let $\underline{a} = (a_1, ..., a_r) \in \mathbb N_{\geq 1}^r$ and $\underline{u_1}, ..., \underline{u_p} \in \mathbb N^r$ be as before. For any $m \in \mathbb N$, we let $H_m = \{ \underline{l} = (l_j) \in \mathbb N^r \; | \; \sum_j a_j l_j = m \}$. Assume that there exists $t \in \Delta_{\underline{a}}$ and $\lambda, \epsilon > 0$ such that for all $j \in \llbracket 1, p \rrbracket$, we have $\norm{t - \frac{1}{\lambda}\underline{u_j}}_{\infty} < \epsilon$. Then, for any $i \in \llbracket 0, n \rrbracket$, any $m \in \mathbb N$, and any $\underline{l} \in (\sum_j \mathbb R_+ \cdot \underline{u_j}) \cap H_m$,
\begin{equation} \label{eqsmallcone}
\chi^{[i]} (X, \mathcal L^{\otimes \underline{l}} \otimes M) \leq (-1)^i (\varphi_{[\leq j]}(t) + O(\epsilon) ) \frac{m^n}{n!} + O(m^{n-1}),
\end{equation}
where the constant appearing in the $O(\epsilon)$ term depends only on $\Sigma, \mathbf{e}_1, ..., \mathbf{e}_r$ and the $a_j$.
\end{prop}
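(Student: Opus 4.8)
The plan is to deduce the statement from Proposition~\ref{propvariantmorse} applied \emph{not} to the collection $\mathcal L$ but to $\mathcal M=\{M_1,\dots,M_p\}$, for which the leading coefficient $c(\mathcal M,\underline{\Sigma}^{\mathcal M})_{[\le i]}$ is exactly $\varphi_{[\le i]}(\underline{u_1},\dots,\underline{u_p})$ by Lemma~\ref{lemcontfunction}; the hypothesis that all rays $\tfrac1\lambda\underline{u_j}$ lie near $t$ will then let us replace this coefficient by $\varphi_{[\le i]}(t,\dots,t)=\upsilon_{[\le i]}(t)$ up to $O(\epsilon)$.

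First I would write the given $\underline{l}\in(\sum_j\mathbb R_+\cdot\underline{u_j})\cap H_m$ as $\underline{l}=\sum_j s_j\underline{u_j}$ with $s_j\ge 0$, and set $k_j=\lfloor s_j\rfloor$, $\underline{k}=(k_1,\dots,k_p)$, $N=\sum_j k_j$ and $\underline{\rho}=\underline{l}-\sum_j k_j\underline{u_j}=\sum_j(s_j-k_j)\underline{u_j}$. Since $0\le s_j-k_j<1$ and the $\underline{u_j}$ lie in $\mathbb N^r$, the vector $\underline{\rho}$ lies in $\mathbb N^r$ with all coordinates bounded by the constant $B=\sum_j\norm{\underline{u_j}}_\infty$, so it takes only finitely many values. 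As $\mathcal M^{\otimes\underline{k}}=\mathcal L^{\otimes\sum_j k_j\underline{u_j}}$, we have $\mathcal L^{\otimes\underline{l}}\otimes M=\mathcal M^{\otimes\underline{k}}\otimes(\mathcal L^{\otimes\underline{\rho}}\otimes M)$, and Proposition~\ref{propvariantmorse} applied to $\mathcal M$ with twisting bundle $\mathcal L^{\otimes\underline{\rho}}\otimes M$ yields
$$
\chi^{[i]}(X,\mathcal L^{\otimes\underline{l}}\otimes M)\ \le\ (-1)^i\,c(\mathcal M,\underline{\Sigma}^{\mathcal M})_{[\le i]}\,\frac{N^n}{n!}+O(N^{n-1}),
$$
where the error constant, being independent of the exponents in Proposition~\ref{propvariantmorse}, can be chosen uniformly in $\underline{k}$ and over the finitely many possible $\underline{\rho}$. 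For $\epsilon$ small each $\underline{u_j}$ is a nonzero element of $\mathbb N^r$ (since $t\in\Delta_{\underline{a}}$ forces $\norm{t}_\infty>0$), so $\underline{a}\cdot\underline{u_j}\ge 1$, whence $N\le\underline{a}\cdot\underline{l}=m$ and $O(N^{n-1})=O(m^{n-1})$.

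Then I would do the bookkeeping. Pairing $\norm{t-\tfrac1\lambda\underline{u_j}}_\infty<\epsilon$ with $\underline{a}$ and using $\underline{a}\cdot t=1$ gives $\underline{a}\cdot\underline{u_j}=\lambda(1+O(\epsilon))$ for every $j$, hence $m=\sum_j k_j(\underline{a}\cdot\underline{u_j})+\underline{a}\cdot\underline{\rho}=\lambda(1+O(\epsilon))N+O(1)$; since $\lambda\ge\tfrac12$ for $\epsilon$ small this rearranges into $N=\tfrac m\lambda(1+O(\epsilon))+O(1)$, and therefore $N^n=\tfrac{m^n}{\lambda^n}(1+O(\epsilon))+O(m^{n-1})$. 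On the other side, Lemma~\ref{lemcontfunction} and the degree-$n$ homogeneity of $\varphi_{[\le i]}$ give $c(\mathcal M,\underline{\Sigma}^{\mathcal M})_{[\le i]}=\varphi_{[\le i]}(\underline{u_1},\dots,\underline{u_p})=\lambda^n\,\varphi_{[\le i]}(\tfrac1\lambda\underline{u_1},\dots,\tfrac1\lambda\underline{u_p})$; and as $\varphi_{[\le i]}$ is continuous piecewise polynomial it is Lipschitz on any fixed compact neighbourhood of the diagonal image of the compact simplex $\Delta_{\underline{a}}$, so the point $(\tfrac1\lambda\underline{u_1},\dots,\tfrac1\lambda\underline{u_p})$, being within $\epsilon$ of $(t,\dots,t)$, satisfies $\varphi_{[\le i]}(\tfrac1\lambda\underline{u_1},\dots,\tfrac1\lambda\underline{u_p})=\varphi_{[\le i]}(t,\dots,t)+O(\epsilon)=\upsilon_{[\le i]}(t)+O(\epsilon)$. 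Substituting both estimates, absorbing the $\lambda^n\,O(m^{n-1})$ contribution into $O(m^{n-1})$ (here $\lambda$ is a fixed constant, determined up to $O(\epsilon)$ by $t$ and the $\underline{u_j}$), and using that $\upsilon_{[\le i]}$ is bounded on $\Delta_{\underline{a}}$, I would obtain $\chi^{[i]}(X,\mathcal L^{\otimes\underline{l}}\otimes M)\le(-1)^i(\upsilon_{[\le i]}(t)+O(\epsilon))\tfrac{m^n}{n!}+O(m^{n-1})$, which is the claimed bound (here $\varphi_{[\le i]}(t)$ in the statement stands for $\varphi_{[\le i]}(t,\dots,t)=\upsilon_{[\le i]}(t)$).

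The one point that needs care, and the main obstacle, is the asserted uniformity of the $O(\epsilon)$-constant: it should depend only on $\Sigma$, the $\mathbf{e}_1,\dots,\mathbf{e}_r$ and the $a_j$, and in particular be independent of $p$ and of the $\underline{u_j}$. The $p$-independence is the crux: by Definition~\ref{defileadingcoeff}, $c(\mathcal M,\underline{\Sigma}^{\mathcal M})_{[\le i]}$ is a maximum over the maps $\phi:\mathcal E\to\mathcal M$ of a sum over the complete paths of $\mathcal T$ of products of $n$ linear forms, each form depending on a single $\underline{u_j}$ through coefficients read off from $\Sigma$ and the $\mathbf{e}_1,\dots,\mathbf{e}_r$; hence on a fixed compact set every $c(\phi,\underline{\Sigma}^{\mathcal M})_{[\le i]}$ is Lipschitz with a constant depending only on $\mathcal T$, on $n$ and on those coefficients, and a maximum of functions sharing a common Lipschitz bound retains it. The radius of the compact neighbourhood of the diagonal of $\Delta_{\underline{a}}$ one works on, and the bound for $\upsilon_{[\le i]}$ on $\Delta_{\underline{a}}$, depend only on $\underline{a}$. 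Everything else is the elementary long-exact-sequence estimation already performed in Proposition~\ref{propvariantmorse}.
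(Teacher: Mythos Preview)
Your proof is correct and follows essentially the same approach as the paper: decompose $\underline{l}$ as an $\mathbb N$-combination of the $\underline{u_j}$ plus a bounded integer remainder, apply Proposition~\ref{propvariantmorse} to $\mathcal M$, relate $N=\sum_j k_j$ to $m$ via the $\epsilon$-closeness hypothesis, and finish by homogeneity and uniform continuity of $\varphi_{[\le i]}$. Your discussion of why the $O(\epsilon)$-constant is independent of $p$ (via a common Lipschitz bound for all the $c(\phi,\underline{\Sigma}^{\mathcal M})_{[\le i]}$, preserved under taking maxima) is in fact more explicit than the paper's, which simply invokes uniform continuity on a compact neighbourhood of $\Delta_{\underline{a}}$ without spelling out the $p$-uniformity.
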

\begin{proof}

\emph{Step 1. We determine a first asymptotic expansion for the right hand side of \eqref{eqsmallcone}}. In this step, the $\underline{u_j}$ are \emph{fixed}, and only $\underline{l}$ and $m$ are allowed to vary.
\medskip

Since $\underline{l} \in (\sum_j \mathbb R_+ \cdot \underline{u_j}) \cap \mathbb N^r$, we can write $\underline{l} = \underline{d} + \underline{v_0}$, with $\underline{d} \in \sum_j \mathbb N \cdot \underline{u_j}$ and $\underline{v_0} \in \left( \sum_j [0, 1] \cdot \underline{u_j}\right) \cap \mathbb N^r$. In particular $\norm{\underline{v_0}}_{\infty}$ is bounded by a constant independent of $m$. Write $\underline{d} = q_1 \, \underline{u_1} + ... + q_p \, \underline{u_p}$ and let $\underline{q} = (q_1, ..., q_p)$. We have then
\begin{align*}
\mathcal L^{\underline{l}} \otimes M & = M_1^{\otimes q_1} \otimes ... \otimes M_p^{\otimes q_p} \otimes \mathcal L^{\otimes \underline{v_0}} \otimes M \\
&  = \mathcal M^{\otimes \underline{q}} \otimes ( \mathcal L^{\otimes \underline{v_0}} \otimes M) 
\end{align*}
(recall that $M_j = \mathcal L^{\otimes \underline{u_j}}$). Since $\underline{v_0}$ is bounded, there is only a finite number of possible $\mathcal L^{\otimes \underline{v_0}} \otimes M$ that can appear in the previous equation. Thus, we can apply Proposition \ref{propvariantmorse} applied with $\mathcal L$ (resp. $\underline{\Sigma}$, resp. $M$) replaced by $\mathcal M$ (resp. $\underline{\Sigma}^{\mathcal M}$, resp. $\mathcal L^{\otimes \underline{v_0}} \otimes M$), to obtain
$$
\chi^{[i]} (X, \mathcal L^{\otimes \underline{l}} \otimes M) \leq (-1)^i c(\mathcal M, \underline{\Sigma}^{\mathcal M})_{[\leq i]} \frac{(\sum_j q_j)^n}{n!} + O((\sum_j q_j)^{n-1}).
$$
Lemma \ref{lemcontfunction} yields in turn:

\begin{equation} \label{eqasymptoticstep1}
\chi^{[i]} (X, \mathcal L^{\otimes \underline{l}} \otimes M) \leq (-1)^i \varphi_{[\leq i]} (\underline{u_1}, ..., \underline{u_p}) \frac{(\sum_j q_j)^n}{n!} + O((\sum_j q_j)^{n-1}).
\end{equation}

\emph{Step 2. Keeping the $\underline{u_j}$ fixed, we give an asymptotic expansion of the upper bound \eqref{eqasymptoticstep1} in terms of $m$.} A direct computation shows that $\sum_j q_j \leq \frac{1}{\max \norm{\underline{u_j}}_{\infty}} \frac{1}{\min_k a_k} ( \sum_j a_j l_j) = O(m)$, hence $O((\sum_j q_j)^{n-1}) = O(m^{n-1})$.

Moreover, we have
\begin{align*}
m = \sum_j a_j l_j & = \sum_{j} a_j d_j + \sum_{j} a_j (v_0)_j \\
	& = \sum_{k} q_k (\sum_j a_j u_{k, j} ) + \sum_{j} a_j (v_0)_j \\
	& = \sum_{k} q_k (\sum_j a_j \lambda (t_j + r_{k, j})) + O(1)
\end{align*}
where we let $r_{k, j} = \frac{1}{\lambda} u_{k, j} - t_j$. Note that the $O(1)$ term may depend on the $\underline{u_j}$ and the $a_j$, but not on $m$. Also, we have $|r_{k,j}| \leq \epsilon$ by hypothesis. Thus, since $\sum_{j} a_j t_j = 1$, still by hypothesis, we have
$$
m = (\sum_k q_k) \, \lambda \, (1 + O(\epsilon)) + O(1),
$$
where the constant appearing in the $O(\epsilon)$ term may depend on the $a_j$, but not on $m$ nor on the $u_j$, and the constant $O(1)$ may not depend on $m$.

Inserting this in \eqref{eqasymptoticstep1}, we get
$$
\chi^{[i]} (X, \mathcal L^{\otimes \underline{l}} \otimes M) \leq (-1)^i \varphi_{[\leq i]} (\underline{u_1}, ..., \underline{u_p}) \frac{1 + O(\epsilon)}{\lambda^n} \frac{m^n}{n!} + O(m^{n-1}).
$$
\emph{Step 3. We show that the leading coefficient obtained at Step 2 is close to $(-1)^i \upsilon_{[\leq j]} (u)$.} By homogeneity, we have $\frac{1}{\lambda^n} \varphi_{[\leq i]} (\underline{u_1}, ..., \underline{u_p}) = \varphi_{[\leq i]} (\frac{1}{\lambda} \underline{u_1}, ..., \frac{1}{\lambda} \underline{u_p})$. Since the definition of the function $\varphi_{[\leq j]}$ depends only on $\Sigma, \mathbf{e}_1, ..., \mathbf{e}_r$, and since this function is uniformly continuous in a compact neighborhood of $\Delta_{\underline{a}}$, we have finally
$$
\varphi_{[\leq j]} (\frac{1}{\lambda} \, \underline{u_1}, ..., \frac{1}{\lambda} \, \underline{u_p}) = \varphi_{[\leq j]} (t, ..., t) + O(\epsilon),
$$ 
where the constant appearing in $O(\epsilon)$ may only depend on $\Sigma, \mathbf{e}_1, ..., \mathbf{e}_r$ and the $a_j$. Since $\upsilon_{[\leq j]} (t) = \varphi_{[\leq j]} (t, ..., t)$, this ends the proof.
\end{proof}

The line bundle $\mathcal L^{\otimes \underline{l}}$ is one of the many line bundles appearing in the natural decomposition of the symmetric product $S^m (L_1^{(a_1)} \oplus ... \oplus L_r^{(a_r)})$. To prove Theorem \ref{thmineqintegral}, i.e. to obtain an upper bound on $\chi^{[i]} (X, S^m (L_1^{(a_1)} \otimes ... \otimes L_r^{(a_r)}))$, we will cover $\mathbb N^r$ by narrow cones of the form $\sum_j \mathbb R_+ \cdot \underline{u_j}$, and then apply inequality \eqref{eqsmallcone} to every line bundle appearing in the decomposition. Summing over all the cones, and then letting their width tend to $0$, will yield the result.

\begin{proof}[Proof of Theorem \ref{thmineqintegral}]

Let $v_1, ..., v_{n-1}$ be a basis of the primitive sublattice 
$$
H = \{(z_1, ..., z_r) \in \mathbb Z^r \; | \; \sum_i a_i z_i = 0 \} \subseteq \mathbb Z^r.
$$
For $m \in \mathbb N$, let $\mathcal H_m = \{ (t_i) \in \mathbb R^r \; | \; \sum_i a_i t_i = m \}$: with the notations of Proposition \ref{propcone}, we have then $H_m = \mathcal H_m \cap \mathbb N^r$. Let $\epsilon > 0$. Let $m_0 \in \mathbb N$ be such that $m_0 > \frac{\max_i \norm{v_i}_{\infty}}{\epsilon}$. 
\medskip

\emph{Step 1. We construct a partition of $m_0 \cdot \Delta_{\underline{a}}$ in elementary polyhedral cells.}
\medskip
 
For all $u \in H_{m_0}$, we let $C^\circ_u = u + \sum_{j=0}^{r-1} [0, 1[ \cdot v_j \subseteq H_{m_0}$, and $C_u = C^\circ_u \cap m_0 \cdot \Delta_{\underline{a}}$. The following facts are easy to check.

\begin{lem}  \label{lemauxiliary}
\begin{enumerate}
\item Each $C_u$ $(u \in H_{m_0})$ is a rational polyhedron contained in $m_0 \cdot \Delta_{\underline{a}}$. We have $\cup_{u \in H_{m_0}} C_u = m_0 \cdot \Delta_{\underline{a}}$ and $C_u \cap C_{u'} = \emptyset$ for $u \neq u'$. 
\item We have, for any fixed $m_0$, and any $m \geq m_0$
$$
\mathrm{card} \left((\mathbb R_+ \cdot C_u) \; \cap \; H_m \right) = O \left( \left(\frac{m}{m_0}\right)^{r-1} \right).
$$
as $m \longrightarrow +\infty$. If $C_u \subseteq \overset{\circ}{\Delta_{\underline{a}}}$, the cardinal above is equivalent to $(m/m_0)^{r-1}$ as $m \longrightarrow + \infty$.
\item $\mathrm{card} \left( \{ u \in H_{m_0} \; | \; C_u \cap \partial (m_0 \cdot \Delta_{\underline{a}}) \neq \emptyset \} \right) = O(m_0^{r-2})$ as $m_0 \longrightarrow + \infty$.
\end{enumerate}
\end{lem}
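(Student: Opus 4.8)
The plan is to identify the cells $C_u$ with the pieces obtained by tiling the rational affine hyperplane $\mathcal H_{m_0}$ by fundamental domains of the lattice $H$ and then truncating by the simplex $m_0 \cdot \Delta_{\underline{a}}$; all three assertions then reduce to elementary lattice-point estimates. First I would record a triviality: unless $\gcd(a_1, \dots, a_r) \mid m_0$ the set $m_0 \cdot \Delta_{\underline{a}}$ contains no integer point (and this divisibility is all that is used later), so I assume it and fix $u_0 \in H_{m_0}$. Then $\mathcal H_{m_0} = u_0 + H_{\mathbb R}$ with $H_{\mathbb R} = H \otimes \mathbb R$ of dimension $r-1$, the integer points of $\mathcal H_{m_0}$ form the coset $u_0 + H$, and $(v_1, \dots, v_{r-1})$ is a $\mathbb Z$-basis of $H$, so the half-open (convex) parallelepipeds $P_x = x + \sum_j [0, 1[ \, v_j$, $x \in u_0 + H$, partition $\mathcal H_{m_0}$. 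Setting $C_x = P_x \cap (m_0 \cdot \Delta_{\underline{a}})$, assertion (1) is then immediate: each $C_x$ is cut out of $\mathcal H_{m_0}$ by finitely many rational affine inequalities (some strict), hence is a rational polyhedron; disjointness of the $C_x$ follows from that of the $P_x$; and $\bigcup_x C_x = (m_0 \Delta_{\underline{a}}) \cap \mathcal H_{m_0} = m_0 \Delta_{\underline{a}}$. Only finitely many $C_x$ are non-empty, and the non-empty ones whose base point $x$ has a negative coordinate all meet $\partial(m_0 \Delta_{\underline{a}})$ and hence, by (3) below, number $O(m_0^{r-2})$; discarding them lets me index the remaining cells by $H_{m_0}$ as in the statement.

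For (2), I would use that the homothety of ratio $m/m_0$ centred at the origin carries $\mathcal H_{m_0}$ onto $\mathcal H_m$, with $(\mathbb R_+ \cdot C_u) \cap \mathcal H_m = \tfrac{m}{m_0} C_u$; since $C_u \subseteq m_0 \Delta_{\underline{a}} \subseteq \mathbb R_{\geq 0}^r$, this gives $(\mathbb R_+ \cdot C_u) \cap H_m = \mathbb Z^r \cap \tfrac{m}{m_0} C_u$. Passing to the coordinates on $\mathcal H_{m_0}$ given by the basis $(v_j)$, the cell $C_u$ lies in a translate of $[0,1[^{r-1}$, so $\tfrac{m}{m_0} C_u$ lies in a translate of $[0, m/m_0[^{r-1}$, which contains $O((m/m_0)^{r-1})$ integer points; this is the upper bound. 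If moreover $C_u$ does not meet $\partial(m_0 \Delta_{\underline{a}})$, then convexity of $m_0 \Delta_{\underline{a}}$ and of $P_u$ forces $C_u = P_u$ (a point of $P_u$ outside $m_0 \Delta_{\underline{a}}$, joined inside $P_u$ to a point of $C_u$, would push a point of $\partial(m_0 \Delta_{\underline{a}})$ into $C_u$), so $\tfrac{m}{m_0} C_u$ is exactly a translated $[0, m/m_0[^{r-1}$ and contains $(m/m_0)^{r-1} + O((m/m_0)^{r-2})$ integer points, whence the stated equivalence.

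For (3), if $C_u$ meets $\partial(m_0 \Delta_{\underline{a}})$ then, since $C_u \subseteq \overline{P_u}$ and $\mathrm{diam}\,\overline{P_u} \leq D := \sum_j \norm{v_j}_{\infty}$ is independent of $m_0$, the base point $u$ lies within distance $D$ of $\partial(m_0 \Delta_{\underline{a}})$ inside $\mathcal H_{m_0}$; and $\partial(m_0 \Delta_{\underline{a}})$ is the union of $r$ facets, each an $(r-2)$-dimensional simplex of diameter $O(m_0)$, whose $D$-neighbourhood in the $(r-1)$-dimensional space $\mathcal H_{m_0}$ has $(r-1)$-volume $O(m_0^{r-2})$ and so contains $O(m_0^{r-2})$ points of the coset $u_0 + H$. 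Summing over the $r$ facets yields the bound, which also justifies the count invoked in (1). I do not anticipate any genuine obstacle here — the paper rightly calls these facts "easy to check"; the only point that needs a little care, and which I would therefore settle at the outset, is the bookkeeping around the half-open boundaries of the cells and the precise indexing set $H_{m_0}$.
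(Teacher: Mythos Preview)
Your proof is correct and follows essentially the same route as the paper: tile $\mathcal H_{m_0}$ by translates of the fundamental parallelepiped of $H$, intersect with $m_0\Delta_{\underline{a}}$, use the homothety of ratio $m/m_0$ for (2), and the uniform diameter bound for (3). If anything, you are more careful than the paper about the half-open boundaries, the argument that $C_u=P_u$ in the interior case, and the indexing by $H_{m_0}$ versus the full coset $u_0+H$; the paper simply declares (1) ``clear'' and (3) ``easy to check''.
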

\begin{proof}[Proof of the lemma] The first point is clear. The third point is easy to check since $\partial ( m_0 \cdot \Delta_{\underline{a}})$ is a union of $r - 2$ dimensional polyhedrons, and since all the $C_u$ are isometric, of diameter independent of $m_0$. 

Let us prove the second point. Let $C = \sum_{j=1}^{r-1} [0,1[ \cdot v_j$: this is a fundamental domain for the lattice $H$. If $C_u \subseteq \overset{\circ}{\Delta_{\underline{a}}}$, we have $C_u = C_u^\circ = u + C$, hence
\begin{align*}
(\mathbb R_+ \cdot C_u) \; \cap \; H_m & =   \mathbb R_+ \cdot \left( u + C \right) \cap H_m\\
& = \left[ \frac{m}{m_0} u +  \frac{m}{m_0} C \right] \cap H_m 
\end{align*} 
Thus 
\begin{align*}
\mathrm{card}\left((\mathbb R_+ \cdot C_u) \; \cap \; H_m\right) & = \mathrm{card} \left( \left[ \frac{m}{m_0} u +  \frac{m}{m_0} C \right] \cap H_m \right) \\
& = \mathrm{card}  \left( \frac{m}{m_0} C \cap H \right) + O(m^{r-2}),
\end{align*} 
where at the last line, we used the fact that $\partial H_m$ is a union of $r-2$ dimensional polyhedrons. Now, $C$ is a fundamental domain for $H$, so $\mathrm{card}(\frac{m}{m_0} C \cap H) \sim (\frac{m}{m_0})^{r-1}$ as $m \longrightarrow + \infty$. This ends the proof in the case where $C_u \subseteq m_0 \cdot \overset{\circ}{\Delta_{\underline{a}}}$. The proof of the general claim follows in the same lines, using $C_u \subseteq u + C$.
\end{proof}

\emph{Step 2. We apply Proposition \ref{propcone} to each cone $\mathbb R_+ \cdot C_u$.}
\medskip

Let $u \in H_{m_0}$. By construction, if $\underline{u_1}, ..., \underline{u_p}$ are the vertices of the polyhedron $C_u$, we have $\norm{\frac{u}{m_0} - \frac{1}{m_0} \, \underline{u_i}}_{\infty} \leq \frac{\max \norm{v_i}_{\infty}}{m_0} \leq \epsilon$. Thus, we are in the setting of Proposition \ref{propcone}: for any $m$, and any $(l_1, ..., l_r) \in (\mathbb R_+ \cdot C_u )\; \cap \; H_m  = (\sum_j \mathbb R_+ \cdot \underline{u_j}) \cap H_m$, we have
\begin{equation} \label{eqapplpropcone}
\chi^{[i]}(X, L_1^{\otimes l_1} \otimes ... \otimes L_r^{\otimes l_r}) \leq (-1)^i \left(\upsilon_{[\leq j]}(\frac{u}{m_0}) + O(\epsilon)\right) \frac{m^n}{n!} + O(m^{n-1}). 
\end{equation} 
where the constant in $O(\epsilon)$ do not depend on $m_0$ nor on $m$.
\medskip

\emph{Step 3. We sum over all cones $\mathbb R_+ \cdot C_u$.} Using Lemma \ref{lemauxiliary}, we can sort the $\underline{l} \in H_m$ among the cones $\mathbb R \cdot C_u$ to which they belong, and get:
\begin{align*}
\chi^{[i]} (X,  S^m (L_1^{(a_1)} & \oplus ... \oplus L_r^{(a_r)}))   = \sum_{\underline{l} \in H_m} \chi^{[i]} (X, L_1^{\otimes l_1} \otimes ... \otimes L_r^{\otimes l_r})  \\
 &  = \sum_{u \in H_{m_0}} \left(  \sum_{\underline{l} \in (\mathbb R_+ \cdot C_u) \; \cap \; H_m} \chi^{[i]} (X, L_1^{\otimes l_1} \otimes ... \otimes L_r^{\otimes l_r}) \right) \\
\end{align*}

Now, Step 2 permits to bound this from above by
	\begin{equation} \label{eqintermediate1}
(-1)^i \sum_{u \in H_{m_0}}  \left( \sum_{\underline{l} \in (\mathbb R_+ \cdot C_u) \cap H_m} 1 \right) \cdot\left[ (\upsilon_{[\leq j]} \left(\frac{u}{m_0} \right) + O(\epsilon) ) \frac{m^n}{n!} + O(m^{n-1}) \right]. 
\end{equation}

We can apply Lemma \ref{lemauxiliary} (2), (3) to get an upper bound by
	\begin{equation} \label{eqintermediate2}
		(-1)^i \sum_{u \in H_{m_0}}  \left(\frac{m}{m_0} \right)^{r-1} \cdot\left[ (\upsilon_{[\leq j]} \left(\frac{u}{m_0} \right) + O(\epsilon) ) \frac{m^n}{n!} + O(m^{n-1}) \right] + O \left( \frac{m}{m_0} \right)^{r-1} O(m_0^{r-2}) O(m^n) 
\end{equation}
To get this formula from \eqref{eqintermediate1}, we split the sum over $u \in H_{m_0}$ in two, distinguishing among the $u$ for which $C_u \cap \partial (m_0 \cdot \Delta_{\underline{a}}) \neq \emptyset$ (using Lemma \ref{lemauxiliary} (3) bounds this part of the sum by the second member of the formula above), and the other elements $u \in H_{m_0}$ (using Lemma \ref{lemauxiliary} (2), (3) bound this part of the sum by the full expression above).\medskip

Thus, we have proved that for any fixed $\epsilon > 0$, and any $m_0 > \frac{C}{\epsilon}$, we have
\begin{equation} \label{eqlimsup}
\limsup_{m \longrightarrow + \infty} \frac{\chi^{[i]} (X,  S^m (L_1^{(a_1)} \oplus ... \oplus L_r^{(a_r)}))}{m^{n+ r - 1}} \leq \frac{(-1)^i}{n!} \left( \frac{1}{m_0^{r-1}} \sum_{u \in H_{m_0}} \upsilon_{[\leq j]}\left( \frac{u}{m_0} \right) \right) + C_1 \epsilon + \frac{C_2}{m_0}.
\end{equation}
The constant $C_1$ does not depend on $m_0$. Indeed, in \eqref{eqintermediate2}, the constant in the $O(\epsilon)$ is independent of $m_0$, and we have $\frac{1}{m_0^{r-1}} \left( \sum_{u \in H_{m_0}} 1 \right) = \frac{1}{m_0^{r-1}} \mathrm{card}(H_{m_0}) \leq D$ for some constant $D$ depending only on $\underline{a}$. Also, the constant $C_2$ comes from the second member of \eqref{eqintermediate2} and does not depend on $\epsilon$.
\medskip

\emph{Step 4. We recognize a Riemann sum in the upper bound \eqref{eqlimsup}.}

As the element $u$ runs among $H_{m_0}$, the element $t = \frac{u}{m_0}$ runs among a lattice in $\Delta_{\underline{a}}$, with fundamental domain isometric to $\frac{1}{m_0} C$. The latter has euclidian volume $\frac{1}{m_0^{r-1}} \mathrm{vol}_{r-1}(C)$. Thus, as $m_0 \longrightarrow + \infty$, we have
\begin{align*}
\frac{1}{m_0^{r-1}} \sum_{u \in H_{m_0}} \upsilon_{[\leq j]} \left(\frac{u}{m_0}\right) & = \frac{ \mathrm{vol}_{r-1}(\frac{1}{m_0} C)}{\mathrm{vol}_{r-1}(C)} \sum_{u \in H_{m_0}} \upsilon_{[\leq j]} \left(\frac{u}{m_0}\right) \\
&  \underset{m_0 \longrightarrow + \infty}{\longrightarrow} \mathrm{vol}_{r-1}(C)^{-1} \int_{\Delta_{\underline{a}}} \upsilon_{[\leq j]} \, d \mathrm{vol}_{r-1}
\end{align*} 
Inserting this in \eqref{eqlimsup} and letting $\epsilon \longrightarrow 0$ and $m_0 \longrightarrow + \infty$, we get

$$
\limsup_{m \longrightarrow + \infty} \frac{\chi^{[i]} (X,  S^m (L_1^{(a_1)} \oplus ... \oplus L_r^{(a_r)}))}{m^{n+ r - 1}} \leq (-1)^i \mathrm{vol}_{r-1}(C)^{-1} \left( \int_{ \Delta_{\underline{a}}} \upsilon_{[\leq j]} \, d \mathrm{vol}_{r-1} \right) \frac{1}{n!}.
$$

To conclude, it suffices to use Lemma \ref{lemlattice2}, joint to the fact that $dP = \frac{1}{\mathrm{vol}_{r-1}(\Delta_{\underline{a}})} d \mathrm{vol}_{r-1}$. 
\end{proof}

\begin{rem} \label{lemthmineqroot}
	Using the modified version of Proposition \ref{propvariantmorse} mentioned in Remark \ref{remroot}, it is possible to adapt Theorem \ref{thmineqintegral} to the case where the $L_i$ are standard line bundles endowed with fractional trivializations $\frac{1}{d_i} \mathbf{e}_i$. The conclusion is unchanged, but we have to modify the definition of $\upsilon_{[\leq j]}$ by replacing the $m_i^{\mathfrak{s}}$ by $\frac{m_i^{\mathfrak{s}}}{d_i}$ in Definition \ref{defiupsilon}.
\end{rem}

\subsection{Twist by an auxiliary $\mathbb Q$-line bundle}

In the next section, we present the version of Theorem \ref{thmineqintegral} that we announced at the beginning of Section \ref{sectstatement}.

Let us recall the setting we introduced previously. We consider line bundles $L_1, ..., L_r$ on a complex variety $X$ of dimension $n$, as well as a stratification $\Sigma$ of $X$, with trivializations $\mathbf{e}_i$ of the $L_i$ over $\Sigma$. Assume now that we are given an auxiliary $\mathbb Q$-line bundle $N$ on $X$, so that $\Sigma$ is also adapted to $N$, and let $\frac{1}{d} \mathbf{g}$ be a fractional trivialization of $N$ over $\Sigma$. For each edge $\mathfrak{s}$ in the tree $\mathcal T$ associated to $\Sigma$, we let $m_i^{\mathfrak{s}}$ (resp. $p^{\mathfrak{s}}$) be the marking of $\mathfrak{s}$ associated to $\mathbf{e}_i$ (resp. $\mathbf{g}$). We now adapt Definition \ref{defiupsilon} to take into account our supplementary data.

\begin{defi} \label{defiupsilonN} Let $(t_1, ..., t_r) \in \mathbb R^r$. Mark each edge $\mathfrak{s}$ in $\mathcal T$ with the real number $t_1 m_1^{\mathfrak{s}} + ... + t_r m_r^{\mathfrak{s}} + \frac{1}{d} p^{\mathfrak{s}}$. For each complete path $\sigma$ in $\mathcal T$, denote by $C_{\sigma}$ the product of all markings along the edges of $\sigma$. Then, for all $i \in \llbracket 1, n \rrbracket$, we let
	$$
	\upsilon_{[\leq i]}^{N} (t_1, ..., t_r) = \sum_{\mathrm{index}(\sigma) \leq i} C_{\sigma},
	$$
	where the sum runs among all the complete path with a number of negative markings $\leq i$.
\end{defi}

We can now state the following corollary to Theorem \ref{thmineqintegral}.

\begin{corol} \label{corolineintegral} Let $\underline{a} = (a_1, ..., a_r) \in \mathbb N^r$, and let $P$ denote the uniform probability measure on $\Delta_{\underline{a}}$. Then, for all $i \in \llbracket 0, n \rrbracket$, and any $m$ divisible enough by $d$, we have the asymptotic upper bound
\begin{align*}
	\chi^{[i]} (X, N^{\otimes m} \otimes S^m (&L_1^{(a_1)} \oplus ...  \oplus L_r^{(a_r)}) )  \\
& \leq  \frac{\mathrm{gcd}(a_1, ..., a_r)}{a_1 ... a_r} \binom{n + r - 1}{r - 1} \left[ \int_{\Delta_{\underline{a}}} \upsilon_{[\leq i]}^N dP \right] \frac{m^{n+ r - 1}}{(n+ r - 1)!} + o(m^{n+r - 1}).
\end{align*}

	\begin{proof}

		\emph{Case 1. Assume first that $N$ is a standard line bundle, and that $d = 1$.} For each $1 \leq i \leq r$, we let $L_i' = L_i \otimes N^{\otimes a_i}$. By construction of the symmetric product of a weighted direct sum, we have, for any $m \in \mathbb N$:
		$$
		N^{\otimes m} \otimes S^m (L_1^{(a_1)} \oplus ... \oplus L_r^{(a_r)} ) = S^m (L_1' {}^{(a_1)} \oplus ... \oplus L_r'{}^{(a_r)} ).
		$$
		Hence, we can bound the $\chi^{[i]}$ of the term above by applying Theorem \ref{thmineqintegral}. To do this, we first need to produce a common trivialized stratification for all $L_i'$.
		\medskip
		
		Since $\Sigma$ is adapted to all $L_i$ and $N$, it is adapted to all $L_i'$. Furthermore, if $e_i$ (resp. $g$) is the trivialization of $L_i$ (resp. $N$) on a strata $U$ provided by $\mathbf{e}_i$ (resp. $\mathbf g$), we get a trivialization of $L_i'$ on $U$ by letting $e_i' = e_i \otimes g^{\otimes a_i}$. Let $\mathbf e'_i$ be the trivialization of $L_i'$ obtained by taking the $e'_i$ on all strata. Then, for any edge $\mathfrak{s}$ in $\mathcal T$, the marking of $\mathfrak{s}$ associated to $\mathbf{e}_i$ is equal to $m_i'{}^{\mathfrak{s}} := m_i^{\mathfrak{s}} + a_i \, p^{\mathfrak{s}}$. 

		\medskip
		
		We can now use Definition \ref{defiupsilon} to define the function $\upsilon_{[\leq i]}$ associated to the data $(\Sigma, (\mathbf e'_i)_{1 \leq i \leq r})$. Let $\underline{t} = (t_1, ..., t_r) \in  \Delta_{\underline{a}}$, and let $\mathfrak{s}$ be an edge of $\mathcal T$. Then Definition \ref{defiupsilon} prescribes to mark $\mathfrak{s}$ with the weight $t_1 m_1'{}^{\mathfrak{s}} + ... + t_r m_r'{}^{\mathfrak{s}}$.
		Since $\underline{t} \in \Delta_{\underline{a}}$, we have $a_1 t_1 + ... + a_r t_r = 1$, and the previous weight is equal to $t_1 m_1 {}^{\mathfrak{s}} + ... + t_r m_r{}^{\mathfrak{s}} + p^{\mathfrak{s}}$. This shows that the function $\upsilon_{[\leq j]}$ coincides on $\Delta_{\underline{a}}$ with the function $\upsilon^N_{[\leq j]}$ introduced in Definition \ref{defiupsilonN}. This reduces the required inequality to an application of Theorem \ref{thmineqintegral} to the line bundles $L_i'$.

		\medskip

		\emph{General case. }In the setting where $L$ is a $\mathbb Q$-line bundle, with $d > 1$, we can adapt the proof using the following instructions.
		\begin{enumerate}
			\item Perform a Bloch-Gieseker covering to reduce to the case where $L$ is a standard line bundle;
			\item  Construct the $L_i'$ as above;
			\item Instead of applying Theorem \ref{thmineqintegral} to the $L_i'$, apply its modified version mentioned in Remark \ref{lemthmineqroot}. According to this definition, we have to mark the $\mathfrak{s}$ with $m_i'{}^\mathfrak{s} =  m_i^{\mathfrak{s}} + a_i \, \frac{p^{\mathfrak{s}}}{d}$. The same computation as above yields the result.
		\end{enumerate}

	\end{proof}

\end{corol}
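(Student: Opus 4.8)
The plan is to deduce the corollary from Theorem~\ref{thmineqintegral} by absorbing the twist by $N$ into the bundles $L_i$. The starting observation is purely formal: on the summand of $S^m(L_1^{(a_1)} \oplus \dots \oplus L_r^{(a_r)})$ indexed by $(l_1, \dots, l_r)$ with $a_1 l_1 + \dots + a_r l_r = m$, the factor $N^{\otimes m}$ decomposes as $\bigotimes_i (N^{\otimes a_i})^{\otimes l_i}$. Setting $L_i' = L_i \otimes N^{\otimes a_i}$ — a $\mathbb Q$-line bundle in general, with underlying standard bundle $(L_i')^{\otimes d} = L_i^{\otimes d} \otimes (N^{\otimes d})^{\otimes a_i}$ — one gets, for every $m$ divisible by $d$, an isomorphism
\[
N^{\otimes m} \otimes S^m(L_1^{(a_1)} \oplus \dots \oplus L_r^{(a_r)}) \;\cong\; S^m\bigl((L_1')^{(a_1)} \oplus \dots \oplus (L_r')^{(a_r)}\bigr),
\]
so it suffices to estimate $\chi^{[i]}$ of the right-hand side.

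First I would equip the $L_i'$ with a common trivialized stratification. Since $\Sigma$ is adapted to each $L_i$ and to $N$, it is adapted to each $L_i'$: working stratum by stratum, if $e_i$ trivializes $L_i$ and $g$ trivializes $N^{\otimes d}$ on a given stratum, then $e_i^{\otimes d} \otimes g^{\otimes a_i}$ trivializes $(L_i')^{\otimes d}$, and this assembles into a fractional trivialization $\mathbf e_i'$ of $L_i'$ over $\Sigma$. By construction, the effective marking of an edge $\mathfrak s$ of the tree $\mathcal T$ associated to $\Sigma$, relative to $\mathbf e_i'$, is $m_i^{\mathfrak s} + \tfrac{a_i}{d}\,p^{\mathfrak s}$ (with $d = 1$ when $N$ is a genuine line bundle, in which case $L_i'$ is genuine as well).

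Next I would apply Theorem~\ref{thmineqintegral} to the data $(\Sigma, (\mathbf e_i')_i)$ and the simplex $\Delta_{\underline a}$ — or, when $d > 1$, its $\mathbb Q$-line bundle variant from Remark~\ref{lemthmineqroot}, after performing the same generic Bloch–Gieseker bootstrap as in the proof of the general case of Theorem~\ref{thmmorse} (on the cover $p\colon\widehat X \to X$ the underlying line bundles acquire genuine $d$-th roots, and by Lemmas~\ref{lemmodification} and~\ref{lemBG} both sides of the desired inequality scale by $\deg p$). It remains to identify the integrand produced by Theorem~\ref{thmineqintegral} for $(\Sigma, (\mathbf e_i')_i)$, restricted to $\Delta_{\underline a}$, with the function $\upsilon_{[\leq i]}^N$ of Definition~\ref{defiupsilonN}. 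Indeed, for $\underline t = (t_1, \dots, t_r) \in \Delta_{\underline a}$, Definition~\ref{defiupsilon} marks an edge $\mathfrak s$ with
\[
\sum_i t_i\Bigl(m_i^{\mathfrak s} + \tfrac{a_i}{d}\,p^{\mathfrak s}\Bigr) \;=\; \sum_i t_i m_i^{\mathfrak s} \;+\; \Bigl(\sum_i a_i t_i\Bigr)\tfrac{p^{\mathfrak s}}{d} \;=\; \sum_i t_i m_i^{\mathfrak s} + \tfrac{p^{\mathfrak s}}{d},
\]
using the defining relation $\sum_i a_i t_i = 1$ of $\Delta_{\underline a}$; this is exactly the marking prescribed by Definition~\ref{defiupsilonN}. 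Hence the two integrands coincide on $\Delta_{\underline a}$, their integrals against $dP$ agree, and the bound of Theorem~\ref{thmineqintegral} for the $L_i'$ is precisely the asserted inequality.

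The only genuinely delicate point is the bookkeeping of the fractional trivializations: one has to check that tensoring trivializations stratum by stratum is well defined (compatible with the identifications along the strata of Remark~\ref{remtree} and, if needed, with refinements via Proposition~\ref{proprefine2}), and that the marking of each edge of $\mathcal T$ transforms exactly additively as claimed, so that the collapse $\sum_i a_i t_i = 1$ on $\Delta_{\underline a}$ returns $\upsilon_{[\leq i]}^N$ on the nose rather than up to a lower-order error. Everything else — the formal identity $L_i' = L_i \otimes N^{\otimes a_i}$, the Bloch–Gieseker reduction, and the normalization $dP = \mathrm{vol}_{r-1}(\Delta_{\underline a})^{-1}\, d\mathrm{vol}_{r-1}$ — is routine, reusing arguments already in place for Theorems~\ref{thmineqintegral} and~\ref{thmmorse}.
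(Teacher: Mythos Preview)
Your proposal is correct and follows essentially the same approach as the paper: absorb $N$ into the $L_i$ via $L_i' = L_i \otimes N^{\otimes a_i}$, note that the edge-markings become $m_i^{\mathfrak s} + \tfrac{a_i}{d}\,p^{\mathfrak s}$, and use the simplex constraint $\sum_i a_i t_i = 1$ to identify the resulting $\upsilon_{[\leq i]}$ with $\upsilon_{[\leq i]}^N$ on $\Delta_{\underline a}$, reducing to Theorem~\ref{thmineqintegral} (or its fractional variant from Remark~\ref{lemthmineqroot} after a Bloch--Gieseker covering when $d>1$). The only cosmetic difference is that you treat the cases $d=1$ and $d>1$ uniformly from the start, whereas the paper first spells out the $d=1$ case and then indicates the modifications for general $d$.
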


\section{Proof of the main theorem}

\subsection{Statement of the result} \label{sectstatement}

We will now apply Corollary \ref{corolineintegral} to the situation of a direct sum $E^{(1)} \oplus ... \oplus E^{(k)}$, where $E$ is a direct sum of line bundles. The main result of this section is an algebraic version of the main theorem \cite[Theorem 2.37]{dem11}. Before stating it, let us introduce a simplifying notation. 

\begin{bfseries}Notation.\end{bfseries} If $E$ is a vector bundle, and $k$ an integer, we will denote by $\mathbb E_k$ the weighted direct sum
	$$
	\mathbb E_k = E^{(1)} \oplus ... \oplus E^{(k)}.
	$$

\begin{thm} \label{thmcomparison} Let $X$ be a complex projective manifold of dimension $n$, and let $E = L_1 \oplus ... \oplus L_r$ be a direct sum of line bundles over $X$. For $k \in \mathbb N^\ast$. Let $N$ be an auxiliary line bundle on $X$. For each $k$, we introduce the $\mathbb Q$-line bundle
	$$
	N_k = \mathcal O_X \left( \frac{1}{kr} ( 1 + \frac{1}{2} + ... + \frac{1}{k} ) F \right).
	$$

Assume that $\Sigma$ is a stratification of $X$, adapted to $\mathrm{det}\, E \otimes N = L_1 \otimes ... \otimes L_r \otimes N$, and let $\mathbf{e}$ be a trivialization of $\det\, E \otimes N$ over $\Sigma$. Let $\underline{\Sigma} = (\Sigma, \mathbf{e})$.
\medskip

Then, for all $j \in \llbracket 0, n \rrbracket$, and all $m \gg k \gg 1$, with $m$ divisible enough, we have
\begin{align*}
	\chi^{[j]} (X, S^m \mathbb E_k \, \otimes \, N_k^{\otimes m })  \leq (-1)^j \frac{(\log k)^n}{n! (k!)^r} & \left( c_1 (\det E \otimes N, \underline{\Sigma})^n_{[\leq j]} + O(\frac{1}{\log k}) \right) \frac{m^{n+kr - 1}}{(n + kr - 1)!}  \\
	& + o(m^{n+kr-1})
\end{align*}
\end{thm}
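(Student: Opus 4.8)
The plan is to recognise $S^m\mathbb E_k\otimes N_k^{\otimes m}$ as one of the objects handled by Corollary~\ref{corolineintegral}, apply that result, and then estimate the resulting integral over a simplex as $k\to+\infty$ by Demailly's averaging (Monte--Carlo) argument. Write $H_k=1+\tfrac12+\cdots+\tfrac1k=\log k+O(1)$, so that $N_k=N^{\otimes H_k/(kr)}$ as a $\mathbb Q$-line bundle. Since $E=L_1\oplus\cdots\oplus L_r$, we have $\mathbb E_k=\bigoplus_{j'=1}^{k}\bigoplus_{i=1}^{r}L_i^{(j')}$, a weighted direct sum of the $kr$ line bundles $L_i$, the copy sitting in block $j'$ carrying weight $a_{(j',i)}=j'$. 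Using Propositions~\ref{proprefine} and~\ref{proprefine2} (refining $\Sigma$ affects none of the truncated intersection numbers), I may assume $\Sigma$ is adapted to every $L_i$ and to $N$, with trivializations $\mathbf e_i$ of $L_i$ and $\mathbf g$ of $N$ whose tensor product is $\mathbf e$.

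Feeding these data into Corollary~\ref{corolineintegral} — each slot $(j',i)$ trivialized by $\mathbf e_i$, and $N_k$ as the auxiliary $\mathbb Q$-line bundle — gives, for $m\gg k$ with $m$ divisible enough,
\[
\chi^{[j]}\bigl(X,S^m\mathbb E_k\otimes N_k^{\otimes m}\bigr)\le(-1)^{j}\,\frac{1}{(k!)^{r}}\binom{n+kr-1}{kr-1}\Bigl[\int_{\Delta_{\underline a}}\upsilon^{N_k}_{[\le j]}\,dP\Bigr]\frac{m^{n+kr-1}}{(n+kr-1)!}+o(m^{n+kr-1}),
\]
since the $\gcd$ of all the weights is $1$ (weight $1$ occurs, in block $j'=1$) and their product is $\prod_{j'=1}^{k}(j')^{r}=(k!)^{r}$. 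Because $\binom{n+kr-1}{kr-1}/(n+kr-1)!=1/(n!\,(kr-1)!)$ and $(kr)(kr+1)\cdots(kr+n-1)=(kr)^{n}(1+O(1/k))$, the theorem reduces to the asymptotic identity
\[
\int_{\Delta_{\underline a}}\upsilon^{N_k}_{[\le j]}\,dP=\Bigl(\tfrac{H_k}{kr}\Bigr)^{n}\Bigl(\deg c_1(\det E\otimes N,\underline\Sigma)^{n}_{[\le j]}+O(1/\log k)\Bigr)\qquad(k\to+\infty),
\]
together with the elementary $(H_k/kr)^{n}=(\log k/kr)^{n}(1+O(1/\log k))$ and $(kr)^{n}/(n+kr-1)!=1/(kr-1)!\cdot(1+O(1/k))$.

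To prove this identity — the heart of the argument — let $\mathcal T$ be the marked tree of $\Sigma$, let $m_i^{\mathfrak s},p^{\mathfrak s}$ denote the markings of an edge $\mathfrak s$ attached to $\mathbf e_i$ and $\mathbf g$, and put $\mu_{\mathfrak s}=\sum_i m_i^{\mathfrak s}+p^{\mathfrak s}$, which is the marking attached to $\mathbf e$, i.e.\ to $\det E\otimes N$. For $t\in\Delta_{\underline a}$, Definition~\ref{defiupsilonN} marks $\mathfrak s$ by $\ell_{\mathfrak s}(t)=\sum_i S_i(t)m_i^{\mathfrak s}+\tfrac{H_k}{kr}p^{\mathfrak s}$ with $S_i(t)=\sum_{j'=1}^{k}t_{(j',i)}$; since each complete path has exactly $n$ edges, $\upsilon^{N_k}_{[\le j]}(t)=(\tfrac{H_k}{kr})^{n}\sum_{\mathrm{index}_t(\sigma)\le j}\prod_{\mathfrak s\in\sigma}X_{\mathfrak s}(t)$, where $X_{\mathfrak s}=\tfrac{kr}{H_k}\ell_{\mathfrak s}=\sum_i R_i m_i^{\mathfrak s}+p^{\mathfrak s}$, $R_i=\tfrac{kr}{H_k}S_i$, and $\mathrm{index}_t$ counts the edges with $X_{\mathfrak s}(t)<0$. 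The affine substitution $u_{(j',i)}=j'\,t_{(j',i)}$ carries $(\Delta_{\underline a},dP)$ onto the standard simplex $\Delta^{kr-1}$ with its uniform probability measure, with $S_i=\sum_{j'}u_{(j',i)}/j'$. The Dirichlet moment formula on $\Delta^{kr-1}$ (a standard computation, cf.\ the Annex) gives $\mathbf E[u_\alpha]=1/(kr)$ and $\mathbf E[\prod_\alpha u_\alpha^{q_\alpha}]=\tfrac{(kr-1)!\prod_\alpha q_\alpha!}{(kr-1+q)!}\le q!\,(kr)^{-q}$ for $q=\sum_\alpha q_\alpha$; hence $\mathbf E[S_i]=H_k/(kr)$ and, crucially because $\sum_{j'\ge1}1/(j')^{2}<\infty$, $\mathrm{Var}(S_i)=O((kr)^{-2})$ and $\mathbf E[S_i^{q}]\le q!\,(H_k/kr)^{q}$. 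Thus $\mathbf E[R_i]=1$, $\mathrm{Var}(R_i)=O(1/(\log k)^{2})$ and $\mathbf E[R_i^{q}]=O(1)$ for each fixed $q$. Now fix a complete path $\sigma$: for $\mathfrak s\in\sigma$, $X_{\mathfrak s}-\mu_{\mathfrak s}=\sum_i(R_i-1)m_i^{\mathfrak s}$, so $\mathbf E[(X_{\mathfrak s}-\mu_{\mathfrak s})^{2}]=O(1/(\log k)^{2})$ while $\mathbf E[\prod_{\mathfrak s\in\sigma}X_{\mathfrak s}^{2}]=O(1)$ (a fixed-degree polynomial in the $R_i$). A telescoping expansion together with Cauchy--Schwarz then gives $\mathbf E\bigl[\,\bigl|\prod_{\mathfrak s\in\sigma}X_{\mathfrak s}-\prod_{\mathfrak s\in\sigma}\mu_{\mathfrak s}\bigr|\,\bigr]=O(1/\log k)$ (in particular this is $O(1/\log k)$ when some $\mu_{\mathfrak s}=0$); and whenever $\mu_{\mathfrak s}\ne0$, Chebyshev gives $\mathbf E[\mathbf 1_{\{\mathrm{sign}\,X_{\mathfrak s}\ne\mathrm{sign}\,\mu_{\mathfrak s}\}}]=O(1/(\log k)^{2})$, so $\mathrm{index}_t(\sigma)$ equals the asymptotic index $\#\{\mathfrak s\in\sigma:\mu_{\mathfrak s}<0\}$ off a set on which, by Cauchy--Schwarz again, $\mathbf E[\,|\prod_{\mathfrak s}X_{\mathfrak s}|\,\mathbf 1\,]=O(1/\log k)$. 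Summing over the finitely many complete paths and invoking Proposition~\ref{propgraph} (which identifies $\sum_{\mathrm{index}(\sigma)\le j}\prod_{\mathfrak s\in\sigma}\mu_{\mathfrak s}$ with $\deg c_1(\det E\otimes N,\underline\Sigma)^{n}_{[\le j]}$) yields $(\tfrac{kr}{H_k})^{n}\int_{\Delta_{\underline a}}\upsilon^{N_k}_{[\le j]}\,dP=\deg c_1(\det E\otimes N,\underline\Sigma)^{n}_{[\le j]}+O(1/\log k)$, which is the required identity; inserting it into the bound above finishes the proof.

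The main obstacle is exactly this averaging estimate with its $O(1/\log k)$ rate: the functions $X_{\mathfrak s}$ are bounded in $k$ only in an $L^{q}$ sense, not uniformly, so controlling products of them forces the use of the Dirichlet moment bounds $\mathbf E[R_i^{q}]=O(1)$ and of the convergence of $\sum 1/(j')^{2}$; moreover the truncation index $\mathrm{index}_t(\sigma)$ is itself $t$-dependent and must be pinned to its asymptotic value by a concentration argument, with the exceptional set weighted by $|\prod_{\mathfrak s}X_{\mathfrak s}|$. Everything else is routine bookkeeping, provided the quantifiers are kept in order: $k$ is sent to infinity first (producing the $O(1/\log k)$), and only then $m$ (producing $o(m^{n+kr-1})$), consistently with $m\gg k\gg1$ and with ``$m$ divisible enough'' ensuring that $N_k^{\otimes m}$ is a genuine line bundle so that Corollary~\ref{corolineintegral} applies.
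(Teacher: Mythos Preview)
Your proof is correct and follows essentially the same route as the paper: apply Corollary~\ref{corolineintegral} to reduce to the integral $\int_{\Delta_{\underline a}}\upsilon^{N_k}_{[\le j]}\,dP$, then estimate the latter probabilistically via the Dirichlet moments on the simplex and a telescoping/Cauchy--Schwarz bound. The only cosmetic difference is that the paper packages the product--difference and the index--matching into a single inequality (Lemma~\ref{lemupperbounddiff}, which already absorbs the indicator $\mathbbm 1_{\{\mathrm{index}=j\}}$), whereas you split these into two steps and invoke Chebyshev explicitly for the sign concentration; both lead to the same $O(1/\log k)$ error.
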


Let us place ourselves in the hypotheses of the theorem, and define a few objects that will be useful in the proof. 

\begin{defi} \label{defitrivial} By Propositions \ref{proprefine} and \ref{proprefine2}, we do not lose generality in assuming that $\Sigma$ is also adapted to $L_1, ..., L_r, N$. Under this hypothesis, we introduce trivializations $\mathbf{e}_1, ..., \mathbf{e}_r, \mathbf g$ of $L_1, ..., L_r, N$ on $\Sigma$, such that the following holds. For any irreducible component $V$ appearing in the stratification $\Sigma$, if $U \subseteq V$ is the complement of the natural strata on $V$, and if $e_i$ (resp. $g, e$) is the trivialization of $L_i$ (resp. $N$, resp. $\det E \otimes N$) on $U$ given by $\mathbf{e}_i$ (resp. $\mathbf{g}$, resp. $\mathbf{e}$), we have
$$
e_1 \otimes ... \otimes e_r \otimes g = e.
$$  
\end{defi}

\begin{rem} It is always possible to find $\mathbf{e}_1, ..., \mathbf{e}_r, \mathbf{g}$ as in Definition \ref{defitrivial}, by first fixing $e_1, ..., e_{r}, e$ on $U$, and then letting $g = e \cdot (e_1)^{-1} \cdot ... \cdot (e_{r})^{-1}$.
\end{rem}

Let $\underline{k} = (1, ..., 1, 2, ..., 2, ...., k, ..., k)$, where each number is repeated $r$ times. Applying Corollary \ref{corolineintegral} to the weighted direct sum $\mathbb E_k = L_1^{(1)} \oplus ... \oplus L_r^{(1)} ... \oplus L_1^{(k)} \oplus ... \oplus L_r^{(k)}$ and to the $\mathbb Q$-line bundle $N_k$, we get
\begin{align} \nonumber 
	\chi^{[i]} (X, N_k^{\otimes m } \; \otimes \; & S^m \mathbb E_k) \\ \label{equpperboundint}
	& \leq \frac{1}{(k!)^r} \binom{n+ kr - 1}{kr - 1} \left[ \int_{ \Delta_{\underline{k}}} \upsilon^{N_k}_{[\leq j]} \, d P \right] \frac{m^{n + kr - 1}}{(n+ kr - 1)!} + o(m^{n+ kr - 1})
\end{align}
where $\upsilon^{N_k}_{[\leq j]} : \mathbb R_+^{kr} \longrightarrow \mathbb R$ is the function provided by Definition \ref{defiupsilonN}, and $dP$ is the uniform probability measure on $\Delta_{\underline{k}}$.
\medskip

The theorem will come directly from the following asymptotic estimate of the integral term.

\begin{prop} \label{propasymptint} We have, as $ k \longrightarrow + \infty$,
\begin{equation} \label{eqasymptint}
	\int_{ \Delta_{\underline{k}}} \upsilon^{N_k}_{[\leq j]} \, d P = \frac{(\log k)^n}{(kr)^n} \left[ c_1(\det E \otimes N, \underline{\Sigma})^n_{[\leq j]} + O(\frac{1}{\log k}) \right].
\end{equation}
\end{prop}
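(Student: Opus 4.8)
The strategy is to interpret the integral $\int_{\Delta_{\underline{k}}} \upsilon^{N_k}_{[\leq j]} \, dP$ probabilistically, as the expectation of a random variable depending on a uniformly distributed point $\underline{t} = (t_{1,1}, \ldots, t_{r,1}, \ldots, t_{1,k}, \ldots, t_{r,k}) \in \Delta_{\underline{k}}$, following Demailly's Monte-Carlo method. Recall that $\upsilon^{N_k}_{[\leq j]}(\underline{t}) = \sum_{\mathrm{index}(\sigma) \leq j} C_\sigma$, where along each edge $\mathfrak{s}$ of the tree $\mathcal{T}$ the marking is the linear form $\sum_{p=1}^k \sum_{i=1}^r t_{i,p} m_i^{\mathfrak{s}} + \frac{1}{kr}(1 + \cdots + \frac{1}{k}) f^{\mathfrak{s}}$, where $m_i^{\mathfrak{s}}$ (resp. $f^{\mathfrak{s}}$) is the marking associated to $\mathbf{e}_i$ (resp. to the divisor $F$). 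The key observation, which I would make first, is that if we set $s_i = \sum_{p=1}^k t_{i,p}$, then by the compatibility of the trivializations ($e_1 \otimes \cdots \otimes e_r \otimes g = e$, Definition \ref{defitrivial}), the marking on $\mathfrak{s}$ becomes $\sum_{i=1}^r s_i m_i^{\mathfrak{s}} + (\text{small twist})$; and when all $s_i$ are close to their common mean $\frac{1}{kr}(1 + \cdots + \frac{1}{k}) \approx \frac{\log k}{kr}$, this marking is close to $\frac{\log k}{kr}$ times the marking associated to $\mathbf{e}$ for $\det E \otimes N$. This is the "averaging phenomenon".

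Thus the concrete steps are: (1) Compute the distribution of $(s_1, \ldots, s_r)$ when $\underline{t}$ is uniform on $\Delta_{\underline{k}}$; each $s_i$ is a sum of $k$ coordinates, and one shows, via the probability estimates gathered in the Annex, that each $s_i$ concentrates around its mean $\bar{s} = \frac{1}{kr}(1 + \frac{1}{2} + \cdots + \frac{1}{k})$ (the weight $\underline{k}$ assigns coefficient $p$ to the $p$-th block, so the relevant mean of a single $t_{i,p}$ scales like $\frac{1}{p} \cdot \frac{1}{kr}$ up to normalization, and summing gives the harmonic-type sum), with fluctuations of relative order $O(1/\log k)$. (2) Since $\upsilon^{N_k}_{[\leq j]}$ is piecewise polynomial and homogeneous of degree $n$ in the markings, and the markings are linear in $(s_1, \ldots, s_r)$ plus a controlled lower-order twist, write $\upsilon^{N_k}_{[\leq j]}(\underline{t}) = \bar{s}^n \cdot \upsilon_{[\leq j]}^{\mathbf{e}}(\tfrac{s_1}{\bar s}, \ldots, \tfrac{s_r}{\bar s}) + (\text{error})$, where $\upsilon_{[\leq j]}^{\mathbf e}$ is the analogous function built from the trivialized stratification $\underline{\Sigma} = (\Sigma, \mathbf e)$ of $\det E \otimes N$, evaluated near the barycenter. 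By Proposition \ref{propgraph} (applied to the barycentric point, where $\frac{s_i}{\bar s} \to 1$), $\upsilon_{[\leq j]}^{\mathbf e}$ at the all-ones point equals exactly $\deg c_1(\det E \otimes N, \underline{\Sigma})^n_{[\leq j]}$ — here one uses that all the $s_i/\bar s$ tend to $1$, matching the uniform marking $\sum_i m_i^{\mathfrak s} = $ (marking for $e$). (3) Integrate: the main term gives $\bar{s}^n \deg c_1(\det E \otimes N, \underline{\Sigma})^n_{[\leq j]} = \frac{(\log k)^n}{(kr)^n}\big(\deg c_1(\ldots)^n_{[\leq j]} + O(1/\log k)\big)$, using $\bar s = \frac{\log k}{kr}(1 + O(1/\log k))$; and the error term, controlled by the concentration estimate from step (1) together with Lipschitz/polynomial bounds on $\upsilon_{[\leq j]}^{\mathbf e}$ on a compact neighborhood of the barycenter and a crude bound on the (measure-small) event that the $s_i/\bar s$ stray far from $1$, is $o$ of the main term, i.e. absorbed into the $O(1/\log k)$.

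The main obstacle will be step (3): precisely controlling the contribution of the tail event where the $s_i$ are not concentrated. On that bad set $\upsilon^{N_k}_{[\leq j]}$ can be as large as $O((\text{max coordinate})^n)$, which a priori is not small, so one must show the probability of the bad event decays fast enough (exponentially, or at least faster than any power of $1/\log k$) to beat this polynomial growth — this is exactly the role of the Dirichlet/simplex concentration inequalities deferred to the Annex. A secondary subtlety is bookkeeping the twist by $N_k$: its marking $\frac{1}{kr}(1 + \cdots + \frac 1k) f^{\mathfrak s} = \bar s \, f^{\mathfrak s}$ is designed precisely so that, combined with the $\sum_i s_i m_i^{\mathfrak s}$ terms at the barycenter $s_i = \bar s$, one recovers $\bar s \sum_i (m_i^{\mathfrak s} + \text{contribution of }N) = \bar s \cdot (\text{marking of } e \text{ for } \det E \otimes N)$; verifying this identity cleanly, using Definition \ref{defitrivial}, is what makes $\det E \otimes N$ (rather than $\det E$) appear in the final formula, and I would check it carefully before running the estimate.
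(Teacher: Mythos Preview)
Your overall strategy is correct and closely matches the paper's: both interpret the integral as an expectation over the uniform law on $\Delta_{\underline{k}}$, observe that each edge-marking depends on $t$ only through the affine forms $A_i^\sigma(t)=\sum_{j,l} t_{j,l}\,d_l^i(\sigma)+\bar s\,p^i(\sigma)$ (equivalently, only through the sums $s_l=\sum_p t_{l,p}$, as you note), compute $\mathbf E[A_i^\sigma]\sim \frac{\log k}{kr}\,d^i(\sigma)$ using Lemma~\ref{lemsum}, and identify the main term with $\bar s^{\,n}\,\deg c_1(\det E\otimes N,\underline\Sigma)^n_{[\le j]}$ via Proposition~\ref{propgraph}. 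Your bookkeeping of the $N_k$-twist is also exactly right.

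Where your plan diverges from the paper is in the error control. You propose a \emph{good/bad event split}: restrict to $\{|s_i-\bar s|<\epsilon\bar s\}$, use Lipschitz/Taylor bounds there, and control the complementary tail by a concentration inequality. The paper does \emph{not} do this. Instead it works path by path and proves the pointwise inequality (Lemma~\ref{lemupperbounddiff}, modelled on Demailly's Lemma~2.25)
\[
\Big|\mathbf E\big[\mathbbm 1_{\{\mathrm{index}(\sigma)=j\}}\textstyle\prod_i A_i^\sigma\big]-\delta_{j,j_\sigma}\prod_i\mathbf E[A_i^\sigma]\Big|
\le \Big[\sum_p\big(\textstyle\prod_{q<p}\mathbf E[(A_q^\sigma)^2]\big)\,\mathbf{Var}(A_p^\sigma)\,\big(\textstyle\prod_{s>p}\mathbf E[A_s^\sigma]^2\big)\Big]^{1/2},
\]
obtained by a telescoping identity plus Cauchy--Schwarz. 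Since $\mathbf{Var}(A_i^\sigma)=O(1/k^2)$ and $\mathbf E[(A_i^\sigma)^2]=O((\log k)^2/k^2)$ (Lemma~\ref{lemvariance}), the right-hand side is $O((\log k)^{n-1}/k^n)$, exactly the required error, with no tail event to handle.

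This matters because your proposed route has a genuine quantitative gap as stated. The Annex contains only second-moment bounds (Lemma~\ref{lemboundvariance}); it gives $\mathbf{Var}(s_i)=O(1/k^2)$ while $\bar s\sim\log k/k$, so Chebyshev yields $P(|s_i-\bar s|>\lambda\bar s)=O(1/(\lambda^2(\log k)^2))$. If you take $\lambda$ small enough (say $\lambda\asymp 1/\log k$) to make the Lipschitz error on the good set acceptable, the bad-set probability is only $O(1)$; if you take $\lambda$ fixed, the good-set Lipschitz error is the same order as the main term. And on the bad set $\upsilon^{N_k}_{[\le j]}$ can be as large as $O(1)$ (since $\sum_l s_l\le 1$ on $\Delta_{\underline k}$), which swamps the target $O((\log k)^{n-1}/k^n)$. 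To rescue your split you would need either genuine sub-Gaussian concentration for Dirichlet-type marginals (not provided here) or a second-order Taylor argument combined with Cauchy--Schwarz control of $\mathbf E[\upsilon\,\mathbbm 1_B]$ --- at which point you are essentially reinventing the paper's moment method. So: keep your setup, but replace the good/bad split by the telescoping Cauchy--Schwarz bound on each path; that is the missing lemma that closes the argument cleanly.
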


This proposition implies Theorem \ref{thmcomparison} right away: it suffices to insert \eqref{eqasymptint} in \eqref{equpperboundint}, and to remark that $\frac{1}{(kr)^n} \binom{n + kr - 1}{kr - 1} = \frac{1}{n!} (1 + O(\frac{1}{k}))$, with $n, r$ fixed, and $k \longrightarrow + \infty$.
\medskip

\subsection{Proof of Proposition \ref{propasymptint}}

We propose to further elaborate on Demailly's Monte-Carlo approach, and to interpret the integral in \eqref{eqasymptint} as the mean value of the random variable $\upsilon^{N_k}_{[\leq j]}$ depending of a uniform sorting in $\Delta_{\underline{k}}$. The reader should compare \eqref{equpperboundint} with \cite[(2.17)]{dem11}: even though the computations are closely related, our asymptotic estimate is slightly different to the one of Demailly, as our random variables will depend on random sorting inside $\Delta_{\underline{k}}$, and not on a product $\Delta^{k-1} \times (S^{2r - 1})^k$. We refer to Section \ref{sectprob} for some useful computations related to uniform random variables on simplexes.
\medskip

Let $\mathcal T$ the tree associated to $\Sigma$, and let $\sigma$ be a complete path in $\mathcal T$. For all $i \in \llbracket 1, n \rrbracket$, we denote by $V_i^\sigma$ the irreducible $i$-dimensional variety that appears along the labels of $\sigma$ (see Remark \ref{remtree}). We also denote by $f_i : V_i^\sigma \longrightarrow V_{i+1}^\sigma$ the natural map provided by the stratification. Now, for all $i \in \llbracket 1, n \rrbracket$, and all $j \in \llbracket 1, r \rrbracket$, denote by $d_j^i(\sigma)$ the multiplicity along $f_{i-1}(V_{i-1})$ of the trivialization of $L_j$  provided by $\mathbf{e}_j$. Also, let $d^i(\sigma)$ (resp. $p^i(\sigma)$) denote the multiplicity of the trivialization of $\det E = L_1 \otimes ... \otimes L_r \otimes N$ (resp. $N$) provided by $\mathbf{e}$ (resp. $\mathbf{g}$) along $f_{i-1}(V_{i-1}^\sigma)$.

By our choice of $\mathbf{e}_1, ..., \mathbf{e}_r$ and $\mathbf{e}$ in Definition \ref{defitrivial}, the following property is straightforward.

\begin{lem} \label{lemsum}
	For all complete path $\sigma$ in $\mathcal T$, and for all $i \in \llbracket 1, n \rrbracket$, we have $d^i(\sigma) = d_1^i(\sigma) + ... + d_r^{i}(\sigma) + p^i(\sigma)$.
\end{lem}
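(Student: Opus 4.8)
The plan is to reduce the statement to the elementary fact that the multiplicity of a tensor product of meromorphic sections of line bundles, at the generic point of a prime divisor, is the sum of the individual multiplicities. First I would recall the defining relation from Definition \ref{defitrivial}: for every irreducible component $V$ appearing in $\Sigma$, with $U\subseteq V$ the complement of the natural stratum, the trivializations satisfy $e_1\otimes\cdots\otimes e_r\otimes g=e$ over $U$, where $e_i$ (resp.\ $g$, resp.\ $e$) is the local trivialization of $L_i$ (resp.\ $N$, resp.\ $\det E\otimes N$) provided by $\mathbf e_i$ (resp.\ $\mathbf g$, resp.\ $\mathbf e$). Since a trivialized stratification restricts naturally to each stratum, this identity holds in particular on $V=V_i^\sigma$, viewing each $e_j$, $g$, $e$ as a meromorphic section of the corresponding pullback line bundle on $V_i^\sigma$.

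Next, I would fix $i\in\llbracket 1,n\rrbracket$ and consider the prime divisor $f_{i-1}(V_{i-1}^\sigma)$ inside $V_i^\sigma$ (which is irreducible, being the image of the irreducible variety $V_{i-1}^\sigma$ under a birational map onto it, as in Definition \ref{defistratum}(\ref{itembij})). By definition, $d_j^i(\sigma)$, $p^i(\sigma)$ and $d^i(\sigma)$ are precisely the multiplicities at the generic point of this divisor of the meromorphic sections $e_j$, $g$ and $e$ respectively. Applying additivity of multiplicity under tensor product to the identity $e_1\otimes\cdots\otimes e_r\otimes g=e$ then yields
$$
d^i(\sigma)=d_1^i(\sigma)+\cdots+d_r^i(\sigma)+p^i(\sigma),
$$
which is exactly the claim.

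There is essentially no obstacle here; the only point that deserves a word of care is the compatibility of the trivializations with restriction to strata — namely that the relation $e_1\otimes\cdots\otimes e_r\otimes g=e$, originally imposed over the open sets $U$ at each node, is still valid when the sections are regarded as meromorphic sections on the lower-dimensional stratum $V_i^\sigma$. This is built into the construction of $\underline{\Sigma}=(\Sigma,\mathbf e)$ together with the choice of $\mathbf e_1,\dots,\mathbf e_r,\mathbf g$ in Definition \ref{defitrivial}, so it requires no further argument.
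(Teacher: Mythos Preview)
Your argument is correct and follows exactly the route the paper indicates: the lemma is declared ``straightforward'' from Definition \ref{defitrivial}, and your proof simply unpacks this by applying additivity of multiplicities to the relation $e_1\otimes\cdots\otimes e_r\otimes g=e$ on each stratum $V_i^\sigma$. There is nothing to add.
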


In this setting, Definition \ref{defiupsilonN} prescribes to compute $\upsilon^{N_k}_{[\leq j]}$ as follows. Let 
$$
t = (t_{j, l})_{1 \leq j \leq k, 1 \leq l \leq r} \in \Delta_{\underline{k}}.
$$
For all $i \in \llbracket 1, n \rrbracket$ and all complete path $\sigma$, mark the edge from $V_{i-1}^\sigma$ to $V_i^\sigma$ with the real number 
$$
A_{i}^\sigma (t) = \sum_{ 1 \leq j \leq k, 1 \leq l \leq r} t_{k, l} d_l^{i}(\sigma) + p'{}^i(\sigma). 
$$
where $p'{}^i(\sigma) = \left[ \frac{1}{kr} \left( 1 + \frac{1}{2} + ... \frac{1}{k} \right) \right]^{-1} p^i(\sigma)$.

Then, we have
\begin{equation} \label{equpsilon}
	\upsilon^{N_k}_{[\leq j]} (t) = \sum_{\sigma \; \text{complete path}} \left[ \mathbbm 1_{\{\mathrm{index}(\sigma) \leq j \}} \prod_{1 \leq i \leq n} A_{i}^\sigma(t) \right],
\end{equation}
where $\mathbbm 1_{\{\mathrm{index}(\sigma) \leq j \}} = 1$ if there are less that $j$ negative values among the $A_i^\sigma(t)$ ($1 \leq i \leq n$), and $0$ otherwise. Note that this index depends on $t$: we will not write explicitly this dependence to lighten a bit the notations.
\medskip

We will now interpret each $A_{i}^\sigma(t)$, as well as $\upsilon^{N_k}_{[\leq j]}(t)$, as a random variable, using the probability measure $dP$ to draw a random element $t \in \Delta_{\underline{k}}$. To simplify the presentation, let us fix a complete path $\sigma$, and remove it for the time being from our notations. In the next lemma, we give estimates on the expectancy value and the variance of the $A_i$.

\begin{lem} \label{lemvariance} Let $i \in \llbracket 1, n \rrbracket$. Then,
\begin{enumerate}
	\item the expectancy value satisfies $A_i$ is $\mathbf{E}(A_i) \sim \frac{\log k}{kr} d^i$ as $k \longrightarrow + \infty$.
\item There is a constant $C_i$ depending only on the $d^i_l$ $(1 \leq l \leq r)$, such that $$\mathbf{Var}(A_i) \leq \frac{C_i}{k^2} V.$$
\end{enumerate}

\end{lem}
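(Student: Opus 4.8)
The plan is to reduce Lemma~\ref{lemvariance} to a moment computation for a uniform random vector on a standard simplex, for which the relevant formulas are collected in the Annex (Section~\ref{sectprob}).

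\emph{Normalizing the simplex.} Consider the linear map $\Phi\colon (t_{j,l})_{j,l}\longmapsto (u_{j,l})_{j,l}$ defined by $u_{j,l}=j\,t_{j,l}$. It sends the affine hyperplane $\{\sum_{j,l}j\,t_{j,l}=1\}$ isomorphically onto $\{\sum_{j,l}u_{j,l}=1\}$, hence sends $\Delta_{\underline{k}}$ onto the standard simplex $\Delta^{N-1}$ with $N=kr$; being the restriction of a linear automorphism of the ambient affine hyperplanes, it transports the uniform probability measure $dP$ to the uniform probability measure on $\Delta^{N-1}$. Thus, under $dP$, the family $(u_{j,l})$ is a uniform random vector on $\Delta^{N-1}$, i.e. a $\mathrm{Dirichlet}(1,\dots,1)$ vector, and by the computations of Section~\ref{sectprob},
\[
\mathbf{E}(u_{j,l})=\tfrac1N,\qquad \mathbf{Var}(u_{j,l})=\tfrac{N-1}{N^2(N+1)},\qquad \mathbf{Cov}(u_{j,l},u_{j',l'})=\tfrac{-1}{N^2(N+1)}\quad\text{for }(j,l)\neq(j',l').
\]
In these coordinates, $A_i=\sum_{j,l}\frac{d_l^i}{j}\,u_{j,l}+p'^i$.

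\emph{Proof of (1).} By linearity of expectation, $\mathbf{E}(A_i)=\frac1N\sum_{j,l}\frac{d_l^i}{j}+p'^i=\frac{H_k}{kr}\sum_{l=1}^r d_l^i+p'^i$, where $H_k=1+\tfrac12+\dots+\tfrac1k$. Since $p'^i$ is precisely the contribution of $N_k=\mathcal O_X\!\big(\tfrac1{kr}H_k\,F\big)$ to the marking, it equals $\frac{H_k}{kr}p^i$, so Lemma~\ref{lemsum} (which gives $d^i=d_1^i+\dots+d_r^i+p^i$) yields $\mathbf{E}(A_i)=\frac{H_k}{kr}\,d^i$; as $H_k=\log k+O(1)$ this is $\sim\frac{\log k}{kr}d^i$.

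\emph{Proof of (2).} Expanding the variance of the linear form and writing $S_1=\sum_{j,l}\frac{(d_l^i)^2}{j^2}$ and $S=\sum_{j,l}\frac{d_l^i}{j}$, so that the diagonal of $\big(\sum_{j,l}\frac{d_l^i}{j}\big)^2$ equals $S_1$ and the full square equals $S^2$, we get
\[
\mathbf{Var}(A_i)=\sum_{j,l}\frac{(d_l^i)^2}{j^2}\,\mathbf{Var}(u_{j,l})+\sum_{(j,l)\neq(j',l')}\frac{d_l^i d_{l'}^i}{jj'}\,\mathbf{Cov}(u_{j,l},u_{j',l'})=\frac{(N-1)S_1-(S^2-S_1)}{N^2(N+1)}=\frac{S_1}{N(N+1)}-\frac{S^2}{N^2(N+1)}.
\]
The key point is the exact cancellation forced by the negative off-diagonal covariances: the last term is $\ge 0$, hence $\mathbf{Var}(A_i)\le \frac{S_1}{N(N+1)}$. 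Since $\sum_{j\ge 1}\frac1{j^2}=\frac{\pi^2}{6}$ we have $S_1\le\frac{\pi^2}{6}\sum_{l}(d_l^i)^2$, while $N(N+1)\ge k^2r^2$; therefore $\mathbf{Var}(A_i)\le \frac{C_i}{k^2}$ with $C_i=\frac{\pi^2}{6r^2}\sum_l(d_l^i)^2$ depending only on the $d_l^i$, which is the asserted estimate (absorbing into $C_i$, resp. into the factor written $V$, the remaining universal constant).

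\emph{Main obstacle.} There is no real difficulty beyond bookkeeping: one must keep careful track of the normalizations — the rational power defining $N_k$ and the precise shape of $p'^i$ — and, crucially, exploit the exact cancellation of the off-diagonal covariances in the variance; a crude triangle-inequality bound would only give $\mathbf{Var}(A_i)=O\!\big((\log k)^2/k^2\big)$, which is too weak for the concentration argument used afterwards in the proof of Proposition~\ref{propasymptint}.
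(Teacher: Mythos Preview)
Your proof is correct. Both your argument and the paper's reduce to the covariance structure of a uniform vector on a simplex, but you take a more direct route: after the linear change $u_{j,l}=j\,t_{j,l}$ you work with the full Dirichlet$(1,\dots,1)$ vector on $\Delta^{kr-1}$ and plug its explicit second moments into the bilinear expansion of $\mathbf{Var}(A_i)$, obtaining the closed form $\frac{S_1}{N(N+1)}-\frac{S^2}{N^2(N+1)}$ and then dropping the nonnegative second term. The paper instead invokes Lemma~\ref{lemboundvariance}, which passes through the decomposition $X_{j,l}=Y_j Z^j_l$ into ``radial'' variables $Y_j=\sum_l X_{j,l}$ and independent ``angular'' variables $Z^j\in\Delta^{r-1}$, and uses the negative correlation $\mathbf{E}[Y_pY_q]\le\mathbf{E}[Y_p]\mathbf{E}[Y_q]$ together with the independence of the $S_j=\sum_l d_l Z^j_l$. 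Your approach is slightly more elementary and yields a sharper explicit constant $C_i=\tfrac{\pi^2}{6r^2}\sum_l(d_l^i)^2$; the paper's decomposition, on the other hand, makes the analogy with Demailly's Monte-Carlo computation on $\Delta^{k-1}\times(S^{2r-1})^k$ more transparent. Either way the key input is the same: the negative off-diagonal covariances on the simplex prevent the naive $O((\log k)^2/k^2)$ bound and give the $O(1/k^2)$ needed downstream.
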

\begin{proof}
	We are in the situation of Section \ref{sectprob} : $t = (t_{j, l})_{1 \leq j \leq r\, 1 \leq l \leq k}$ is drawn with uniform law in the simplex $\Delta_{\underline{k}}$, and $A_i(t)$ is an affine function of $t$ of the form $A_i(t) = \sum_{1 \leq j \leq k} \sum_{1 \leq l \leq r} t_{j, l} d^i_l + p'{}^i$.

	\emph{(1)} Since $A_i$ is an affine function, it is easy to see that its mean value on $\Delta_{\underline{k}}$ is equal to the average value of the images of the vertices of $\Delta_{\underline{k}}$ by $A_i$. These vertices are the $\mathrm{v}_{j,k} = (\frac{1}{l} \delta_{j, j'} \delta_{l, l'})_{1 \leq j' \leq k, 1 \leq l' \leq r}$ for $1 \leq j \leq k$ and $1 \leq l \leq r$. The affine function $A_i$ takes the value $\frac{1}{j} d^i_l + p'{}^i$ on $\mathrm{v}_{j,k}$. Thus
	\begin{align*}
		\mathbf{E}[A_i] & = \frac{1}{k r}  \sum_{1 \leq l \leq r} \sum_{1 \leq j \leq k} (\frac{1}{j} d^i_l + p'{}^i) \\
				& = \frac{1}{kr} \sum_{1 \leq j \leq k} \frac{1}{j} \left( \sum_{1 \leq l \leq r} d^i_l + p^i \right) 
	\end{align*}
	This gives the first point, since $\sum_{1 \leq l \leq r} d^i_l + p^i = d^i$ by Lemma \ref{lemsum}.

\emph{(2)} This follows right away from Lemma \ref{lemboundvariance}: if $M$ is the mean value of the random variable $\left( \sum_{1 \leq l \leq r} T_l d_l \right)^2$, with $T$ uniformly distributed in $\Delta^{r-1}$, the lemma provides the result with $C_i = \frac{\pi^2}{3} M$. 
\end{proof}

We now state the fundamental lemma that will allow us to end the proof of Proposition \ref{propasymptint}: it can be seen as a version of \cite[Lemma 2.25]{dem11}, adapted to our combinatorial context. Recall that we are working on a fixed path $\sigma$. We let $j_\sigma$ be the index of this complete path for the trivialization $\mathbf e$, i.e. the number of negative labels among the $d^i = d^i(\sigma)$.

\begin{lem} \label{lemupperbounddiff}
Let $j \in \llbracket 0, n \rrbracket$. Then, we have
\begin{align} \nonumber
| \; \mathbf{E}( \mathbbm 1_{\{\mathrm{index(\sigma)} = j\} } \prod_{1 \leq i \leq n} A_i ) -  \delta_{j, j_\sigma} & \prod_{1 \leq i \leq n}   \mathbf E(A_i) \; | \\ \label{equpperbounddiff}
& \leq \left[ \sum_{1 \leq p \leq n} \; \; \left( \prod_{1 \leq q \leq p - 1} \mathbf{E} (A_q^2) \right)  \; \mathbf{Var}(A_p) \left( \prod_{p+1 \leq s \leq n} \mathbf{E}(A_s)^2 \right) \right]^{1/2}
\end{align}
\end{lem}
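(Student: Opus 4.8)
The plan is to prove Lemma~\ref{lemupperbounddiff} by a triangle-inequality reduction to a telescoping estimate, plus a separate argument to discard the indicator function when $j \neq j_\sigma$. First I would observe that, since the $A_i$ are affine functions of a uniform point in a simplex, their expectations $\mathbf E(A_i)$ and second moments $\mathbf E(A_i^2)$ are finite and can be handled via the formulas recalled in Section~\ref{sectprob}. The key algebraic identity is the telescoping decomposition
$$
\prod_{1 \leq i \leq n} A_i - \prod_{1 \leq i \leq n} \mathbf E(A_i) = \sum_{1 \leq p \leq n} \left( \prod_{1 \leq q \leq p-1} A_q \right) \bigl( A_p - \mathbf E(A_p) \bigr) \left( \prod_{p+1 \leq s \leq n} \mathbf E(A_s) \right),
$$
valid pointwise on $\Delta_{\underline k}$. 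Taking expectations of the $p$-th summand, the factor $\prod_{q<p} A_q$ is independent in no useful sense, so instead I would bound its $L^2$ norm: by Cauchy--Schwarz on the product of the first $p$ factors against $(A_p - \mathbf E A_p)$, the expectation of the absolute value of the $p$-th term is at most $\left( \prod_{q<p} \mathbf E(A_q^2) \right)^{1/2} \mathbf{Var}(A_p)^{1/2} \prod_{s>p} |\mathbf E(A_s)|$, which after a final Cauchy--Schwarz over the sum in $p$ gives exactly the right-hand side of \eqref{equpperbounddiff} --- but only for the quantity $\mathbf E(\prod A_i) - \prod \mathbf E(A_i)$, without the indicator.

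Next I would deal with the indicator $\mathbbm 1_{\{\mathrm{index}(\sigma) = j\}}$. The point is that $\mathrm{index}(\sigma)$ counts the number of $i$ for which $A_i(t) < 0$, and this is a random quantity depending on $t$; however, by Lemma~\ref{lemvariance}(1) each $\mathbf E(A_i) \sim \frac{\log k}{kr} d^i$ has the sign of $d^i$ (for $k$ large), and by Lemma~\ref{lemvariance}(2) the fluctuation of $A_i$ around its mean is $O(1/k)$ in $L^2$, hence much smaller than $|\mathbf E(A_i)| \asymp \frac{\log k}{k}$ whenever $d^i \neq 0$. So with high probability $A_i(t)$ has the same sign as $d^i$ for every $i$ with $d^i \neq 0$; on that event $\mathrm{index}(\sigma) = j_\sigma$ exactly when there are no $i$ with $d^i = 0$ contributing, and in general the contribution of the ``bad'' event where some $A_i$ changes sign is controlled by Chebyshev using $\mathbf{Var}(A_i) \leq C_i/k^2$. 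Concretely, I would write $\mathbbm 1_{\{\mathrm{index}(\sigma)=j\}} = \delta_{j,j_\sigma} + \left( \mathbbm 1_{\{\mathrm{index}(\sigma)=j\}} - \delta_{j,j_\sigma} \right)$, note the error term is supported on the bad event whose probability is $O(1/\log k)$ (hence negligible compared to the main term, and in fact absorbable into the right-hand side of \eqref{equpperbounddiff} once one checks that the right-hand side itself is of the correct order of magnitude), and bound $\mathbf E$ of that error times $\prod A_i$ by Cauchy--Schwarz against $\left(\mathbf E \prod A_i^2\right)^{1/2}$ times the square root of the bad-event probability.

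The main obstacle I expect is precisely the bookkeeping in this second step: one must verify that the ``sign-change'' error is genuinely dominated by the stated right-hand side of \eqref{equpperbounddiff}, which is of size roughly $\left(\sum_p \prod_{q \neq p}(\text{size }\mathbf E A)^2 \cdot \mathbf{Var}(A_p)\right)^{1/2} \asymp \left( (\log k / k)^{2(n-1)} \cdot k^{-2} \right)^{1/2} = (\log k)^{n-1} k^{-n}$, and to compare this with the product $\prod \mathbf E(A_i) \asymp (\log k / k)^n = (\log k)^n k^{-n}$ times $O(1/\log k)$ --- these match up to constants, so the estimate is tight and the argument has to be done carefully rather than wastefully. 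A subtlety is the case where some $d^i = 0$: then $\mathbf E(A_i)$ is not $\asymp \log k / k$ but comes only from the $p'^i$ term (the $N$-twist), of size $O(1/\log k)$, and one has to check the corresponding factor is still handled correctly by the same Cauchy--Schwarz bound; since $A_i$ appears both in $\prod A_i$ and, squared, in $\mathbf E(A_i^2)$, the inequality remains valid verbatim, and I would simply remark that the claimed bound holds uniformly without case distinction because it only uses $\mathbf E(A_i^2) \geq \mathbf E(A_i)^2$ and the Cauchy--Schwarz steps above.
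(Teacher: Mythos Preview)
Your telescoping decomposition and the Cauchy--Schwarz step are exactly right, and they match the paper's argument. The gap is in how you handle the indicator $\mathbbm 1_{\{\mathrm{index}(\sigma)=j\}}$.

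Your plan is to write $\mathbbm 1_{\{\mathrm{index}(\sigma)=j\}} = \delta_{j,j_\sigma} + (\text{error})$ and control the error term via Chebyshev on the sign-change event. This would at best yield an \emph{asymptotic} bound of the form $(\text{RHS of \eqref{equpperbounddiff}}) \cdot (1 + o(1))$ as $k \to \infty$, with constants depending on the $d^i$, and you yourself note the orders of magnitude are tight so there is no room to be wasteful. But the lemma as stated is an exact, constant-free inequality valid for every $k$; your probabilistic route cannot deliver that. (It would suffice for Proposition~\ref{propasymptint}, but it does not prove the lemma.)

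The paper sidesteps this entirely with a pointwise trick (borrowed from \cite[Lemma~2.25]{dem11}): for any real numbers $a_1,\dots,a_n$ and $b_1,\dots,b_n$, if $\alpha$ (resp.\ $\beta$) denotes the number of negative $a_i$ (resp.\ $b_i$), then for every $j$
\[
\Bigl|\,\mathbbm 1_{\{j=\alpha\}} \prod_i a_i \;-\; \mathbbm 1_{\{j=\beta\}} \prod_i b_i\,\Bigr|
\;\le\; \sum_{1\le p\le n} \Bigl(\prod_{q<p} |a_q|\Bigr)\,|a_p-b_p|\,\Bigl(\prod_{s>p} |b_s|\Bigr).
\]
This is checked by distinguishing whether $j$ equals $\alpha$, $\beta$, both, or neither; the non-trivial cases reduce to the same telescoping bound you already wrote down (when $\alpha\neq\beta$ some $a_p,b_p$ have opposite signs, so $|a_p-b_p|\ge |a_p|$ and $\ge |b_p|$, which is what makes it work). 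Applying this pointwise with $a_i=A_i(t)$ and $b_i=\mathbf E(A_i)$ --- noting that $\mathbf E(A_i)=\frac{1}{kr}\bigl(1+\tfrac12+\cdots+\tfrac1k\bigr)d^i$ exactly, so $\beta=j_\sigma$ --- and then taking expectation and Cauchy--Schwarz, gives \eqref{equpperbounddiff} directly. The indicator is absorbed into the telescoping inequality from the start, so no separate sign-change analysis is needed.
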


\begin{proof} The proof is essentially the same as \cite[Lemma 2.25]{dem11}, and is based on the following observation: let $(a_1, ..., a_t), (b_1, ..., b_t) \in \mathbb R^t$ be such that there are exactly $\alpha$ negative numbers among the $a_i$, and $\beta$ negative numbers among the $b_i$. Then we have, for any $j$:
$$
|\mathbbm{1}_{\{j=\alpha\}} \prod_{i} a_i - \mathbbm{1}_{\{j=\beta\}} \prod_{i} b_i| \leq \sum_{1 \leq p \leq t} \left( \prod_{1 \leq q \leq p-1} |a_q| \right) |a_p - b_p| \left( \prod_{p + 1 \leq s \leq t} |b_s| \right).
$$
This is easy to show by distinguishing among the possible values of $j$.

This observation gives:
\begin{align*}
| \mathbbm{1}_{\{\mathrm{index}(\sigma) = j \}} \prod_i A_i -  \mathbbm 1_{\{ j = j_\sigma \} } & \prod_{i} \mathbf E(A_i) | \leq \\
& \leq \sum_{1 \leq p \leq n} \; \; \left( \prod_{1 \leq q \leq p - 1} | A_q | \right)  \; | A_p - \mathbf{E}(A_p) | \; \left( \prod_{p+1 \leq s \leq n} \mathbf{E}(A_s) \right)
\end{align*}

Taking the expectancy value, we obtain
\begin{align*}
| \; \mathbf{E}( \mathbbm 1_{\{\mathrm{index(\sigma)} = j\} } \prod_i A_i) - \mathbbm 1_{\{ j = j_\sigma \} } & \prod_{i} \mathbf E(A_i) \; |^2\\
& \leq \mathbf{E} \left(| \mathbbm{1}_{\{\mathrm{index}(\sigma) = j \}} \prod_i A_i -  \mathbbm 1_{\{ j = j_\sigma \} } \prod_{i} \mathbf E(A_i) | \right)^2 \\
& \leq \mathbf{E} \left( \sum_{1 \leq p \leq n} \; \; \left( \prod_{1 \leq q \leq p - 1} | A_q | \right)  \; | A_p - \mathbf{E}(A_p) | \; \left( \prod_{p+1 \leq s \leq n} \mathbf{E}(A_s) \right) \right)^2 \\
& \leq \sum_{1 \leq p \leq n} \; \; \left( \prod_{1 \leq q \leq p - 1} \mathbf{E} (A_p^2) \right)  \; \mathbf{E} (|A_p - \mathbf{E}(A_p)|^2) \; \left( \prod_{p+1 \leq s \leq n} \mathbf{E}(A_s)^2 \right),
\end{align*}
where, at the last line, we used Cauchy-Schwarz inequality $\mathbf{E}(XY)^2 \leq \mathbf{E}(X^2) \mathbf{E}(Y^2)$. Since $\mathbf{Var}(A_l) = \mathbf{E} (|A_l - \mathbf{E}(A_l)|^2)$, we get the result.
\end{proof}

We are now ready to end the proof of Proposition \ref{propasymptint}, by summing the previous estimates over all paths $\sigma$.

\begin{proof}[Proof of Proposition \ref{propasymptint}]

Using \eqref{equpsilon}, we can write
\begin{align*}
	\int_{\Delta_{\underline{k}}} \upsilon^{N_k}_{[\leq j]} d P & =  \mathbf{E} [ \upsilon^{N_k}_{[\leq j]} ]\\
	&  = \sum_{\sigma}  \mathbf{E} [ \mathbbm{1}_{\{\mathrm{index}(\sigma) \leq j\}} \prod_{1  \leq i \leq n} A_i^\sigma ] \\
	& \leq  \sum_{\sigma} \left[\mathbbm 1_{\{j_\sigma \leq j\}} \prod_{1 \leq i \leq n} \mathbf{E}[A_i^\sigma] \right]  + \sum_\sigma \left[ \sum_{1 \leq l \leq j} \left| \mathbf{E}( \mathbbm 1_{\{\mathrm{index(\sigma)} = l\} } \prod_{1 \leq i \leq n} A_i^\sigma ) -  \delta_{l, j_\sigma} \prod_{1 \leq i \leq n}   \mathbf E(A_i^\sigma) \; \right| \right] 
\end{align*}

We can apply Lemma \ref{lemupperbounddiff} to bound from above the second term of the right hand side. By Lemma \ref{lemvariance}, (1), (2), the right hand side of \eqref{equpperbounddiff} is bounded from above by a term of the form $C \frac{(\log k)^{n-1}}{k^n}$, where the constant $C$ does not depend on $k$. Thus, we get:
$$\int_{\Delta_{\underline{k}}} \upsilon^N_{[\leq j]} d P =  \sum_\sigma \mathbbm 1_{\{j_\sigma \leq j\}} \prod_{1 \leq i \leq n} \mathbf{E}[A_i^\sigma] + O\left(\frac{(\log k)^{n-1}}{k^n} \right) 
$$
Using again Lemma \ref{lemvariance}, (1), we then obtain
$$
\int_{\Delta_{\underline{k}}} \upsilon^N_{[\leq j]} d P = \frac{(\log k)^n}{(kr)^n} \left[ \sum_\sigma \left( \mathbbm 1_{\{j_\sigma \leq j\}} \prod_{1 \leq i \leq n} d^i(\sigma) \right) \right] + O(\frac{(\log k)^{n-1}}{k^n}).
$$

	Now, the term between brackets is equal to $c_1(\det E \otimes N, \underline{\Sigma})_{[\leq j]}^n$, since the $d^i(\sigma)$ are the multiplicities of the trivialization $\mathbf e$ along the strata of $\Sigma$. This concludes the proof of Proposition \ref{propasymptint}, and of Theorem \ref{thmcomparison}.
\end{proof}

\subsection{End of the proof of the main theorem}

Theorem \ref{thmprinc} follows directly from Proposition \ref{proptwisteestimate} and the following result. Again, if $E$ is a vector bundle, we write $\mathbb E_k = E^{(1)} \oplus ... \oplus E^{(k)}$.

\begin{prop} \label{propfinal}
Let $X$ be a smooth complex projective manifold of dimension $n$. Let $E \longrightarrow X$ be a vector bundle of rank $r$, such that $\det E$ is big. For any $\epsilon > 0$, there exists 
	\begin{enumerate} \itemsep=0em
		\item a generically finite projective morphism $p : X' \longrightarrow X$;
		\item a decomposition $p^{\ast} \det E = A + G$ into ample and effective divisors;
		\item a trivialized stratification $\underline{\Sigma}$ on $X'$, adapted to $A$, such that $$\deg c_1(A, \underline{\Sigma})^n_{[\leq 1]} > (\deg p) (\mathrm{vol}(K_X) - \epsilon) > 0;$$
		\item a sequence of effective $\mathbb Q$-divisors $(F_k)_{k \geq 1}$ on $X'$; 
	\end{enumerate}
			
			such that the following holds.  
\medskip

For $m \gg k \gg 0$, and $m$ divisible enough, we have,
\begin{align} \nonumber
	\chi^{[1]} \left( X', \right. & \left.  S^m \left( \mathbb E_k \right) \otimes \mathcal O(-m \, F_k) \right) \\ \label{eqineqpropfinal}
	& \leq \; \frac{m^{n + kr - 1}}{(n + kr - 1)!} \frac{(\log k)^n}{(k!)^n} \left( \deg c_1(A, \underline{\Sigma})^n_{[\leq 1]} - O(\frac{1}{\log k}) \right) + o (m^{n + kr - 1})
\end{align}
\end{prop}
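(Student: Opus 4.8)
The statement is an assembly of tools already in place: Fujita's approximation theorem to split an ample piece off $\det E$, the splitting principle of Section~\ref{sectredsum} to turn $E$ into a direct sum of line bundles, the construction of a trivialized stratification for that ample piece all of whose multiplicities are positive, and finally Theorem~\ref{thmcomparison}. The only genuinely delicate point is the order in which the modifications are performed.

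\textbf{Step 1 (the morphism $p$ and the decomposition).} First I would apply Lemma~\ref{lemlb} to $E$ to get a smooth modification $\mu_1\colon X_1\to X$ over which $\mu_1^\ast E$ admits a total filtration by subbundles; its graded bundle $\overline E_1=L_1'\oplus\dots\oplus L_r'$ is a direct sum of line bundles with $\det\overline E_1=\mu_1^\ast\det E$. Since $\det E$ is big and $\mu_1$ birational, $\mu_1^\ast\det E$ is big, so Fujita's approximation theorem yields a smooth modification $\mu_2\colon X'\to X_1$ with $\mu_2^\ast\mu_1^\ast\det E=A+G$, where $A$ is an ample $\mathbb Q$-divisor, $G$ is effective, and $A^n>\operatorname{vol}(\det E)-\epsilon$. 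Put $p=\mu_1\circ\mu_2\colon X'\to X$; it is birational, so $\deg p=1$, and pulling the filtration of $\mu_1^\ast E$ back along $\mu_2$ we get a filtration of $p^\ast E$ with graded bundle $\overline E:=\mu_2^\ast\overline E_1=\bigoplus_i\mu_2^\ast L_i'$, a direct sum of line bundles with $\det\overline E=p^\ast\det E=A+G$. This gives items (1) and (2); for $\epsilon$ small, $A^n>0$. It is essential to split \emph{before} applying Fujita: a further modification would only leave $A$ nef, destroying the ampleness needed in Step~2.

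\textbf{Step 2 (a positively signed stratification adapted to $A$).} Write $A=\tfrac1d N$ with $N$ ample on $X'$, and choose $\ell\gg0$ with $N^{\otimes\ell}$ very ample. I would build $\Sigma$ layer by layer: take a general $H_n\in|N^{\otimes\ell}|$, which by Bertini is smooth of pure dimension $n-1$, whose complement $U_n=X'\setminus H_n$ is affine (complement of an ample divisor) and on which the defining section $s_n$ is a \emph{holomorphic} trivialization of $N^{\otimes\ell}$ with reduced divisor $H_n$; then iterate on $H_n$ with $N^{\otimes\ell}|_{H_n}$ (still very ample), down to dimension $0$. This produces a trivialized stratification $\underline\Sigma=(\Sigma,\tfrac1{d\ell}\mathbf e)$ adapted to $A$ in which every multiplicity equals $+1$; equivalently, in the marked tree of Proposition~\ref{propgraph} every edge has a positive marking, so every complete path has index $0$. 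Hence $\deg c_1(A,\underline\Sigma)^n_{[l]}=0$ for $l\ge1$, and
$$\deg c_1(A,\underline\Sigma)^n_{[\le 1]}=\deg c_1(A,\underline\Sigma)^n_{[\le n]}=A^n ,$$
the last equality because the full truncated top power represents $c_1(A)^n\cap[X']$ (the proposition following Definition~\ref{deficherntrunc}). With Step~1 this is item (3): $\deg c_1(A,\underline\Sigma)^n_{[\le1]}=A^n>(\deg p)(\operatorname{vol}(\det E)-\epsilon)>0$.

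\textbf{Step 3 (the divisors $F_k$ and the final estimate).} Set $N:=\mathcal O_{X'}(-G)$, so $\det\overline E\otimes N=\mathcal O_{X'}(A)$, and, following the notation of Theorem~\ref{thmcomparison}, put $F_k:=\tfrac1{kr}\bigl(1+\tfrac12+\dots+\tfrac1k\bigr)G$, an effective $\mathbb Q$-divisor with $\mathcal O_{X'}(-F_k)=N_k$; this is item (4). The filtration of $p^\ast E$ induces a filtration of $S^m(p^\ast\mathbb E_k)\otimes\mathcal O(-mF_k)$ with graded sheaf $S^m(\overline{\mathbb E}_k)\otimes N_k^{\otimes m}$, so by the comparison of $\chi^{[1]}$ of a filtered sheaf with its graded sheaf (as in the proof of Proposition~\ref{propcompjetvb}),
$$\chi^{[1]}\!\bigl(X',\,S^m(p^\ast\mathbb E_k)\otimes\mathcal O(-mF_k)\bigr)\ \le\ \chi^{[1]}\!\bigl(X',\,S^m(\overline{\mathbb E}_k)\otimes N_k^{\otimes m}\bigr).$$
I would then apply Theorem~\ref{thmcomparison} on $X'$ to the direct sum of line bundles $\overline E=\bigoplus_i\mu_2^\ast L_i'$, the auxiliary ($\mathbb Q$-)line bundle $N=\mathcal O(-G)$, and the trivialized stratification $\underline\Sigma$ adapted to $\det\overline E\otimes N=\mathcal O(A)$ — using, if $G$ has denominators, the extension of Theorem~\ref{thmcomparison} to a $\mathbb Q$-line bundle twist that follows from Corollary~\ref{corolineintegral} (alternatively one clears denominators by a Bloch–Gieseker cover, which is exactly why the factor $\deg p$ appears in item (3)). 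Its conclusion for $j=1$, whose leading coefficient is $\deg c_1(A,\underline\Sigma)^n_{[\le1]}=A^n$, combined with the displayed inequality, is precisely \eqref{eqineqpropfinal}. The hard part here is not any single estimate — those come from Theorem~\ref{thmcomparison} and the splitting principle — but the bookkeeping of Step~1: producing one modification $p$ that simultaneously splits $E$ and supports a Fujita decomposition with a genuinely ample $A$, so that the all-positive stratification of Step~2 exists and $\deg p=1$ throughout.
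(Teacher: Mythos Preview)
Your argument is correct, and in one respect it is actually more complete than the paper's: the paper's proof applies Theorem~\ref{thmcomparison} to $p^\ast E$ without first reducing to a direct sum of line bundles, which is tacitly left to the reader (via Proposition~\ref{propredlb}); you make this reduction explicit by performing the splitting modification $\mu_1$ \emph{before} the Fujita modification, so that the ample $\mathbb Q$-divisor $A$ lives on the same variety as the split bundle $\overline E$.

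The route, however, is genuinely different. The paper does not attempt to keep $p$ birational: after Fujita it takes a Kawamata cover to make $G$ integral, and then Lemma~\ref{lemstrat} produces the stratification by first passing through a Bloch--Gieseker cover (to extract a very ample root of a power of $A$) and then an \'etale cover, so that $p$ is a genuine finite map of degree $>1$ and the stratification is adapted to an honest line bundle. Your construction bypasses all of these covers: you work directly with the ample $\mathbb Q$-divisor $A$ on the birational model, build the stratification from general members of $|N^{\otimes \ell}|$ with the fractional trivialization $\tfrac{1}{d\ell}\mathbf e$, and then invoke the $\mathbb Q$-line-bundle version of Theorem~\ref{thmcomparison} coming from Corollary~\ref{corolineintegral}. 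What this buys you is $\deg p = 1$ and a shorter list of auxiliary constructions; what the paper's approach buys is that every object in sight (the twist $N$, the stratification) is attached to an integral line bundle, so Theorem~\ref{thmcomparison} applies verbatim without appealing to its $\mathbb Q$-extension. Both are legitimate; yours is the more economical once one accepts that Corollary~\ref{corolineintegral} already handles $\mathbb Q$-twists.
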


Before proving the proposition above, let us explain how it permits to prove Theorem \ref{thmfulldetail}, and thus Theorem \ref{thmprinc}.

\begin{proof}
Let $X$ be a projective manifold of dimension $n$, such that $K_X = \det \Omega_X$ is big.

	Let $\epsilon > 0$. We now apply Proposition \ref{propfinal} with $E = \Omega_X$, to obtain $p : X' \longrightarrow X$ and $(F_k)_{k \geq 1}$ such that \eqref{eqineqpropfinal} holds. Then, Proposition \ref{proptwisteestimate} implies in turn that
\begin{align*}
h^0 (X', p^\ast & E_{k,m}^{GG}) \geq 
	\; \frac{m^{n + kr - 1}}{(n + kr - 1)!} \frac{(\log k)^n}{(k!)^n} \left( \deg c_1(A, \underline{\Sigma})_{[\leq 1]}^n - O((\log k)^{-1}) \right) \\
 & \hspace{1cm} + o (m^{n + kr - 1}).
\end{align*}    

	This implies that $p^\ast E_{k, \bullet}^{GG} \Omega_{X}$ is big. Since $p$ is generically finite of degree $\mathrm{deg}(p)$, we have, by Lemma \ref{lemmodification},
$$
	\mathrm{vol}(X', p^\ast E_{k, \bullet }^{GG} \Omega_X) = \deg (p) \cdot  \mathrm{vol}(X, E_{k, \bullet }^{GG} \Omega_X)
$$
	so $E_{k, \bullet}^{GG} \Omega_X$ is big, and this ends the proof.
\end{proof}

Since Theorem \ref{thmfulldetail} holds for all $\epsilon > 0$, we get the following corollary (implied by \cite[Corollary 2.38]{dem11}).  

\begin{corol} \label{thmvol} Let $X$ be a complex projective manifold, with $K_X$ big. For $k \gg 1$, we have:
$$
\mathrm{vol}(E_{k, \bullet}^{GG} \Omega_X) \geq \frac{(\log k)^n}{(k!)^n} \left( \mathrm{vol}(K_X) - O\left( \frac{1}{\log k} \right) \right).
$$
\end{corol}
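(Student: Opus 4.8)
The plan is to deduce Corollary \ref{thmvol} directly from Theorem \ref{thmfulldetail}, by converting its lower bound on $h^0$ into an estimate on volumes and then letting $\epsilon$ tend to $0$. First I would fix $\epsilon>0$ and apply Theorem \ref{thmfulldetail} with $E=\Omega_X$ (so that $r=\mathrm{rk}\,\Omega_X=n$, $\dim X_k^{GG}=n+nk-1$, and the Hilbert function of $E_{k,\bullet}^{GG}\Omega_X$ has growth order $m^{n+nk-1}/(n+nk-1)!$): this produces a generically finite $p:X'\to X$, a decomposition $p^\ast K_X=A+E$ into ample and effective parts, and a trivialized stratification $\underline{\Sigma}$ on $X'$ adapted to $A$. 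I would also recall the identity $\mathrm{vol}(X',p^\ast E_{k,\bullet}^{GG}\Omega_X)=(\deg p)\,\mathrm{vol}(X,E_{k,\bullet}^{GG}\Omega_X)$, which is part of the proof of that theorem (it follows from Lemma \ref{lemmodification}).

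Next, dividing the displayed inequality of Theorem \ref{thmfulldetail} by $m^{n+nk-1}/(n+nk-1)!$ and taking $\limsup$ over the divisible integers $m\to\infty$ yields, for $k\gg 1$,
$$\mathrm{vol}(X',p^\ast E_{k,\bullet}^{GG}\Omega_X)\ \geq\ \frac{(\log k)^n}{n!\,(k!)^n}\Bigl(\deg c_1(A,\underline{\Sigma})^n_{[\leq 1]}-O\bigl(\tfrac{1}{\log k}\bigr)\Bigr).$$
Using the identity above, dividing by $\deg p\geq 1$, and invoking the third item of Theorem \ref{thmfulldetail}, namely $\deg c_1(A,\underline{\Sigma})^n_{[\leq 1]}>(\deg p)(\mathrm{vol}(K_X)-\epsilon)$, this becomes
$$\mathrm{vol}(X,E_{k,\bullet}^{GG}\Omega_X)\ \geq\ \frac{(\log k)^n}{n!\,(k!)^n}\Bigl(\mathrm{vol}(K_X)-\epsilon-O\bigl(\tfrac{1}{\log k}\bigr)\Bigr),$$
valid for all $\epsilon>0$ and all $k$ beyond a threshold $k_0(X,\epsilon)$. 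Letting $\epsilon\to 0$ gives $\liminf_k\,\frac{n!\,(k!)^n}{(\log k)^n}\,\mathrm{vol}(X,E_{k,\bullet}^{GG}\Omega_X)\geq\mathrm{vol}(K_X)$, which is Corollary \ref{thmvol} (the precise value of the dimensional constant being fixed by the normalization chosen for the volume of a graded ring).

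The one step I expect to need genuine care is the passage from ``fixed $\epsilon$, then $k$ large'' to ``$k$ large, with a uniform error $O(1/\log k)$''. For each $\epsilon$, Theorem \ref{thmfulldetail} produces auxiliary data (a Kawamata/Bloch-Gieseker cover, a Hironaka principalization, the stratification $\underline{\Sigma}$) whose complexity --- and hence the threshold $k_0(X,\epsilon)$ and the implied constant in $O(1/\log k)$ --- may a priori grow as $\epsilon\to 0$. To obtain the sharp form with error $O(1/\log k)$ rather than a bare $o(1)$, I would verify from the construction in Section 4 that $k_0(X,\epsilon)$ and that constant depend only mildly on $1/\epsilon$, and then run the argument with the single choice $\epsilon=\epsilon_k:=1/\log k$, so that $\epsilon_k$ is absorbed into $O(1/\log k)$. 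The weaker $o(1)$ version requires none of this and follows purely formally; all remaining steps are routine manipulations of Hilbert functions together with the already-proved Lemma \ref{lemmodification}.
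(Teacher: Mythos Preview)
Your approach is essentially the paper's own: the paper simply remarks ``Since Theorem \ref{thmfulldetail} holds for all $\epsilon > 0$, we get the following corollary'' and cites Demailly, without further detail. Your derivation (pass to volume, divide by $\deg p$, use item (3), let $\epsilon\to 0$) is exactly the intended argument, and you correctly flag the one genuine subtlety the paper glosses over, namely that the naive limit $\epsilon\to 0$ only yields the $o(1)$ form while the stated $O(1/\log k)$ requires controlling how the implicit constant in Theorem \ref{thmfulldetail} depends on $\epsilon$; your proposed diagonal choice $\epsilon_k = 1/\log k$ is the natural remedy, though the paper does not spell this out.
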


We now finish with the proof of Proposition \ref{propfinal}, which is based on Theorem \ref{thmcomparison}. We are essentially looking for a way to construct a natural stratification adapted to a line bundle of the form $A = \det E \otimes N$. If the latter were very ample, this would be easily done by taking successive generic hyperplane sections. In the ample case, we can use the same idea, but we need to pass to a ramified cover ; this technique provides the following result. 

\begin{lem} \label{lemstrat}
	Let $X$ be a complex projective manifold of dimension $n$, and let $A$ be an ample line bundle on $X$.  Then there exists a finite dominant morphism $X' \overset{p}{\longrightarrow} X$ and a trivialized stratification $\underline{\Sigma} = (\Sigma, \mathbf{e})$ on $X'$, adapted to $p^\ast A$, such that
	$$
	\deg c_1(p^\ast A, \underline{\Sigma})_{[\leq 1]}^n = (\deg p) \, (A^n).
	$$
\end{lem}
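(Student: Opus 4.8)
The strategy is to build the desired stratification $\underline{\Sigma}$ on a suitable ramified cover $X' \to X$, where $p^\ast A$ becomes \emph{very ample} (or at least the pullback of a very ample bundle), so that one can take successive generic hyperplane sections as in the proof of the Demailly–Angelini inequalities (the last theorem proved in Section \ref{sectmorseineq}). Concretely, I would first choose $d \gg 1$ so that $A^{\otimes d}$ is very ample, embedding $X \hookrightarrow \mathbb P^N$, and then apply a Bloch–Gieseker-type construction: take a generic cover $p : X' \to X$ (a composition of the base steps $q : \mathbb P^N \to \mathbb P^N$, $[z_i] \mapsto [z_i^e]$, with the embedding in general position) so that $p^\ast A$ itself admits a section — i.e. so that $p^\ast A = \mathcal O(H)$ for an honest very ample divisor $H$ on $X'$, after possibly replacing $A^{\otimes d}$ by an appropriate root. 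The point of the genericity is exactly the hypothesis ``$X \hookrightarrow \mathbb P^N$ in general position'' appearing in Lemma \ref{lemBG}.

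**Construction of the stratification.** Once $p^\ast A = \mathcal O(H)$ with $H$ very ample on $X'$, I would mimic the stratification built in the proof of the Demailly–Angelini theorem, but with $G = 0$: set $X'_n = X'$, pick a generic smooth member $H_n \in |p^\ast A|$, and let $e_n$ be a meromorphic section of $p^\ast A$ trivializing it on $X'_n \setminus H_n$ with divisor $D(e_n) = H_n$ (so \emph{all} multiplicities $m_i$ are $+1$, in particular strictly positive). Since $p^\ast A$ restricts to a very ample bundle on $H_n$, iterate: this produces a sequence of strata $X'_0 \to \dots \to X'_{n-1} \to X'_n = X'$ together with a trivialization $\mathbf e$ of $p^\ast A$ over $\Sigma$, all of whose boundary multiplicities are equal to $+1$. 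By Proposition \ref{propgraph}, since every marking of every edge of the tree $\mathcal T$ is $+1$, every complete path has index $0$; hence $\deg c_1(p^\ast A, \underline{\Sigma})^n_{[1]} = 0$ and
$$
\deg c_1(p^\ast A, \underline{\Sigma})^n_{[\leq 1]} = \deg c_1(p^\ast A, \underline{\Sigma})^n_{[0]} = \deg c_1(p^\ast A, \underline{\Sigma})^n_{[\leq n]}.
$$
By the proposition relating the truncated power $c_1(L,\underline\Sigma)^n_{[\leq n]} \cap [X]$ to the honest cycle class $c_1(L)^n \cap [X]$ (stated just before the degree map), the right-hand side equals $(p^\ast A)^n = (\deg p)\,(A^n)$ by the projection formula. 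This gives exactly the claimed identity.

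**Main obstacle.** The only delicate point is arranging that $p^\ast A$ (not merely $p^\ast A^{\otimes d}$) becomes globally generated/very ample on $X'$ with a section whose divisor has all multiplicities $+1$: the naive Bloch–Gieseker cover only trivializes a power. The clean fix is to work throughout with $A$ as a $\mathbb Q$-line bundle $A = (A^{\otimes d})^{\otimes 1/d}$, construct the stratification adapted to the standard very ample bundle $A^{\otimes d}$ on $X'$ using generic hyperplane sections (so the multiplicities of $e_n$ as a section of $A^{\otimes d}$ are $+1$, i.e. fractional multiplicities $+1/d > 0$ for the $\mathbb Q$-line bundle $A$), and observe that the sign considerations in Proposition \ref{propgraph} — hence the vanishing of all indices $\geq 1$ — are unchanged. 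Because the cover $p$ is taken generic (Bloch–Gieseker), the irreducibility and transversality statements from the proof of Lemma \ref{lemBG} guarantee each stratum remains well-defined and each $p^\ast A|_V$ stays ample. After that, the degree computation is immediate from the projection formula and the representative statement preceding the degree map, exactly as above; no further estimate is needed since the inequality in the lemma is in fact an equality of truncated intersection numbers.
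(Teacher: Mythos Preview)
Your overall strategy---use successive generic hyperplane sections of a very ample bundle to obtain a stratification whose tree carries only strictly positive markings, so that $\deg c_1^n_{[\leq 1]} = \deg c_1^n_{[\leq n]}$---is exactly the paper's approach, and your identification of the obstacle (Bloch--Gieseker only produces a root $C$ of $p_1^\ast A^{\otimes m}$, which need not coincide with $p_1^\ast A$) is correct.

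Where you and the paper differ is in how that obstacle is resolved. The paper observes that $L = p_1^\ast A \otimes C^{-1}$ is $m$-torsion, so a degree-$m$ \'etale cover $p_2 : X' \to X_1$ trivializes it; on $X'$ one then has $p^\ast A \cong p_2^\ast C$ honestly, and the hyperplane-section stratification $\underline{\Sigma_1}$ for $C$ on $X_1$ pulls back (via fibre products) to a stratification for $p^\ast A$ on $X'$ with all markings equal to $+1$. The projection formula, applied twice, gives the exact constant $(\deg p)(A^n)$.

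Your fix---treat $A$ as the $\mathbb Q$-line bundle $(A^{\otimes d})^{1/d}$ and use a fractional trivialization with all markings $+1/d$---also works and is in fact simpler: once you allow fractional trivializations, the Bloch--Gieseker cover becomes irrelevant and you may take $p = \mathrm{Id}_X$, building the stratification on $X$ itself from generic hyperplane sections of the very ample $A^{\otimes d}$. Two small caveats. First, the cover in your write-up is then doing no work (Lemma~\ref{lemBG} is about pulling back an already-constructed stratification, not about producing one), so you should either drop it or explain why you keep it. Second, the lemma is stated with $(\Sigma, \mathbf e)$ and is fed into Theorem~\ref{thmcomparison} via Definition~\ref{defitrivial}, which writes $e_1 \otimes \dots \otimes e_r \otimes g = e$ with $e_i, g$ honest trivializations of $L_i, N$; if $\mathbf e$ is only a fractional trivialization of $A = \det E \otimes N$, you need to check that this factorization and the subsequent application of Corollary~\ref{corolineintegral} still go through. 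They do (one simply carries the denominator $d$ through the markings, as in Remark~\ref{lemthmineqroot}), but the paper's \'etale-cover route avoids this extra verification by delivering an honest trivialization directly.
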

\begin{proof}
	Let $m \in \mathbb N$ be such that $B = A^{\otimes m}$ is very ample. Using Bloch-Gieseker lemma \cite{BG71, KM98}  (see also \cite[Theorem 4.1.10]{lazpos1}), we find $p_1 : X_1 \longrightarrow X$, finite, dominant, such that $p_1^\ast B = C^{\otimes m}$, with $C$ very ample (the fact that $C$ can be chosen very ample follows directly from the proof presented e.g. in \cite{lazpos1}). \medskip

	Since $C \longrightarrow X$ is very ample, it induces an embedding $X_1 \hookrightarrow \mathbb P^N$. Intersecting $X_1$ with a generic flag $\mathbb P^{N - n} \subseteq \mathbb P^{N - n + 1} \subseteq ... \subseteq \mathbb P^N$, and using the standard trivialization of $\mathcal O(1)$ on the successive $\mathbb P^{N-i} \setminus \mathbb P^{N-i-1}$, we get a trivialized stratification $\underline{\Sigma_1}$ on $X_1$, adapted to $C$. This stratification satisfies
$$
c_1 (C, \underline{\Sigma_1})^n_{[\leq 1]} = c_1(C, \underline{\Sigma}_1)^n = (C^n). 
$$
	where the first equality holds since all paths appearing in the graph associated to $\underline{\Sigma_1}$ have only positive markings (these markings are in fact all equal to $1$).
\medskip

	The line bundle $L = p_1^\ast A \otimes C^{\otimes (-1)}$ is such that $L^{\otimes m} = \mathcal O_{X_1}$, so there exists a finite étale covering $p_2 : X' \longrightarrow X_1$ of degree $m$ such that $p_2^\ast L \cong \mathcal O_{X'}$. Thus, we have 
\begin{equation} \label{eqroot}
p_2^\ast p_1^\ast A \cong p_2^\ast C.
\end{equation}
\medskip

	We let $p = p_1 \circ p_2$. The morphism $X' \overset{p}{\longrightarrow} X$ will be the required finite morphism ; we just have to exhibit the trivialized stratification $\underline{\Sigma}$ on $X'$. Taking the fiber products between $p_2 : X_2 \longrightarrow X_1$ and the irreducible components of the strata of $\Sigma_1$, we obtain a trivialized stratification $\underline{\Sigma}$ on $X'$, adapted to $p_2^\ast C = p^\ast A$. This implies then $c_1(p^\ast A,  \underline{\Sigma})^n_{[\leq 1]} = c_1(p_2^\ast C, \underline{\Sigma})^n = (p_2^\ast C^n)$.
	
	A repeated application of the projection formula finally yields
	\begin{align*}
		c_1 (p^\ast A, \underline{\Sigma})^n_{[\leq 1]}	& = (\deg p_2) (C^n) \\
		& = (\deg p_2) \frac{(\deg p_1)}{m^n} (B^n) \\
		& = (\deg p) (A^n),
	\end{align*}
	since $(\deg p_1) (\deg p_2) = \deg p$, and $(B^n) = m^n (A^n)$.
\end{proof}

\begin{proof}[Proof of Proposition \ref{propfinal}]
Since $L = \det E$ is big, we can use Fujita's approximate Zariski decomposition theorem \cite{fuj94, DEL00}, to obtain a modification $p_1 : X_1 \longrightarrow X$ and an integer $m$ such that
$$
m (p_1^\ast L) = A_1 + G_1, 
$$
where $A_1$ is ample with $(A_1^n) \geq m^n \left( \mathrm{vol} (\det E) - \epsilon \right)$, and where $G_1$ is an effective divisor.

Now, we can use Kawamata's covering lemma \cite{kawa82} (see also \cite[Proposition 4.1.12]{lazpos1}) to find a finite dominant morphism $p_2 : X_2 \longrightarrow X_1$ such that $p_2^\ast G_1 = mG_2$, where $G_2$ is effective. Then, if $q = p_2 \circ p_1$, we have
$$
m (q^\ast L - G_2) = p_2^\ast A_1
$$
	The divisor $p_2^\ast A_1$ is ample as the pullback of an ample divisor by a finite morphism, and $(p_2^\ast A^n) = (\deg p_2) (A^n)$. Thus, $q^\ast L - G_2$ is itself ample, and by Lemma \ref{lemstrat}, there exists a finite dominant morphism $p_3 : X' \longrightarrow X_2$ and a trivialized stratification $\Sigma$, adapted to the ample divisor $A = p_3^\ast (q^\ast L - G_2)$, such that $\deg c_1(A, \underline{\Sigma})_{[\leq 1]}^n = (A^n)$.
\medskip

Let $p = q \circ p_3$. The intersection number above can be computed by repeated applications of the projection formula:
	\begin{align*}
		(A^n) & = \frac{1}{m^n} (p_3^\ast p_2^\ast A_1^n) \\
					      & = \frac{1}{m^n} (\deg p_3)(\deg p_2) (A_1^n) \\ 
					      & \geq (\deg p) \left( \mathrm{vol} (\det E) - \epsilon \right),
	\end{align*}
	since $p = p_1 \circ p_2 \circ p_3$, with $\deg p_1 = 1$. We have the requested inequality: 
	\begin{equation} \label{ineqc1}
		\deg c_1(A, \underline{\Sigma})_{[\leq 1]}^n \geq (\deg p) \left( \mathrm{vol} (\det E) - \epsilon) \right).
	\end{equation}

	To conclude, we let $F_k = \frac{1}{kr} \left( 1 + \frac{1}{2} + ... + \frac{1}{k} \right) p_3^\ast G_2$ for all $k \geq 1$. We now apply Theorem \ref{thmcomparison} with $X$ replaced by $X'$, $E$ replaced by $p^\ast E$ and letting $N = \mathcal O(- p_3^\ast G_2)$. We have then $N_k = \mathcal O(-F_k)$ and $\det (p^\ast E) \otimes N = \mathcal O(A)$, so Theorem \ref{thmcomparison} gives the result immediately.
\end{proof}

\begin{rem}
	As it was the case in \cite{dem11}, it is actually not necessary to use Fujita's approximation's theorem to get Theorem \ref{thmprinc}, if we are not interested in the more precise volume estimate of Theorem \ref{thmvol}.

	In the proof of Proposition \ref{propfinal}, it suffices to take $X_1 = X$ and $p_1 = \mathrm{Id}_{X}$, and to remark that since $L$ is big, then $m L = A + G_1$ for some $m \gg 1$, $A$ ample, and $G_1$ effective. Then $(A^n) > 0$, and this is enough to find $\underline{\Sigma}$ so that $\deg c_1 (A, \underline{\Sigma})^n_{[\leq 1]} > 0$. The final estimate of Proposition \ref{propfinal} is still valid, and this is enough to prove Theorem \ref{thmprinc}.
\end{rem}

\section{Annex. Some computations on simplexes}

For the convenience of the reader, we gather here a few classical or technical results and computations which were used in the rest of the text.

\subsection{Lattices and volumes of fundamental domains}

Let $a_1, ..., a_r \in \mathbb N$. Let $H = \{ (t_1, ..., t_r) \in \mathbb Z^r \; | \; \sum_i a_i t_i = 0\}$. Then $H \subseteq \mathbb Z^r$ is a \emph{primitive} sublattice, meaning that $\quotientd{\mathbb Z^r}{H}$ is torsion-free. Hence, by the adapted basis theorem, there exists a basis $(f_1, ..., f_r)$ of $\mathbb Z^r$ such that $(f_1, ..., f_{r-1})$ is a basis of $H$. Let $C_H = \sum_{1 \leq i \leq r-1} [0, 1] \cdot f_i$ denote the associated fundamental domain of $H$.
\medskip

For all $n$, we let $\mathrm{vol}_{n}$ denote the $n$-dimensional euclidian volume measure.

\begin{lem} \label{lemlattice1} The fundamental domain of $H$ has volume $\mathrm{vol}_{r-1} \left( C_H \right) = \frac{\sqrt{ \sum_{1 \leq i \leq r } a_i^2}}{\gcd(a_1, ..., a_r)}$.
\end{lem}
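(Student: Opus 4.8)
The plan is to compute $\mathrm{vol}_{r-1}(C_H)$ directly, by completing a basis of $H$ to a basis of the unimodular lattice $\mathbb Z^r$ and then using B\'ezout's identity. Write $a = (a_1, \dots, a_r) \in \mathbb Z^r$ and $d = \gcd(a_1, \dots, a_r)$, and observe that $H_{\mathbb R} := H \otimes \mathbb R$ is the hyperplane $a^\perp$, with unit normal $\nu = a / \norm a$, where $\norm{a} = \sqrt{\sum_i a_i^2}$. Since $H$ is primitive (as noted just before the statement), I would extend the basis $(f_1, \dots, f_{r-1})$ of $H$ to a $\mathbb Z$-basis $(f_1, \dots, f_r)$ of $\mathbb Z^r$. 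By unimodularity of $\mathbb Z^r$, $\abs{\det(f_1, \dots, f_r)} = 1$, so the full parallelepiped $P = \sum_{i=1}^r [0,1]\cdot f_i$ has Euclidean $r$-volume equal to $1$.

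Next I would decompose this $r$-volume as ``base times height'': the base face is $C_H = \sum_{i=1}^{r-1} [0,1] \cdot f_i$, spanned by $f_1, \dots, f_{r-1} \in H_{\mathbb R}$, with $(r-1)$-volume $\mathrm{vol}_{r-1}(C_H)$, and the height is the distance from $f_r$ to the affine span of the base, i.e. to $H_{\mathbb R} = a^\perp$, which is $\abs{\langle f_r, \nu \rangle} = \abs{\langle f_r, a \rangle}/\norm{a}$. This is nothing but the QR/Gram--Schmidt interpretation of the determinant, and yields
$$
1 = \mathrm{vol}_{r-1}(C_H) \cdot \frac{\abs{\langle f_r, a \rangle}}{\norm{a}}.
$$

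It then remains to identify $\abs{\langle f_r, a \rangle}$. Consider the linear form $\ell : \mathbb Z^r \longrightarrow \mathbb Z$, $x \longmapsto \langle x, a \rangle = \sum_i a_i x_i$. Since $f_1, \dots, f_{r-1} \in H = \ker \ell$, we get $\ell(\mathbb Z^r) = \mathbb Z \cdot \langle f_r, a \rangle$; on the other hand $\ell(\mathbb Z^r) = \{ \sum_i a_i x_i : x \in \mathbb Z^r \} = \gcd(a_1, \dots, a_r)\, \mathbb Z = d\,\mathbb Z$ by B\'ezout. Hence $\abs{\langle f_r, a \rangle} = d$, and substituting back gives
$$
\mathrm{vol}_{r-1}(C_H) = \frac{\norm{a}}{d} = \frac{\sqrt{\sum_{1 \leq i \leq r} a_i^2}}{\gcd(a_1, \dots, a_r)},
$$
as claimed. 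The argument is entirely elementary; the only step requiring a moment's care is the base-times-height decomposition of the parallelepiped volume, which is standard, so I do not anticipate any real obstacle here.
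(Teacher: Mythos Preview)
Your proof is correct and follows essentially the same approach as the paper: extend a basis of $H$ to a $\mathbb Z$-basis of $\mathbb Z^r$, use that the full parallelepiped has volume $1$, and apply the base-times-height decomposition together with B\'ezout. The only cosmetic difference is that the paper first reduces to $\gcd(a_1,\dots,a_r)=1$ and then chooses $f_r$ explicitly via B\'ezout so that $\langle f_r, a\rangle = 1$, whereas you keep $f_r$ arbitrary and instead identify $\abs{\langle f_r, a\rangle} = d$ via the image of the linear form $\ell$; this is a slight streamlining but not a genuinely different route.
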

\begin{proof}
The lattice $H$ and its fundamental domain do not change if we replace $a_i$ by $\frac{a_i}{\gcd(a_1, ..., a_r)}$, hence we can suppose that $\gcd(a_1, ..., a_r) = 1$. In this case, there exist $u_1, ..., u_r \in \mathbb Z$ such that $\sum_i a_i u_i = 1$, and we can assume that $f_r = (u_1, ..., u_r)$.

Since $(f_1, ..., f_{r})$ is a basis of $\mathbb Z^r$, we have $\mathrm{vol}_r(\sum_{1 \leq i \leq r} [0, 1] \cdot f_i) = 1$. Moreover, 
\begin{align*}
\mathrm{vol}_r(\sum_{1 \leq i \leq r} [0, 1] \cdot f_i) & = \mathrm{vol}_{r-1}(\sum_{1 \leq i \leq r - 1} [0, 1] \cdot f_i) \, \cdot \, \norm{\pi_{H^\perp} (f_r) }_{\mathrm{eucl}} \\
	& = \mathrm{vol}_{r-1}(C_H) \, \cdot\, \norm{\pi_{H^\perp} (f_r) }_{\mathrm{eucl}}
\end{align*}
where $\pi_{H^\perp} (f_n)$ is the orthogonal projection of $f_r$ on $H^\perp$, and $\norm{ \cdot }_{\mathrm{eucl}}$ is the euclidian norm. Since $H^\perp = \mathbb R \cdot (a_1, ..., a_r)$ by definition of $H$, a direct computation gives $\norm{\pi_{H^\perp} (f_r) }_{\mathrm{eucl}} = \frac{1}{\norm{(a_1, ..., a_p)}_{\mathrm{eucl}}}$, hence the result.
\end{proof}

For the next lemma, we resume the notations introduced in Definition \ref{defisimplex}.

\begin{lem} \label{lemlattice2} Let $\underline{a} = (a_1, ..., a_r) \in \mathbb N^r$, and let $\Delta_{\underline{a}} = \{ (t_i) \in \mathbb R_+^r \; | \; \sum_i a_i t_i = 1 \}$. Then the volume of $\Delta_{\underline{a}}$ is $\mathrm{vol}_{r-1} (\Delta_{\underline{a}}) = \frac{1}{(r-1)!} \frac{\gcd(a_1, ..., a_r)}{a_1 ... a_r} \mathrm{vol}_{r-1}(C_H)$;
\end{lem}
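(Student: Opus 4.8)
The plan is to compute $\mathrm{vol}_{r-1}(\Delta_{\underline a})$ directly from the coordinates of its vertices, and then to match the answer with the formula of Lemma \ref{lemlattice1}. First I would observe that $\Delta_{\underline a}$ is exactly the simplex whose vertices are the points $P_i = a_i^{-1} e_i$ for $1 \le i \le r$, where $(e_1, \dots, e_r)$ is the standard basis of $\mathbb R^r$; indeed these are the only points of $\Delta_{\underline a}$ lying on a coordinate axis, and they are affinely independent. The classical Gram-determinant formula for the $(r-1)$-dimensional volume of a simplex then gives $\mathrm{vol}_{r-1}(\Delta_{\underline a}) = \frac{1}{(r-1)!}\sqrt{\det G}$, where $G = \bigl(w_i \cdot w_j\bigr)_{1 \le i, j \le r-1}$ is the Gram matrix of the edge vectors $w_i = P_i - P_r = a_i^{-1} e_i - a_r^{-1} e_r$.

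Next I would compute this Gram matrix: its diagonal entries are $w_i \cdot w_i = a_i^{-2} + a_r^{-2}$ and its off-diagonal entries are $w_i \cdot w_j = a_r^{-2}$ (only the $e_r$-components overlap), so $G = D + a_r^{-2}\,\mathbf 1 \mathbf 1^{T}$ with $D = \mathrm{diag}(a_1^{-2}, \dots, a_{r-1}^{-2})$ and $\mathbf 1 = (1, \dots, 1)^{T} \in \mathbb R^{r-1}$. By the matrix determinant lemma, $\det G = \det(D)\bigl(1 + a_r^{-2}\,\mathbf 1^{T} D^{-1} \mathbf 1\bigr) = (a_1 \cdots a_{r-1})^{-2}\bigl(1 + a_r^{-2}\sum_{i=1}^{r-1} a_i^2\bigr) = (a_1 \cdots a_r)^{-2}\sum_{i=1}^{r} a_i^2$. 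Hence $\mathrm{vol}_{r-1}(\Delta_{\underline a}) = \dfrac{\sqrt{\sum_{i=1}^r a_i^2}}{(r-1)!\; a_1 \cdots a_r}$.

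Finally, Lemma \ref{lemlattice1} says $\mathrm{vol}_{r-1}(C_H) = \dfrac{\sqrt{\sum_i a_i^2}}{\gcd(a_1, \dots, a_r)}$, i.e. $\sqrt{\sum_i a_i^2} = \gcd(a_1, \dots, a_r)\,\mathrm{vol}_{r-1}(C_H)$; substituting this into the previous line yields the asserted identity $\mathrm{vol}_{r-1}(\Delta_{\underline a}) = \dfrac{1}{(r-1)!}\dfrac{\gcd(a_1, \dots, a_r)}{a_1 \cdots a_r}\,\mathrm{vol}_{r-1}(C_H)$. There is no real obstacle in this argument; the only point requiring a little care is the determinant evaluation, and as an alternative one could instead view $\Delta_{\underline a}$ as the image of the standard simplex $\Delta^{r-1}$ under the diagonal linear map $\mathrm{diag}(a_1^{-1}, \dots, a_r^{-1})$ and track the factor by which this map rescales $(r-1)$-dimensional volume between the hyperplanes $\{\sum t_i = 1\}$ and $\{\sum a_i t_i = 1\}$, arriving at the same formula.
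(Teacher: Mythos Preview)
Your proof is correct and follows essentially the same route as the paper: both compute the intermediate formula $\mathrm{vol}_{r-1}(\Delta_{\underline a}) = \frac{1}{(r-1)!}\frac{\sqrt{\sum_i a_i^2}}{a_1\cdots a_r}$ via the Gram matrix of the vectors $a_i^{-1}e_i - a_r^{-1}e_r$ (the paper obtains these as the pushforwards $\psi_*(e_i)$ of a parametrization by the standard simplex, which are exactly your edge vectors $w_i$), and then invoke Lemma~\ref{lemlattice1}. Your use of the matrix determinant lemma makes explicit the ``simple computation'' of $\det G$ that the paper leaves to the reader.
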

\begin{proof}
	By Lemma \ref{lemlattice1}, it suffices to show that $\mathrm{vol}_{r-1}(\Delta_{\underline{a}}) = \frac{1}{(r-1)!} \frac{\sqrt{\sum_{1 \leq i \leq r} a_i^2}}{a_1 ... a_r}$. To perform this computation, we can for example use the standard parametrization of $\Delta_{\underline{a}}$ given by $\psi : t \in \Delta   \longmapsto (\frac{1}{a_1} t_1, ..., \frac{1}{a_{r-1}} t_{r-1}, \frac{1}{a_r} (1 - \sum_{1 \leq i \leq r - 1} t_i))$, where $\Delta = \{ (t_i) \in [0,1]^{r-1} \; | \; \sum_i t_i \leq 1 \}$ is the standard $(r-1)$-dimensional simplex in $\mathbb R^{r-1}$. We have then $\psi^\ast( d\mathrm{vol}_{r-1} ) = \sqrt{\det G} \, d\mathrm{vol}_{r-1}$, where $G= (\left< \psi_\ast(e_i), \psi_\ast(e_j)\right>)_{i,j}$ is the Gram matrix of the vectors $\psi_\ast (e_i)$ ($(e_i)_i$ being the canonical basis of $\mathbb R^{r-1}$). A simple computation shows that $\det G = \frac{1}{\prod_i a_i^2} \sum_{i} a_i^2$. Thus, we have $\mathrm{vol}_{r-1}(\Delta_{\underline{a}}) = \frac{\sqrt{\sum_i a_i^2}}{\prod_i a_i} \mathrm{vol}_{r-1}(\Delta)$. To conclude, it suffices to compute $\mathrm{vol}(\Delta) = \frac{1}{(r-1)!}$, which is easy.
\end{proof}

\subsection{Probability estimates on affine simplexes} \label{sectprob}

We present now a few estimates for the classical probability functional on random variables with values in affine simplexes. The following computations are very close to the ones of Demailly in \cite{dem11}, so we tried to give only the necessary details. The main result of this section is Lemma \ref{lemboundvariance}, which was used in the proof of Lemma \ref{lemvariance}.

Again, we use the notations introduced in Definition \ref{defisimplex}. Recall that for any $m$-dimensional simplex $\Delta \accentset{\circ}{\subset} \mathbb R^m$, the \emph{uniform probability measure} of $\Delta$ is the measure $d \mathbf P_{\Delta} = \frac{1}{\mathrm{vol}_m(\Delta)} d \mathrm{vol}_m$. 

Since the uniform measure on $\Delta$ is the unique probability measure which is the restriction of a translation invariant measure on $\mathbb R^m$, we see that if $\Delta_1, \Delta_2 \subseteq \mathbb R^m$ are $m$-dimensional simplexes, and if $\Psi \in \mathrm{GL}(\mathbb R^m)$ is such that $\Delta_2 = \Psi(\Delta_1)$, then $\Psi$ sends the uniform measure of $\Delta_1$ on the uniform measure of $\Delta_2$.
\medskip

Let now $r, k \in \mathbb N$, and consider a random variable $X$ drawn uniformly in the $(kr -1)$-dimensional simplex
$$
\Delta_{\underline{k}} = \Delta_{(1, ..., 1, ..., k, ..., k)} \subseteq \mathbb R^{kr - 1}
$$ 
(each integer $i \in \llbracket 1, k \rrbracket$ being repeated $r$ times). We write $X = (X_{j,l})_{1 \leq j \leq k, 1 \leq l \leq r}$.

\begin{lem} \label{lemrandomvariables} For all $j \in \llbracket 1, k \rrbracket$ and all $i \in \llbracket 1, r \rrbracket$, we let $Y_j = \sum_{1 \leq l \leq r} X_{j, l}$, and $Z_{l}^j = \frac{X_{j,l}}{Y_j}$.
Then
\begin{enumerate}
\item the random variables $Z^j = (Z^j_1, ..., Z^j_r)$ $(1 \leq j \leq k)$ are of uniform law with values in $\Delta^{r-1}$, and are pairwise independent. They are also independent of the $Y_j$ ;
\item the random variable $(Y_1', ..., Y_k') = (Y_1, 2 Y_2, ..., k Y_k)$ takes its values in $\Delta^{k-1} \subseteq \mathbb R^k$. Its density is
$$
dP(y_1, ..., y_{k}) = \frac{(kr - 1)!}{(r - 1)^k} (y_1...y_k)^{r-1}d\mathrm{vol}_{k-1} 
$$
\end{enumerate}
\end{lem}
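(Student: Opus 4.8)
The plan is to realize the uniform law on $\Delta_{\underline{k}}$ through independent Gamma variables, which makes every claimed independence and every density transparent, reducing the whole lemma to two textbook facts about the Dirichlet distribution.

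\textbf{Step 1: reduction to the standard simplex.} The linear map $\Phi:\mathbb R^{kr}\to\mathbb R^{kr}$, $(t_{j,l})\mapsto (j\,t_{j,l})$, restricts to a linear isomorphism $\Delta_{\underline{k}}\to\Delta^{kr-1}$, since $\sum_{j,l} j\,t_{j,l}=1$ becomes $\sum_{j,l}(j t_{j,l})=1$. As recalled just before Lemma \ref{lemrandomvariables}, the uniform probability measure is the restriction of a translation--invariant measure up to normalization, hence is preserved by linear isomorphisms. So $X=(X_{j,l})$ is uniform on $\Delta_{\underline{k}}$ if and only if $W:=(j X_{j,l})_{j,l}$ is uniform on $\Delta^{kr-1}$; and in these variables $Y_j'=jY_j=\sum_l W_{j,l}$ while $Z_l^j=X_{j,l}/Y_j=W_{j,l}/Y_j'$.

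\textbf{Step 2: the Gamma representation and the classical facts.} Let $(\xi_{j,l})_{1\le j\le k,\,1\le l\le r}$ be i.i.d.\ $\mathrm{Exp}(1)$ and $\Xi=\sum_{j,l}\xi_{j,l}$; then $(\xi_{j,l}/\Xi)$ is uniform on $\Delta^{kr-1}$, so we may take $W_{j,l}=\xi_{j,l}/\Xi$. Put $\eta_j=\sum_l\xi_{j,l}$, so $\eta_j\sim\mathrm{Gamma}(r,1)$, the $\eta_j$ are independent, $\Xi=\sum_j\eta_j$, and one computes $Y_j'=\eta_j/\Xi$, $Z^j=(\xi_{j,1}/\eta_j,\dots,\xi_{j,r}/\eta_j)$. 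I then invoke the two classical facts: (i) for independent $G_1,\dots,G_p$ with $G_i\sim\mathrm{Gamma}(\alpha_i,1)$, the normalized vector $(G_1/\!\sum_iG_i,\dots,G_p/\!\sum_iG_i)$ is $\mathrm{Dirichlet}(\alpha_1,\dots,\alpha_p)$ and is independent of the sum $\sum_iG_i$; (ii) the $\mathrm{Dirichlet}(\alpha_1,\dots,\alpha_p)$ density on $\Delta^{p-1}$ is $\frac{\Gamma(\alpha_1+\dots+\alpha_p)}{\Gamma(\alpha_1)\cdots\Gamma(\alpha_p)}\prod_i x_i^{\alpha_i-1}$. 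Fact (i) is itself proved in one line by the change of variables $(G_1,\dots,G_p)\mapsto(S,G_1/S,\dots,G_{p-1}/S)$ with $S=\sum_iG_i$, whose Jacobian is $S^{p-1}$, so that the joint density factors into an $S$--part and a Dirichlet part.

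\textbf{Step 3: conclusion.} Applying (i) inside each block $j$ (all $\alpha_l=1$) shows $Z^j$ is uniform on $\Delta^{r-1}$ and independent of $\eta_j$; since $Z^j$ is a function of $(\xi_{j,l})_l$ alone and the blocks $((\xi_{j,l})_l)_j$ are independent, the vectors $(Z^1,\eta_1),\dots,(Z^k,\eta_k)$ are mutually independent with $Z^j$ independent of $\eta_j$, so $(Z^1,\dots,Z^k)$ is jointly independent of $(\eta_1,\dots,\eta_k)$, hence of $(Y_1',\dots,Y_k')$; this is part (1) (and even gives full mutual independence, not just pairwise). Applying (i) to the $\eta_j$ (all $\alpha_j=r$) shows $(Y_1',\dots,Y_k')$ is $\mathrm{Dirichlet}(r,\dots,r)$ on $\Delta^{k-1}$, and (ii) gives its density $\frac{\Gamma(kr)}{\Gamma(r)^k}(y_1\cdots y_k)^{r-1}=\frac{(kr-1)!}{((r-1)!)^k}(y_1\cdots y_k)^{r-1}$, which is the asserted formula; this is part (2). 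The only genuinely technical point I expect is the bookkeeping of normalizing constants together with the care needed when passing between the Euclidean $(k-1)$--volume on $\Delta^{k-1}\subseteq\mathbb R^k$ and the coordinate Lebesgue measure on the parametrizing $(k-1)$--simplex (they differ by a constant factor $\sqrt{k}$); everything else is formal once Steps 1 and 2 are in place. An alternative, entirely self-contained route avoiding probability altogether is to perform directly the change of variables $(W_{j,l})\mapsto\big((Y_j')_j,(Z^j)_j\big)$ on $\Delta^{kr-1}$, whose block Jacobian is $\prod_j (Y_j')^{r-1}$; this yields the same factorization of the measure, at the cost of a slightly heavier Jacobian computation.
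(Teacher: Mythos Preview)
Your proof is correct and takes a genuinely different route from the paper. The paper handles part~(1) by simply saying ``this is easy to check'', and for part~(2) it pushes the uniform measure on $\Delta^{kr-1}$ forward along the block--sum map $\pi:\Delta^{kr-1}\to\Delta^{k-1}$, observes that each fibre $\pi^{-1}(y)$ is a product of $k$ scaled $(r-1)$--simplexes of total volume proportional to $(y_1\cdots y_k)^{r-1}$, and then fixes the constant by normalisation via the Beta integral. You instead realise the uniform law on $\Delta^{kr-1}$ through i.i.d.\ exponentials and reduce everything to the classical Gamma--Dirichlet correspondence.

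What each approach buys: the paper's fibre--volume argument is entirely self-contained and avoids any probabilistic background, at the price of leaving part~(1) as an exercise. Your Gamma representation makes all the independence statements in part~(1) genuinely transparent --- you actually obtain full mutual independence of the $Z^j$ (and of the $Z^j$ from the $Y_j$), which is stronger than the pairwise independence the lemma asserts --- and the Dirichlet density drops out without a separate normalisation computation. The cost is invoking two textbook facts about Dirichlet distributions; you sketch their one-line proofs, so this is not a real gap. Your closing remark about the alternative direct change of variables with Jacobian $\prod_j (Y_j')^{r-1}$ is in fact essentially the paper's own argument for~(2), phrased in coordinates rather than as a fibre-volume computation. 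Your caution about the $\sqrt{k}$ discrepancy between the Euclidean measure $d\mathrm{vol}_{k-1}$ on $\Delta^{k-1}\subset\mathbb R^k$ and the Lebesgue measure on the parametrising simplex is well placed; the paper itself is slightly loose on this point (its normalisation is computed via iterated one-dimensional Beta integrals, which implicitly uses the latter).
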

\begin{rem}
	The density above is a particular case of Dirichlet distribution : it also appears naturally in Demailly's estimates (see \cite[(2.16)]{dem11}).
\end{rem}
\begin{proof}
\emph{(1)} This is easy to check.

\emph{(2)} Let $Y' = (Y_1', ..., Y_k')$, and let, for all $i, j$,  $X_{i,j}' = j X_{i,j}$. Since the uniform measure on a simplex is invariant under linear automorphisms of $\mathbb R^{kr}$, we see that $X' = (X'_{i,j})$ is a random variable with uniform law in the simplex $\Delta^{kr -1}$. We then have $Y_j' = \sum_{1 \leq l \leq r} X'_{l, j}$.

Let $\pi : x \in \Delta^{kr -1} \subseteq \mathbb R_+^{kr}  \longmapsto (\sum_{1 \leq l \leq r} x_{l, 1}, ..., \sum_{1 \leq l \leq r} x_{l, k}) \in \Delta^{k-1}$. We have by definition $Y' = \pi(X')$, so the probability law of $Y$ is the image measure $\pi_{\ast} (d \mathbf{P}_{\Delta^{kr -1}})$. If $y \in \Delta^{k-1}$, we have $\pi_{\ast}^{-1}(y) = \left\{(x_1, x_2, ..., x_k) \in (\mathbb R_+^r)^k \; | \; \forall i, \; \sum_{1 \leq l \leq r} (x_i)_l = y_i \right\}$.
This set is a product of $k$ different $(r-1)$-dimensional simplexes, and we see right away that its euclidian volume is proportional to $y_1^{r-1} ... y_k^{r-1}$. Thus, the probability density of $Y'$ in $\Delta^{k-1}$ is of the form $dP(y) = C y_1^{r-1} ... y_k^{r-1}$. To compute the constant $C$, we apply the normalization $\int_{\Delta^{k-1}} dP(y) = 1$, using inductively the formula $\int_0^1 y^a (1 - y)^b dy = \frac{a! b!}{(a+ b + 1)!}$. A simple computation yields $C = \frac{(kr - 1)!}{(r- 1)!^k}$, as announced. 

\end{proof}

\begin{lem} \label{lemrandomy} With the same notations as in Lemma \ref{lemrandomvariables}, we have
\begin{enumerate}
\item for all $j \in \llbracket 1, k \rrbracket$, $\mathbf{E}[Y_j] = \frac{1}{j k}$ and $\mathbf{E}[Y_j^2] =  \frac{1}{j^2} \frac{r+1}{k(kr + 1)} \leq \frac{2}{j^2 k^2}$. 
\item for all $j, l \in \llbracket 1, k \rrbracket$, with $j \neq l$, we have $\mathbf{E}[Y_j Y_l] \leq \mathbf{E}[Y_j] \mathbf{E}[Y_l]$ (i.e. the variables $Y_j$ and $Y_k$ are negatively correlated). 
\end{enumerate}
\end{lem}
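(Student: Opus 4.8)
The plan is to reduce the whole statement to the moments of a symmetric Dirichlet distribution. By Lemma~\ref{lemrandomvariables}(2) the vector $Y' = (Y_1',\dots,Y_k') = (Y_1, 2Y_2,\dots, kY_k)$ lives on the standard simplex $\Delta^{k-1}\subseteq\mathbb R^k$ with density proportional to $(y_1\cdots y_k)^{r-1}$, i.e.\ $Y'$ follows the Dirichlet law $\mathrm{Dir}(r,\dots,r)$ with $k$ parameters. Since $Y_j = \tfrac1j Y_j'$, all of (1) and (2) will drop out of the values of $\mathbf{E}[Y_j']$, $\mathbf{E}[(Y_j')^2]$ and $\mathbf{E}[Y_j'Y_l']$ ($j\neq l$) after dividing respectively by $j$, $j^2$ and $jl$. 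So the first step is just to record this reduction.

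The second step is to compute those three moments. They are ratios of Dirichlet integrals, so the exact choice of measure on $\Delta^{k-1}$ is irrelevant; I would use
$$
\int_{\Delta^{k-1}} y_1^{\beta_1-1}\cdots y_k^{\beta_k-1}\,dy = \frac{\Gamma(\beta_1)\cdots\Gamma(\beta_k)}{\Gamma(\beta_1+\cdots+\beta_k)},
$$
which follows by induction on $k$ from $\int_0^1 t^{a-1}(1-t)^{b-1}\,dt=\Gamma(a)\Gamma(b)/\Gamma(a+b)$ (this is also the computation already used in the proof of Lemma~\ref{lemrandomvariables}(2)). Taking one $\beta_i$ equal to $r+1$ or $r+2$, or two of them equal to $r+1$, and dividing by the value obtained when all $\beta_i=r$, a short factorial manipulation gives
$$
\mathbf{E}[Y_j'] = \frac{1}{k},\qquad \mathbf{E}[(Y_j')^2] = \frac{r+1}{k(kr+1)},\qquad \mathbf{E}[Y_j'Y_l'] = \frac{r}{k(kr+1)}\quad(j\neq l),
$$
which are just the classical first and second moments of $\mathrm{Dir}(r,\dots,r)$, so one may alternatively quote them directly.

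The last step is bookkeeping. Dividing, $\mathbf{E}[Y_j]=\tfrac1{jk}$ and $\mathbf{E}[Y_j^2]=\tfrac1{j^2}\tfrac{r+1}{k(kr+1)}$, which is (1); the bound $\mathbf{E}[Y_j^2]\le\tfrac2{j^2k^2}$ is $k(r+1)\le 2(kr+1)$, i.e.\ $k\le kr+2$, true because $r\ge1$. For (2), $\mathbf{E}[Y_jY_l]=\tfrac1{jl}\tfrac{r}{k(kr+1)}$ while $\mathbf{E}[Y_j]\mathbf{E}[Y_l]=\tfrac1{jlk^2}$, so the claimed negative correlation is equivalent to $\tfrac{r}{kr+1}\le\tfrac1k$, i.e.\ $kr\le kr+1$. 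There is no genuine difficulty here; the only things to watch are the factorial bookkeeping when extracting the moments, and the (already noted) fact that the precise normalisation of the measure on $\Delta^{k-1}$ does not matter since every quantity involved is a ratio of integrals.
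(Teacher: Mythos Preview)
Your proof is correct and follows essentially the same approach as the paper: both reduce to computing the moments of $Y' = (Y_1,\dots,kY_k)$ via the density on $\Delta^{k-1}$ given in Lemma~\ref{lemrandomvariables}(2), then divide by the appropriate powers of $j$ and $l$. The only cosmetic difference is that you phrase the computation in terms of the symmetric Dirichlet law and the Gamma-function Dirichlet integral, whereas the paper integrates directly against $dP$ using the Beta integral already invoked in the proof of Lemma~\ref{lemrandomvariables}; the resulting values $\mathbf{E}[Y_j']=\tfrac{1}{k}$, $\mathbf{E}[(Y_j')^2]=\tfrac{r+1}{k(kr+1)}$, $\mathbf{E}[Y_j'Y_l']=\tfrac{r}{k(kr+1)}$ and the final inequalities are identical.
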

\begin{proof}
For (1), we compute $\mathbf{E}[Y_j] = \frac{1}{j} \mathbf{E}[Y_j'] = \frac{1}{j} \int_{\Delta^{k-1}} y_j dP(y_1, ..., y_k)$. The formula for $dP(y)$ given in Lemma \ref{lemrandomvariables} gives the result. The computation of $\mathbf{E}[Y_j^2]$ is similar. 

For the second point, we write $\mathbf{E}[Y_j Y_l] = \frac{1}{jl} \mathbf{E}[Y_j' Y_l'] = \frac{1}{jl} \int_{\Delta^{k-1}} y_j y_l \, dP(y)$. This gives $\mathbf{E}[Y_j Y_l] = \frac{1}{j l} \frac{r}{k(kr + 1)} \leq \frac{1}{j k} \frac{1}{l k}$, hence the result by (1).
\end{proof}

Now, we let $d_1, ..., d_r \in \mathbb R$, and we let $T$ be a random variable of uniform law on the $(r-1)$-dimensional simplex $\Delta^{r-1} \subseteq \mathbb R^r$. Let $S = \sum_{1 \leq l \leq r} d_l T_l$. We check easily that $\mathbf{E}[S] = \frac{1}{r} \sum_{1 \leq l \leq r} d_l$. 

\begin{lem} \label{lemboundvariance} We let $X = (X_{j,l})_{1 \leq j \leq k, \, 1 \leq l \leq r}$ have the same meaning as before. Let $A : t \in \Delta_{\underline{k}} \longrightarrow  \sum_{1 \leq j \leq  k} \sum_{1 \leq l \leq r} t_{j, l} d_l$. Then, we have an upper bound
\begin{equation} \label{eqvar}
\mathbf{Var}[A(X)] \leq \frac{2}{k^2} \left( \sum_{1 \leq j \leq k} \frac{1}{j^2} \right) \mathbf{E}[S^2] \leq \frac{\pi^2}{3 k^2} \mathbf{E}[S^2].
\end{equation} 
\end{lem}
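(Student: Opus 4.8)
The plan is to reduce the statement to the structural description of the uniform law on $\Delta_{\underline{k}}$ given in Lemma \ref{lemrandomvariables}. Writing $X_{j,l} = Y_j Z_l^j$ (so $Y_j = \sum_l X_{j,l}$ and $Z_l^j = X_{j,l}/Y_j$), the linear form becomes
$$A(X) = \sum_{j=1}^k \sum_{l=1}^r d_l\, Y_j Z_l^j = \sum_{j=1}^k Y_j S_j, \qquad \text{where } S_j := \sum_{l=1}^r d_l Z_l^j.$$
By Lemma \ref{lemrandomvariables}(1), the vectors $Z^1,\dots,Z^k$ are i.i.d.\ uniform on $\Delta^{r-1}$ and jointly independent of the family $(Y_j)_{1\le j\le k}$. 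Consequently each $S_j$ has the same distribution as the random variable $S$ of the statement, the $S_j$ are mutually independent, and they are independent of all the $Y_j$. This is the only place where the preparatory lemmas enter in an essential way, and it is really the crux of the argument; everything afterwards is a bookkeeping computation.

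First I would expand $\mathbf{E}[A(X)^2]$ using these independences:
$$\mathbf{E}[A(X)^2] = \sum_{j} \mathbf{E}[Y_j^2]\,\mathbf{E}[S^2] + \sum_{j\neq l} \mathbf{E}[Y_j Y_l]\,\mathbf{E}[S]^2,$$
and likewise $\mathbf{E}[A(X)]^2 = \mathbf{E}[S]^2\big(\sum_j \mathbf{E}[Y_j]^2 + \sum_{j\neq l}\mathbf{E}[Y_j]\mathbf{E}[Y_l]\big)$. Subtracting gives
$$\mathbf{Var}[A(X)] = \mathbf{E}[S^2]\sum_{j}\mathbf{E}[Y_j^2] + \mathbf{E}[S]^2\Big(\sum_{j\neq l}\big(\mathbf{E}[Y_j Y_l]-\mathbf{E}[Y_j]\mathbf{E}[Y_l]\big) - \sum_j \mathbf{E}[Y_j]^2\Big).$$

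Then I would observe that the second bracket is nonpositive: the inequality $\mathbf{E}[Y_j Y_l]\le \mathbf{E}[Y_j]\mathbf{E}[Y_l]$ for $j\neq l$ is exactly the negative-correlation bound of Lemma \ref{lemrandomy}(2), and $\sum_j \mathbf{E}[Y_j]^2 \ge 0$. Hence $\mathbf{Var}[A(X)] \le \mathbf{E}[S^2]\sum_j \mathbf{E}[Y_j^2]$. Plugging in the moment estimate $\mathbf{E}[Y_j^2]\le \tfrac{2}{j^2 k^2}$ from Lemma \ref{lemrandomy}(1) yields $\mathbf{Var}[A(X)]\le \tfrac{2}{k^2}\big(\sum_{j=1}^k \tfrac1{j^2}\big)\mathbf{E}[S^2]$, and bounding $\sum_{j\ge 1} j^{-2}=\tfrac{\pi^2}{6}$ gives the final $\tfrac{\pi^2}{3k^2}\mathbf{E}[S^2]$. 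I do not expect any genuine obstacle: the subtlety is purely in keeping track of which independences hold (in particular that the $S_j$ are independent of the $Y_j$ and of each other), after which the chain of inequalities is immediate.
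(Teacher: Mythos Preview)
Your proposal is correct and follows essentially the same route as the paper's proof: decompose $A(X)=\sum_j Y_j S_j$ via Lemma~\ref{lemrandomvariables}, expand the variance using the independence of the $S_j$ from each other and from the $Y_j$, drop the cross terms by the negative-correlation bound of Lemma~\ref{lemrandomy}(2), and conclude with the moment estimate $\mathbf{E}[Y_j^2]\le \tfrac{2}{j^2k^2}$ from Lemma~\ref{lemrandomy}(1). The bookkeeping is identical to the paper's.
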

\begin{proof}
Let $W$ be the left hand side member of \eqref{eqvar}. We have $A(X) = \sum_{j, l} X_{j, l} \, d_l$, so, using the notations of Lemma \ref{lemrandomvariables}, we can rewrite $W$ as follows:
\begin{align*}
W = \mathbf{Var}[\sum_{1 \leq j \leq k} Y_j \left( \sum_{1 \leq i \leq r} d_i Z_{i}^j \right)] = \mathbf{Var}[\sum_{1 \leq j \leq k} Y_j S_j ]
\end{align*}
where we let $S_j = \sum_{1 \leq l \leq r} d_l Z^j_l$. By definition of the variance, we have
\begin{align*}
W & = \mathbf{Var}[\sum_{1 \leq j \leq k} Y_j S_j] = \mathbf{E}[(\sum_{1 \leq j \leq k} Y_j S_j)^2] - \mathbf{E} [\sum_{1 \leq j \leq k} Y_j S_j]^2  
\end{align*}

Now, by Lemma \ref{lemrandomvariables}, the variables $S_j$ are pairwise independent for $j \in \llbracket 1 , k \rrbracket$, and they are also independent of the $Y_j$. Also, by the same lemma, they have the same law than $S$. This implies that for all $j$, we have $\mathbf{E}[Y_j S_j] = \mathbf{E}[Y_j] \mathbf{E}[S_j] = \mathbf{E}[Y_j] \mathbf{E}[S]$, and for all $p \neq q$, $\mathbf{E}[Y_p Y_q S_p S_q] = \mathbf{E}[Y_p Y_q] \mathbf{E}[S_p] \mathbf{E}[S_q] =  \mathbf{E}[Y_p Y_q] \mathbf{E}[S]^2$. Thus, expanding the computation yields 
\begin{align*}
W & = \mathbf{E}[ \sum_j Y_j^2 S_j^2] + \mathbf{E}[\sum_{p \neq q} Y_p Y_q S_p S_q ] -  \sum_j \mathbf{E}[ Y_j S_j]^2 - \sum_{p \neq q} \mathbf{E}[Y_p S_p] \mathbf{E}[Y_q S_q]\\
& = \mathbf{E}[\sum_j Y_j^2]\, \mathbf{E}[S^2] + \sum_{p \neq q} \left( \mathbf{E}[Y_p Y_q] - \mathbf{E}[Y_p] \mathbf{E}[Y_q] \right) \, \mathbf{E}[S^2] - \sum_j \mathbf{E}[Y_j]^2 \, \mathbf{E}[S]^2 
\end{align*}

By Lemma \ref{lemrandomy}, (2), we get
\begin{align*}
W & \leq \sum_j \mathbf{E}[Y_j^2] \mathbf{E}[S^2] - \sum_{j} \mathbf{E}[Y_j]^2 \mathbf{E}[S]^2 \\
 & \leq  \sum_j \mathbf{E}[Y_j^2] \mathbf{E}[S^2],
\end{align*}
and then the conclusion follows immediately from Lemma \ref{lemrandomy}, (1).
\end{proof}

\bibliographystyle{amsalpha}
\bibliography{biblio}

\end{document}